\newtheorem{theorem}{Theorem}[section]
\newtheorem{corollary}[theorem]{Corollary}
\newtheorem{lemma}[theorem]{Lemma}
\newtheorem{claim}[theorem]{Claim}
\newtheorem{proposition}[theorem]{Proposition}
\theoremstyle{definition}
\newtheorem{definition}[theorem]{Definition}
\newtheorem{example}[theorem]{Example}
\newtheorem*{definition*}{Definition}
\newtheorem*{lemma*}{Lemma}
\newtheorem*{proposition*}{Proposition}
\newtheorem*{theorem*}{Theorem}
\newtheorem*{corollary*}{Corollary}
\theoremstyle{definition}
\newtheorem{question}[theorem]{Question}
\newtheorem{problem}[theorem]{Problem}
\newtheorem{remark}[theorem]{Remark}
\newtheorem*{remark*}{Remark}
\newtheorem*{notation}{Notation}
\newtheorem*{acks}{Acknowledgements}
\newtheorem*{outline}{Outline}
\newcommand{\Ab}{\operatorname{Ab}}
\newcommand{\cd}{\operatorname{cd}}
\newcommand{\U}{\operatorname{U}}
\newcommand{\f}{\varphi}
\newcommand{\hf}{\hat{\varphi}}
\newcommand{\of}{\overline{\varphi}}
\newcommand{\ff}{\psi}
\newcommand{\hff}{\hat{\psi}}
\newcommand{\K}{\mathbb{K}}
\newcommand{\oo}{\mathfrak{o}}
\newcommand{\pp}{\mathfrak{p}}
\newcommand{\kk}{\mathfrak{k}}
\newcommand{\ok}{\mathfrak{o}/ \mathfrak{p}^k}
\newcommand{\uni}{\overline{\omega}}
\newcommand{\Qp}{\mathbb{Q}_p}
\newcommand{\Zp}{\mathbb{Z}_p}
\newcommand{\Fp}{\mathbb{F}_p}
\newcommand{\Fq}{\mathbb{F}_q}
\newcommand{\Zpk}{\mathbb{Z}/p^k \mathbb{Z}}
\newcommand{\GGL}{\operatorname{GL}}
\newcommand{\MM}{\operatorname{M}}
\newcommand{\BS}{\operatorname{BS}}
\newcommand{\Sym}{\operatorname{Sym}}
\newcommand{\Alt}{\operatorname{Alt}}
\newcommand{\Gal}{\operatorname{Gal}}
\newcommand{\Aut}{\operatorname{Aut}}
\newcommand{\pr}{\operatorname{pr}}
\newcommand{\id}{\operatorname{id}}
\newcommand{\diag}{\operatorname{diag}}
\newcommand{\res}{\operatorname{res}}
\newcommand{\normal}[1]{\left<\! \left< #1\right> \!\right>}
\newcommand{\HH}{\operatorname{H}}
\newcommand{\EH}{\operatorname{EH}}
\newcommand{\CC}{\operatorname{C}}
\newcommand{\ZZ}{\operatorname{Z}}
\newcommand{\BB}{\operatorname{B}}
\newcommand{\en}{\varepsilon_n}
\newcommand{\ez}{\varepsilon_0}
\newcommand{\ek}{\varepsilon_k}
\newcommand{\ee}{\varepsilon}
\newcommand{\eee}{\overline{\varepsilon}}
\newcommand{\dd}{\delta}
\newcommand{\G}{\mathcal{G}}
\newcommand{\C}{\mathcal{C}}
\newcommand{\defe}{\operatorname{def}}
\newcommand{\dist}{\operatorname{dist}}
\newcommand{\Hdist}{\operatorname{Hdist}}
\newcommand{\MQ}{\operatorname{MQ}}
\begin{document}

\title{Ultrametric analogues of Ulam stability of groups}
\author{Francesco Fournier-Facio}
\date{\today}
\maketitle

\vspace{0.5cm}

\begin{abstract}
We study stability of metric approximations of countable groups with respect to groups endowed with ultrametrics, the main case study being a $p$-adic analogue of Ulam stability, where we take $\GGL_n(\Zp)$ as approximating groups instead of $\U(n)$. For finitely presented groups, the ultrametric nature implies equivalence of the pointwise and uniform stability problems, and the profinite one implies that the corresponding approximation property is equivalent to residual finiteness. Moreover, a group is uniformly stable if and only if its largest residually finite quotient is. We provide several examples of uniformly stable groups, including finite groups, virtually free groups, some groups acting on rooted trees, and certain lamplighter and (Generalised) Baumslag--Solitar groups. We construct a finitely generated group that is not uniformly stable. Finally, we prove and apply a (bounded) cohomological criterion for stability of a finitely presented group.
\end{abstract}

\setlength{\cftbeforetoctitleskip}{1.8cm}

\tableofcontents

\medskip

{\noindent \textit{Mathematics Subject Classification}. Primary: 20E26, 20F69. Secondary: 20E06, 20G25, 20J06.}

\medskip

{\noindent \textit{Keywords}. Metric approximation, group equations, stability, ultrametric geometry, $p$-adic matrices}

%
%
%
%


\pagebreak

\section{Introduction}

Let $\Gamma$ be a countable discrete group, and let $\G$ be a family of groups $G$ equipped with bi-invariant metrics $d_G$. The question of \emph{stability of $\Gamma$ with respect to $\G$} asks whether a map $\f \colon \Gamma \to G \in \G$ that is a homomorphism up to a small error, is close to an actual homomorphism $\ff \colon \Gamma \to G$. This can be made rigorous in (at least) two different ways, that depend on whether one wants the errors and closeness to be pointwise or uniform. The notion of ``error'' is defined as follows:

\begin{definition}
\label{def:def}

Let $\f \colon \Gamma \to G \in \G$ be a map. We define the \emph{defect of $\f$ at $(g, h) \in \Gamma^2$} to be
\[\defe_{g, h}(\f) \coloneqq d_G(\f(gh), \f(g) \f(h)).\]
The \emph{defect of $\f$} is
\[\defe(\f) \coloneqq \sup\limits_{g, h \in \Gamma} \defe_{g, h}(\f).\]
A sequence of maps $(\f_n \colon \Gamma \to G_n \in \G)_{n \geq 1}$ is called a \emph{pointwise asymptotic homomorphism} if $\defe_{g, h}(\f_n) \xrightarrow{n \to \infty} 0$ for all $(g, h) \in \Gamma^2$; and a \emph{uniform asymptotic homomorphism} if $\defe(\f_n) \xrightarrow{n \to \infty} 0$.
\end{definition}

Other commonly used terms are \emph{almost-representation} \cite{defstab} for the pointwise notion and \emph{quasi-representation} \cite{Shtern} or \emph{$\ee$-representation} \cite{amenst} for the uniform notion. The notion of ``close'' is defined as follows:

\begin{definition}
\label{def:dist}

Given two maps $\f, \ff \colon \Gamma \to G \in \G$, we define their \emph{distance at $g \in \Gamma$} to be
\[\dist_g(\f, \ff) \coloneqq d_G(\f(g), \ff(g));\]
and their \emph{distance} to be
\[\dist(\f, \ff) \coloneqq \sup\limits_{g \in \Gamma} \dist_g(\f, \ff).\]
Two sequences of maps $(\f_n, \ff_n \colon \Gamma \to G_n \in \G)_{n \geq 1}$ are said to be \emph{pointwise asymptotically close} if $\dist_g(\f_n, \ff_n) \xrightarrow{n \to \infty} 0$ for all $g \in \Gamma$; and \emph{uniformly asymptotically close} if $\dist(\f_n, \ff_n) \xrightarrow{n \to \infty} 0$.
\end{definition}

This leads to two notions of stability, that we attribute to Ulam, after \cite{Ulam}:

\begin{definition}[Ulam]
\label{def:stab}

The group $\Gamma$ is \emph{pointwise $\G$-stable} if every pointwise asymptotic homomorphism is pointwise asymptotically close to a sequence of homomorphisms. It is \emph{uniformly $\G$-stable} if every uniform asymptotic homomorphism is uniformly asymptotically close to a sequence of homomorphisms.
\end{definition}

Early mentions of these problems can be found in works of von Neumann \cite{vN} and Turing \cite{Turing}. The problem of pointwise stability of $\mathbb{Z}^2$ with respect to certain families of matrices, for instance self-adjoint and with the operator norm, received a lot of attention during the second half of the twentieth century (see \cite{Lin} and the references therein). In \cite[Chapter 6]{Ulam}, Ulam discusses more generally the question of stability of certain functional equations: because of this, the term \emph{Ulam stability} was introduced in \cite{BOT} to refer to uniform stability with respect to unitary groups equipped with the distance induced by the operator norm (see below). Some of the most common families $\G$ of approximating groups are the unitary groups $\U(n)$ or the symmetric groups $S_n$.

$\U(n)$ is typically considered with a metric induced by a norm defined on $\MM_n(\mathbb{C})$. The first example is that of the \emph{operator norm}, where pointwise stability has striking topological and $K$-theoretic interpretations \cite{bundles1, bundles2}, all amenable groups are known to be uniformly stable \cite{amenst}, and groups with non-trivial quasimorphisms are known not to be uniformly stable \cite{Rolli, BOT}. Recently, the list of so-called Ulam stable groups, as been extended to include most high-rank lattices \cite{Bharat} and Thompson groups \cite{Bharatandi}. Another example is that of the \emph{Frobenius norm} $\| A \|_{Frob} \coloneqq \sqrt{|A_{ij}|^2}$, that is, the norm induced by the embedding of $\U(n)$ into $\mathbb{C}^{n \times n}$: this has the advantage of allowing a cohomological criterion for pointwise stability \cite{GLT}. The third main example is given by the \emph{Hilbert--Schmidt norm} $\| A \|_{HS} \coloneqq \frac{1}{\sqrt{n}} \| A \|_{Frob}$, which is the normalisation of the Frobenius norm: this has the advantage of allowing a $C^\ast$-algebraic approach, which yields a characterisation of pointwise stability of amenable groups in terms of characters \cite{HSstab1, HSstab2}, as well as a simple algebraic characterisation of uniform stability of finitely generated residually finite groups \cite{uHS}.

On the other hand the groups $S_n$ are studied with the normalised Hamming distance $d_H(\sigma, \tau) \coloneqq 1 - \frac{1}{n}|Fix(\sigma^{-1} \tau)|$. Pointwise stability of equations in permutation was initially considered by Glebsky and Rivera \cite{GR}, then by Arzhantseva and P\u{a}unescu \cite{a:comm} who proved that this can be translated to a group property, as in Definition \ref{def:stab}. Since then this pointwise stability problem has been under intense investigation, as well as some variants thereof: flexible \cite{T, surf}, quantitative \cite{quant}, uniform, probabilistic \cite{BChap}, local \cite{henry:local, us:local}, and connections to computer science \cite{test, test2}. A standout result is a characterisation of pointwise stability of amenable groups in terms of invariant random subgroups \cite{IRS}.

\medskip

The pointwise and uniform problems typically exhibit a very different behaviour. For example, consider the family $\mathcal{G} = \{ (\U(n), \| \cdot \|_{op}) : n \geq 1 \}$ of unitary groups equipped with the metric induced by the operator norm, and the two stability problems with respect to $\mathcal{G}$. On the one hand, $\mathbb{Z}^2$ is not pointwise stable \cite{Voie}, but it is uniformly stable, as are all amenable groups \cite{amenst}. On the other hand, a non-abelian free group of finite rank is not uniformly stable \cite{Rolli}, but it is pointwise stable. In fact, free groups are pointwise $\G$-stable for every family $\G$: if $(\f_n)_{n \geq 1}$ is a pointwise asymptotic homomorphism, then letting $\ff_n$ be the unique homomorphism that coincides with $\f_n$ on a given free basis, $(\ff_n)_{n \geq 1}$ is pointwise asymptotically close to $(\f_n)_{n \geq 1}$. 

\medskip

In this paper, we study \emph{ultrametric} versions of these problems, that is, we look at approximating families $\G$ where the metrics $d_G$ are ultrametric. The main example throughout the paper will be a $p$-adic analogue of Ulam stability: we choose $\GGL_n(\Zp)$ -- which is maximal compact in $\GGL_n(\Qp)$ -- as an analogue of $\U(n)$ -- which is maximal compact in $\GGL_n(\mathbb{C})$. The natural norm on $\Qp$-vector spaces, that is, the one that preserves the non-Archimedean nature, is the $\ell^\infty$-norm relative to the $p$-adic norm $|\cdot|_p$ on $\Qp$. Keeping this and the case of $\U(n)$ in mind, there are three norms that one could choose to induce a distance on $\GGL_n(\Zp)$: the operator norm with respect to the $\ell^\infty$-norm on $\mathbb{Q}_p^n$, the norm induced by the embedding of $\GGL_n(\Zp)$ into $\mathbb{Q}_p^{n \times n}$ with the $\ell^\infty$-norm, and a normalised version of the latter. It turns out that all of these coincide (Lemma \ref{lem:GL}), and so
\[\| \cdot \| \colon \MM_n(\Qp) \to \mathbb{R}_{\geq 0} : A = (A_{ij})_{1 \leq i, j \leq n} \mapsto \max\limits_{1 \leq i, j \leq n} |A_{ij}|_p\]
is, in some sense, the canonical norm to consider. It induces a bi-invariant ultrametric $d$, and moreover it reflects the profinite structure of $\GGL_n(\Zp)$: in fact $\|A - I \| \leq p^{-k}$ if and only if $A \equiv I \mod p^k$. We denote by $\GGL(\Zp)$ this family of metric groups, and will focus on this example of approximating family for the statements of the results, mentioning which properties we are using. Each result can be generalised to families of groups satisfying such properties, and the statements will be given in full generality in the paper.

To the author's knowledge, the only previous mention of $p$-adic versions of stability is in Kazhdan's work \cite[Proposition 1]{amenst}, where it is shown that for every $n \geq 1$ the standard representative map $\f \colon \mathbb{Z}/p^n\mathbb{Z} \to \Zp$ satisfies $\defe(\f) = p^{-n}$ and $\dist(\f, \ff) = 1$ for every homomorphism $\ff$. This result does not however show that these groups are unstable with respect to the family $\{ \Zp \}$: this is indeed not the case as we will see in Proposition \ref{prop:unfin}. However, the fact that these bad estimates for stability arise when looking at maps from a finite $p$-group to a pro-$p$ group is not a coincidence, and will be exploited to construct unstable groups in Section \ref{s:unst}. 

\medskip

Using only the ultrametric inequality, we prove a relation between the pointwise and uniform stability problems, that as we have seen above does not hold in the Archimedean setting:

\begin{theorem}[Theorem \ref{thm:pw_un}]
\label{intro:thm:pw_un}

Let $\Gamma$ be finitely generated and pointwise $\GGL(\Zp)$-stable. Then $\Gamma$ is uniformly $\GGL(\Zp)$-stable. If moreover $\Gamma$ is finitely presented, then the converse holds.
\end{theorem}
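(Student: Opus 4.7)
The plan is to handle the two implications separately, with the ultrametric inequality and bi-invariance of $d$ as the main tools in both. For the forward direction, fix a finite symmetric generating set $S$ of $\Gamma$ and let $(\varphi_n)$ be a uniform asymptotic homomorphism. It is in particular pointwise asymptotic, so pointwise stability gives homomorphisms $(\psi_n)$ with $\dist_g(\varphi_n, \psi_n) \to 0$ for every $g$, and the task is to upgrade this to uniform closeness. Writing $g \in \Gamma$ as a product $s_1 \cdots s_k$ of elements of $S$, induction using the ultrametric and bi-invariance yields
\[
d\bigl(\varphi_n(g), \varphi_n(s_1) \cdots \varphi_n(s_k)\bigr) \leq \defe(\varphi_n)
\]
and
\[
d\bigl(\varphi_n(s_1) \cdots \varphi_n(s_k), \psi_n(s_1) \cdots \psi_n(s_k)\bigr) \leq \max_{s \in S} \dist_s(\varphi_n, \psi_n);
\]
since $\psi_n$ is a genuine homomorphism, a final application of the ultrametric gives $\dist(\varphi_n, \psi_n) \leq \max\bigl(\defe(\varphi_n), \max_{s \in S}\dist_s(\varphi_n, \psi_n)\bigr)$, which tends to $0$ because $S$ is finite.

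For the converse, write $\Gamma = \langle S \mid R \rangle$ with both $S$ and $R$ finite, fix a choice of word $w_g \in F_S$ representing each $g \in \Gamma$, let $(\varphi_n)$ be a pointwise asymptotic homomorphism, and let $\hat{\varphi}_n \colon F_S \to \GGL_n(\Zp)$ be the unique homomorphism extending $s \mapsto \varphi_n(s)$. The plan is to build the discretized map $\tilde{\varphi}_n(g) := \hat{\varphi}_n(w_g)$, show it is a \emph{uniform} asymptotic homomorphism pointwise close to $\varphi_n$, and then apply uniform stability to $\tilde{\varphi}_n$ to obtain homomorphisms $\psi_n$ pointwise close to $\varphi_n$ via a last application of the ultrametric. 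The element $w_g w_h w_{gh}^{-1}$ lies in the normal closure $N$ of $R$ in $F_S$, and bi-invariance gives $\defe_{g,h}(\tilde{\varphi}_n) = d\bigl(1, \hat{\varphi}_n(w_g w_h w_{gh}^{-1})\bigr)$, so controlling $\defe(\tilde{\varphi}_n)$ reduces to estimating $\hat{\varphi}_n$ on $N$.

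The crux --- and the main obstacle, since $N$ is a priori infinite --- is the identity
\[
\sup_{x \in N} d\bigl(\hat{\varphi}_n(x), 1\bigr) = \max_{r \in R} d\bigl(\hat{\varphi}_n(r), 1\bigr),
\]
which is where the ultrametric hypothesis is essential: every element of $N$ is a product of $F_S$-conjugates of elements of $R \cup R^{-1}$, bi-invariance strips off conjugation and inversion, and the ultrametric then turns the outer product into a maximum. For each fixed $r$, $\hat{\varphi}_n(r)$ is a specific product of values $\varphi_n(s)$ and differs from $\varphi_n(r) = \varphi_n(1_\Gamma)$ by the finitely many pointwise defects of $\varphi_n$ along the prefixes of $r$; combined with $\varphi_n(1_\Gamma) \to 1$ (which follows from $\defe_{1,1}(\varphi_n) \to 0$) and the finiteness of $R$, this forces $\defe(\tilde{\varphi}_n) \to 0$. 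The same prefix argument gives $\dist_g(\varphi_n, \tilde{\varphi}_n) \to 0$ for every fixed $g$, completing the plan. In the Archimedean setting this key estimate would instead pay a factor of word length in $N$, which is exactly why pointwise and uniform stability diverge for unitary approximations.
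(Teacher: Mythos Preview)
Your proof is correct and follows essentially the same approach as the paper: for the forward direction you inline the content of Lemma~\ref{lem:ultra_close} (pointwise closeness plus small uniform defect plus finite $S$ forces uniform closeness via the ultrametric), and for the converse you inline Lemmas~\ref{lem:ultra_free} and~\ref{lem:ultra_lift} to show that a pointwise asymptotic homomorphism is pointwise close to a uniform one, which is exactly Item~2 of Proposition~\ref{prop:ultra_asy}. The paper packages these steps into that proposition and then applies it, but the underlying argument is identical.

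Two small remarks: you have a notational clash using $n$ both for the sequence index and in $\GGL_n(\Zp)$; and in the converse, when you say ``differs from $\varphi_n(1_\Gamma)$ by the finitely many pointwise defects along the prefixes of $r$'', you are silently absorbing the estimate $d(\varphi_n(s^{-1}),\varphi_n(s)^{-1}) \leq \max\{\defe_{s,s^{-1}}(\varphi_n),\defe_{1,1}(\varphi_n)\}$ needed when a letter of $r$ appears with exponent $-1$ (cf.\ Remark~\ref{rem:2unst}). Neither affects correctness.
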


The techniques developed for the proof of Theorem \ref{intro:thm:pw_un} also apply further: in Proposition \ref{prop:unfin} we show that if $\G$ is a finite family of compact ultrametric groups, then every finitely generated group is uniformly $\G$-stable. This also does not hold in the Archimedean setting: for instance if $\G = \{ ( \U(1), \| \cdot \|_{op} ) \}$, then a non-abelian free group is not uniformly $\G$-stable \cite{Rolli}. 

\medskip

Using the fact that the metric reflects the profinite structure, we are able to reduce the uniform stability problem to residually finite groups:

\begin{theorem}[Theorem \ref{thm:rf}]
\label{intro:thm:rf}

Let $\Gamma$ be a group, and $\Omega$ its largest residually finite quotient. Then $\Gamma$ is uniformly $\GGL(\Zp)$-stable if and only if $\Omega$ is. If $\Gamma$ is pointwise $\GGL(\Zp)$-stable, then so is $\Omega$.
\end{theorem}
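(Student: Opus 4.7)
The plan rests on two features of the family $\GGL(\Zp)$. First, (P1) each $\GGL_n(\Zp)$ is profinite, hence residually finite, so every homomorphism $\Gamma \to \GGL_n(\Zp)$ kills $N := \ker(\Gamma \twoheadrightarrow R)$ (the intersection of all finite-index normal subgroups) and factors uniquely through $R$. Second, (P2) the metric satisfies $d(A, I) \leq p^{-m} \iff A \equiv I \mod p^m$, so if $\f : \Gamma \to \GGL_n(\Zp)$ has defect $\leq p^{-m}$, then reduction mod $p^m$ turns $\f$ into a genuine homomorphism $\Gamma \to \GGL_n(\mathbb{Z}/p^m\mathbb{Z})$ to a finite group, which by (P1) factors through $R$. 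In particular $\f(\nu) \equiv I \mod p^m$ for every $\nu \in N$, which, combined with bi-invariance and the ultrametric inequality, implies $d(\f(x\nu), \f(x)) \leq p^{-m}$ for every $x \in \Gamma$ and $\nu \in N$.

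For the two easy directions (pointwise or uniform stability of $\Gamma$ implies the corresponding stability of $R$), the approach is to pull back along the quotient $\pi : \Gamma \to R$. Given an asymptotic homomorphism $(\widetilde{\f_n} : R \to \GGL_{k_n}(\Zp))_n$ of the relevant type, set $\f_n := \widetilde{\f_n} \circ \pi$; this has the same defects, and stability of $\Gamma$ supplies homomorphisms $\ff_n : \Gamma \to \GGL_{k_n}(\Zp)$ close to $\f_n$. By (P1) each $\ff_n$ descends to $\widetilde{\ff_n} : R \to \GGL_{k_n}(\Zp)$, and for any preimage $g$ of $\bar g$ one has $\dist_{\bar g}(\widetilde{\f_n}, \widetilde{\ff_n}) = \dist_g(\f_n, \ff_n)$, so the pointwise or uniform closeness transfers to $R$.

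The hard direction is uniform stability of $R$ implies uniform stability of $\Gamma$. Given a uniform asymptotic homomorphism $(\f_n : \Gamma \to \GGL_{k_n}(\Zp))_n$, pick $m_n \to \infty$ with $\defe(\f_n) \leq p^{-m_n}$ and any set-theoretic section $s : R \to \Gamma$ of $\pi$; define $\widetilde{\f_n} := \f_n \circ s$. For $\bar g, \bar h \in R$, the elements $s(\bar g \bar h)$ and $s(\bar g)s(\bar h)$ differ by an element of $N$, so (P2) plus the defect bound and the ultrametric inequality yield $\defe(\widetilde{\f_n}) \leq p^{-m_n} \to 0$. Uniform stability of $R$ then provides homomorphisms $\widetilde{\ff_n} : R \to \GGL_{k_n}(\Zp)$ with $\dist(\widetilde{\f_n}, \widetilde{\ff_n}) \to 0$, and setting $\ff_n := \widetilde{\ff_n} \circ \pi$ gives homomorphisms on $\Gamma$. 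To bound $\dist(\f_n, \ff_n)$, write each $g \in \Gamma$ as $s(\pi(g))\cdot \nu_g$ with $\nu_g \in N$; by (P2), $d(\f_n(g), \f_n(s(\pi(g)))) \leq p^{-m_n}$, hence by the ultrametric $d(\f_n(g), \ff_n(g)) \leq \max(p^{-m_n}, \dist(\widetilde{\f_n}, \widetilde{\ff_n})) \to 0$ uniformly in $g$.

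The main obstacle is the hard direction: one must manufacture an asymptotic homomorphism on $R$ from one on $\Gamma$ despite having no a priori control on $\f_n|_N$. The crucial input is (P2), which forces uniform asymptotic homomorphisms into a pro-$p$ group to almost annihilate $N$; the ultrametric then absorbs the ambiguity coming from any set-theoretic choice of section, a phenomenon that has no Archimedean counterpart.
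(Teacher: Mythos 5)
Your proof is correct and takes essentially the same route as the paper: both exploit the profinite nature of $\GGL_n(\Zp)$ (to force homomorphisms and mod-$p^m$ reductions to factor through $R$) and the ultrametric inequality (to absorb the set-theoretic ambiguity in a section $R \to \Gamma$). Your map $\widetilde{\f_n} = \f_n \circ s$ is precisely the lift $\overline{\f}$ that the paper constructs in its Lemma~\ref{lem:factor_R}, just made concrete via a chosen section rather than described abstractly as a set-theoretic lift of the induced homomorphism $R \to G/G(\ee)$.
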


In particular, a group without finite quotients is uniformly $\GGL(\Zp)$-stable.
The techniques developed for the proof of Theorem \ref{intro:thm:rf} also apply further: in Proposition \ref{prop:aut_stab} we provide the complete solution to two other stability problems. The first one is with respect to the family $T(R)$ of invertible upper-triangular matrices over a commutative unital ring $R$; the second one is with respect to the family $\Aut(X^*_\bullet)$ of automorphism groups of regular rooted trees of increasing degrees.
Under natural ultrametrics that reflect a projective structure, we prove that all groups are uniformly stable with respect to these two families.

\medskip

Related to the stability problem is the corresponding approximation problem. We attribute the following definition to Gromov, after \cite{Gromov}:

\begin{definition}[Gromov]
\label{def:approx}

A sequence $(\f_n \colon \Gamma \to G_n \in \G)_{n \geq 1}$ is \emph{asymptotically injective} if
\[\liminf_{n \to \infty} d_{G_n}(\f_n(g), 1) > 0\]
for every $1 \neq g \in \Gamma$. A pointwise asymptotic homomorphism that is also asymptotically injective is called a \emph{$\G$-approximation}: if one exists, $\Gamma$ is said to be \emph{$\G$-approximable}.
\end{definition}

This leads to the important notions of \emph{sofic groups}, when $\G = \{ (S_n, d_H) : n \geq 1\}$, introduced by Gromov \cite{Gromov} and named by Weiss \cite{Weiss}; and \emph{hyperlinear groups}, when $\G = \{ (\U(n), \| \cdot \|_{HS}) : n \geq 1\}$, introduced by Radulescu \cite{Radulescu} in the context of the Connes Embedding Conjecture \cite{Connes}. These classes of groups are very large, so large that no non-example is known to date (although there has been some important recent progress \cite{MIP, aldous1, aldous2, hyperlinearIRS}). In contrast, the profinite nature of $\GGL_n(\Zp)$ allows to characterise approximation in terms of other well-studied properties:

\begin{theorem}[Propositions \ref{prop:approx_1} and \ref{prop:approx_2}]
\label{intro:thm:approx}

A countable group is $\GGL(\Zp)$-approximable if and only if it is LEF (locally embeddable in the class of finite groups). In particular, a finitely presented group is $\GGL(\Zp)$-approximable if and only if it is residually finite.
\end{theorem}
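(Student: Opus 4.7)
The plan is to prove the first equivalence (approximable $\iff$ LEF) and deduce the finitely presented case from the classical fact that LEF and residual finiteness coincide for finitely presented groups.

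For LEF $\Rightarrow$ $\GGL(\Zp)$-approximable, fix an exhaustion $\{1\} \subseteq F_1 \subseteq F_2 \subseteq \cdots$ of $\Gamma$ by finite subsets and let LEF provide injective partial homomorphisms $\f_n : F_n \to H_n$ to finite groups. Composing with the regular representation $H_n \hookrightarrow \GGL_{|H_n|}(\mathbb{Z}) \subseteq \GGL_{|H_n|}(\Zp)$ and extending $\f_n$ to $\Gamma$ arbitrarily, the partial homomorphism property forces $\defe_{g,h}(\f_n) = 0$ for large $n$, so $(\f_n)$ is a pointwise asymptotic homomorphism. For asymptotic injectivity, the essential input is that every non-identity element $h$ of a finite subgroup of $\GGL_k(\Zp)$ satisfies $d(h, I) \geq \tfrac{1}{2}$, uniformly in $k$, because the principal congruence subgroup $\{A : A \equiv I \pmod{p}\}$ is torsion-free for $p$ odd (with the mod-$4$ version serving the analogous role for $p=2$).

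For the converse, given a $\GGL(\Zp)$-approximation $(\f_n : \Gamma \to \GGL_{k_n}(\Zp))$ and a finite $F \subseteq \Gamma$, I would produce an injective partial homomorphism from $F$ into the finite group $\GGL_{k_n}(\mathbb{Z}/p^K\mathbb{Z})$ for suitable $n, K$. First enlarge to the finite set $E = F \cup F^{-1} \cup FF^{-1} \cup \{1\}$; then choose $K$ with $p^{-K} < \min_{x \in E \setminus \{1\}} \liminf_n d(\f_n(x), I)$; then choose $n$ large enough that $\defe_{g,h}(\f_n) < p^{-K}$ for every $g, h \in E$; finally let $\f$ be the composition of $\f_n|_F$ with the reduction $\GGL_{k_n}(\Zp) \to \GGL_{k_n}(\mathbb{Z}/p^K\mathbb{Z})$. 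The ultrametric inequality makes the argument tight: a defect below $p^{-K}$ means $\f_n(gh) \equiv \f_n(g)\f_n(h) \pmod{p^K}$ \emph{exactly}, yielding the partial homomorphism property on $F$; and for $g \neq h$ in $F$ the element $gh^{-1} \in E \setminus \{1\}$ gives $\f_n(gh^{-1}) \not\equiv I \pmod{p^K}$, which combined with the defect bound and $\f_n(1) \equiv I \pmod{p^K}$ forces $\f(g) \neq \f(h)$. The main obstacle is this two-step balancing: $K$ must be tailored to $E$ (else the reduction collapses distinct elements), and $n$ must then be chosen large relative to $K$ (else the defect is too large). Ultrametricity is precisely what makes both conditions simultaneously achievable, since a sufficiently small defect becomes identically zero after reduction; the analogous Archimedean argument would fail for exactly this reason.

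For the finitely presented case, apply the equivalence just established together with Mal'cev's classical argument. Given $\Gamma = \langle S \mid R \rangle$ with $S, R$ finite and $1 \neq g \in \Gamma$, apply LEF to the finite subset containing $\{g\} \cup S \cup S^{-1}$ together with all prefixes of every relator in $R$. The restriction to $S$ extends uniquely to a homomorphism from the free group on $S$; iterating the partial homomorphism property along each relator shows this homomorphism vanishes on $R$, so it descends to a genuine homomorphism $\Gamma \to H$ separating $g$ from $1$. Together with the trivial implication RF $\Rightarrow$ LEF, this concludes the finitely presented equivalence.
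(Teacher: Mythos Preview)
Your argument is correct. The direction LEF $\Rightarrow$ approximable matches the paper's approach (permutation matrices via the regular representation, as in Example~\ref{ex:approx_gl} and Proposition~\ref{prop:approx_2}); note however that your appeal to torsion-freeness of congruence subgroups is more than needed, since a non-identity permutation matrix already sits at distance exactly $1$ from the identity.

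The direction approximable $\Rightarrow$ LEF is where you genuinely diverge from the paper. The paper (Proposition~\ref{prop:approx_1}) lifts the asymptotic homomorphism to the free group $F_S$, looks at the induced homomorphisms to the finite metric quotients $G_n/G_n(\ee_n)$, and shows that the resulting sequence of kernels converges in the space of marked groups $\mathcal{N}(F_S)$ to the kernel defining $\Gamma$; local embeddability then follows from the marked-group characterisation (Theorem~\ref{thm:mg}). Your argument bypasses this machinery entirely: you fix a finite set, tune $K$ to the injectivity gap, then tune $n$ to the defect, and read off the local embedding directly from the reduction modulo $p^K$. This is more elementary and self-contained; the paper's route has the advantage of fitting into a general framework (metric quotients of arbitrary ultrametric families, cf.\ Lemma~\ref{lem:mq}) that it reuses elsewhere. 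Two small points of care in your write-up: the map $\f$ should be the reduction of $\f_n$ on all of $\Gamma$, not just $\f_n|_F$ (you need $\f$ defined on products $gh$ with $g,h \in F$); and when choosing $n$ you should also explicitly require $d(\f_n(x), I) > p^{-K}$ for every $x \in E \setminus \{1\}$, which your liminf condition guarantees for large $n$ but which you only use implicitly.

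For the finitely presented case, you reprove Mal'cev's argument directly, while the paper simply cites it (Proposition~\ref{prop:lec_rc}); either is fine.
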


The class of LEF groups was formally introduced by Gordon and Vershik in \cite{LEF}, although it is already present in Malcev's work \cite{Malcev}: we refer the reader to Subsection \ref{ss:rf} for the precise definitions. We are also able to characterise \emph{uniform} approximability, where the approximation is required to be a uniform asymptotic homomorphism, as being equivalent to residual finiteness, for arbitrary countable groups (Propositions \ref{prop:approx_1} and \ref{prop:approx_2}). Moreover, the techniques developed for the proof of Theorem \ref{intro:thm:approx} allow to characterise approximability for a few other families (Corollaries \ref{cor:approxTR} and \ref{cor:galfin_approx_1}). 

\medskip

Going back to stability, the strongest results that are proved in this paper concern ultrametric families that are \emph{virtually pro-$p$} for some prime $p$ (see Definition \ref{def:vprop}), which includes the case of $\GGL(\Zp)$. For such families, we can prove stability results for fundamental groups of graphs of groups with some restrictions on the orders of finite quotients. These include the following classes of examples:

\begin{theorem}
\label{intro:thm:pifree}

The following groups are uniformly $\GGL(\Zp)$-stable:
\begin{enumerate}
\item \emph{(Proposition \ref{prop:pifree})} Groups without finite virtual $p$-quotients (i.e., groups all of whose finite quotients have order coprime to $p$).
\item \emph{(Corollary \ref{cor:vfree_p})} Finitely generated virtually free groups without elements of order $p$.
\item \emph{(Corollary \ref{cor:wr})} Wreath products $\Gamma \wr \mathbb{Z}$, when $\Gamma$ does not surject onto $\mathbb{Z}/p\mathbb{Z}$.
\item \emph{(Corollary \ref{cor:BS_p})} Baumslag--Solitar groups $\BS(m, n)$, when $p$ divides exactly one of $m$ and $n$.
\item \emph{(Corollary \ref{cor:rfBS_p})} $\mathbb{Z} \left[ \frac{1}{mn} \right] \rtimes_{\frac{m}{n}} \mathbb{Z}$, when $p$ divides exactly one of $m$ and $n$, and moreover $(m, n) = 1$ and $|m|, |n| \neq 1$.
\end{enumerate}
\end{theorem}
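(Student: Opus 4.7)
The plan is to prove part (1) directly from the virtually pro-$p$ structure of the family and then to deduce parts (2)--(5) as applications of a general stability result for fundamental groups of graphs of groups, which the introduction promises will be developed elsewhere in the paper.

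For (1), let $\G$ be a virtually pro-$p$ family and let $(\f_n : \Gamma \to G_n)_{n \geq 1}$ be a uniform asymptotic homomorphism, where every finite quotient of $\Gamma$ has order coprime to $p$. For each $n$ fix a pro-$p$ open normal subgroup $N_n \leq G_n$ with finite quotient $\bar G_n := G_n / N_n$. The first step is an ultrametric observation: as soon as $\defe(\f_n)$ is smaller than the diameter of $N_n$, every defect element $\f_n(g)\f_n(h)\f_n(gh)^{-1}$ lies in $N_n$, so the induced map $\bar\f_n : \Gamma \to \bar G_n$ is an honest homomorphism into a finite group. By hypothesis its image has order coprime to $p$. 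The second step applies the Schur--Zassenhaus theorem to the pro-$p$ normal subgroup $N_n$ to obtain a lift $\ff_n : \Gamma \to G_n$ realising $\bar\f_n$. Setting $\eta_n(g) := \ff_n(g)^{-1}\f_n(g) \in N_n$, a short bi-invariance calculation shows that $\eta_n$ is an approximate $1$-cocycle with coefficients in the pro-$p$ $\Gamma$-module $N_n$, whose defect tends to zero. The third and last step is to correct $\eta_n$ to an exact, then trivial, cocycle: reducing modulo each term of the lower $p$-central series of $N_n$ yields a cocycle into a finite $p$-group, which must vanish because any nontrivial such cocycle would produce a nontrivial finite $p$-group quotient of an extension of $\Gamma$, contradicting the $p'$-quotient hypothesis. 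Taking the inverse limit adjusts $\ff_n$ to a homomorphism uniformly close to $\f_n$.

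Parts (2)--(5) each follow by writing the group as the fundamental group of a graph of groups and invoking the stability theorem for such decompositions with (1) handling the vertex-group base case. A finitely generated virtually free group without elements of order $p$ is, by Stallings's structure theorem, the fundamental group of a finite graph of finite groups, each of order coprime to $p$, so (1) applies to every vertex; $G \wr \mathbb{Z}$ is the HNN extension of $\bigoplus_\mathbb{Z} G$ by the shift, and the hypothesis that $G$ does not surject onto $\Fp$ is exactly what guarantees that the base has no finite $p$-quotient; $\BS(m,n)$ is the HNN extension of $\mathbb{Z}$ with edge maps multiplication by $m$ and by $n$, and the assumption that $p$ divides exactly one of $m,n$ is matched to the graph-of-groups stability hypothesis; finally, the metabelian group in (5) is the largest residually finite quotient of $\BS(m,n)$, so Theorem \ref{intro:thm:rf} together with (4) closes the case.

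The main obstacle I anticipate is the graph-of-groups step. In the non-Archimedean pro-$p$ setting the obstruction theory for stability of amalgams and HNN extensions must be phrased as an inverse limit of finite $p$-group cohomologies rather than through bounded cohomology with Banach coefficients, and the Schur--Zassenhaus lifts constructed independently at each vertex have to be made compatible along the edges, which requires simultaneously controlling the cocycle correction terms so that they stay uniformly small. Part (1) itself should be relatively routine once the Schur--Zassenhaus splitting is in place; the technical heart of the theorem will lie in making this edge-compatibility machinery go through in the ultrametric regime, after which the remaining items reduce to verifying the $p$-arithmetic hypotheses on vertex and edge groups.
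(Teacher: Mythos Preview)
Your overall strategy is right---Schur--Zassenhaus for part (1), then a graph-of-groups mechanism for (2)--(5)---but two points need repair.

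\textbf{Part (1): the correction step is both unnecessary and not justified as stated.} Your $\eta_n$ is only an \emph{approximate} cocycle, and reducing modulo terms of the lower $p$-central series of $N_n$ does not make the defect disappear: that filtration is unrelated to the metric one. The paper avoids this entirely by noting (Lemma~\ref{lem:mq}) that $\f_n$ already induces a genuine homomorphism $\f_n(\ee):\Gamma\to G_n/G_n(\ee)$ at the level $\ee=\defe(\f_n)$, not just at the fixed pro-$p$ level. Its image is a finite $p'$-group by hypothesis, and then one lifts this homomorphism step by step through the tower $G_n/G_n(\dd)$, $\dd\to 0$, applying finite Schur--Zassenhaus at each stage to the $p$-kernel $G_n(\ee)/G_n(\dd)$. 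The resulting $\ff_n$ agrees with $\f_n$ modulo $G_n(\ee)$, so $\dist(\f_n,\ff_n)\leq\defe(\f_n)$ automatically---no cocycle correction needed, and the estimate is optimal.

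\textbf{Part (3): the hypothesis you check is the wrong one.} You claim that ``$G$ does not surject onto $\Fp$'' forces the base $\bigoplus_{\mathbb{Z}}G$ to be $p$-free. This is false: take $G=S_3$, $p=3$; then $S_3$ has no map onto $\mathbb{F}_3$, yet $\bigoplus_{\mathbb{Z}}S_3$ surjects onto $S_3$, whose order is divisible by $3$. The paper's graph-of-groups criterion (Proposition~\ref{prop:pifree_gog2}) does not require the vertex group itself to be $p$-free, only that its image in every finite quotient \emph{of the fundamental group} $G\wr\mathbb{Z}$ is a $p'$-group. That is where the hypothesis on $G$ enters: in any finite quotient of $G\wr\mathbb{Z}$ the image of the base is \emph{abelian} (Example~\ref{ex:wr}), hence a quotient of $\bigoplus_{\mathbb{Z}}\Ab(G)$, and this is $p'$ precisely when $G$ does not surject onto $\Fp$. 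The same distinction---image in quotients of $\Gamma$ versus quotients of the vertex group---is what makes (4) work: one shows the generator $s$ has $p'$-order in every finite quotient of $\BS(m,n)$, not that $\mathbb{Z}$ is $p$-free.

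Your anticipated obstacle (edge compatibility) is real; the paper handles it with the \emph{conjugacy} part of Schur--Zassenhaus, iterated along the metric filtration and propagated through a spanning tree (Lemma~\ref{lem:pifree_gog}). Parts (2) and (5) are correct as you outline them.
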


Groups as in $1.$ include all periodic groups without elements of order $p$ (Example \ref{ex:period}), as well as groups of automorphisms of regular rooted trees of degree smaller than $p$ with the congruence subgroup property (Example \ref{ex:CSP}). Item $4.$, with the appropriate $p$, applies to every non-Hopfian Baumslag--Solitar group and every residually finite Baumslag--Solitar group, with the exception of $\mathbb{Z}^2$ and the Klein bottle group (see \cite{BS:Hopf} or \cite{Alex} for a more detailed proof). The group from Item $5.$ is the largest residually finite quotient of $\BS(m, n)$ \cite{moldavanskii}, so it also provides an example of an infinitely presented pointwise stable group, by Item $4.$ and Theorem \ref{intro:thm:rf}.

The proof of Theorem \ref{intro:thm:pifree} relies on the Schur--Zassenhaus Theorem (Theorem \ref{thm:SZ}), which states that an extension of finite groups with coprime orders splits, and that any two splittings are conjugate. The first part is used to prove Item $1.$, the second one is used to treat graphs of groups. All these results are quantitatively precise, in particular, the quantitative estimates involved with stability are optimal. Moreover, the statement about graphs of groups falls both in the framework of \emph{constraint stability} \cite{a:const} and of \emph{stability of epimorphisms} \cite{stepi}, providing new examples of these recently introduced notions (Remark \ref{rem:constraint:epi}). 

\medskip

These results only use that the family $\GGL(\Zp)$ is virtually pro-$p$, so in particular they also apply to the characteristic $p$ setting, where $\Zp$ is replaced by $\Fq[[X]]$, for $q$ a power of $p$. But for the case of $\Zp$ we can make these criteria more flexible: a cohomological argument implies an analogue of the Schur--Zassenhaus Theorem suitable to this setting (Lemma \ref{lem:cohopk}), that yields the following strengthening of Theorem \ref{intro:thm:pifree}:

\begin{theorem}
\label{intro:thm:vpfree}

The following groups are uniformly $\GGL(\Zp)$-stable:
\begin{enumerate}
\item \emph{(Proposition \ref{prop:vpfree})} Groups with a bound on the order of their finite virtual $p$-quotients.
\item \emph{(Corollary \ref{cor:vfree})} Finitely generated virtually free groups.
\item \emph{(Corollary \ref{cor:BS})} Baumslag--Solitar groups $\BS(m, n)$, when $\nu_p(m) \neq \nu_p(n)$.
\end{enumerate}
\end{theorem}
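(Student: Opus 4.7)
The plan is to first establish Proposition \ref{prop:vpfree} by mimicking the proof of Proposition \ref{prop:pifree}, replacing the Schur--Zassenhaus Theorem with the cohomological Lemma \ref{lem:cohopk}; items 2 and 3 then follow from a graph-of-groups refinement applied to standard Bass--Serre decompositions.

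For Proposition \ref{prop:vpfree}, I would fix $\f : \Gamma \to \GGL_n(\Zp)$ of defect $\ee \leq p^{-k}$ with $k$ large, so that the reduction $\f_k : \Gamma \to \GGL_n(\Zpk)$ is an honest homomorphism with finite image $H$, itself a finite quotient of $\Gamma$. By the bounded-$p$-quotient hypothesis, the $p$-Sylow of $H$ has order bounded by some fixed $N$. The task is to build a subgroup $\tilde H \leq \GGL_n(\Zp)$ projecting isomorphically onto $H$: composing $\f_k$ with the inverse of this projection then yields a homomorphism $\ff$ with $\ff \equiv \f \pmod{p^k}$, up to a constant loss of precision coming from the cohomological obstruction.

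The construction of $\tilde H$ proceeds by lifting level by level. Given a lift $\tilde H_l \leq \GGL_n(\mathbb{Z}/p^l\mathbb{Z})$ for $l \geq k$, the obstruction to lifting to $\GGL_n(\mathbb{Z}/p^{l+1}\mathbb{Z})$ lies in $\HH^2(H, \mathfrak{gl}_n(\Fp))$, where $H$ acts through its reduction modulo $p$. Lemma \ref{lem:cohopk} should assert that this class is annihilated by $N$, hence admits an approximate splitting with loss of precision bounded by $p^c$ for some $c = c(N)$ independent of $l$. The main obstacle I foresee is coherence of the corrections across levels, since the $1$-cochains furnished by the lemma are non-canonical; this is best addressed by applying the lemma directly in the pro-$p$ topological setting to the extension $1 \to \GGL_n^1(\Zp) \to \text{preimage of }\f_1(\Gamma) \to \f_1(\Gamma) \to 1$, producing a single topological section at once. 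The resulting uniform stability has linear modulus $\dd(\ee) = p^c \cdot \ee$, and optimality of the power should match what Lemma \ref{lem:cohopk} affords.

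For items 2 and 3, the graph-of-groups refinement used to derive Corollaries \ref{cor:vfree_p} and \ref{cor:BS_p} from Proposition \ref{prop:pifree} generalizes, with Lemma \ref{lem:cohopk} again substituting for the conjugacy clause of Schur--Zassenhaus. Finitely generated virtually free groups are fundamental groups of finite graphs of finite groups by Stallings--Dunwoody, and the vertex groups have uniformly bounded $p$-Sylows, so Proposition \ref{prop:vpfree} applies vertex-wise and glues across the edges via the cohomological uniqueness statement hidden inside Lemma \ref{lem:cohopk}. For $\BS(m,n)$ with $\nu_p(m) \neq \nu_p(n)$, the asymmetry of $p$-adic valuations at the two HNN boundary maps supplies exactly the splitting data needed to adapt the argument of Corollary \ref{cor:BS_p}, now with bounded (rather than trivial) $p$-parts.
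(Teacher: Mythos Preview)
Your overall strategy---replace Schur--Zassenhaus by Lemma \ref{lem:cohopk} and otherwise rerun the arguments of Section \ref{s:vpropi}---is exactly the paper's approach, and your descriptions of items 2 and 3 are accurate (Stallings--Dunwoody for virtually free groups; bounded $p$-part of the cyclic vertex group in finite quotients of $\BS(m,n)$ when $\nu_p(m)\neq\nu_p(n)$, via Proposition \ref{prop:vpfree_gog2}). However, your execution of item 1 contains a genuine gap.

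The issue is your ``one level at a time'' lifting. If you lift from $\GGL_n(\mathbb{Z}/p^l\mathbb{Z})$ to $\GGL_n(\mathbb{Z}/p^{l+1}\mathbb{Z})$, the kernel is $\mathfrak{gl}_n(\Fp)$, which is a $\mathbb{Z}/p\mathbb{Z}$-module. Lemma \ref{lem:cohopk} only buys you something when the kernel $N$ is a $\mathbb{Z}/p^k\mathbb{Z}$-module with $k$ strictly larger than the $p$-valuation $l$ of $|H|$: the conclusion is that the map to $G/N$ lifts, at the cost of the information in $N/(p^l N)$. For $N=\mathfrak{gl}_n(\Fp)$ and $l\geq 1$ you have $p^l N=0$, so the lemma gives you nothing beyond the tautology that a map to $G/0$ lifts to $G$. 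Your fallback to ``the pro-$p$ topological setting'' does not help either, since $\GGL_n^1(\Zp)$ is not abelian and Lemma \ref{lem:cohopk} does not apply to that extension.

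What the paper does instead is a \emph{doubling} step. By Lemma \ref{lem:vprop}, the quotient $\GGL_n(\Zp)_{k-l}/\GGL_n(\Zp)_{2(k-l)}$ is isomorphic to $\MM_n(\mathbb{Z}/p^{k-l}\mathbb{Z})$, a $\mathbb{Z}/p^{k-l}\mathbb{Z}$-module whose image under multiplication by $p^l$ is precisely $\GGL_n(\Zp)_k/\GGL_n(\Zp)_{2(k-l)}$. So given $\f$ of defect $\leq p^{-k}$ with $k>2l$, one applies Lemma \ref{lem:cohopk} to this kernel: the induced map $\f_{k-l}:\Gamma\to\GGL_n(\mathbb{Z}/p^{k-l}\mathbb{Z})$ lifts to $\GGL_n(\mathbb{Z}/p^{2(k-l)}\mathbb{Z})$, yielding $\ff_1$ with $\defe(\ff_1)\leq p^{-2(k-l)}$ and $\dist(\f,\ff_1)\leq p^{-(k-l)}$. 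Since $2(k-l)>k$, the defect strictly drops, and iterating produces a Cauchy sequence converging to the desired homomorphism. The linear estimate $\dist\leq p^l\cdot\defe$ falls out directly. Once you have this, your sketches for items 2 and 3 go through via Lemma \ref{lem:vpfree_gog} and Propositions \ref{prop:vpfree_gog1}--\ref{prop:vpfree_gog2}, which are the Lemma \ref{lem:cohopk}-based analogues of the graph-of-groups machinery you correctly invoke.
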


Item $1.$ includes all finite groups, and more generally all groups with finite exponent, by Zelmanov's solution of the restricted Burnside Problem \cite{RBP_odd, RBP_2}. In Item $3.$ above, $\nu_p$ denotes the $p$-adic valuation: with the appropriate $p$ it applies to every non-residually finite Baumslag--Solitar group \cite{BS:Hopf, Alex}, extending the remark following Theorem \ref{intro:thm:pifree}. Also these results are quantitatively precise, and the estimates involved are linear. Both stability results on Baumslag--Solitar groups are part of more general statements on \emph{Generalised Baumslag--Solitar groups} (Corollaries \ref{cor:GBS_p} and \ref{cor:GBS}), which give combinatorial and arithmetic conditions on the underlying weighted graphs that imply stability.

Let us also point out that the more general results on stability of graphs of groups imply that the fundamental group of a graph of groups with uniformly $\GGL(\Zp)$-stable vertex groups and finite edge groups is uniformly $\GGL(\Zp)$-stable (Corollary \ref{cor:vpfree_gog}). Similar results for pointwise permutation and Hilbert--Schmidt stability have recently been under investigation \cite{arzhantseva:amalgamated, maria:amalgamated}.

The methods used for the proof of Theorem \ref{intro:thm:vpfree} rely strongly on the fact that $\mathbb{Q}_p$ has characteristic $0$, and in particular they cannot be used to determine whether finite $p$-groups are stable with respect to $\GGL(\mathbb{F}_q[[X]])$, where $q$ is a power of $p$. Still, we are able to show that stability does hold for $\mathbb{Z}/2\mathbb{Z}$ in characteristic $2$, with a quadratic estimate (Proposition \ref{prop:z2z}). However our proof relies on the solution of the similarity problem for representations of $\mathbb{Z}/2\mathbb{Z}$ over finite commutative local rings \cite{Braw2}. The analogous problem for all other $p$-groups in characteristic a power of $p$ is computationally wild \cite{wild}. 

\medskip

The previous results suggest that uniform $\GGL(\oo)$-stability behaves well when passing to finite-index supergroups. We show that this holds under some additional hypotheses, via a method which is conceptually very different from the rest of the paper, and is reserved to Section \ref{s:BC}. In \cite{GLT}, the authors consider the family $\G \coloneqq \{ (\U(n), \| \cdot \|_{Frob}) : n \geq 1 \}$ and prove a cohomological criterion that ensures pointwise $\G$-stability of a finitely presented group. The key feature of the Frobenius norm that is exploited is its \emph{submultiplicativity}, and indeed the same approach works for other submultiplicative norms on $\U(n)$ \cite{pnorm}. The $\ell^\infty$-norm on $\GGL_n(\Zp)$ also has this property, which allows to carry over the arguments. But the non-Archimedean setting has a peculiarity of its own: the cocycles appearing in the process are moreover bounded, so it is natural to state the results in terms of \emph{bounded cohomology} instead. This is a rich theory over the reals (see e.g. \cite{monod, Frig}), whose study over non-Archimedean fields was recently initiated by the author \cite{mio}. 

This way, we are able to prove a bounded-cohomological criterion for stability, as well as a cohomological one (Corollary \ref{cor:BC:abs}). The reader may suspect that the passage from cohomology in the Archimedean setting to bounded cohomology in the non-Archimedean setting is accounted for by the close relation between pointwise and uniform stability; and that a bounded cohomological criterion for uniform stability should also hold in the Archimedean setting. However, in that case the situation is more delicate and requires the introduction of a different cohomology theory, called \emph{asymptotic cohomology} \cite{Bharat}. The advantage of our setting is that, while cocycles are bounded thanks to the uniform nature of the problem, we can still use the same ultraproduct techniques that apply to the pointwise setting of \cite{GLT}.

Our criterion however does not seem to apply further than virtually free groups, whose stability was already established in Theorems \ref{intro:thm:pifree} and \ref{intro:thm:vpfree}. On the other hand, phrasing the correspondence between asymptotic homomorphisms and bounded cohomology classes more precisely by means of the recently introduced \emph{property of defect diminishing} \cite{defdim} (Proposition \ref{prop:defdim:BC}) we are also able to prove relative versions of these criteria (Corollary \ref{cor:BC:rel}), which imply the following:

\begin{theorem}[Theorem \ref{thm:fi}]
\label{intro:thm:fi}

Let $\Gamma$ be a finitely presented group, which is uniformly $\GGL(\Zp)$-stable with a linear estimate. Then the same is true for finite-index supergroups of $\Gamma$, as well as for quotients of $\Gamma$ by a normally finitely generated normal subgroup $N$ satisfying $\HH^1(N; \mathbb{Q}) = 0$.
\end{theorem}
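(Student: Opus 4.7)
The plan is to apply the relative bounded-cohomological criterion of Corollary~\ref{cor:BC:rel}, which, combined with the defect-diminishing reformulation of Proposition~\ref{prop:defdim:BC}, translates uniform $\GGL(\Zp)$-stability with a linear estimate for a finitely presented group into the vanishing of a certain second bounded cohomology $\EH^2_b$ with coefficients in continuous isometric representations on spaces of the form $(\Qp^k,\|\cdot\|_\infty)$. Once this reformulation is in place, the theorem reduces to verifying that such vanishing is preserved under passage to finite-index supergroups and to quotients by finite normal subgroups, both of which preserve finite presentability.

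For a finite-index supergroup $\Lambda \supseteq \Gamma$ with $n := [\Lambda:\Gamma]$, and a continuous isometric $\Lambda$-representation $V$, the plan is to construct a transfer $\operatorname{tr} \colon \HH^\bullet_b(\Gamma, V|_\Gamma) \to \HH^\bullet_b(\Lambda, V)$ by averaging the usual cocycle formula over coset representatives. Because $V$ is a $\Qp$-vector space and $n$ is invertible in $\Qp$ regardless of $\gcd(n,p)$, this map is well-defined; moreover it is bounded (averaging does not increase sup-norms), so it descends to bounded cohomology and to its reduced version. The composition $\operatorname{tr}\circ\res$ is multiplication by $n$, hence an isomorphism, so $\res$ is injective and $\EH^2_b(\Lambda, V) \hookrightarrow \EH^2_b(\Gamma, V|_\Gamma) = 0$. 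The criterion applied to $\Lambda$ then yields its uniform stability with a linear estimate.

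For a quotient $Q = \Gamma/N$ by a finite normal subgroup $N$, and a continuous isometric $Q$-representation $V$ inflated to $\Gamma$, the plan is to invoke a Hochschild--Serre-type argument for non-Archimedean bounded cohomology. The projector $|N|^{-1}\sum_{g\in N} g \in \Qp[N]$ (well-defined because $|N|$ is invertible in $\Qp$) provides bounded averaging operators that kill the higher cohomology of $N$ with coefficients in $V$, so inflation identifies $\HH^\bullet_b(Q, V) \cong \HH^\bullet_b(\Gamma, V)$. In particular $\EH^2_b(Q, V) \cong \EH^2_b(\Gamma, V) = 0$, and the criterion applied to $Q$ concludes.

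The main obstacle is making the non-Archimedean transfer and inflation arguments rigorous in the bounded setting adapted to the flavour of $\EH^2_b$ that appears in Corollary~\ref{cor:BC:rel}, using the bounded cohomology framework of \cite{mio}. The boundedness of the resulting cochains is automatic from the choice of coefficients and the fact that averaging does not increase sup-norms; however, one must also verify that both constructions interact correctly with the defect-diminishing equivalence of Proposition~\ref{prop:defdim:BC}, so that vanishing of $\EH^2_b$ indeed transports along restriction to $\Lambda$ and along inflation to $Q$. Once this functoriality is secured, the theorem follows formally from applying the cohomological criterion first to $\Gamma$ to get vanishing, and then to $\Lambda$ or $Q$ to deduce uniform stability with linear estimate.
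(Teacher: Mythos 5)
The core mechanism you identified is correct and is exactly the paper's route: apply Corollary~\ref{cor:BC:rel} and verify injectivity of the relevant pullback in (bounded) cohomology, using the transfer/corestriction map for the finite-index case and an averaging projector for the finite-kernel case, with the coprimality condition (automatic over $\Qp$) making the index/order invertible. One small methodological difference: the paper does not redo the transfer and inflation arguments in the bounded category. It simply quotes the \emph{ordinary} cohomology facts from Brown (injectivity of restriction for finite-index subgroups, vanishing of $\HH^n(N;E)$ for finite $N$ of invertible order), and then uses the injectivity of the comparison map $c^2$ (Proposition~\ref{prop:comp}, packaged as Corollary~\ref{cor:pullback}) to transfer injectivity of $f^*$ to injectivity of $f^*_b$. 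Your plan of constructing the transfer and the $|N|^{-1}\sum_{g\in N}g$ projector directly in bounded cohomology would also work, since the ultrametric inequality keeps the sums bounded, but it forces you to re-verify boundedness and $\Gamma$-equivariance, whereas the paper's route gets these for free from the comparison map.

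There is, however, a genuine logical gap in how you invoke the cohomological criterion. You repeatedly assert that uniform stability of $\Gamma$ with a linear estimate \emph{is equivalent to} (and in particular implies) the vanishing of $\HH^2_b(\Gamma;E)$ for all coefficient modules $E$, and you use this to write ``$\HH^2_b(\Gamma, V|_\Gamma)=0$'' and then embed $\HH^2_b(\Lambda,V)$ into $0$. The paper establishes no such equivalence. Proposition~\ref{prop:defdim:BC} only associates to \emph{each individual} asymptotic homomorphism a specific class in a specific Banach module $E$ (an infinite-dimensional ultraproduct space, not $(\Qp^k,\|\cdot\|_\infty)$), and it is the vanishing of \emph{that class} which is equivalent to defect diminishing of \emph{that sequence}. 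Stability with a linear estimate therefore does not give the blanket vanishing $\HH^2_b(\Gamma;E)=0$, and you cannot deduce $\HH^2_b(\Lambda;V)=0$ from it. The argument should instead go: given an asymptotic homomorphism of the supergroup $\Lambda$ (resp. quotient $Q$), produce the class $\alpha\in\HH^2_b(\Lambda;E)$; restrict (resp. pull back) to the stable group $\Gamma$ to get $f^*_b(\alpha)$; defect diminishing of $\Gamma$ forces $f^*_b(\alpha)=0$ for this specific class; the injectivity of $f^*_b$ established by your transfer/projector then gives $\alpha=0$. This is precisely the content of Corollary~\ref{cor:BC:rel}, so the correct move is to feed your injectivity of $\res$ (or inflation) directly into that corollary, rather than asserting global vanishing. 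With that replacement the proof goes through, matching the paper's argument. (As a notational aside, $\EH^2_b$ in the paper denotes the kernel of the comparison map, which vanishes for finitely generated groups by Proposition~\ref{prop:comp}; the relevant groups throughout are the full $\HH^2_b$.)
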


We refer the reader to Definition \ref{def:quant} for the precise definition of \emph{stability with a linear estimate}; suffice it to say that all groups whose $\GGL(\Zp)$-stability is proved in this paper satisfy this condition (this is also true of our results in positive characteristic, with the exception of $\mathbb{Z}/2\mathbb{Z}$ in characteristic $2$, see Proposition \ref{prop:z2z}). This allows to recover some of the results collected in Theorem \ref{intro:thm:vpfree} from the corresponding ones in Theorem \ref{intro:thm:pifree}, but is more flexible and allows for more examples of stability (Example \ref{ex:fi:application}). Let us point out that, in contrast with Ulam stability \cite[Lemma 2.2]{BOT}, uniform $\GGL(\Zp)$-stability is not preserved under taking general quotients (Remark \ref{rem:quotients}).

\medskip

Although most of the paper is concerned with proving positive results, we also produce non-examples. Groups which are not pointwise stable are easy to construct via an argument due to Arzhantseva and P\u{a}unescu \cite{a:comm} (see Lemma \ref{lem:GR}), which implies in our setting that a group which is LEF but not residually finite is not pointwise $\GGL(\Zp)$-stable (Corollary \ref{cor:cex_stab}). This construction is flexible enough to produce some groups with interesting combinations of properties (Proposition \ref{prop:sharp}).

However this method fails in the uniform setting, since uniform $\GGL(\Zp)$-approximability is equivalent to residual finiteness (by contrast, this method is fruitful in the study of uniform approximability with respect to symmetric and unitary groups \cite[Section 7]{Bharatandi}). We are still able to provide non-examples, by putting together groups with increasingly bad stability estimates; in particular we show that the group $\bigoplus_{i \geq 1} \mathbb{Z}/p^i\mathbb{Z}$ is not uniformly $\GGL(\Zp)$-stable (Proposition \ref{prop:countable:unst}). It turns out however that it is pointwise $\GGL(\Zp)$-stable.

These non-examples prove that some of our previous results were sharp:

\begin{proposition}[Propositions \ref{prop:sharp} and \ref{prop:countable:unst}]
\label{intro:prop:sharp}

Theorems \ref{intro:thm:pw_un} and \ref{intro:thm:rf} are sharp, in the following sense:
\begin{enumerate}
\item There exists a finitely generated group that is uniformly but not pointwise $\GGL(\Zp)$-stable.
\item There exists a countable group that is pointwise but not uniformly $\GGL(\Zp)$-stable.
\item There exists a finitely generated group that is not pointwise $\GGL(\Zp)$-stable, but whose largest residually finite quotient is.
\end{enumerate}
\end{proposition}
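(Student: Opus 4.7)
The plan is to build three separate examples, one for each claim. For parts~(1) and~(3) the engine is Corollary~\ref{cor:cex_stab}, which says that a finitely generated group that is LEF but not residually finite fails to be pointwise $\GGL(\Zp)$-stable; the task is then to tune the largest residually finite quotient to satisfy the remaining condition. Part~(2) is independent and rests on amplifying Kazhdan's example from \cite{amenst}.

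For~(1), I would take any finitely generated infinite simple LEF group $\Gamma$, for instance a topological full group of a suitable minimal Cantor system. Since $\Gamma$ is simple and infinite, its largest residually finite quotient is trivial, hence uniformly $\GGL(\Zp)$-stable, so Theorem~\ref{intro:thm:rf} gives that $\Gamma$ itself is uniformly $\GGL(\Zp)$-stable. Being LEF and not residually finite, $\Gamma$ is not pointwise $\GGL(\Zp)$-stable by Corollary~\ref{cor:cex_stab}. For~(3), I would take $\Gamma \times F$, with $\Gamma$ as above and $F$ a finitely generated non-abelian free group. The product is finitely generated, is LEF because the class is closed under finite products, and is not residually finite since it surjects onto $\Gamma$, so Corollary~\ref{cor:cex_stab} still applies. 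Every finite quotient of $\Gamma \times F$ must kill the infinite simple factor $\Gamma$ and hence factors through $F$; as $F$ is residually finite, the largest residually finite quotient of $\Gamma \times F$ is precisely $F$, which is pointwise $\GGL(\Zp)$-stable because free groups are pointwise $\G$-stable for every family $\G$, as recalled in the introduction.

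For~(2), I would take $A := \bigoplus_{i \geq 1} \mathbb{Z}/p^i \mathbb{Z}$. To witness uniform instability, for each $n$ I define $\f_n : A \to \GGL_2(\Zp)$ as the composition of the projection onto the $n$-th summand, the standard representative map $\mathbb{Z}/p^n\mathbb{Z} \to \Zp$ from \cite{amenst}, and the isometric unipotent embedding $x \mapsto \bigl(\begin{smallmatrix} 1 & x \\ 0 & 1 \end{smallmatrix}\bigr)$. Then $\defe(\f_n) = p^{-n} \to 0$, while an analysis of the $p$-power torsion subgroup of $\GGL_2(\Zp)$ (whose possible orders are restricted to divisors of $4$ for $p=2$ and to $1$ and $p$ for $p$ odd, by a degree count on cyclotomic polynomials) combined with the observation that no such element can be congruent to $\f_n(e_n)$ modulo $p^2$ shows that $\dist(\f_n, \ff_n) \geq p^{-1}$ for every homomorphism $\ff_n : A \to \GGL_2(\Zp)$. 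For pointwise stability, I would exploit that each finite subgroup $A_N := \bigoplus_{i \leq N} \mathbb{Z}/p^i\mathbb{Z}$ is uniformly $\GGL(\Zp)$-stable with a linear rate $C_N$ by Theorem~\ref{intro:thm:vpfree}. Given a pointwise asymptotic homomorphism $(\f_n : A \to \GGL_{m_n}(\Zp))$, I would pick a diagonal sequence $N(n) \to \infty$ slowly enough that $\defe(\f_n|_{A_{N(n)}}) \cdot C_{N(n)} \to 0$; quantitative stability produces $\ff_n : A_{N(n)} \to \GGL_{m_n}(\Zp)$ uniformly close to $\f_n|_{A_{N(n)}}$, and extending $\ff_n$ by the identity on the complementary summand $\bigoplus_{i > N(n)} \mathbb{Z}/p^i\mathbb{Z}$ yields a homomorphism of $A$ pointwise close to $\f_n$, because every fixed $g$ lies in $A_{N(n)}$ for $n$ large.

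The main obstacle is the pointwise stability half of~(2): nothing formal assembles homomorphism data on the summands of $A$ into a coherent global homomorphism, since the targets $\GGL_{m_n}(\Zp)$ are non-abelian and vary with $n$. The diagonal argument succeeds only because Theorem~\ref{intro:thm:vpfree} supplies a linear quantitative rate on each $A_N$, so that $N(n)$ can be grown to infinity while keeping the approximation error small; a merely qualitative stability statement would not suffice. A comparatively minor point is the external appeal in~(1) and~(3) to the existence of finitely generated infinite simple LEF groups, which is by now well documented.
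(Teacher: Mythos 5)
Your proof is correct and rests on the same two pillars as the paper's, but with different concrete witnesses and one notable misconception. For parts (1) and (3) the engine is indeed Corollary \ref{cor:cex_stab} combined with Theorem \ref{intro:thm:rf}; the paper's witnesses (Examples \ref{ex:cex} and \ref{ex:cex2}) are Pride's group and the Houghton-type group $\Alt_0(\mathbb{Z})\rtimes\mathbb{Z}$, whereas you use a finitely generated infinite simple LEF group $\Gamma$ and $\Gamma\times F$. Both choices work, but a small repair is needed in (3): $\Gamma\times F$ fails to be residually finite because it \emph{contains} $\Gamma$ as a subgroup, not because it surjects onto $\Gamma$ --- surjecting onto a non-residually-finite group does not in general rule out residual finiteness. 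For the uniform instability in (2), the paper's witness sends $1\mapsto (1+x)\in\GGL_1(\Zp)$ with $x$ chosen via the $p$-adic logarithm to stay bounded away from every $p$-power root of unity (Lemmas \ref{lem:roots} and \ref{lem:badestimates}), a form designed to scale through $\diag_n$ to the finitely generated construction of Theorem \ref{thm:fg:unst}. Your unipotent embedding into $\GGL_2(\Zp)$ with a torsion-plus-trace lower bound is a valid, self-contained alternative for the countable case (minor correction: for odd $p\ge 5$ the $p$-power torsion of $\GGL_2(\Zp)$ is trivial, not $\{1,p\}$, since the $p$-th cyclotomic polynomial has degree $p-1>2$ --- this only strengthens your bound). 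The genuine misreading is your claim that a merely qualitative stability statement on each finite truncation $A_N$ would not suffice for the diagonalization: the paper's proof of Proposition \ref{prop:countable:unst} uses only qualitative stability of the $\Gamma_k$, choosing $n(k)$ growing fast enough that $\dist(\f_n^k,\ff_n^k)\le 1/k$ for $n\ge n(k)$ and defining $k(n)$ inversely; the linear rate of Theorem \ref{intro:thm:vpfree} is compatible with, but not required by, the argument.
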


In order to turn the non-example of uniform stability into a finitely generated one, we use a modification of the wreath product constructions from \cite{embedding} to embed $\bigoplus_{i \geq 1} \mathbb{Z}/p^i\mathbb{Z}$ into a finitely generated group $\Gamma$, in such a way that the asymptotic homomorphisms proving instability extend. This leads to the following construction:

\begin{theorem}[Theorem \ref{thm:fg:unst}]
\label{intro:thm:unst}

There exists a finitely generated, residually finite group that is not uniformly $\GGL(\Zp)$-stable, and thus not pointwise stable.
\end{theorem}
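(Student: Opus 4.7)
The plan is to leverage Proposition \ref{prop:countable:unst}, which supplies a countable group $A := \bigoplus_{i \geq 1} \mathbb{Z}/p^i\mathbb{Z}$ that fails uniform $\GGL(\Zp)$-stability via an explicit sequence of maps $(\f_n : A \to \GGL_{d_n}(\Zp))_{n \geq 1}$ satisfying $\defe(\f_n) \to 0$ while $\dist(\f_n, \ff_n) \geq c > 0$ for every sequence of homomorphisms $\ff_n$. The goal is to embed $A$ into a finitely generated residually finite group $\Gamma$ in a way flexible enough that the $\f_n$ extend to witness the failure of uniform stability of $\Gamma$.

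First I would construct $\Gamma$ via a variant of the wreath-product embedding of \cite{embedding}: typically, one places $A$ (which is itself residually finite, being a direct sum of finite groups) as the base of a suitable (possibly twisted) wreath product with a finitely generated residually finite acting group, so that $\Gamma$ becomes 2-generated, residually finite, and contains $A$ as a distinguished subgroup. The residual finiteness of $\Gamma$ would follow from classical closure properties of wreath products with residually finite base.

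The crucial step is the extension of the asymptotic homomorphisms. Using the wreath-product description of $\Gamma$, I would build maps $\tilde{\f}_n : \Gamma \to \GGL_{D_n}(\Zp)$ by an induced-representation-style construction: the base is sent to block-diagonal matrices whose blocks are copies of the restriction of $\f_n$ to the appropriate finite subgroup $\bigoplus_{i \leq N_n} \mathbb{Z}/p^i\mathbb{Z}$ parametrized by an orbit of the acting group, while the acting generators are sent to genuine permutation matrices that rotate the blocks. The ultrametric nature of $\| \cdot \|$ on $\GGL(\Zp)$ is decisive here: defects of the block-diagonal images add by taking the maximum rather than accumulating, so $\defe(\tilde{\f}_n) \leq \defe(\f_n) \to 0$. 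Conversely, if homomorphisms $\tilde{\ff}_n : \Gamma \to \GGL_{D_n}(\Zp)$ were uniformly close to $\tilde{\f}_n$, restricting to a single block would yield homomorphisms uniformly close to $\f_n$ on $A$, contradicting Proposition \ref{prop:countable:unst}. Hence $\Gamma$ is not uniformly $\GGL(\Zp)$-stable, and since $\Gamma$ is finitely generated, the contrapositive of Theorem \ref{intro:thm:pw_un} immediately yields the failure of pointwise stability.

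The main obstacle is the extension step: I need the wreath-product structure to simultaneously (i) be rich enough to reproduce the sequence $(\f_n)$ with dimensions $d_n \to \infty$ acting on increasing finite truncations of $A$, (ii) avoid imposing relations on $A$ that would constrain $\f_n$ in an undesirable way, and (iii) keep $\Gamma$ finitely generated and residually finite. Balancing these requirements is the reason a straight wreath product may not suffice and a modification in the spirit of \cite{embedding} is needed; once the construction is set up correctly, the verification of stability properties should follow from the ultrametric block calculation sketched above.
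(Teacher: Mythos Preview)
Your overall strategy matches the paper's: embed (pieces of) $\bigoplus_i \mathbb{Z}/p^i\mathbb{Z}$ into a finitely generated residually finite group via an iterated-wreath construction, and extend the bad maps $\f_i$ by block-diagonal plus permutation matrices so that defects do not accumulate. However, the lower-bound step in your sketch has a real gap.

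You write that if homomorphisms $\tilde{\ff}_n$ were uniformly close to $\tilde{\f}_n$, then ``restricting to a single block would yield homomorphisms uniformly close to $\f_n$ on $A$''. But a general homomorphism $\tilde{\ff}_n : \Gamma \to \GGL_{D_n}(\Zp)$ has no reason to respect the block decomposition; projecting to a block is not a group homomorphism, so you cannot recover a contradiction with Proposition~\ref{prop:countable:unst} (which is only stated for $\GGL_1$). What the paper actually does is different: the construction is arranged so that $\Gamma$ contains an element $\delta_i$ generating a copy of $\mathbb{Z}/p^i\mathbb{Z}$, and the extended map $\Phi_i$ sends $\delta_i$ to the \emph{scalar} matrix $(1+x)I_{D_i}$. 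One then restricts an arbitrary homomorphism $\Psi$ to the cyclic subgroup $\langle \delta_i \rangle$, obtaining a genuine homomorphism into $\GGL_{D_i}(\Zp)$ in the \emph{ambient} dimension, and invokes Lemma~\ref{lem:badestimates2}: for every $n$ and every $\Xi \in \GGL_n(\oo)$ with $\Xi^{p^i} = I$, one has $\|(1+x)I_n - \Xi\| > \ee$, with $\ee$ independent of both $i$ and $n$. This dimension-uniform bound requires an eigenvalue argument (passing to a splitting field and using Lemma~\ref{lem:roots} for all finite extensions of $\K$), not block-restriction. Your proposal does not contain this ingredient.

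A secondary point: the paper does not place $A$ as the base of a wreath product and appeal to closure properties. Instead $\Gamma$ is built directly as a $2$-generated subgroup of the direct product $\prod_i (C_{p^i} \wr C_{p^i}) \wr C_{r_i}$ of finite groups, so residual finiteness is automatic; a combinatorial choice of the integers $r_i, t_i$ (Claim~\ref{claim:riti}) guarantees that each $\delta_i$ is expressible as an explicit word in the two generators. Your plan to use ``classical closure properties of wreath products with residually finite base'' is more delicate than it sounds (recall $G \wr H$ with $H$ infinite is residually finite only when $G$ is abelian), and in any case the paper sidesteps this entirely.
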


The existence of a finitely presented unstable group remains open. This is one of a number of open questions that are listed in Section \ref{s:q}.

\begin{outline}
In Section \ref{s:preli} we review a few general facts about stability, approximation, residual properties and local embeddings. Moreover, we recall the interplay between lifting, splitting and cohomology, proving a useful technical lemma (Lemma \ref{lem:cohopk})
. In Section \ref{s:fam} we introduce the general framework of ultrametric families which will be the subject of our stability results, focusing on examples. In Section \ref{s:ultra} we treat stability with respect to general ultrametric families, proving Theorems \ref{intro:thm:pw_un} and \ref{intro:thm:rf}. In Section \ref{s:approx} we treat approximation, prove Theorem \ref{intro:thm:approx} and give our first examples of groups that are not pointwise stable. In Section \ref{s:vpropi} we focus on families that are virtually pro-$\pi$ for some set $\pi$ of primes, and prove generalisations of Theorem \ref{intro:thm:pifree}. In Section \ref{s:char0} we focus on the case of $\GGL(\Zp)$ (or more generally $\GGL(\oo)$ where $\oo$ is the ring of integers of a finite extension of $\Qp$), prove Theorem \ref{intro:thm:vpfree}, and end by discussing the differences in the case of positive characteristic. In Section \ref{s:unst}, we give examples of groups that are not uniformly stable. Finally, in Section \ref{s:BC} we take a bounded-cohomological approach to stability, which leads to the proof of Theorem \ref{intro:thm:fi}. Section \ref{s:q} is dedicated to open questions and suggestions for further research.
\end{outline}

\begin{remark*}
The results of this paper were part of the author's PhD thesis \cite{thesis}. He is currently supported by the Herchel Smith Postdoctoral Fellowship Fund.
\end{remark*}

\begin{acks}
I would like to start by thanking my advisor Alessandra Iozzi, as well as Marc Burger and Konstantin Golubev, for the constant support and guidance. Secondly, thanks to Bharatram Rangarajan for introducing me to the subject of stability, as well as the organisers of the Geometry Graduate Colloquium at ETH Zurich for inviting him; and to Alexander Lubotzky and Konstantin Golubev, who first suggested to me the study of $p$-adic stability and bounded cohomology, respectively.

I had the great chance to talk with many people during the development of this project, and for that I would like to thank Dante Bonolis, Michael Chapman, Yves de Cornulier (via MathOverflow), Arman Darbinyan, Alexandra Edletzberger, Dominik Francoeur, Jakob Glas, Victor Jaeck, Jan Kohlhaase, Maxim Mornev, Amritanshu Prasad, Peter Schneider, Zoran \v{S}uni\'c and Michele Zordan for lending me their time, and for the interesting and useful discussions, suggestions and comments. Most of all, thanks to Goulnara Arzhantseva for the invaluable comments, insight, and help with navigating the literature.

This project was started during the first wave of the COVID-19 pandemic, and the first version was completed during the first round of vaccinations in Switzerland. Therefore I need to extend a special thank you to my flatmates Etienne Batelier, Victor Jaeck and Lauro Silini, for making working from home a pleasant experience.

The current version of the paper is the result of many revisions, thanks to invaluable comments from referees, to whom I am very grateful.
\end{acks}

\pagebreak

\section{Preliminaries}
\label{s:preli}

\begin{notation}
In the sequel, $p$ always denotes a prime. If $\pi$ is a set of primes, we denote by $\pi'$ its complement, in particular $p'$ is the set of primes other than $p$. For an integer $n$, the $p$-adic valuation of $n$ is denoted by $\nu_p(n)$. That is, $p^{\nu_p(n)}$ is the largest power of $p$ that divides $n$. The set of natural numbers $\mathbb{N}$ starts at $1$. For simplicity, we use $x_n \to x$ instead of $x_n \xrightarrow{n \to \infty} x$ to denote convergence of sequences, whenever this does not lead to confusion.

Unless stated otherwise, $\Gamma$ denotes a countable discrete group. Given a set $S$ of letters, $F_S$ denotes the corresponding free group. We will always assume that $S \cap S^{-1} = \emptyset$. The trivial homomorphism to a group will be denoted by $\mathbbm{1}$. If $R \subset F_S$ we denote by $\normal{R}$ its normal closure, and $\langle S \mid R \rangle \coloneqq F_S/\normal{R}$. Once the presentation is fixed, we denote the projection map by $F_S \to \langle S \mid R \rangle : w \mapsto \overline{w}$.
\end{notation}

\subsection{Stability and approximation}

Let $\G$ be a family of groups equipped with arbitrary bi-invariant metrics. By \emph{bi-invariant} we mean that if $(G, d_G) \in \G$, and $g, h, k \in G$, then
\[d_G(g, h) = d_G(kg, kh) = d_G(gk, hk).\]
We denote by $G(\ee)$ the closed ball of radius $\ee > 0$ around the identity. The groups $G$ can thus be seen as topological groups. 

\medskip

Most of the paper is concerned with uniform stability (Definition \ref{def:stab}). What follows is an equivalent characterisation which allows to make the statements more quantitative:

\begin{lemma}
\label{lem:quant}

The following are equivalent:
\begin{enumerate}
\item $\Gamma$ is uniformly $\G$-stable.
\item For all $\ee > 0$ there exists $\dd > 0$ such that whenever $\f \colon \Gamma \to G \in \G$ satisfies $\defe(\f) \leq \dd$, there exists a homomorphism $\ff \colon \Gamma \to G$ such that $\dist(\f, \ff) \leq \ee$.
\end{enumerate}
\end{lemma}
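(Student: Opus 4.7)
The plan is to prove the two directions separately, using a standard ``diagonal extraction'' argument in one direction and a ``negate and build a bad sequence'' argument in the other.

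For $(2) \Rightarrow (1)$, I would let $(\f_n : \Gamma \to G_n)_{n \geq 1}$ be an arbitrary uniform asymptotic homomorphism, so $\defe(\f_n) \to 0$. For each $k \geq 1$, I would apply $(2)$ with $\ee = 1/k$ to obtain some $\dd_k > 0$; without loss of generality I may take $\dd_k$ decreasing. Since $\defe(\f_n) \to 0$, there exists an increasing sequence $N_k$ such that $\defe(\f_n) \leq \dd_k$ for all $n \geq N_k$. For $n \geq N_1$, let $k(n)$ be the largest $k$ with $N_k \leq n$; then $k(n) \to \infty$, and by $(2)$ there exists a homomorphism $\ff_n : \Gamma \to G_n$ with $\dist(\f_n, \ff_n) \leq 1/k(n)$. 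For $n < N_1$, define $\ff_n := \mathbbm{1}$. Then $(\ff_n)$ is a sequence of homomorphisms uniformly asymptotically close to $(\f_n)$, proving $(1)$.

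For $(1) \Rightarrow (2)$, I would argue by contrapositive. If $(2)$ fails, there exists $\ee_0 > 0$ such that for every $n \geq 1$ there is a group $G_n \in \G$ and a map $\f_n : \Gamma \to G_n$ with $\defe(\f_n) \leq 1/n$, and yet $\dist(\f_n, \ff) > \ee_0$ for every homomorphism $\ff : \Gamma \to G_n$. The sequence $(\f_n)_{n \geq 1}$ is then a uniform asymptotic homomorphism, because $\defe(\f_n) \to 0$. However, for any sequence $(\ff_n : \Gamma \to G_n)_{n \geq 1}$ of homomorphisms, $\dist(\f_n, \ff_n) > \ee_0$ for all $n$, so $(\f_n)$ is not uniformly asymptotically close to any sequence of homomorphisms. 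This contradicts $(1)$, completing the proof.

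Neither direction should present a real obstacle; the proof is essentially bookkeeping on quantifiers. The only mild subtlety is the diagonal choice of $\ff_n$ in the first implication, where one must ensure that the tolerances $\dd_k$ and the thresholds $N_k$ are organized so that a single sequence works for all $\ee$ simultaneously. Everything else is standard.
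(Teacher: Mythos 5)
Your proof is correct. The $(1)\Rightarrow(2)$ direction is word for word the paper's argument (contrapositive plus the obvious bad sequence). For $(2)\Rightarrow(1)$ you take a mildly different route: you organize thresholds $N_k$ and pick $\ff_n$ at tolerance $1/k(n)$ via a diagonal scheme, whereas the paper simply fixes, once and for all, $\ff_n$ to be a homomorphism that nearly minimizes $\dist(\f_n,\ff_n)$ (up to $1/n$), and then verifies directly that $\dist(\f_n,\ff_n)\to 0$ by applying $(2)$ with $\ee/2$. The paper's variant is a touch cleaner because it avoids having to organize a strictly increasing sequence $N_k$ and the associated function $k(n)$; your version needs the small additional care that the $N_k$ be chosen strictly increasing so that $k(n)$ is finite and $k(n)\to\infty$, which you do handle (by saying the sequence is increasing), but it is worth being explicit that ``increasing'' must mean strictly increasing there. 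Both are entirely routine quantifier bookkeeping, and both buy you the same statement.
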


\begin{proof}
$2. \Rightarrow 1.$ Suppose that $2.$ holds, and let $(\f_n \colon \Gamma \to G_n \in \G)_{n \geq 1}$ be a uniform asymptotic homomorphism. For all $\f_n$ let $\ff_n \colon \Gamma \to G_n$ be a homomorphism that minimises $\dist(\f_n, \ff_n)$ up to $1/n$ (we cannot ask that a minimising one exists in such a general situation). We need to show that $\dist(\f_n, \ff_n) \to 0$, so let $\ee > 0$ and let $\dd > 0$ be as in $2.$ for $\ee/2$. Let $N$ be large enough so that $N \geq 2/\ee$ and $\defe(\f_n) \leq \dd$ for all $n \geq N$. Then $\dist(\f_n, \ff_n) \leq \ee/2 + 1/n \leq \ee$. 

\medskip

$1. \Rightarrow 2.$ Suppose that $2.$ does not hold. Then there exists $\ee > 0$ with the following property: for all $n \geq 1$ there exists $\f_n \colon \Gamma \to G_n \in \G$ such that $\defe(\f_n) \leq 1/n$ but for every homomorphism $\ff_n \colon \Gamma \to G_n$ we have $\dist(\f_n, \ff_n) > \ee$. The sequence $\f_n$ provides a counterexample to the uniform stability of $\Gamma$.
\end{proof}

This characterisation allows to talk quantitatively about stability. This quantitative approach to stability was initiated by Becker and Mosheiff in \cite{quant}, although it is already mentioned in \cite{BOT}, and is of greatest interest for applications to computer science (see also \cite{test}).

\begin{definition}
\label{def:quant}
Let $\Gamma$ be uniformly $\G$-stable. For each $\ee > 0$, we denote by $D_{\Gamma}^{\G}(\ee)$ the maximal $\dd > 0$ such that Item $2.$ of Lemma \ref{lem:quant} holds for $\ee$.
\begin{enumerate}
\item If $D_{\Gamma}^{\G}(\ee) \geq \ee$ for all small enough $\ee > 0$, we say that $\Gamma$ is uniformly-$\G$ stable \emph{with an optimal estimate}.
\item If there exists $C > 0$ such that $D_{\Gamma}^{\G}(\ee) \geq C \cdot \ee$ for all small enough $\ee > 0$, we say that $\Gamma$ is uniformly $\G$-stable \emph{with a linear estimate}.
\item If there exists $C > 0$ such that $D_{\Gamma}^{\G}(\ee) \geq C \cdot \ee^2$ for all small enough $\ee > 0$, we say that $\Gamma$ is uniformly $\G$-stable \emph{with a quadratic estimate}.
\end{enumerate}
\end{definition}

In other words, a group is uniformly $\G$-stable with an optimal estimate, if there exists $\ee_0 > 0$ such that whenever $\f \colon \Gamma \to G \in \G$ satisfies $\defe(\f) \leq \ee \leq \ee_0$, there exists a homomorphism $\ff \colon \Gamma \to G$ such that $\dist(\f, \ff) \leq \ee$.

\begin{remark}
Suppose that $\f \colon \Gamma \to G \in \G$ is such that there exists a homomorphism $\ff \colon \Gamma \to G$ with $\dist(\f, \ff) \leq \ee$. Then the most one can conclude about the defect of $\f$ is that $\defe(\f) \leq 3 \ee$. Indeed:
\begin{align*}
d_G(\f(gh), \f(g)\f(h)) &\leq d_G(\f(gh), \ff(gh)) + d_G(\ff(g)\ff(h), \f(g)\f(h)) \\
&\leq \ee + d_G(\ff(g)\ff(h), \f(g)\ff(h)) + d_G(\f(g)\ff(h), \f(g)\f(h)) \\
&\leq \ee + \ee + \ee = 3 \ee,
\end{align*}
where passing from the second to the third line we used both left and right invariance. So stability with an optimal estimate is not a natural notion for an arbitrary family $\G$. On the other hand, if $\G$ is \emph{ultrametric}, the same computation as above, using the ultrametric inequality instead of the triangle inequality, shows that $\defe(\f) \leq \ee$.
\end{remark}

Our positive results for stability will all be with an optimal, linear, or quadratic estimate, but of course one can analogously define stability with a polynomial estimate, with an exponential estimate, and so on.

\begin{remark}
Rephrasing stability in terms of presentations (see Proposition \ref{prop:stab_equiv}) allows to give a quantitative characterisation of pointwise stability of finitely presented groups, as in \cite{a:comm}. We will see that the two quantitative notions coincide (see Corollary \ref{cor:quant} and Remark \ref{rem:quant:fp}).
\end{remark}

The following observation, first made in \cite{a:comm} (see also \cite[Proposition 3]{GR}), is the key to connecting the notions of stability and approximation (Definitions \ref{def:stab} and \ref{def:approx}). Recall that given a group $\Gamma$ and a family of groups $\G$, we say that $\Gamma$ is \emph{residually-$\G$} if for all $1 \neq g \in \Gamma$ there exists a homomorphism $\f \colon \Gamma \to G \in \G$ such that $g \notin \ker(\f)$. It is \emph{fully residually-$\G$} if for every finite subset $K \subset \Gamma$ there exists a homomorphism $\f \colon \Gamma \to G \in \G$ such that $f|_K$ is injective. If the groups in $\G$ are residually finite, and $\Gamma$ is residually-$\G$, then $\Gamma$ is residually finite.

\begin{lemma}[Arzhantseva--P\u{a}unescu {\cite[Theorem 4.3]{a:comm}}]
\label{lem:GR}

Let $\Gamma$ be a group that is both $\G$-approximable and pointwise $\G$-stable. Then $\Gamma$ is fully residually-$\G$.
\end{lemma}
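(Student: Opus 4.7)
\bigskip

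\noindent\textbf{Proof plan.} The plan is to use approximability to produce a sequence $(\f_n)$ that detects elements, use pointwise stability to straighten it into a sequence of honest homomorphisms $(\ff_n)$, and then argue via bi-invariance of $d_{G_n}$ that the pointwise closeness of $\ff_n$ to $\f_n$ forces $\ff_n$ to be eventually injective on any prescribed finite set.

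First, by $\G$-approximability, fix a $\G$-approximation $(\f_n : \Gamma \to G_n)_{n \geq 1}$: it is a pointwise asymptotic homomorphism with the extra feature that for every $g \neq 1$ there is a constant $\eee_g := \tfrac12 \liminf_n d_{G_n}(\f_n(g), 1) > 0$ and an index $N_g$ such that $d_{G_n}(\f_n(g), 1) \geq 2 \eee_g$ for all $n \geq N_g$. By pointwise $\G$-stability, choose a sequence of homomorphisms $\ff_n : \Gamma \to G_n$ with $\dist_g(\f_n, \ff_n) \to 0$ for every $g \in \Gamma$.

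Next, I claim that for every $g \neq 1$, the element $g$ lies outside $\ker(\ff_n)$ for all sufficiently large $n$. Indeed, once $n$ is large enough that both $d_{G_n}(\f_n(g), \ff_n(g)) < \eee_g$ and $n \geq N_g$, bi-invariance of $d_{G_n}$ gives
\[
d_{G_n}(\ff_n(g), 1) \;\geq\; d_{G_n}(\f_n(g), 1) - d_{G_n}(\f_n(g), \ff_n(g)) \;>\; 2\eee_g - \eee_g \;=\; \eee_g \;>\; 0,
\]
so $\ff_n(g) \neq 1$.

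Finally, to conclude full residual-$\G$-ness, let $K \subset \Gamma$ be any finite subset and consider the finite set $K' := \{ gh^{-1} : g \neq h \in K \} \subset \Gamma \setminus \{1\}$. Applying the previous paragraph to each element of $K'$ and taking the maximum of the finitely many thresholds, we find an index $n$ such that $\ff_n$ is nontrivial on every element of $K'$; since $\ff_n$ is a homomorphism this means $\ff_n|_K$ is injective, as required.

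The argument is essentially routine once the right quantities are lined up; the only real content is the bi-invariance step, which is why the statement requires the approximating groups to carry bi-invariant metrics. There is no genuine obstacle beyond being careful about the order of quantifiers: the lower bound $\eee_g$ depends on $g$, so one cannot hope to make a single $\ff_n$ work for all of $\Gamma$ at once, but one can for any fixed finite subset, which is exactly the content of ``fully residually-$\G$''.
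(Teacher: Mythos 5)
Your argument is the natural direct proof, and the overall structure is correct: use stability to replace the approximation $(\f_n)$ by honest homomorphisms $(\ff_n)$, then use the injectivity gap to see that each $g\ne 1$ is detected for $n$ large, and finally pass from nontriviality on $K' = \{gh^{-1}: g\ne h\in K\}$ to injectivity on $K$ via the homomorphism property. The paper itself does not prove this lemma but cites \cite[Theorem 4.3]{a:comm}, where the same conclusion is reached via the ultraproduct reformulations (cf.\ Lemma \ref{lem:stab_up} and Proposition \ref{prop:lef_up} here): $\Gamma$ embeds in a metric ultraproduct by approximability, the embedding lifts to the direct product by stability, and the lift projects to the desired local embeddings. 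Your route and theirs are equivalent in content; yours is more explicit and elementary, theirs is cleaner if one has already set up the ultraproduct machinery, which this paper does use later.

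There is one small but genuine slip in your quantifier bookkeeping. Setting $\eee_g := \tfrac12 \liminf_n d_{G_n}(\f_n(g),1)$ does \emph{not} give an $N_g$ with $d_{G_n}(\f_n(g),1) \geq 2\eee_g$ for all $n\geq N_g$: if the liminf equals $L$, the sequence can remain strictly below $L$ forever (e.g.\ $a_n = L - 1/n$). What the liminf does give, for any $\delta>0$, is that eventually $d_{G_n}(\f_n(g),1) > L-\delta$. Take $\delta = \eee_g$ to get eventually $d_{G_n}(\f_n(g),1) > \eee_g$, and then require $\dist_g(\f_n,\ff_n) < \eee_g/2$ (possible by pointwise closeness) to conclude $d_{G_n}(\ff_n(g),1) > \eee_g/2 > 0$. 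Alternatively, define $\eee_g := \tfrac13\liminf_n d_{G_n}(\f_n(g),1)$ and your displayed chain of inequalities works verbatim. Also, a minor attribution: the displayed inequality $d(\ff_n(g),1) \geq d(\f_n(g),1) - d(\f_n(g),\ff_n(g))$ is just the reverse triangle inequality and holds for any metric; bi-invariance is not what is being invoked there (it is of course used elsewhere in the theory, e.g.\ to make $\defe$ and $\dist$ well behaved).
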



This is mostly useful for counterexamples. For instance let $\G$ be a family of residually finite groups; then if $\Gamma$ is not residually finite, it cannot be simultaneously approximable and pointwise stable. Such families include all families of finite groups, as well as the \emph{profinite families} that will be defined in Section \ref{s:fam}. Similarly, if $\G$ is a family of locally residually finite groups, then the same holds under the additional hypothesis that $\Gamma$ is finitely generated: such classes include all linear groups by a theorem of Malcev \cite{Malcev}. This is the way the authors in \cite{GR} provide the first non-examples of pointwise $(S_n, d_H)$-stable equations. It is also the approach suggested in \cite{a:comm} and successfully realised in \cite{GLT}, by which the authors provide the first example of a non-approximable group, with respect to $\{ (\U(n), \| \cdot \|_{Frob}) : n \geq 1 \}$. 

\medskip

Another useful equivalent characterisation of pointwise stability, due to Arzhantseva and P\u{a}unescu, is in terms of ultraproducts.
We will only use it in Section \ref{s:BC} and apply it to finitely presented groups, so we state it in this setting which is the one from \cite{a:comm}.
In the statement, $\prod\limits_{n \to \omega} G_n$ denotes the \emph{metric ultraproduct} of the $G_n$ with respect to the free (aka non-principal) ultrafilter $\omega$. That is, $\prod\limits_{n \to \omega} G_n$ is the quotient of the direct product by the normal subgroup $\{ (g_n)_{n \geq 1} : d_{G_n}(g_n, 1_{G_n}) \xrightarrow{n \to \omega} 0 \}$.

\begin{lemma}[Arzhantseva--P\u{a}unescu {\cite[Theorem 4.2]{a:comm}}]
\label{lem:stab_up}

Let $\Gamma$ be a countable group, and let $\G$ be a family of groups equipped with bi-invariant metrics. The following are equivalent:
\begin{enumerate}
\item $\Gamma$ is pointwise $\G$-stable.
\item For every free ultrafilter $\omega$ on $\mathbb{N}$ and every sequence $(G_n)_{n \geq 1} \subset \G$, every homomorphism $\Gamma \to \prod\limits_{n \to \omega} G_n$ lifts to a homomorphism $\Gamma \to \prod\limits_{n \geq 1} G_n$.
\end{enumerate}
\end{lemma}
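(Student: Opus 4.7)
The plan is to prove both implications using a common auxiliary construction: fix an enumeration $\Gamma = \{g_i\}_{i \geq 1}$, and for a sequence of maps $(\f_n : \Gamma \to G_n)$ define
$$A_k := \bigl\{n \in \mathbb{N} : \text{there exists a homomorphism } \psi : \Gamma \to G_n \text{ with } d_{G_n}(\psi(g_i), \f_n(g_i)) \leq 1/k \text{ for all } i \leq k\bigr\},$$
which form a decreasing chain in $k$. Both directions reduce to a statement about the $A_k$: for $(1) \Rightarrow (2)$, that $A_k \in \omega$ for every $k$; for $(2) \Rightarrow (1)$, that $A_k$ is cofinite for every $k$.

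For $(1) \Rightarrow (2)$, given $\Phi$ I would pick set-theoretic representatives $\f_n$, so that the defects $\defe_{g, h}(\f_n)$ converge to $0$ along $\omega$. Granting the claim $A_k \in \omega$, define $k(n) := \sup\{k : n \in A_k\}$ and let $\psi_n$ be a witnessing homomorphism for $A_{k(n)}$ (with $\psi_n = \mathbbm{1}$ if $k(n) = 0$). For each $g_j$ and $\ee > 0$, the set $\{n : d_{G_n}(\psi_n(g_j), \f_n(g_j)) \leq \ee\}$ contains $A_K$ for $K = \max(j, \lceil 1/\ee \rceil)$, hence lies in $\omega$; so $(\psi_n)_n$ is a lift of $\Phi$. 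To prove the claim, argue by contradiction: if $\mathbb{N} \setminus A_k \in \omega$, then intersecting with the $\omega$-sets $E_{k'} := \{n : \defe_{g_i, g_j}(\f_n) \leq 1/k'\ \forall i, j \leq k'\}$, I extract a strictly increasing sequence $n_{k'} \in E_{k'} \cap (\mathbb{N} \setminus A_k)$. Then $(\f_{n_{k'}})_{k'}$ is a genuine pointwise asymptotic homomorphism, so by (1) it is pointwise asymptotically close to a sequence of homomorphisms $\psi'_{k'} : \Gamma \to G_{n_{k'}}$; for $k'$ large these witness $n_{k'} \in A_k$, contradicting $n_{k'} \in \mathbb{N} \setminus A_k$.

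For $(2) \Rightarrow (1)$, dually, given a pointwise asymptotic homomorphism $(\f_n)$, I would prove each $A_k$ is cofinite. Then $k(n) := \sup\{k \leq n : n \in A_k\}$ tends to infinity, and selecting witnesses for $A_{k(n)}$ gives homomorphisms $\psi_n$ pointwise asymptotically close to $(\f_n)$. For the cofiniteness: if $\mathbb{N} \setminus A_k$ were infinite, it would lie in some free ultrafilter $\omega$, forcing $A_k \notin \omega$. But by (2), the homomorphism $\Gamma \to \prod_{n \to \omega} G_n$ represented by $(\f_n)$ lifts to a sequence of homomorphisms $(\psi_n^\omega)$; the convergence $d_{G_n}(\psi_n^\omega(g_i), \f_n(g_i)) \to 0$ along $\omega$ for each $i$ forces the set $\bigcap_{i \leq k}\{n : d_{G_n}(\psi_n^\omega(g_i), \f_n(g_i)) \leq 1/k\}$ to lie in $\omega$; but this set is contained in $A_k$, so $A_k \in \omega$, a contradiction.

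The main obstacle I expect is the subsequence extraction in the proof of the claim for $(1) \Rightarrow (2)$: one must simultaneously control the defects (via the $E_{k'}$) and ensure non-membership in $A_k$ (via $\mathbb{N} \setminus A_k$), and then verify that the extracted subsequence is a \emph{true} pointwise asymptotic homomorphism to which hypothesis (1) applies. The conceptual heart of the argument is its duality: in each direction, failure of the relevant statement about $A_k$ produces an ultrafilter witnessing the failure of the opposing hypothesis.
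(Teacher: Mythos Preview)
The paper does not supply its own proof of this lemma: it is quoted as a result of Arzhantseva--P\u{a}unescu, with a citation to \cite[Theorem 4.2]{a:comm} and no proof environment. So there is nothing in the paper to compare your argument against.

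Your argument is correct and is essentially the standard one. One small technical point in the $(1) \Rightarrow (2)$ direction: when you set $k(n) := \sup\{k : n \in A_k\}$ and choose $\psi_n$ to witness $n \in A_{k(n)}$, you should say what happens when $k(n) = \infty$ (i.e.\ $n \in A_k$ for all $k$), since then there is no single witnessing $\psi_n$ to pick. The easy fix is to truncate, e.g.\ set $k(n) := \min\bigl(n, \sup\{k : n \in A_k\}\bigr)$; then for any $K$ the set $A_K \cap \{n \geq K\}$ lies in $\omega$ and on it $k(n) \geq K$, which is all you need. With this adjustment both directions go through as you describe.
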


\begin{remark}
Typically the metric ultraproduct is defined as a quotient of the subgroup of the direct product consisting of \emph{bounded} sequences of elements in $G_n$; that is, $\{ (g_n)_{n \geq 1} : \sup d_n(g_n, 1_{G_n}) < \infty \}$. However most of the times the diameter of the $G_n$ is uniformly bounded. This will be the case when we use Lemma \ref{lem:stab_up} in Section \ref{s:BC}: there $G_n = \GGL_{k_n}(\oo)$ has diameter $1$.
\end{remark}

\subsection{Residual properties and local embeddings}
\label{ss:rf}

See \cite[Chapters 2, 3, 7]{Cell} for more details. 

\medskip

Let $\C$ be a class of groups, that for simplicity we assume to be closed under taking subgroups. If the class $\C$ is closed under taking direct products, then a residually-$\C$ group is automatically fully residually-$\C$.
The following result is standard, so we include it here for reference and omit the proof:

\begin{lemma}
\label{lem:largest_rf}

Let $\C$ be a class of groups closed under taking subgroups, $\Gamma$ a group, $K$ the intersection of all normal subgroups of $\Gamma$ whose quotient belongs to $\C$, and $\Omega \coloneqq \Gamma / K$. Then $\Omega$ is the largest residually-$\C$ quotient of $\Gamma$; that is, $\Omega$ is residually-$\C$, and every homomorphism from $\Gamma$ to a residually-$\C$ group factors through $\Omega$.
\end{lemma}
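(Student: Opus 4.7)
The plan is to verify the two assertions constituting ``largest residually-$\C$ quotient'' separately: first that $R$ itself is residually-$\C$, and second the universal property, namely that any homomorphism from $\Gamma$ to a residually-$\C$ group kills $K$.

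First I would note that $K$, being an intersection of normal subgroups of $\Gamma$, is itself normal, so that $R = \Gamma/K$ makes sense as a group. For the residual property, I would take an arbitrary $1 \neq r \in R$ and lift it to some $\gamma \in \Gamma \setminus K$. By the very definition of $K$ as an intersection, there must exist a normal subgroup $N \triangleleft \Gamma$ with $\Gamma/N \in \C$ and $\gamma \notin N$. Since $K \subseteq N$, the projection $\Gamma \to \Gamma/N$ descends to a homomorphism $R \to \Gamma/N$ which sends $r$ to the nontrivial element $\gamma N$. This exhibits $R$ as residually-$\C$.

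For the universal property, I would take any homomorphism $\f : \Gamma \to H$ with $H$ residually-$\C$ and show that $K \subseteq \ker(\f)$, so that $\f$ factors through $R$. Suppose for contradiction that some $k \in K$ satisfies $\f(k) \neq 1$. Since $H$ is residually-$\C$, there exists $\pi : H \to C$ with $C \in \C$ and $\pi(\f(k)) \neq 1$. Setting $N := \ker(\pi \circ \f) \triangleleft \Gamma$, the first isomorphism theorem gives an embedding $\Gamma/N \hookrightarrow C$; here I would invoke the hypothesis that $\C$ is closed under taking subgroups to conclude $\Gamma/N \in \C$. Then $N$ is one of the normal subgroups whose intersection defines $K$, so $k \in K \subseteq N$, contradicting $\pi(\f(k)) \neq 1$.

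There is no real obstacle to overcome here: the statement is a standard universal-property argument and the only subtlety is the explicit use of the closure-under-subgroups hypothesis to pass from ``$\Gamma/N$ embeds into $C \in \C$'' to ``$\Gamma/N \in \C$''. This is also the only place where that hypothesis is used, and without it the conclusion would require $\C$ to be closed under quotients (or at least under the quotients that arise this way), so I would flag this step as the conceptual heart of the argument even though the verification itself is immediate.
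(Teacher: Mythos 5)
Your proof is correct. The paper explicitly states this result is standard and omits the proof, so there is no direct comparison to draw; but your argument is exactly the expected one, and both halves check out: the residual-$\C$ property of $R$ follows from lifting a nontrivial element of $R$ to $\Gamma \setminus K$ and using the definition of $K$ as an intersection, and the universal property follows by showing $K$ dies under any map to a residually-$\C$ group, using the subgroup-closure hypothesis to conclude that $\Gamma / \ker(\pi \circ \f) \in \C$. You also correctly flag that step as the one place where the subgroup-closure hypothesis on $\C$ is genuinely needed.
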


The following will be common examples throughout this paper.

\begin{example}
\label{ex:sym0}

Let $\Sym(\mathbb{Z})$ be the group of permutations of the integers, let $\Sym_0(\mathbb{Z})$ be the subgroup of permutations with finite support, and $\Alt_0(\mathbb{Z})$ the subgroup of even permutations with finite support.
Let $T \colon \mathbb{Z} \to \mathbb{Z} : n \to (n + 1)$ be the translation. We denote by $H$ the group generated by $\Sym_0(\mathbb{Z})$ and $T$, and by $H^+$ the group generated by $\Alt_0(\mathbb{Z})$ and $T$, which has index $2$ in $H$. Then $H$ splits as a semidirect product $\Sym_0(\mathbb{Z}) \rtimes \langle T \rangle$, and similarly $H^+$ splits as a semidirect product $\Alt_0(\mathbb{Z}) \rtimes \langle T \rangle$. Since $\Alt_0(\mathbb{Z})$ has no non-trivial finite quotients, and $\mathbb{Z}$ is residually finite, the largest residually finite quotient of $H^+$ is $\mathbb{Z}$. Similarly the largest residually finite quotient of $H$ is $\mathbb{Z} \times \mathbb{Z}/2\mathbb{Z}$.
\end{example}

\begin{remark}
The group $H$ above was first considered by Malcev in \cite{Malcev}. It is commonly referred to as \emph{Houghton's second group}, in referece to \cite{Houghton}, and denoted by $H_2$. We will simply refer to it as Houghton's group, and denote it by $H$.
\end{remark}

\begin{example}
\label{ex:wr}

Let $\Gamma, \Lambda$ be groups, denote $\Sigma_\Lambda \Gamma = \bigoplus_{h \in \Lambda} \Gamma_h$, where each $\Gamma_h$ is an indexed copy of $\Gamma$, and consider the wreath product $\Gamma \wr \Lambda = \Sigma_\Lambda \Gamma \rtimes \Lambda$ where $\Lambda$ acts on the direct sum by shifting the coordinates. Common examples in combinatorial group theory are the \emph{lamplighter groups}, where $\Lambda = \mathbb{Z}$ and $\Gamma$ is finite (often the name is used to refer to the specific case of $\Gamma = \mathbb{Z}/2\mathbb{Z}$).

Suppose that $\Lambda$ is infinite. Then the projection of $\Gamma \wr \Lambda$ onto its largest residually finite quotient factors through $\Ab(\Gamma) \wr \Lambda$, where $\Ab(\Gamma)$ is the abelianisation of $\Gamma$. This latter is residually finite if both $\Ab(\Gamma)$ and $\Lambda$ are residually finite \cite[Theorem 3.2]{wr_rf} (we will mostly be interested in the first statement).
\end{example}

In Section \ref{s:approx} we are concerned with local embeddings, which sit between approximability in the sense of Definition \ref{def:approx} and the corresponding residual property. The notion of local embedding was formally introduced in \cite{LEF} although the idea goes back to Malcev \cite{Malcev}. We refer the reader to \cite[Chapter 7]{Cell} for details and proofs.

\begin{definition}[Malcev, Gordon--Vershik]
Let $\Gamma, C$ be groups, $K \subset \Gamma$ a finite subset. A map $f \colon \Gamma \to C$ is a \emph{$K$-local embedding} if $f|_K$ is injective and $f(g)f(h) = f(gh)$ whenever $g, h \in K$.

Let $\C$ be a class of groups (closed under taking subgroups). The group $\Gamma$ is \emph{locally embeddable into $\C$} if for every finite subset $K \subset \Gamma$ there exists a $K$-local embedding $f \colon \Gamma \to  C \in \C$. When $\C$ is the class of finite groups, $\Gamma$ is said to be \emph{LEF (Locally Embeddable in the class of Finite groups}).
\end{definition}

Here is an equivalent characterisation of local embeddability. In the statement, $\prod\limits_{n \to \omega} C_n$ denotes the \emph{set-theoretic ultraproduct} of the $C_n$ with respect to the free ultrafilter $\omega$. That is, $\prod\limits_{n \to \omega} C_n$ is the quotient of the direct product by the normal subgroup $\{ (g_n)_{n \geq 1} : \{n : g_n = 1_{C_n} \} \in \omega \}$ (equivalently, the metric ultraproduct where the $C_n$ are endowed with the discrete metric).

\begin{proposition}[Gordon--Vershik, see {\cite[Theorem 7.2.5]{Cell}}]
\label{prop:lef_up}

Let $\Gamma$ be a countable group, $\C$ a class of groups. Then $\Gamma$ is locally embeddable into $\C$ if and only if it embeds into $\prod\limits_{n \to \omega} C_n$ for some sequence $(C_n)_{n \geq 1} \subset \C$.
\end{proposition}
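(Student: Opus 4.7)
The plan is to run the standard ``diagonal'' argument, using only the two defining features of a free ultrafilter $\omega$ on $\mathbb{N}$: cofinite sets lie in $\omega$, and a finite intersection of elements of $\omega$ is again in $\omega$.

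For the forward direction, assume $\Gamma$ is locally embeddable into $\C$. Since $\Gamma$ is countable, I would enumerate $\Gamma = \{g_1, g_2, \ldots\}$, set $K_n := \{g_1, \ldots, g_n\}$, and for each $n$ choose a $K_n$-local embedding $f_n : \Gamma \to C_n \in \C$. Define
$$\Phi : \Gamma \to \prod\limits_{n \to \omega} C_n, \quad \Phi(g) := [(f_n(g))_{n \geq 1}].$$
To check that $\Phi$ is a well-defined homomorphism, observe that for any fixed $g,h \in \Gamma$ there is $N$ with $g,h,gh \in K_n$ for all $n \geq N$, so $\{n : f_n(g)f_n(h) = f_n(gh)\}$ is cofinite and thus in $\omega$. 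Injectivity is the only point that needs a small remark: applying the local-multiplication rule to $1 \cdot 1 = 1$ forces $f_n(1)^2 = f_n(1)$, hence $f_n(1) = 1$. Then for $1 \neq g \in \Gamma$, the set $\{n : f_n(g) = 1\}$ is contained in $\{n : g \notin K_n\}$, which is finite and therefore not in $\omega$, so $\Phi(g) \neq 1$.

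For the backward direction, suppose $\iota : \Gamma \hookrightarrow \prod_{n \to \omega} C_n$ is an embedding. I would choose, by the axiom of choice, a family of set-maps $\tilde\iota_n : \Gamma \to C_n$ with $\iota(g) = [(\tilde\iota_n(g))_{n \geq 1}]$ for every $g$. Given a finite subset $K \subset \Gamma$, consider the finite collections
$$A_{g,h} := \{n : \tilde\iota_n(g) \tilde\iota_n(h) = \tilde\iota_n(gh)\}, \quad g, h \in K,$$
$$B_{g,h} := \{n : \tilde\iota_n(g) \neq \tilde\iota_n(h)\}, \quad g \neq h \in K.$$
The former lie in $\omega$ because $\iota$ is a homomorphism, the latter because $\iota$ is injective. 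As the index set $K \times K$ is finite, the intersection of all these sets is still in $\omega$, hence nonempty; any element $n_0$ of this intersection produces a map $\tilde\iota_{n_0} : \Gamma \to C_{n_0}$ that is injective on $K$ and multiplicative on pairs in $K$, i.e., a $K$-local embedding.

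The argument is essentially a bookkeeping exercise in ultrafilter combinatorics. The only (minor) pitfalls I expect are: remembering that the ultraproduct here is \emph{set-theoretic}, so equality in the quotient reads as membership in $\omega$ of the equality set; ensuring $f_n(1) = 1$ in the forward direction to get injectivity for free; and exploiting the finiteness of $K$ in the backward direction so that finite intersections of elements of $\omega$ can be used.
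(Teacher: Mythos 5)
Your proof is correct, and it is the standard ultraproduct argument for this equivalence. Note that the paper itself does not prove this proposition; it cites it to Ceccherini-Silberstein--Coornaert (\cite[Theorem 7.2.5]{Cell}), so there is no in-paper proof against which to compare.

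One small imprecision is worth flagging in the forward direction. Your containment $\{n : f_n(g)=1\} \subseteq \{n : g \notin K_n\}$ presupposes $f_n(1)=1_{C_n}$, which you only established for those $n$ with $1 \in K_n$. The correct containment is $\{n : f_n(g)=1\} \subseteq \{n : g \notin K_n\} \cup \{n : 1 \notin K_n\}$; since both sets on the right are finite (and $\omega$ is free), the conclusion that $\Phi(g)\neq 1$ still follows, so this is cosmetic rather than a gap. The backward direction is exactly right, and your remark that the relevant ultraproduct is the \emph{set-theoretic} one (so that equality of classes translates to membership in $\omega$ of the agreement set) is precisely the observation that lets the finite-intersection step deliver a single index $n_0$ witnessing a $K$-local embedding.
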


%

The properties of being residually-$\C$ and locally embeddable into $\C$ are related by the following result:

\begin{proposition}[Gordon--Vershik, see {\cite[Corollaries 7.1.14 and 7.1.21]{Cell}}]
\label{prop:lec_rc}

Let $\C$ be a class of groups closed under taking subgroups.
\begin{enumerate}
\item Every fully residually-$\C$ group is locally embeddable into $\C$.
\item Every finitely presented group that is locally embeddable into $\C$ is fully residually-$\C$.
\end{enumerate}
\end{proposition}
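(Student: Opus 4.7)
The plan is to handle the two items separately, with the bulk of the work in Item $2$, where finite presentation must be used to upgrade a partial homomorphism to an honest one.

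For Item $1$, let $\Gamma$ be fully residually-$\C$ and let $K \subset \Gamma$ be a finite subset. The hypothesis supplies a homomorphism $\f : \Gamma \to C \in \C$ that is injective on $K$. Since $\f$ is a homomorphism, the identity $\f(g)\f(h) = \f(gh)$ holds for all $g, h \in \Gamma$, and in particular for $g, h \in K$; so $\f$ itself is a $K$-local embedding.

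For Item $2$, fix a finite presentation $\Gamma = \langle S \mid R \rangle$. Given a finite subset $K' \subset \Gamma$, I would first choose a word $w_g \in F_S$ representing each $g \in K'$, and then let $K \subset \Gamma$ be the finite subset consisting of $1$, of $\overline{s}$ and $\overline{s^{-1}}$ for every $s \in S$, of the image in $\Gamma$ of every prefix of each $w_g$, of the image in $\Gamma$ of every prefix of each $r \in R$, and of $K'$ itself. Local embeddability then supplies a $K$-local embedding $f : \Gamma \to C \in \C$. I would define $\ff : F_S \to C$ on generators by $\ff(s) := f(\overline{s})$ and extend multiplicatively. A straightforward induction on the length of prefixes, using the partial-homomorphism condition on $K$, shows that $\ff(r) = f(\overline{r}) = f(1) = 1$ for every $r \in R$ (here $f(1) = 1$ follows from $1 \cdot 1 = 1$ and $1 \in K$). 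Hence $\ff$ descends to a homomorphism $\ff : \Gamma \to C$. The same induction applied to each word $w_g$ shows that $\ff$ agrees with $f$ on $K'$; and since $K' \subset K$ makes $f|_{K'}$ injective, $\ff|_{K'}$ is injective, giving the required witness of the fully residually-$\C$ property.

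The main obstacle, such as it is, is the combinatorial bookkeeping in Item $2$: one must ensure that $K$ is closed under taking prefixes of a chosen set of words representing the generators (with their inverses), the relators, and the target elements of $K'$, so that each multiplication performed in $C$ during the induction takes place within the pair of elements on which the partial-homomorphism condition is guaranteed. Once this is arranged, the argument uses only the definitions, together with the fact that a homomorphism $F_S \to C$ factors through $\Gamma = F_S/\langle\langle R \rangle\rangle$ as soon as all relators go to the identity.
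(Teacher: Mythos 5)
Your proof is correct, and since the paper does not prove this proposition itself (it is cited to \cite[Corollaries 7.1.14 and 7.1.21]{Cell}), the relevant comparison is with the standard textbook argument, which yours matches. Item 1 is immediate as you observe. For Item 2, the one delicate point --- namely arranging $K$ to contain $1$, the images of $s$ and $s^{-1}$ for $s \in S$, and the images of all prefixes of the chosen representatives of $R$ and of $K'$, so that every multiplication used in the induction $\hff(w) = f(\overline{w})$ is between two elements of $K$ --- is handled correctly, including the two small facts that make the base case work: $f(1) = 1$ (from $f(1)^2 = f(1)$, using $1 \in K$) and $f(\overline{s}^{-1}) = f(\overline{s})^{-1}$ (from $f(\overline{s})f(\overline{s}^{-1}) = f(1) = 1$, using $\overline{s}, \overline{s}^{-1} \in K$). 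With those in place, the induction up prefixes of each relator shows $\hff$ kills $R$ and hence factors through $\Gamma$, and the same induction up prefixes of each $w_g$ shows the factored map agrees with $f$ on $K'$, so it is injective there. Nothing is missing.
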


Importantly, Item $2.$ does not hold for general finitely generated groups.

\begin{example}
\label{ex:sym0_lef}

Let $H$ denote Houghton's group, and $H^+$ its index-$2$ subgroup (Example \ref{ex:sym0}). $H$ is finitely generated: by the transposition $(12)$ and the translation $T$. It is not residually finite: we computed its largest residually finite quotient in Example \ref{ex:sym0}. However $H$ is LEF \cite{LEF}. Similarly, $H^+$ is finitely generated (by the cycle $(123)$ and $T$, or because it has finite index in $H$), not residually finite (again by Example \ref{ex:sym0}), but it is LEF (since a subgroup of a LEF group is clearly LEF).
\end{example}

\begin{example}
\label{ex:wr_lef}

Let $\Gamma, \Lambda$ be finitely generated LEF groups. Then their wreath product $\Gamma \wr \Lambda$ is finitely generated and LEF \cite[Theorem 2.4 (ii)]{LEF}. However, if $\Gamma$ is non-abelian and $\Lambda$ is infinite, then $\Gamma \wr \Lambda$ is not residually finite (Example \ref{ex:wr}).
\end{example}

A natural framework in which to see these properties is that of \emph{marked groups}, introduced by Grigorchuk in \cite{grigor_mg}. We use the point of view of normal subgroups, as in \cite{Cell} (see also \cite{limit}). Given a countable group $\Gamma$, the set of \emph{$\Gamma$-marked groups} is the set of isomorphism classes of quotients of $\Gamma$ up to automorphism, which is identified with the set of normal subgroups $\mathcal{N}(\Gamma)$ of $\Gamma$. The space of marked groups is this set endowed with the subspace topology $\mathcal{N}(\Gamma) \subset \mathcal{P}(\Gamma) \cong \{ 0, 1 \}^\Gamma$. This topology is totally disconnected, compact and - since $\Gamma$ is assumed to be countable - metrizable. 

\medskip

Given a class $\C$ closed under taking subgroups, both the residual property and local embeddability admit characterisations in terms of the space of marked groups:

\begin{theorem}[Gordon--Vershik, see {\cite[Proposition 3.4.3 and Corollary 7.1.20]{Cell}}]
\label{thm:mg}

Let $\mathcal{C}$ be a class of groups closed under taking subgroups. Let $\Gamma = \langle S \mid R \rangle$ be a countable group, $N \coloneqq \normal{R} \leq F_S$. Then:
\begin{enumerate}
\item $\Gamma$ is fully residually-$\mathcal{C}$ if and only if there exists a sequence $N_k \to \{ 1 \} \in \mathcal{N}(\Gamma)$ such that $\Gamma/N_k \in \mathcal{C}$ for all $k$.
\item $\Gamma$ is locally embeddable into $\mathcal{C}$ if and only if there exists a sequence $N_k \to N \in \mathcal{N}(F_S)$ such that $F_S/N_k \in \mathcal{C}$ for all $k$.
\end{enumerate}
\end{theorem}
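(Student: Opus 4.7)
The plan is to deduce both equivalences directly from the topology on $\mathcal{N}(\Gamma)$ and $\mathcal{N}(F_S)$: in $\{0,1\}^H$ with the product topology, a sequence of subsets $N_k$ converges to $M$ precisely when, for every finite $F \subseteq H$, one has $N_k \cap F = M \cap F$ for all sufficiently large $k$. Both parts then reduce to unpacking the target conditions.

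For Part 1, I would note that $N_k \to \{1\}$ in $\mathcal{N}(\Gamma)$ means exactly that for every finite $F \subseteq \Gamma \setminus \{1\}$, eventually $N_k$ avoids $F$. Conversely, $\Gamma$ is fully residually-$\mathcal{C}$ if and only if, for every finite $K \subseteq \Gamma$, there is a normal subgroup $N$ of $\Gamma$ with $\Gamma/N \in \mathcal{C}$ (using closure of $\mathcal{C}$ under subgroups) such that $N \cap (KK^{-1} \setminus \{1\}) = \emptyset$. Since $KK^{-1} \setminus \{1\}$ ranges over all finite subsets of $\Gamma \setminus \{1\}$ as $K$ varies, both conditions express the same statement. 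Enumerating $\Gamma \setminus \{1\}$ and applying the residual property to successive initial segments produces the sequence, and the converse is immediate.

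For the forward direction of Part 2, I would fix an exhaustion $F_S = \bigcup_k E_k$ by finite subsets. For each $k$, let $K_k \subseteq \Gamma$ be the image under $w \mapsto \overline{w}$ of all prefixes of words in $E_k$, together with $\overline{S} \cup \{1\}$; this is finite. Apply local embeddability to obtain a $K_k$-local embedding $f : \Gamma \to C_k \in \mathcal{C}$, and extend $s \mapsto f(\overline{s})$ to a homomorphism $\pi : F_S \to C_k$. A straightforward induction on word length, using that all prefixes of $w \in E_k$ project into $K_k$ and that $f$ is multiplicative on products from $K_k$, yields $\pi(w) = f(\overline{w})$ for every $w \in E_k$. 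Combined with injectivity of $f$ on $K_k$, this gives $\ker(\pi) \cap E_k = N \cap E_k$. Setting $N_k := \ker(\pi)$, the quotient $F_S/N_k$ embeds into $C_k$ and therefore lies in $\mathcal{C}$, and the sequence converges to $N$.

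The converse of Part 2 reverses the construction: given a finite $K \subseteq \Gamma$, fix a set-theoretic lift $g \mapsto \tilde{g}$ from $\Gamma$ to $F_S$, and pick $k$ large enough that $N_k$ agrees with $N$ on the finite set $E := \{\tilde{g}\tilde{h}^{-1} : g, h \in K\} \cup \{\tilde{g}\tilde{h}\tilde{gh}^{-1} : g, h \in K\}$. Then $f(g) := \tilde{g} N_k$ defines a $K$-local embedding $\Gamma \to F_S/N_k \in \mathcal{C}$: injectivity on $K$ follows since $g \neq h$ forces $\tilde{g}\tilde{h}^{-1} \notin N$ hence $\notin N_k$, while multiplicative compatibility follows since $\tilde{g}\tilde{h}\tilde{gh}^{-1} \in N \cap E = N_k \cap E$. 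The main bookkeeping subtlety is in the forward direction of Part 2: one must include all prefixes of $E_k$ in $K_k$, so that the inductive passage from the local embedding $f$ to the homomorphism $\pi$ really does detect the relations of $N$ on the preassigned finite set $E_k$.
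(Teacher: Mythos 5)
The paper does not prove this statement; it cites it to Ceccherini-Silberstein and Coornaert, so there is no paper proof to compare against, and I am assessing your argument on its own.

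Your overall strategy is correct and is the standard one: unpack convergence in $\{0,1\}^H$ as eventual agreement on finite subsets, and translate both properties into that language. Part 1 and the converse of Part 2 are fine as written. The forward direction of Part 2 has a small but genuine bookkeeping gap. First, you claim $K_k$ is finite, but $\overline{S}\cup\{1\}$ is infinite whenever $S$ is, and the theorem is stated for arbitrary countable $\Gamma$ (not just finitely generated ones); you should instead throw in only the images of the generators that actually occur in words of $E_k$. Second, and more importantly, the induction step $\pi(w_i)=\pi(w_{i-1})\pi(s_i^{\varepsilon_i})=f(\overline{w_{i-1}})f(\overline{s_i})^{\varepsilon_i}$ needs, in the case $\varepsilon_i=-1$, that $f(\overline{s_i})^{-1}=f(\overline{s_i}^{-1})$ and that $\overline{w_{i-1}}\cdot\overline{s_i}^{-1}$ can be recognized via multiplicativity of $f$; both require $\overline{s_i}^{-1}\in K_k$. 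But your $K_k$ contains only $\overline{S}$, not $\overline{S}^{-1}$, and since the paper assumes $S\cap S^{-1}=\emptyset$ these inverses are not supplied by $\overline{S}$, nor in general by the prefix images (a length-one prefix $s_i^{-1}$ only appears when it is the first letter of some word in $E_k$). The fix is to include in $K_k$ both $\overline{s}$ and $\overline{s}^{-1}$ for every generator $s$ appearing in $E_k$, together with $1$ and the prefix images; with that amendment your induction, the identity $\ker\pi\cap E_k=N\cap E_k$, and the rest of the argument all go through.
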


This point of view gives many more examples of finitely generated LEF groups that are not residually finite.

\begin{example}
\label{ex:small_canc}

Let $\Gamma = \langle S \mid R \rangle$, with $S$ finite and $R$ possibly infinite, be a presentation satisfying the $C'(1/6)$ small cancellation condition: we call $\Gamma$ a \emph{classical small cancellation group}. Letting as usual $N = \normal{R} \leq F_S$, there exists a sequence $N_k \to N \in \mathcal{N}(F_S)$, where the $N_k$ are normally generated by $k$ elements of $R$. Since the groups $F_S/N_k$ are defined by \emph{finite} $C'(1/6)$ presentation, they are hyperbolic \cite{Gromov2}, and moreover they are residually finite. The last statement follows by combining the following three deep results: finitely presented $C'(1/6)$ groups are cubulable \cite{Wise}, hyperbolic cubulable groups are virtually special-cubulable \cite{Agol}, and special-cubulable groups embed into RAAGs \cite{scc}. By Item $2.$ of Theorem \ref{thm:mg}, it follows that $\Gamma$ is LEF, being a marked limit of residually finite groups. Let us point out that this is a special property of the $C'(1/6)$ small cancellation condition: there exist finitely presented groups satisfying more relaxed small cancellation conditions that are not residually finite \cite{Wiserf}.

On the other hand there are many examples of (infinitely presented) classical small cancellation groups that are not residually finite. A classical example is a group constructed by Pride \cite{Pride}:
\[\Gamma = \langle a, b \mid a u_1, b v_1, a u_2, b v_2, \ldots \rangle,\]
where $u_n, v_n$ are well-chosen words in $a^n, b^n$ so that the presentation is $C'(1/6)$. For any such choice, this group is infinite and has no proper finite-index subgroup, so it is in particular not residually finite. For more examples see \cite[Section 2]{a:small} and the references therein; the authors also explain how to construct continuum-many isomorphism classes of non-residually finite classical small cancellation groups.
\end{example}

\subsection{Non-Archimedean fields}
\label{ss:nona}

See \cite{ANT, NFA} for more details. 

\medskip

Let $(\K, | \cdot |)$ be a normed field. If the group $|\K^\times| \leq \mathbb{R}^\times_{> 0}$ is discrete, it is either trivial, or of the form $r^\mathbb{Z}$, for some $0 < r < 1$, in the latter case we say that $\K$ is \emph{discretely valued}. The norm is \emph{non-Archimedean} if it satisfies the \emph{ultrametric inequality}, namely $|x + y| \leq \max\{ |x|, |y| \}$. Then the closed ball of radius $1$ is a local ring, called the \emph{ring of integers} and denoted by $\oo$, whose maximal ideal is the open ball of radius $1$, denoted by $\pp$. The quotient $\kk \coloneqq \oo / \pp$ is called the \emph{residue field} of $\K$, and its characteristic is called the \emph{residual characteristic} of $\K$. 

\medskip

If the induced topology is locally compact and non-discrete then $\K$ is called a \emph{local field}. Non-Archimedean local fields are precisely those that are discretely valued and have a finite residue field. It follows that $\oo$ is compact, and that the maximal ideal $\pp$ is principal, generated by an element $\uni$ such that $|\uni| = r$, using the notation above. Such an element is called a \emph{uniformiser}. A fundamental result is \emph{Ostrowski's Theorem} \cite[Proposition 5.2]{ANT}, which states that a non-Archimedean local field is either a finite extension of $\Qp$ (if it has characteristic $0$) or of $\Fp((X))$ (if it has characteristic $p$); in the latter case it is isomorphic to $\Fq((X))$, for $q$ a power of $p$.

\begin{example}
We write elements of $\Qp$ in the usual series form $x = \sum_{i \geq i_0} a_i p^i$, where $a_i \in \{0, \ldots, p-1\}$ and $a_{i_0} \neq 0$. Then $|x|_p \coloneqq p^{-i_0}$, so the norm takes values in $p^\mathbb{Z}$: in the previous notation we have $r = p^{-1}$. The ring of integers is $\oo = \Zp$ with uniformiser $\uni = p$ and maximal ideal $\pp = p \Zp$. The residue field is $\kk \cong \Fp$.
\end{example}

\begin{example}
$\Fq((X))$ is the field of Laurent series, so it consists of elements of the form $x = \sum_{i \geq i_0} a_i X^i$, where $a_i \in \Fq$ and $a_{i_0} \neq 0$. Then $|x|_q \coloneqq q^{-i_0}$, so the norm takes values in $q^\mathbb{Z}$: in the previous notation we have $r = q^{-1}$. The ring of integers is the ring of Laurent polynomials $\oo = \Fq[[X]]$ with uniformiser $\uni = X$ and maximal ideal $\pp = X \Fq[[X]]$. The residue field is $\kk \cong \Fq$.
\end{example}

\begin{example}
\label{ex:K|Qp}

Let $\K$ be a non-Archimedean local field, and let $\K' | \K$ be a finite extension. Then the norm on $\K$ extends uniquely to a norm on $\K'$, which it turn makes $\K'$ into a non-Archimedean local field \cite[Theorem 4.8]{ANT}. Since the extension is unique, the norm on $\K$ extends uniquely to a norm on the algebraic closure $\K^{alg}$ as well.
\end{example}

We next review the basics of functional analysis over the non-Archimedean local field $\K$, which are needed in Section \ref{s:BC}. A \emph{normed $\K$-vector space} is a $\K$-vector space $E$ endowed with a norm $\| \cdot \| \colon E \to \mathbb{R}_{\geq 0}$ that is positive-definite, $\K$-multiplicative, and satisfies the ultrametric inequality. If the induced metric is complete, we say that $E$ is a \emph{$\K$-Banach space}. The norm of $E$ need not take the same set of values as that of $\K$. If this is the case, we say that the norm on $E$ is \emph{solid}. In our case $\K$ is a local field, so it is in particular complete and discretely valued: over such fields, Banach spaces with a solid norm are isometrically classified \cite[Theorem 2.5.4]{NFA}.

Given two normed $\K$-vector spaces $E$ and $F$, a linear map $T \colon E \to F$ is continuous if and only if it is bounded, that is if and only if $\| T \|_{op} = \inf \{ C \geq 0 : \|Tx\|_F \leq C \| x \| \text{ for all } x \in E \} < \infty$. This applies in particular to the case when $F = \K$; then the space of bounded linear maps is called the continuous dual $E^*$, and is a normed $\K$-vector space endowed with the operator norm $\| T \|_{op}$. This is in general not equal to $\sup \{ |Tx|_{\K} : \|x\| \leq 1 \}$, but it is when the norm on $E$ is solid. Since $\K$ is complete, a dual space is always Banach.

\begin{definition}
A \emph{normed $\K[\Gamma]$-module} is a normed $\K$-vector space $E$ with a linear isometric action of the group $\Gamma$. If $E$ is Banach, we say that $E$ is a \emph{Banach $\K[\Gamma]$-module}. If $E = F^*$ is the dual of a $\K[\Gamma]$-module endowed with the operator norm and the contragradient action, we say that $E$ is a \emph{dual $\K[\Gamma]$-module}.
\end{definition}

The following basic theorem of functional analysis holds also in the non-Archimedean context:

\begin{theorem}[Open Mapping Theorem, see {\cite[Theorem 2.1.17]{NFA}}]
\label{thm:open}

Let $T \colon E \to F$ be a bounded surjective linear map between Banach spaces. Then $T$ is open.
\end{theorem}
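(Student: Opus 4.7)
The plan is to follow the classical Baire category proof of the open mapping theorem, with substantial simplifications due to the ultrametric structure.

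First, apply the Baire Category Theorem. Fixing a uniformizer $\uni$ of $\K$, surjectivity of $T$ gives $F = \bigcup_{n \in \mathbb{N}} T(\uni^{-n} B_E)$, where $B_E$ denotes the closed unit ball of $E$: indeed any $y = Tx$ lies in $T(\uni^{-n} B_E)$ for $n$ large enough that $\|x\| \leq |\uni|^{-n}$. Since $F$ is Baire, some $\overline{T(\uni^{-n} B_E)} = \uni^{-n} \overline{T(B_E)}$ has non-empty interior; after rescaling, $\overline{T(B_E)}$ contains a ball $B_F(y_0, r)$ of positive radius around some point $y_0$.

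Next, translate this ball to the origin. The ultrametric inequality makes $B_E$ an additive subgroup of $E$, hence $T(B_E)$ and its closure are subgroups of $F$. Since $y_0 \in \overline{T(B_E)}$, for every $y \in B_F(0, r)$ we have $y_0 + y \in B_F(y_0, r) \subset \overline{T(B_E)}$, so $y = (y_0 + y) - y_0 \in \overline{T(B_E)}$. Thus $B_F(0, r) \subset \overline{T(B_E)}$, with no loss of radius (contrast with the real case, where convex-balance arguments typically cost a factor of $2$).

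The main step is to pass from the closure to the image itself, using completeness of $E$. Scaling the inclusion just obtained by $\uni^k$ yields $B_F(0, r|\uni|^k) \subset \overline{T(\uni^k B_E)}$ for all $k \geq 0$. Given $y \in B_F(0, r)$, I would iteratively construct a sequence $x_k \in E$ with $\|x_k\| \leq |\uni|^k$ and
$$\bigl\|y - T(x_0 + \cdots + x_k)\bigr\| \leq r |\uni|^{k+1},$$
which is possible at each step by density. The ultrametric inequality together with $\|x_k\| \to 0$ force the partial sums to be Cauchy, so completeness of $E$ yields a limit $x = \sum_k x_k$ with $\|x\| \leq \max_k \|x_k\| \leq 1$. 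By continuity of $T$ we have $Tx = y$, hence $B_F(0, r) \subset T(B_E)$, and translation invariance implies that $T$ is open.

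I do not expect any serious obstacles: the non-Archimedean setting is if anything gentler than the real one, since closed balls are subgroups and ultrametric series converge as soon as their terms tend to zero. The only point requiring mild care is setting up the scaling in the iterative step so that the approximating series stays inside $B_E$ rather than just inside $E$.
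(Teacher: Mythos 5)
The paper does not prove this theorem: it is cited from \cite[Theorem 2.1.17]{NFA} as a standard fact of non-Archimedean functional analysis. Your proof is correct and is precisely the classical Baire-category argument adapted to the ultrametric setting, which is also what the cited reference uses; the simplifications you flag (closed balls are additive subgroups, so the translation step loses no radius, and the telescoping series converges by the ultrametric criterion alone) are the genuine points where the non-Archimedean case is tidier than the real one.
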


\subsection{Splitting and lifting}
\label{ss:preli:split}

See \cite[Chapter IV]{Brown} and \cite[Chapter 7]{Rotman} for more details. These results will be relevant starting from Section \ref{s:vpropi}. All statements -- apart from Lemma \ref{lem:cohopk} -- are standard, but we recall them here since we will need some specific constructions in the sequel. 

\medskip

Let $1 \to N \to E \to Q \to 1$ be a group extension. A \emph{splitting} is a homomorphic section $ \sigma \colon Q \to E$: if one exists we say that the extension \emph{splits}, which is equivalent to $E \cong N \rtimes Q$, and the image of this section is a \emph{complement of $N$}. Consider splittings $\sigma_1, \sigma_2 \colon Q \to E$ with complements $Q_1, Q_2$. The complements are \emph{conjugate} if there exists $g \in E$ such that $gQ_1g^{-1} = Q_2$, which implies that there exists $g \in N$ such that $g \sigma_1 g^{-1} = \sigma_2$.
Because of this we may refer to the splittings themselves being conjugate, and the conjugating element may be assumed to lie in $N$.

Splitting problems are a special case of the more general \emph{lifting problems}: given a group $\Gamma$, and a homomorphism $\Gamma \xrightarrow{\f} G/N$, can this be lifted to a homomorphism $\Gamma \xrightarrow{\ff} G$? Are all lifts conjugate?
\[\begin{tikzcd}
	& G \\
	\Gamma \\
	& {G/N}
	\arrow["\f"', from=2-1, to=3-2]
	\arrow[from=1-2, to=3-2]
	\arrow["\ff", dashed, from=2-1, to=1-2]
\end{tikzcd}\]

A splitting problem is the special case in which $\Gamma = G/N$ and $\f$ is the identity. But every lifting problem can be reduced to a splitting problem as follows. Consider the pullback $G \times_{\f} \Gamma \coloneqq \{ (x, g) \in G \times \Gamma : xN = \f(g) \in G/N \}$, denote by $pr_{1, 2}$ the natural projections, and notice that there is a natural embedding $j \colon N \to G \times_{\f} \Gamma : n \mapsto (n, 1)$. Then we have the following commutative diagram with exact rows:
\[\begin{tikzcd}
	1 & N & {G \times_{\f} \Gamma} & \Gamma & 1 \\
	1 & N & G & {G/N} & 1.
	\arrow[from=1-1, to=1-2]
	\arrow["{pr_2}", from=1-3, to=1-4]
	\arrow[from=1-4, to=1-5]
	\arrow[from=2-1, to=2-2]
	\arrow[hook, from=2-2, to=2-3]
	\arrow[from=2-3, to=2-4]
	\arrow[from=2-4, to=2-5]
	\arrow["{=}"', from=1-2, to=2-2]
	\arrow["{pr_1}", from=1-3, to=2-3]
	\arrow["\f", from=1-4, to=2-4]
	\arrow["j", from=1-2, to=1-3]
\end{tikzcd}\]
The lift $\ff$ exists if and only if the exact sequence on the top row splits.
Moreover, two lifts are conjugate if and only if the corresponding two splittings are conjugate, and by the discussion above the conjugating element may be chosen to lie in $N$. Lifting problems will appear more naturally in this paper, so we will state our results in those terms.

The following will be the fundamental tool in Section \ref{s:vpropi} \cite[Theorem 7.41]{Rotman}:

\begin{theorem}[Schur--Zassenhaus, see {\cite[Theorem 7.41]{Rotman}}]
\label{thm:SZ}

Let $\Gamma, N$ be finite groups such that the orders of $\Gamma$ and $N$ are coprime. Then, whenever $N \unlhd G$, every homomorphism $\Gamma \to G/N$ lifts to a homomorphism $\Gamma \to G$, and any two lifts are $N$-conjugate.
\end{theorem}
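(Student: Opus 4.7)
The plan is to reduce to a splitting problem via the pullback construction from Subsection \ref{ss:preli:split}, and then prove existence and conjugacy of splittings by induction on $|N|$, treating the abelian case cohomologically and reducing the non-abelian case via characteristic subgroups.

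First, replacing $G$ by the pullback $G \times_\f \Gamma$ reduces the statement to the following: for any extension $1 \to N \to E \to \Gamma \to 1$ of finite groups with $\gcd(|N|, |\Gamma|) = 1$, a splitting $\Gamma \to E$ exists, and any two splittings are conjugate by an element of $N$. I would then induct on $|N|$. When $N$ is abelian, equivalence classes of such extensions are classified by $H^2(\Gamma, N)$, while $N$-conjugacy classes of splittings are parametrised by $H^1(\Gamma, N)$. Both groups are annihilated by $|\Gamma|$ (via the composition of corestriction and restriction to the trivial subgroup) and by the exponent of $N$, hence by $|N|$; coprimality then forces them to vanish, giving existence and uniqueness simultaneously in the abelian case.

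For the inductive step with $N$ non-abelian, set $K := [N, N]$, a non-trivial proper characteristic subgroup of $N$, hence normal in $E$. Applying the induction hypothesis to $1 \to N/K \to E/K \to \Gamma \to 1$ --- an extension with $|N/K| < |N|$ still coprime to $|\Gamma|$ --- yields a subgroup $H \leq E$ with $H/K$ complementing $N/K$. Then $1 \to K \to H \to \Gamma \to 1$ has $|K| < |N|$, so induction produces a complement of $K$ in $H$, which is the desired complement of $N$ in $E$. Conjugacy reduces similarly: two complements $H_1, H_2 \leq E$ project to complements of $N/K$ in $E/K$ that are conjugate by some $nK$ by induction; after replacing $H_1$ with $nH_1n^{-1}$, both $H_1$ and $H_2$ become complements of $K$ inside a common subgroup, and a second application of induction concludes.

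The main obstacle is that this inductive reduction via the derived subgroup only terminates at the abelian base case when $N$ is solvable; the coprimality hypothesis together with the Feit--Thompson odd order theorem guarantees that one of $N$ or $\Gamma$ has odd order and is therefore solvable, so after possibly running a symmetric induction on $|\Gamma|$ (using characteristic subgroups of $\Gamma$ instead) when $\Gamma$ is the solvable one, the argument goes through. Beyond this subtle point, the cohomological vanishing of $H^1$ and $H^2$ in the abelian base case is the essential technical input; the rest is bookkeeping.
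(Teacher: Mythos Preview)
The paper does not prove this theorem; it cites Rotman and offers only a one-line sketch (``one first reduces to the case where $N$ is an elementary abelian $p$-group, and then uses the fact that if $Q$ has order coprime to $p$, then $\HH^n(Q; N) = 0$''), together with the remark that the conjugacy statement requires one of $N$, $\Gamma$ to be solvable and hence depends on Feit--Thompson in general. So there is no detailed paper proof to compare against, only this sketch.

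Your reduction for existence via $K = [N,N]$ differs from the sketched approach and is weaker: it only terminates when $N$ is solvable, since for $N$ perfect one has $[N,N] = N$ and the induction stalls immediately. The standard reduction that the paper alludes to --- via a Sylow $p$-subgroup of $N$, the Frattini argument, and then the centre of a normal Sylow subgroup --- works for arbitrary $N$ and proves existence \emph{without} invoking Feit--Thompson. Your write-up therefore makes the existence part depend on the Odd Order Theorem unnecessarily, whereas in the paper (and in Rotman) only the conjugacy part does.

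For conjugacy, your inductive argument is correct when $N$ is solvable and matches the standard proof. However, the phrase ``running a symmetric induction on $|\Gamma|$ (using characteristic subgroups of $\Gamma$ instead)'' is too vague to stand as written: the roles of $N$ and $\Gamma$ in the extension are not symmetric, and one must say precisely how a normal subgroup $M \leq \Gamma$ is used (typically via its preimage in $E$ and a Frattini-type argument inside that preimage). This can be made rigorous, but as stated it is a gap in exposition rather than a complete argument.
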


The conjugacy statement admits a simple proof in the case in which either $N$ or $\Gamma$ is solvable. This will be the setting for our applications to $\GGL(\oo)$-stability: in fact $N$ will even be a $p$-group. The general case follows from the Odd Order Theorem \cite{FT}: if $N$ and $\Gamma$ have coprime order then at least one of them has odd order, and so is solvable. To the author's knowledge, no simpler proof is known.

The Schur--Zassenhaus Theorem alone will be enough for Section \ref{s:vpropi}. In Section \ref{s:char0} we will need a stronger lifting criterion, under additional hypotheses on $N$. Recall that given an extension $1 \to N \to E \to Q \to 1$ with $N$ abelian, the action of $E$ on $N$ by conjugacy induces an action of $Q$ on $N$, which defines cohomology groups $\HH^\bullet(Q; N)$. To this extension one associates a second cohomology class $[E] \in \HH^2(Q; N)$. A cocycle representing it can be defined as follows: given a (set-theoretic) section $\tilde{\sigma} \colon Q \to E$, define $z \colon Q \times Q \to N : (g, h) \mapsto z(g, h) = \tilde{\sigma}(gh)^{-1} \tilde{\sigma}(g) \tilde{\sigma}(h)$. If the extension splits, let us fix a splitting $E \cong N \rtimes Q$ (which as a set is just the cartesian product $N \times Q$). Then each splitting $\sigma \colon Q \to N \rtimes Q$ defines a first cohomology class $[\sigma] \in \HH^1(Q; N)$. A cocycle representing it can be defined by $c \colon Q \to N : g \mapsto c(g)$ where $\sigma(g) = (c(g), g) \in N \rtimes Q$.

\begin{theorem}[Schreier, see {\cite[Chapter IV, Proposition 2.3 and Theorem 3.1.2]{Brown}}]
The extension $1 \to N \to E \to Q \to 1$ (where $N$ is abelian) splits if and only if the associated cohomology class $[E] \in \HH^2(Q; N)$ vanishes. Two splittings $\sigma_1, \sigma_2$ are conjugate if and only if the associated cohomology classes $[\sigma_1], [\sigma_2] \in \HH^1(Q; N)$ coincide.
\end{theorem}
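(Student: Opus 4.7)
My plan is to verify each equivalence through direct computation in the extension, checking in each case that the displayed formula produces a genuine cocycle, that any auxiliary choices lead to cohomologous cocycles, and that vanishing (respectively equality) of cohomology classes translates into the desired geometric property.

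For the first equivalence, I would begin by noting that $z(g,h) = \tilde\sigma(gh)^{-1} \tilde\sigma(g) \tilde\sigma(h)$ does take values in $N$, since its projection to $Q$ is trivial. A direct expansion of the associativity identity $(\tilde\sigma(g)\tilde\sigma(h))\tilde\sigma(k) = \tilde\sigma(g)(\tilde\sigma(h)\tilde\sigma(k))$, combined with the fact that conjugation by $\tilde\sigma(g)$ descends to the $Q$-action of $g$ on $N$, produces the standard $2$-cocycle identity. Replacing $\tilde\sigma$ by $\tilde\sigma \cdot c$ for any $1$-cochain $c : Q \to N$ modifies $z$ by the coboundary $\delta c$, so that $[E] \in \HH^2(Q;N)$ is a well-defined invariant of the extension. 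If the extension splits we may take $\tilde\sigma$ itself to be a splitting, giving $z \equiv 1$ and hence $[E] = 0$. Conversely, if $z = \delta c$ for some $1$-cochain $c$, then the modified section $\sigma := c^{-1}\tilde\sigma$ satisfies $\sigma(gh) = \sigma(g)\sigma(h)$ identically, i.e. it is a splitting.

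For the second equivalence, assuming the extension splits I identify $E$ with $N \rtimes Q$ and write $\sigma_i(g) = (c_i(g), g)$. Expanding $\sigma_i(gh) = \sigma_i(g)\sigma_i(h)$ in the semidirect product yields $c_i(gh) = c_i(g) \cdot {}^g c_i(h)$, which (using that $N$ is abelian) is exactly the $1$-cocycle identity for $c_i$. If $\sigma_2 = (n,1)\,\sigma_1\,(n,1)^{-1}$ for some $n \in N$---and the preliminaries noted above guarantee the conjugating element may be chosen in $N$---then expanding the product in $N \rtimes Q$ gives $c_2(g) = n \cdot c_1(g) \cdot {}^g n^{-1}$, which rearranges (since $N$ is abelian) to $c_2(g)\, c_1(g)^{-1} = n \cdot {}^g n^{-1} = (\delta n)(g)$; hence $[\sigma_1] = [\sigma_2]$ in $\HH^1(Q;N)$. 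Conversely, any $0$-cochain $n$ witnessing the equality of classes provides, via conjugation by $(n,1)$, an element taking $\sigma_1$ to $\sigma_2$.

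There is no serious conceptual obstacle, only careful bookkeeping of conventions: the exact shape of the cocycle identities, and the sign with which coboundaries appear, depend on the choice of left versus right action of $Q$ on $N$ and on the order in the defining formulas for $z$ and $c$. I would fix these once at the outset, consistently with the formulas given in the statement, after which the two equivalences reduce to the routine computations inside the extension sketched above.
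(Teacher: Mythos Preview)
The paper does not actually prove this theorem: it is stated with a citation to Brown's textbook and used as a black box in the subsequent Lemma~\ref{lem:cohopk}. Your sketch is the standard direct verification and is correct; there is nothing to compare against on the paper's side.
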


So cohomology vanishing gives rise to splitting and conjugacy results. The Schur--Zassenhaus Theorem is proved this way: one first reduces to the case where $N$ is an elementary abelian $p$-group, and then uses the fact that if $Q$ has order coprime to $p$, then $\HH^n(Q; N) = 0$ for all $n \geq 1$. The following lemma strengthens this last step:

\begin{lemma}
\label{lem:cohopk}

Let $\Gamma$ and $N$ be finite groups, with $N$ a $\Zpk$-module, and $\nu_p(|\Gamma|) \leq l \leq k$. Let $H \coloneqq p^l \cdot N$, which is a characteristic subgroup of $N$. Then, whenever $G$ is a group containing $N$ as a normal subgroup, for every homomorphism $\Gamma \xrightarrow{\f} G/H$, the homomorphism $\Gamma \xrightarrow{\of} G/N$, obtained by composing $\f$ with the quotient map, lifts to a homomorphism $\ff \colon \Gamma \to G$.
\[\begin{tikzcd}
	& G \\
	\Gamma & {G/H} \\
	& {G/N}
	\arrow["\ff", dashed, from=2-1, to=1-2]
	\arrow["{\overline{\f}}"', from=2-1, to=3-2]
	\arrow["\f"', from=2-1, to=2-2]
	\arrow[from=2-2, to=3-2]
	\arrow[from=1-2, to=3-2, bend left = 50]
\end{tikzcd}\]
Moreover, if they exist, any two lifts of $\f$ are $N$-conjugate.
\end{lemma}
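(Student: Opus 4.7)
The plan is to translate the lifting problem into a cohomological vanishing statement via the formalism recalled in Subsection~\ref{ss:preli:split}. Since $N$ is abelian, the existence of a lift $\ff : \Gamma \to G$ of $\of$ is equivalent to the vanishing of the class $[E] \in \HH^2(\Gamma; N)$ of the pullback extension $1 \to N \to G \times_{\of} \Gamma \to \Gamma \to 1$, where $\Gamma$ acts on $N$ via $\of$ and conjugation in $G$. A set-theoretic section $\tilde{\sigma} : \Gamma \to G$ chosen to lift the given $\f$, so that $\tilde{\sigma}(g) H = \f(g)$, yields a representing 2-cocycle $c(g, h) = \tilde{\sigma}(gh)^{-1}\tilde{\sigma}(g)\tilde{\sigma}(h)$ which automatically takes values in $H = p^l N$, because $\f$ itself is a homomorphism.

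The main argument is then a Bockstein-style lift of this cocycle. Since $c$ lands in $p^l N$, I would pointwise pick a 2-cochain $\tilde{c} : \Gamma^2 \to N$ with $p^l \tilde{c} = c$. Its coboundary $\delta \tilde{c}$ is killed by $p^l$, hence takes values in $N[p^l]$, and so represents a class in $\HH^3(\Gamma; N[p^l])$. Once this class is seen to vanish, I can modify $\tilde{c}$ by a 2-cochain valued in $N[p^l]$ to obtain a genuine 2-cocycle $\tilde{c}' \in \ZZ^2(\Gamma; N)$ with $p^l \tilde{c}' = c$. Combined with the fact that $\HH^*(\Gamma; N)$ is annihilated by $p^l$ --- which follows from the standard $|\Gamma|$-annihilation of group cohomology of finite groups together with the $p$-primary nature of $N$ and the assumption $\nu_p(|\Gamma|) \leq l$ --- this yields $[E] = [c] = p^l [\tilde{c}'] = 0$, whence $\ff$ exists.

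The conjugacy statement is handled by a parallel Bockstein procedure one degree lower. Two lifts $\ff_1, \ff_2$ of $\f$ differ by a crossed homomorphism $b : \Gamma \to N$ whose values lie in $H$ --- because $\ff_1$ and $\ff_2$ agree modulo $H$ --- and whose class in $\HH^1(\Gamma; N)$ measures the failure of $N$-conjugacy. Lifting $b$ to $\tilde{b} : \Gamma \to N$ with $p^l \tilde{b} = b$, checking that $\delta \tilde{b}$ lands in $N[p^l]$, and invoking the vanishing of the corresponding class in $\HH^2(\Gamma; N[p^l])$ produces a crossed homomorphism $\tilde{b}' \in \ZZ^1(\Gamma; N)$ with $p^l \tilde{b}' = b$. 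Since $\HH^1(\Gamma; N)$ is also $p^l$-torsion, $[b] = p^l [\tilde{b}'] = 0$, and the two lifts are $N$-conjugate.

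The main obstacle is precisely the verification that the obstruction classes in $\HH^3(\Gamma; N[p^l])$ and $\HH^2(\Gamma; N[p^l])$ on which the argument depends actually vanish on the specific classes we produce; a naive combination of the $p^l$-torsion statements is insufficient. I expect one needs either to induct through the filtration $N \supseteq pN \supseteq \cdots \supseteq p^k N = 0$ using dimension shifting, or to reduce to a Sylow $p$-subgroup of $\Gamma$, which by the hypothesis $\nu_p(|\Gamma|) \leq l$ has order at most $p^l$; in either case Schur--Zassenhaus (Theorem~\ref{thm:SZ}) handles the coprime-order base step, and the inductive Bockstein machinery then pushes $[E]$ and $[b]$ sufficiently far into the $p^l$-filtration to be killed by the torsion bound.
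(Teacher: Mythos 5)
Your reduction to a cohomological vanishing problem is the same as the paper's, and you have put your finger on precisely the step that the paper does not justify. The paper's proof concludes by asserting that, since the representing cocycle takes values in $H = p^l N$ and $p^l \cdot \HH^n(\Gamma; N) = 0$, the class must vanish; as you observe, the group of $n$-cocycles valued in $p^l N$ is in general strictly larger than $p^l \ZZ^n(\Gamma; N)$, so this is not formal. Your Bockstein attempt is the natural thing to try, and you are right that the required vanishing of the auxiliary classes in $\HH^{n+1}(\Gamma; N[p^l])$ does not follow from the torsion bound; that is an honest record of what is still missing.

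The gap is in fact unbridgeable: the implication, and with it the lemma as stated, fails. The conjugacy assertion already fails for $\Gamma = \mathbb{Z}/p\mathbb{Z}$, $N = \mathbb{Z}/p^2\mathbb{Z}$ with trivial action, $k=2$, $l = 1 = \nu_p(|\Gamma|)$, $H = pN$, and $G = N \times \Gamma$: the homomorphisms $g \mapsto (0,g)$ and $g \mapsto (pg, g)$ are distinct lifts of $\f \colon g \mapsto (\bar 0, g)$, yet $G$ is abelian, so they are not $N$-conjugate. Cohomologically, $g \mapsto pg$ is a crossed homomorphism valued in $H$ that represents a nonzero element of $\HH^1(\Gamma; N) = \Hom(\Gamma, N) \cong \mathbb{Z}/p\mathbb{Z}$, even though $p \cdot \HH^1(\Gamma; N) = 0$. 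The existence assertion fails whenever the Schur multiplier of $\Gamma$ is nontrivial: take $\Gamma = (\mathbb{Z}/p\mathbb{Z})^2$, $N = \mathbb{Z}/p^3\mathbb{Z}$ with trivial action, $k=3$, $l = 2 = \nu_p(|\Gamma|)$, $H = p^2 N$; push out a nonsplit central extension $1 \to H \to E \to \Gamma \to 1$ (a Heisenberg-type group) along $H \hookrightarrow N$ to obtain $G$. Then $N$ is central in $G$ with $G/N \cong \Gamma$ and $G$ nonabelian, so $1 \to N \to G \to \Gamma \to 1$ cannot split; but $G/H \cong \Gamma \times N/H$, and $\f \colon g \mapsto (g,0)$ is a homomorphism with $\of = \id_\Gamma$, whose lift to $G$ would be exactly a splitting. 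The structural reason is that $\iota_* \colon \HH^n(\Gamma;H) \to \HH^n(\Gamma;N)$ is nonzero: since $H[p] = N[p]$, it restricts to an isomorphism on the $\Hom(\HH_2(\Gamma;\mathbb{Z}), -)$ term of the universal coefficient sequence, and $\HH_2((\mathbb{Z}/p\mathbb{Z})^2;\mathbb{Z}) \cong \mathbb{Z}/p\mathbb{Z} \neq 0$. This also shows that your fallback of restricting to a Sylow $p$-subgroup (and invoking Schur--Zassenhaus for the complementary part) cannot repair the argument, since $\Gamma$ is already a $p$-group in both examples.
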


\begin{remark}
Note that a lift of $\f$ is also a lift of $\overline{\f}$, but the converse need not hold. The first part of the lemma only guarantees the existence of the latter, and the second one is a statement about the former.
\end{remark}

\begin{proof}
A lift of $\of$ exists if and only if the extension $1 \to N \to G \times_{\of} \Gamma \to \Gamma \to 1$ splits, which in turn is equivalent to the vanishing of the corresponding cohomology class in $\HH^2(\Gamma; N)$. A cocycle representing it is given by $(g, h) \mapsto \tilde{\sigma}(gh)^{-1} \tilde{\sigma}(g) \tilde{\sigma}(h)$, where $\tilde{\sigma} \colon \Gamma \to G \times_{\of} \Gamma$ is some (set-theoretic) section. We can choose $\tilde{\sigma} \colon \Gamma \to G \times_{\f} \Gamma \leq G \times_{\of} \Gamma$, and then the corresponding cocycle will take values in $H$. Similarly, a lift of $\f$ defines a class in $\HH^1(\Gamma; N)$ such that the cocycle representing it takes values in $H$. Since $H = p^l \cdot N$, to show that these classes vanish it suffices to show that $p^l \cdot \HH^n(\Gamma; N) = 0$ for $n = 1, 2$. By \cite[Corollary III.10.2]{Brown}, we have $|\Gamma| \cdot \HH^n(\Gamma; N) = 0$ for all $n \geq 1$. Write $|\Gamma| = p^a m$ where $a \leq l$ and $(m, p) = 1$. The latter condition ensures that $m$ is a unit in $\Zpk$, and so $m \cdot \HH^n(\Gamma; N) = \HH^n(\Gamma; N)$. Therefore $p^a \cdot \HH^n(\Gamma; N) = 0$ and so $p^l \cdot \HH^n(\Gamma; N) = 0$, which concludes the proof.
\end{proof}

%
%

\pagebreak

\section{Ultrametric families}
\label{s:fam}

The subject of this paper is stability with respect to families $\G$ all of whose groups are equipped with bi-invariant ultrametrics. Before moving to stability in the Section \ref{s:ultra}, here we prove some basic facts about such families, and present several examples.

\subsection{Basic facts and terminology}

\begin{definition}
We say that the metric group $(G, d)$ is \emph{ultrametric} if the ultrametric inequality holds:
\[d(g, k) \leq \max\{ d(g, h), d(h, k) \} \text{ for all } g, h, k \in G.\]
We say that the family $\G$ is \emph{ultrametric} if every $G \in \G$ is ultrametric. If moreover the groups in $\G$ are compact, the family $\G$ is called \emph{profinite}.
\end{definition}

The most important general property of ultrametric groups is contained in the following lemma. Recall that, given a metric group $(G, d)$, the closed ball of radius $\ee > 0$ around the identity is denoted by $G(\ee)$.

\begin{lemma}
\label{lem:ultra_gp}

Let $(G, d)$ be a group equipped with a bi-invariant ultrametric. Then $G(\ee)$ is a clopen normal subgroup of $G$.
\end{lemma}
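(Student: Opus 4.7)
The plan is to verify the four claims (subgroup, normal, closed, open) separately, each time exploiting exactly one of the two hypotheses: bi-invariance handles the algebraic content (subgroup and normality), while the ultrametric inequality handles the topological content (openness). Closedness is, of course, immediate from the fact that $G(\ee)$ is a closed ball around the identity.

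First, to see that $G(\ee)$ is a subgroup I would check the three axioms. The identity is in $G(\ee)$ since $d(1,1) = 0 \leq \ee$. For closure under inversion, bi-invariance yields $d(g^{-1}, 1) = d(1, g) = d(g, 1)$, so $g \in G(\ee)$ implies $g^{-1} \in G(\ee)$. For closure under multiplication, I would combine both hypotheses: if $g, h \in G(\ee)$, then by right-invariance $d(gh, h) = d(g, 1) \leq \ee$, and then the ultrametric inequality gives
$$d(gh, 1) \leq \max\{ d(gh, h), d(h, 1) \} \leq \ee.$$
This is the step where the ultrametric assumption is essential: in a general bi-invariant metric one would only get $d(gh, 1) \leq 2\ee$, which would force the balls $G(\ee)$ to fail to be subgroups.

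Normality follows by the same bi-invariance trick: for $g \in G(\ee)$ and arbitrary $h \in G$,
$$d(hgh^{-1}, 1) = d(hg, h) = d(g, 1) \leq \ee.$$

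Finally, to verify that $G(\ee)$ is open, I would use the standard fact that in any ultrametric space the closed ball $G(\ee)$ is also open. Explicitly, for $g \in G(\ee)$ and any $h$ with $d(h, g) \leq \ee$, the ultrametric inequality gives $d(h, 1) \leq \max\{ d(h, g), d(g, 1) \} \leq \ee$, so the ball of radius $\ee$ around $g$ is contained in $G(\ee)$. Since no step requires serious work, there is no real obstacle; the only subtlety worth flagging is that closure under multiplication genuinely needs both bi-invariance and the ultrametric inequality, which is the whole reason ultrametric groups are so much better behaved than Archimedean ones — and this is precisely the structural feature that will be leveraged throughout the rest of the paper.
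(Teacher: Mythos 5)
Your proof is correct and takes essentially the same approach as the paper; the only cosmetic difference is that you verify the subgroup axioms one by one whereas the paper applies the one-step subgroup criterion via $d(gh^{-1},1) = d(g,h) \leq \max\{d(g,1), d(1,h)\}$, but the ingredients (bi-invariance plus the ultrametric inequality) and the conclusion are identical.
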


\begin{proof}
Closed balls in ultrametric spaces are automatically open. If $g, h \in G(\ee)$, then
\[d(gh^{-1}, 1) = d(g, h) \leq \max\{ d(g, 1), d(1, h) \} \leq \ee.\]
This shows that $G(\ee)$ is a subgroup, and it is normal because $d$ is conjugacy-invariant. 
\end{proof}

This allows to quotient out balls, leading to the following definition:

\begin{definition}
\label{def:mq}

Let $(G, d)$ be an ultrametric group. The quotients $G/G(\ee)$ are called \emph{metric quotients} of $G$. Given an ultrametric family $\G$, we denote by $\MQ(\G)$ the family of metric quotients of groups in $\G$ and all subgroups thereof.
\end{definition}

\begin{remark}
Including subgroups of metric quotients in $\MQ(\G)$ will allow us to treat homomorphisms $\Gamma \to G/G(\ee)$ (which will be shortly introduced in Lemma \ref{lem:mq} and be used throughout) with a unified language, without stressing whether they are surjective or not.
\end{remark}

A metric quotient of $G$ is a discrete group, since each $G(\ee)$ is open, and comes equipped with a quotient metric. If $G$ is moreover compact, then all metric quotients are finite; this is why we call such families profinite.

The possibility of quotienting out balls has very strong consequences for stability and approximation. These are based on the following lemma:

\begin{lemma}
\label{lem:mq}

Let $\Gamma$ be a group, $(G, d)$ an ultrametric group and $\ee > 0$. Let $\f, \ff \colon \Gamma \to G$. Then:
\begin{enumerate}
\item $\defe(\f) \leq \ee$ if and only if the map $\f(\ee) \colon \Gamma \xrightarrow{\f} G \to G/G(\ee)$ is a homomorphism.
\item If both $\defe(\f), \defe(\ff) \leq \ee$, then $\dist(\f, \ff) \leq \ee$ if and only if $\f(\ee) = \ff(\ee)$.
\end{enumerate}
\end{lemma}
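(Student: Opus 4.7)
The plan is to unfold the definitions and use Lemma \ref{lem:ultra_gp}, which tells us that $G(\ee)$ is a normal subgroup, so the quotient $G/G(\ee)$ is a well-defined group and the coset map $G \to G/G(\ee)$ is a group homomorphism. The key observation I would use throughout is that by bi-invariance, for any $x, y \in G$, the inequality $d(x, y) \leq \ee$ is equivalent to $xy^{-1} \in G(\ee)$, i.e., $x$ and $y$ represent the same coset in $G/G(\ee)$.

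For item $1.$, I would apply this observation pointwise: $\defe_{g,h}(\f) = d(\f(gh), \f(g)\f(h)) \leq \ee$ is equivalent to $\f(gh)$ and $\f(g)\f(h)$ lying in the same $G(\ee)$-coset, which is precisely saying that $\f(\ee)(gh) = \f(\ee)(g) \f(\ee)(h)$. Taking the supremum over all $(g,h) \in \Gamma^2$ (or equivalently, quantifying over all $g,h$) translates the bound $\defe(\f) \leq \ee$ into the statement that $\f(\ee)$ is a homomorphism.

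For item $2.$, the argument is formally the same: $\dist_g(\f, \ff) = d(\f(g), \ff(g)) \leq \ee$ is equivalent to $\f(g)$ and $\ff(g)$ representing the same coset in $G/G(\ee)$, i.e., $\f(\ee)(g) = \ff(\ee)(g)$; quantifying over $g \in \Gamma$ yields the claim. I would note that the hypothesis $\defe(\f), \defe(\ff) \leq \ee$ is not actually needed for the logical equivalence as set maps, but it is natural to include it since it guarantees (by item $1.$) that $\f(\ee)$ and $\ff(\ee)$ are genuine homomorphisms, which is the setting in which the lemma will be applied.

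There is essentially no obstacle here; the proof is a direct translation of the metric conditions into coset conditions, made possible by the ultrametric property. I would keep the write-up to a few lines, since both statements follow immediately once Lemma \ref{lem:ultra_gp} is in hand.
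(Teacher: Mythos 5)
Your proof is correct and takes essentially the same approach as the paper: both translate the metric inequalities into coset equalities in $G/G(\ee)$ using bi-invariance and Lemma \ref{lem:ultra_gp}. Your remark that the hypothesis $\defe(\f), \defe(\ff) \leq \ee$ in item $2.$ is not needed for the set-theoretic equivalence but only to ensure $\f(\ee), \ff(\ee)$ are homomorphisms is accurate and a nice observation, though the paper does not make it explicit.
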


\begin{proof}
$1.$ The map $\f(\ee)$ is a homomorphism if and only if $\f(gh)G(\ee) = \f(g)\f(h) G(\ee)$, which is equivalent to $\defe_{g, h}(\f) \leq \ee$, for all $g, h \in \Gamma$.

\medskip

$2.$ The homomorphisms $\f(\ee)$ and $\ff(\ee)$ coincide if and only if $\f(g) G(\ee) = \ff(g) G(\ee)$, which is equivalent to $\dist_g(\f, \ff) \leq \ee$, for all $g \in \Gamma$.
\end{proof}

\begin{definition}
With the notation of the previous lemma, we refer to the homomorphisms $\{ \f(\ee) : \ee \geq \defe(\f) \}$ as the \emph{homomorphisms induced by} $\f$.
\end{definition}

\subsection{Examples}

A trivial example of bi-invariant metrics falls in the ultrametric framework.

\begin{example}
\label{ex:discr}

Given a discrete group $G$ it is always possible to define a \emph{discrete metric} on it by setting $d(g, h) = 0$ if $g = h$ and $d(g, h) = 1$ otherwise. This is a bi-invariant ultrametric. A family $\G$ of discrete groups equipped with their discrete metrics will be called a \emph{discrete family}.
\end{example}

Probabilistic stability problems with respect to this metric are mostly used in property testing (see e.g. \cite{PT} and \cite{BChap}). In our deterministic setting, we will see that stability with respect to such families is less interesting (see Example \ref{ex:discr:stab}). 

\medskip

Next, we present two constructions that allow to put natural ultrametrics on groups, and we apply them to give examples of ultrametric families. Then we move on to the main example that will be treated in this paper, namely integral matrices over non-Archimedean local fields. For the rest of this subsection, fix a strictly decreasing sequence $\eee = (\ek)_{k \geq 0} \subset (0, 1]$ with $\ez = 1$ and $\ek \to 0$.

\subsubsection{Groups acting on filtered sets}

Let $\Omega$ be a set with a (possibly finite) filtration $(\Omega_k)_{k \geq 1}$; that is $\Omega_k \subseteq \Omega_{k+1}$, and $\Omega = \bigcup_{k \geq 1} \Omega_k$. Let $G$ be a group acting faithfully on $\Omega$ preserving each $\Omega_k$. For $g \neq h$, define $d_{\Omega}(g, h) \coloneqq \ek$, where $k$ is the maximal integer such that $g|_{\Omega_k} = h|_{\Omega_k}$, and $k = 0$ if no such integer exists. This is a bi-invariant ultrametric: that it is a left-invariant ultrametric is clear, and right-invariance follows from the fact that $G$ preserves each $\Omega_k$.

\begin{example}
\label{ex:UT}

Let $G = T_n(\mathbf{R})$ be the group of invertible upper-triangular $(n \times n)$ matrices over a commutative unital ring $\mathbf{R}$. Then we can set $\Omega = \mathbf{R}^n$ and $\Omega_k = \operatorname{span}\{e_1, \ldots, e_k\}$, so $d_\Omega(g, h) \leq \ek$ if and only if the first $k$ columns of $g$ and $h$ are identical. We denote by $T(\mathbf{R})$ the family $\{ (T_n(\mathbf{R}), d) : n \geq 1 \}$, or $T(\mathbf{R})(\eee)$ if we want to emphasise the choice of the sequence $\eee$. Similarly we can consider the subgroup $UT_n(\mathbf{R})$ of upper-triangular matrices with ones on the diagonal, and obtain the family $UT(\mathbf{R})$.

Given $\ee > 0$, let $k \geq 1$ be the maximal integer such that $\ek \geq \ee$. Then $G(\ee) = G(\ek)$ is the subgroup consisting of upper-triangular matrices with a copy of $I_k$ in the upper-left corner. It follows that the metric quotient $G/G(\ee)$ is isomorphic to $T_k(\mathbf{R})$, or $T_n(\mathbf{R})$ if $k > n$. In particular all metric quotients are solvable, and even nilpotent-by-abelian, so $\MQ(T(\mathbf{R}))$ (Definition \ref{def:mq}) is contained in the class of nilpotent-by-abelian groups. Similarly metric quotients of $UT_n(\mathbf{R})$ are isomorphic to $UT_k(\mathbf{R})$ for some $k$, in particular they are all nilpotent.
\end{example}

\begin{example}
\label{ex:aut:filt}

See \cite{selfsim} for more details. Let $X$ be a finite alphabet and $\Omega \coloneqq X^*$ the regular rooted tree of finite words on $X$. We denote by $\Omega_k$ the set of words of length at most $k$. Then $G = \Aut(\Omega)$, the group of rooted tree automorphisms, can be equipped with the metric induced by this filtration, so $d_\Omega(g, h) \leq \ek$ if and only if $g$ and $h$ act the same way on $\Omega_k$, or equivalently they act the same way on the $k$-th level of the rooted tree. We denote by $\Aut(X_{\bullet}^*)$ the family $\{ (\Aut(X_n^*), d) : n \geq 1 \}$, where $X_n = \{ 1, \ldots, n \}$, or $\Aut(X_{\bullet}^*)(\eee)$ if we want to emphasise the choice of the sequence $\eee$.

Given $\ee > 0$, let $k \geq 1$ be the maximal integer such that $\ek \geq \ee$. Then $G(\ee) = G(\ek)$ is the stabiliser of $\Omega_k$, equivalently the stabiliser of the $k$-th level of the rooted tree. It follows that the metric quotient $G/G(\ee)$ is a $k$-fold iterated wreath product of the symmetric group $\Sym(X)$. In particular all metric quotients are finite $\pi$-groups, where $\pi$ is the set of primes $p \leq |X|$. So $\MQ(\Aut(X_\bullet^*))$ is the class of all finite groups (recall that we are also including subgroups of metric quotients in our definition of $\MQ$).
\end{example}

\subsubsection{Projective limits}

Let $(A_k)_{k \geq 1}$ be an inverse system of discrete groups, indexed by the directed set $\mathbb{N}$, and let $G$ be the corresponding projective limit. Then we can define $d(g, h) = \ee_k$, where $k \geq 1$ is the maximal integer such that $g$ and $h$ have the same image in $A_k$. This is a bi-invariant ultrametric.

Let $G_k$ be the kernel of the quotient map $G \to A_k$. Then $G_k = G(\ek)$ and $d(g, h) \leq \ek$ if and only if $g G_k = h G_k$. Given $\ee > 0$, let $k \geq 1$ be the minimal integer such that $\ee \geq \ek$. Then $G(\ee) = G(\ek) = G_k$ and it follows that the metric quotient $G/G(\ee)$ is isomorphic to $A_k$. 

\medskip

This construction applies to more general projective limits, where the defining system is countable but not necessarily indexed by $\mathbb{N}$. More precisely, if $G$ is the projective limit of $(A_i)_{i \in I}$, where $I$ is a countable directed set, then we may choose a sequence $(i_k)_{k \geq 1}$ that is order-isomorphic to $\mathbb{N}$ such that the corresponding system $(A_{i_k})_{k \geq 1}$ gives back $G$.

Note that given an inverse system $(A_i)_{i \in I}$, the set $I$ is countable if and only if $G$ admits a countable neighbourhood basis of the identity, which for topological groups is equivalent to being first-countable. This shows that countability of the index set is a necessary and sufficient condition to put a metric on $G$, since a metric space is always first-countable.

\begin{example}
\label{ex:prof}

Let $G$ be a first-countable profinite group. Then there exists a strictly nested sequence $(G_k)_{k \geq 1}$ of open finite-index normal subgroups that intersect trivially. The metric defined by $d(g, h) = \ek$, where $k$ is the maximal integer such that $gG_k = hG_k$, is a bi-invariant ultrametric. The corresponding metric quotients are the finite groups $G/G_k$. We denote this metric by $d = d((G_k)_{k \geq 1}, \eee)$.
\end{example}

Here are two specific examples of first-countable profinite groups metrised this way. The first we have already seen from another point of view.

\begin{example}
\label{ex:aut:prof}

Using the same notation as in Example \ref{ex:aut:filt}: let $X$ be a finite alphabet, $G = \Aut(X^*)$ and let $G_k$ be the stabiliser of the $k$-th level. Then $(G_k)_{k \geq 1}$ is a strictly nested sequence of finite-index normal subgroups that intersect trivially. Letting $d = d((G_k)_{k \geq 1}, \eee)$ be the corresponding metric from Example \ref{ex:prof}, we obtain the same metric as in Example \ref{ex:aut:filt}.
\end{example}

The next example falls into the more general construction, where the set $I$ is countable but not necessarily order-isomorphic to $\mathbb{N}$. See \cite{Neuk} for more details.

\begin{example}
\label{ex:gal:prof}

Let $K$ be a field, $L/K$ a Galois extension and $\Gal(L/K)$ the corresponding Galois group: this is the projective limit of the Galois groups of all intermediate finite Galois extensions. Let $G$ be the absolute Galois group, that is, the Galois group of the separable closure $K^{sep}/K$ of $K$: for every other Galois extension $L/K$ the group $\Gal(L/K)$ is a quotient of $G$ by a closed normal subgroup. Suppose that $G$ is first-countable. Then we can choose a strictly nested sequence $(G_k)_{k \geq 1}$ of open finite-index normal subgroups intersecting trivially, and project $G_k$ to $\Gal(L/K)$ for every other Galois extension: this provides an associated nested sequence $(G_k^L)_{k \geq 1}$ of finite-index open normal subgroups of $\Gal(L/K)$. This sequence of subgroups, and the sequence $\eee$, together define a metric as in Example \ref{ex:prof} on each $\Gal(L/K)$.

We denote by $\Gal(K)$ the profinite family obtained this way, or  $\Gal(K)((G_k)_{k \geq 1}, \eee)$ if we want to emphasise the choices of the sequences $(G_k)_{k \geq 1}$ and $\eee$.
\end{example}

Here is a characterisation of fields whose absolute Galois group is first-countable:

\begin{lemma}
\label{lem:gal_firstcount}

Let $L/K$ be a Galois extension of a field $K$. Then the following are equivalent:
\begin{enumerate}
\item $\Gal(L/K)$ is first-countable;
\item $L$ is a countable (increasing) union of intermediate finite Galois extensions;
\item There exist only countably many intermediate finite Galois extensions.
\end{enumerate}
\end{lemma}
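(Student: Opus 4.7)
My plan is to establish the two equivalences $(2) \Leftrightarrow (3)$ and $(1) \Leftrightarrow (3)$ separately, with the Galois correspondence between open normal subgroups of $\Gal(L/K)$ and intermediate finite Galois extensions providing the bridge for $(1) \Leftrightarrow (3)$.

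For $(2) \Leftrightarrow (3)$, the forward implication uses the fact that any intermediate finite Galois extension $M/K$ is finitely generated, say by $\alpha_1, \ldots, \alpha_n \in L$. If $L = \bigcup_k L_k$ with the $L_k$ finite Galois and nested, then all $\alpha_i$ lie in a common $L_k$, so $M \subseteq L_k$; since each finite extension $L_k$ has only finitely many intermediate fields, there are only countably many in total. For the converse, given an enumeration $(M_i)_{i \geq 1}$ of the intermediate finite Galois extensions, I would set $L_k := M_1 \cdots M_k$ and verify that every $\alpha \in L$ lies in the normal closure of $K(\alpha)$ inside $L$, which is an intermediate finite Galois extension and hence equals some $M_i \subseteq L_i$.

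For $(1) \Leftrightarrow (3)$, I would use the fact that open normal subgroups of $\Gal(L/K)$ form a neighborhood basis of the identity, and correspond bijectively to intermediate finite Galois extensions via $M \leftrightarrow \Gal(L/M)$. The direction $(3) \Rightarrow (1)$ is then immediate: countably many open normal subgroups give a countable neighborhood basis. For $(1) \Rightarrow (3)$, starting from a countable neighborhood basis $(U_n)_{n \geq 1}$ of the identity, I would choose open normal subgroups $N_n \subseteq U_n$ and replace each $N_n$ by $\bigcap_{i \leq n} N_i$ to obtain a nested sequence such that every open normal subgroup $N$ contains some $N_n$. Then $N$ is determined by its image in the finite quotient $\Gal(L/K)/N_n$, yielding only finitely many choices for each $n$ and hence countably many open normal subgroups overall.

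The step I expect to require the most care is the last one: deducing that first-countability implies \emph{countably many} open normal subgroups, not just a countable basis. The key observation is that the collection of open normal subgroups is closed under finite intersection, which allows the nested refinement, after which the finiteness of each quotient $\Gal(L/K)/N_n$ bounds the number of open normal subgroups containing $N_n$. The other steps are essentially formal manipulations with the Galois correspondence and the structure of directed unions of finite extensions.
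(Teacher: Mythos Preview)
Your proposal is correct. The argument for $(2) \Leftrightarrow (3)$ is essentially identical to the paper's (you use finite generation where the paper invokes the Primitive Element Theorem, which is a harmless variation). The difference lies in the other leg: the paper pairs $(1)$ with $(2)$ directly, observing that a countable neighbourhood basis of open normal subgroups corresponds under Galois correspondence to a countable increasing family of intermediate finite Galois extensions whose union is $L$, and conversely. You instead pair $(1)$ with $(3)$, which forces you to prove the slightly stronger intermediate claim that first-countability yields only countably many open normal subgroups in total (not just a countable basis among them); your counting argument via the finite quotients $\Gal(L/K)/N_n$ handles this correctly. The paper's route is marginally shorter since it avoids this counting step, but your route makes the finiteness of each layer more explicit; both are straightforward applications of the Galois correspondence.
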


\begin{proof}
$1. \Leftrightarrow 2.$ A countable nested sequence of open finite-index normal subgroups corresponds to a countable increasing sequence of intermediate finite Galois extensions. If we choose the subgroups to form a basis, so as to intersect trivially, the corresponding increasing union of extensions gives all of $L$, and viceversa. Moreover the existence of a countable union implies the existence of an increasing one, since the compositum of finitely many finite Galois extensions is again a finite Galois extension. 

\medskip

$2. \Leftrightarrow 3.$ Write $L = \bigcup_{i \geq 1} K_i$, where each $K_i$ is a finite intermediate Galois extension. By the Primitive Element Theorem, each intermediate finite Galois extension is contained in some $K_i$. By Galois Correspondence, there are only finitely many Galois subextensions of each $K_i$. The other implication is clear.
\end{proof}


The first easy example is the following:

\begin{example}
Let $K$ be a countable field. Then $K$ admits only countably many finite Galois extensions, since $K^{sep}$ itself is countable. For instance the absolute Galois group of $\mathbb{Q}$, or a finite field, is first-countable.
\end{example}

The second one is more involved. It relies on Krasner's Lemma \cite[8.1.6]{Neuk}: let $K$ be a non-Archimedean complete normed field, $\alpha \in K^{sep}$ with conjugates $\alpha = \alpha_1, \ldots, \alpha_d$. If $\beta \in K^{alg}$ satisfies $|\beta - \alpha| < |\alpha - \alpha_i|$ for all $i = 2, \ldots, d$, then $K(\alpha) \subset K(\beta)$.

\begin{example}
Let $K$ be a non-Archimedean complete normed field whose topology is separable. Then $K$ admits only countably many finite Galois extensions. This applies to all non-Archimedean local fields.
\begin{proof}
By hypothesis there exists a countable dense subset $D \subset K$, which we may assume is a field. We claim that $K^{sep} = \bigcup_{\beta \in D^{sep}} K(\beta)$. Since $D^{sep}$ is countable, this allows to conclude thanks to Lemma \ref{lem:gal_firstcount}. So let $\alpha \in K^{sep}$ and let $f(X) \in K[X]$ be the minimal polynomial of $\alpha$, whose roots $\alpha = \alpha_1, \ldots, \alpha_d$ are the Galois conjugates of $\alpha$, which are all distinct since $\alpha$ is separable. We can choose a polynomial $g(X) \in D[X]$ which is arbitrarily close to $f(X)$, in fact we can choose $g$ so that $|g(\alpha_j)| = |g(\alpha_j) - f(\alpha_j)|$ is arbitrarily small, for all $j$. Now write $g(X) = \prod (X - \beta_i)$, where $\beta_i \in K^{alg}$: this implies that for all $j$ there exists $i$ such that $|\alpha_j - \beta_i|$ is small, say smaller than $|\alpha_j - \alpha_k|$ for all $j \neq k$. Since $g$ has the same degree as $f$, and the $\alpha_j$ are all different, the association $\alpha_j \leftrightarrow \beta_i$ is a bijection. This implies in particular that all $\beta_i$ are distinct, so $g$ is separable. Moreover, if $\beta$ is the root close to $\alpha$, we have $K(\alpha) \subset K(\beta)$ by Krasner's Lemma, and $\beta \in D^{sep}$.
\end{proof}
\end{example}

\subsubsection{Integral matrices}

Let $(\K, | \cdot |)$ be a non-Archimedean field with ring of integers $\oo$, maximal ideal $\pp$ and residue field $\kk$ of characteristic $p \geq 0$ (recall that if $\K$ is a non-Archimedean local field, then the residual characteristic is always positive). Then the matrix group $\GGL(\oo)$ comes equipped with the $\ell^\infty$-norm induced by the inclusion into $\MM_n(\K)$:
\[\| M \| \coloneqq \max \{ |M_{ij}| : 1 \leq i, j \leq n \}.\]
This norm induces a distance by $d(A, B) = \| A - B \|$, which we already mentioned in the introduction in the special case $\K = \Qp$. The following lemma gives some basic but useful facts about the norm and the distance, which will be used repeatedly throughout the rest of the paper, occasionally without mention.

\begin{lemma}
\label{lem:GL}

Let $\| \cdot \|$ and $d$ be as above. Then
\begin{enumerate}
\item $d$ is an ultrametric.
\item $\| \cdot \|$ is submultiplicative; that is $\| A B \| \leq \| A \| \cdot \| B \|$.
\item $\| \cdot \|$ is equal to the operator norm, where $\K^n$ is also equipped with the $\ell^\infty$-norm.
\item If $A \in \GGL_n(\oo)$, then $\| A \| = 1$.
\item If $A \in \GGL_n(\oo)$ and $\| A - B \| < 1$, then $B \in \GGL_n(\oo)$.
\item $\| \cdot \|$ is invariant under left or right multiplication by elements of $\GGL_n(\oo)$.
\end{enumerate}
\end{lemma}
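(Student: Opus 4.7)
The plan is to verify each of the six items essentially from the definitions, exploiting the ultrametric inequality $|x+y| \leq \max\{|x|, |y|\}$ on $\K$ at every step; none of the individual items should require a subtle argument, and the only mild work is in items 4 and 5, where the integrality of $\oo$ and the structure of $\GGL_n(\oo)$ come into play.

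First I would dispatch items 1 and 2 directly from the coordinate formulas. For item 1, since each entry satisfies $|M_{ij} - N_{ij}| \leq \max\{|M_{ij}|, |N_{ij}|\}$, taking the max over $(i,j)$ yields $\|M-N\| \leq \max\{\|M\|, \|N\|\}$; applied with $M = A-B$ and $N = B-C$, this gives the ultrametric inequality for $d$. For item 2, expanding $(AB)_{ij} = \sum_k A_{ik} B_{kj}$ and applying the ultrametric inequality to the sum gives $|(AB)_{ij}| \leq \max_k |A_{ik}|\,|B_{kj}| \leq \|A\| \cdot \|B\|$, and taking the max over $(i,j)$ yields submultiplicativity. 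Item 3 then follows: the same estimate shows $\|Av\|_\infty \leq \|A\| \cdot \|v\|_\infty$ for all $v \in \K^n$, so the operator norm is at most $\|A\|$, while evaluating at the standard basis vectors $e_j$ gives $\|A e_j\|_\infty = \max_i |A_{ij}|$, and taking the max over $j$ recovers $\|A\|$.

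The main (and only) step that requires any thought is the reduction modulo $\pp$ used for items 4 and 5. For item 4, any $A \in \GGL_n(\oo)$ has entries in $\oo$, so $\|A\| \leq 1$; for the reverse inequality, I would use $\det(A) \in \oo^\times$ together with the Leibniz expansion $\det(A) = \sum_\sigma \operatorname{sgn}(\sigma) \prod_i A_{i\sigma(i)}$ and the ultrametric inequality to get $1 = |\det(A)| \leq \|A\|^n$, hence $\|A\| = 1$. For item 5, the hypothesis $\|A - B\| < 1$ means that every entry of $B$ lies in $\oo$ and $B \equiv A \pmod{\pp}$ entrywise; hence $\det(B) \equiv \det(A) \pmod{\pp}$, and since $\det(A)$ is a unit in $\oo$, so is $\det(B)$, giving $B \in \GGL_n(\oo)$.

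Finally item 6 is a formal consequence of items 2 and 4: for $U \in \GGL_n(\oo)$ one has $\|UA\| \leq \|U\| \cdot \|A\| = \|A\|$ by submultiplicativity, and the reverse bound $\|A\| = \|U^{-1}(UA)\| \leq \|U^{-1}\| \cdot \|UA\| = \|UA\|$ since $U^{-1} \in \GGL_n(\oo)$ also has norm one; the argument for right multiplication is identical. The main ``obstacle'' I anticipate is simply bookkeeping: remembering to invoke the ultrametric inequality (rather than the weaker triangle inequality) in the expansions for the matrix product and the determinant, which is what keeps the estimates sharp enough to give equalities rather than mere inequalities.
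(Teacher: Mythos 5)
Your proof is correct and follows essentially the same route as the paper's: coordinate-wise expansions with the ultrametric inequality for items 1--3, the Leibniz/determinant estimate $|\det A| \le \|A\|^n$ for item 4, reduction modulo $\pp$ for item 5, and the standard squeeze $\|M\| = \|A^{-1}AM\| \le \|AM\| \le \|M\|$ for item 6. The only superficial difference is in item 3, where the paper singles out the column attaining the maximum and evaluates at the corresponding $e_i$, whereas you evaluate at all $e_j$ and take the max --- the same computation either way.
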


\begin{proof}
$1.$ This follows directly from the fact that the norm on $\K$ induces an ultrametric. 

\medskip

$2.$ Let $A, B \in \MM_n(\K)$. Then
\begin{align*}
\| AB \| &= \max\limits_{1 \leq i, j \leq n} |(AB)_{ij}| = \max\limits_{1 \leq i, j \leq n} \left| \sum\limits_{k = 1}^n A_{ik} B_{kj} \right| \leq \max\limits_{1 \leq i, j, k \leq n} |A_{ik} B_{kj}| \\
&\leq \left( \max\limits_{1 \leq i, k \leq n} |A_{ik}| \right) \cdot \left( \max\limits_{1 \leq j, k \leq n} |B_{kj}| \right) = \|A\| \cdot \|B\|.
\end{align*}

$3.$ The submultiplicativity also holds for matrix-vector multiplication, with the same proof: $\|Ax\| \leq \| A \| \cdot \|x \|$ for all $x \in \K^n$, and so $\|A\|_{op} \leq \|A\|$. For the converse, suppose that $|A_{ij}|$ attains its maximum in the $i$-th column $A_i$. Then $\|A\|_{op} \geq \|A e_i\| = \|A_i\| = \|A\|$. 

\medskip

$4.$ Suppose that $A \in \GGL_n(\oo)$, then all coordinates are in $\mathfrak{o}$ which immediately implies $\|A\| \leq 1$. It implies moreover that $\det(A) \in \oo^\times$ so $|\det(A)| = 1$. Since $\det(A)$ is a polynomial on $A_{ij}$, this is not possible if $\|A\| < 1$. Note that the converse of $4.$ is not true: if $\| A \| = 1$, then $A$ is integral, but its determinant may lie in $\oo \, \backslash \, \oo^\times$, in which case $A^{-1}$ is not integral. 

\medskip

$5.$ Clearly $B \in \MM_n(\oo)$ so it suffices to show that $\det(B) \in \oo^\times$. This is because $A \equiv B \mod \pp$, so $\det(B \mod \pp) \neq 0 \in \kk$. 

\medskip

$6.$ Let $A \in \GGL_n(\oo)$ and $M \in \MM_n(\K)$. Then, using 2. and 4.:
\[\|M\| = \|A^{-1} A M \| \leq \|A^{-1}\| \cdot  \|AM\| = \|AM\| \leq \|A\| \cdot \|M\| = \|M\|.\]
Therefore $\|AM\| = \|M\|$. Similarly $\|MA\| = \|M\|$.
\end{proof}

The family of groups $\GGL_n(\oo)$ with the distance $d$ is thus an ultrametric family. We denote this family by $\GGL(\oo)$. A special case is the family $\GGL(\Zp)$ from the introduction. 

\medskip

In case $\K$ is a local field, then $\GGL(\oo)$ is a profinite family, and the ultrametric distance $d$ is also a special case of Example \ref{ex:prof}. Indeed, we can choose as a sequence $\ek \coloneqq |\uni|^k$ (where $\uni$ is a uniformiser), and $\GGL_n(\oo)_k$ to be the ball of radius $\ek$ around the identity. This can be explicitly identified as the \emph{congruence subgroup}:
\[\GGL_n(\oo)_k \coloneqq \{ I + \uni^k M : M \in \MM_n(\oo) \}.\]
The metric quotients are the finite matrix groups $\GGL_n(\oo / \pp^k)$. There is no restriction on the order of these groups: indeed every finite group embeds into $\GGL_n(\oo/\pp) = \GGL_n(\kk)$ for $n$ large enough. However there is a restriction on the order of the metric quotients of the \emph{principal congruence subgroup} $\GGL_n(\oo)_1$, something that will be crucial in Sections \ref{s:vpropi} and \ref{s:char0}.

\begin{lemma}
\label{lem:vprop}

The principal congruence subgroup $\GGL_n(\oo)_1$ is pro-$p$. More precisely, for all $k \geq 1$ there is an isomorphism:
\[\GGL_n(\oo)_k / \GGL_n(\oo)_{2k} \to \left( \MM_n( \ok ), + \right).\]
\end{lemma}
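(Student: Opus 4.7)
The plan is to construct the claimed isomorphism explicitly via the map
$$\f_k : \GGL_n(\oo)_k \to \bigl(\MM_n(\ok), +\bigr), \qquad I + \uni^k M \mapsto M \bmod \pp^k,$$
and then derive the pro-$p$ statement as a consequence. First I would observe that $\f_k$ is well-defined on elements: given $A \in \GGL_n(\oo)_k$, the matrix $M := \uni^{-k}(A - I) \in \MM_n(\oo)$ is uniquely determined by $A$, so its class in $\MM_n(\ok)$ is well-defined. Conversely, for any $M \in \MM_n(\oo)$ the element $I + \uni^k M$ automatically lies in $\GGL_n(\oo)_k$: it has distance $|\uni|^k < 1$ from $I$, so item $5$ of Lemma \ref{lem:GL} ensures invertibility over $\oo$.

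The main computation is the group homomorphism check, which is just the expansion
$$(I + \uni^k M)(I + \uni^k N) = I + \uni^k(M + N) + \uni^{2k} M N,$$
so that $\f_k\bigl((I + \uni^k M)(I + \uni^k N)\bigr) = (M + N) + \uni^k MN \equiv M + N \pmod{\pp^k}$. Surjectivity is then immediate from the preceding paragraph. For the kernel: $\f_k(I + \uni^k M) = 0$ iff $M \in \pp^k \MM_n(\oo)$, iff $\uni^k M \in \uni^{2k}\MM_n(\oo)$, iff $I + \uni^k M \in \GGL_n(\oo)_{2k}$. The first isomorphism theorem then gives the desired identification.

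It remains to deduce that $\GGL_n(\oo)_1$ is pro-$p$. The filtration $(\GGL_n(\oo)_k)_{k \geq 1}$ is a neighbourhood basis of the identity by clopen normal subgroups (as noted in the discussion preceding the lemma), so it suffices to show that each finite quotient $\GGL_n(\oo)_1 / \GGL_n(\oo)_{2^j}$ is a $p$-group. Applying the just-proved isomorphism with $k$ replaced by $2^i$ for $i = 0, 1, \ldots, j-1$ expresses each successive quotient $\GGL_n(\oo)_{2^i} / \GGL_n(\oo)_{2^{i+1}}$ as a subgroup of $\MM_n(\oo/\pp^{2^i})$, which is a finite additive group of order $|\kk|^{n^2 2^i}$, hence a $p$-group since $\operatorname{char}(\kk) = p$. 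An iterated extension of $p$-groups is a $p$-group, so $\GGL_n(\oo)_1 / \GGL_n(\oo)_{2^j}$ is a $p$-group, and taking the inverse limit over $j$ gives the pro-$p$ conclusion.

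There is no genuine obstacle here; the whole argument rests on the elementary expansion $(I + \uni^k M)(I + \uni^k N) = I + \uni^k(M+N) + \uni^{2k} MN$ together with the non-Archimedean fact that small perturbations of $I$ are automatically invertible. The only point requiring mild care is to verify that $\f_k$ is actually defined on all of $\GGL_n(\oo)_k$ (rather than only on $I + \uni^k \MM_n(\oo)$ a priori), which is precisely the content of the observation that these two sets coincide by item $5$ of Lemma \ref{lem:GL}.
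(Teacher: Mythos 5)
Your proof is correct and follows essentially the same route as the paper's: you define the same explicit map $I + \uni^k M \mapsto M \bmod \pp^k$, verify it is a homomorphism via the identical expansion $(I + \uni^k M)(I + \uni^k N) = I + \uni^k(M+N) + \uni^{2k}MN$, and identify the kernel as $\GGL_n(\oo)_{2k}$. The paper's proof is terser (it leaves surjectivity and the pro-$p$ deduction implicit), but the content is the same; your additional spelling-out of why this makes $\GGL_n(\oo)_1$ pro-$p$ is sound.
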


\begin{proof}
Define the map $\GGL_n(\oo)_k \to \MM_n(\ok) : (I + \uni^k M) \mapsto M \mod \uni^k$. This is a homomorphism:
\[(I + \uni^k M)(I + \uni^k N) = I + \uni^k(M + N + \uni^k MN),\]
and the kernel is precisely $\GGL_n(\oo)_{2k}$.
\end{proof}

\pagebreak

\section{Ultrametric stability}
\label{s:ultra}

Let $\G$ be an ultrametric family. This section is concerned with stability with respect to $\G$, without additional assumptions, the main goals being Theorems \ref{intro:thm:pw_un} and \ref{intro:thm:rf}. The main tool for the proof of Theorem \ref{intro:thm:pw_un} will be to rephrase stability in terms of presentations, following \cite{a:comm}: this is well-known in the general setting, but the ultrametric framework gives better quantitative estimates that we will use in the rest of the paper, so we go through the arguments in detail. The proof of Theorem \ref{intro:thm:rf} will be quite direct thanks to Lemma \ref{lem:mq}. We will end the section by giving complete solutions to uniform stability problems with respect to some of the families introduced in Section \ref{s:fam}.

\subsection{Lifting and inducing asymptotic homomorphisms}
\label{ss:free}

Let $\Gamma = \langle S \mid R \rangle$ be a presentation of $\Gamma$: for the moment we do not impose any finiteness condition. We define defects and distances for homomorphisms on the free group $F_S$ that are close to satisfying the relations; this is analogous to how one can look at homomorphisms of $\Gamma$ as homomorphisms defined of $F_S$ that satisfy the relations. The following definitions are due to Arzhantseva--P\u{a}unescu in the finitely presented case \cite{a:comm}, and to Becker--Lubotzky--Thom in the finitely generated case \cite{IRS}.

\begin{definition}
Given a map $\hf \colon F_S \to G \in \G$ we define the \emph{defect of $\hf$ at $r \in \normal{R}$} and the \emph{defect of $\hf$} to be
\[\defe_r(\hf) \coloneqq d_G(\hf(r), 1_G); \,\,\,\,\,\,\, \defe(\hf) \coloneqq \sup\limits_{r \in R} \defe_r(\hf).\]
Given two maps $\hf, \hff \colon F_S \to G \in \G$ we define their \emph{distance at $w \in F_S$} and their \emph{distance} to be
\[\dist_w(\hf, \hff) \coloneqq d_G(\hf(w), \hff(w)); \,\,\,\,\,\,\, \dist(\hf, \hff) \coloneqq \sup\limits_{s \in S} \dist_s(\hf, \hff).\]
\end{definition}

\begin{remark}
Note that $\defe(\hf) = 0$ if and only if $\hf$ descends to a homomorphism of $\Gamma$.
\end{remark}

We will show in Lemma \ref{lem:ultra_lift} the correspondence between these notions ``at the level of $F_S$'' and those ``at the level of $\Gamma$'' that we defined in the introduction. A good feature of these definitions is that they allow to give a unique quantity for the notions of defect (for finitely presented groups) and of distance (for finitely generated groups), even when dealing with pointwise asymptotic homomorphisms. For dealing with uniform asymptotic homomorphisms, one would instead have to define the defect with a supremum over \emph{all} relations $r \in \normal{R}$, and the distance with a supremum over \emph{all} words $w \in F_S$. It turns out that this is not necessary under the ultrametric assumption:

\begin{lemma}
\label{lem:ultra_free}

Let $\Gamma = \langle S \mid R \rangle$ and let $\hf, \hff \colon F_S \to G \in \G$ be homomorphisms. Then:
\begin{enumerate}
\item For every $r \in \normal{R}$ there exists a finite set $\{r_1, \ldots, r_k\} \subset R$ (independent of $\hf$) such that
\[\defe_r(\hf) \leq \max\limits_{i} \defe_{r_i}(\hf).\]
In particular $\sup\limits_{r \in \normal{R}} \defe_r(\hf) = \sup\limits_{r \in R} \defe_r(\hf) = \defe(\hf)$.
\item For every $w \in F_S$ there exists a finite set $\{ s_1, \ldots, s_k \} \subset S$ (independent of $\hf$ and $\hff$) such that
\[\dist_w(\hf, \hff) \leq \max\limits_i \dist_{s_i}(\hf, \hff).\]
In particular $\sup\limits_{w \in F_S} \dist_w(\hf, \hff) = \sup\limits_{s \in S} \dist_s(\hf, \hff) = \dist(\hf, \hff)$.
\end{enumerate}
\end{lemma}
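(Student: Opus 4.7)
The plan is to reduce everything to the basic observation that in a bi-invariant ultrametric group, the distance from a product to the identity is bounded by the maximum of the distances of the factors to the identity, and that conjugation and inversion preserve distances to the identity. I would first record this observation as a short preliminary computation: for $a,b \in G$, bi-invariance gives $d_G(a^{-1}, b^{-1}) = d_G(1, ba^{-1}) = d_G(a, b)$ and $d_G(gag^{-1}, 1) = d_G(a, 1)$; then for any $a_1, \ldots, a_k \in G$, iterating $d_G(a_1 \cdots a_k, 1) \leq \max\{d_G(a_1, 1), d_G(a_2 \cdots a_k, 1)\}$ (again by bi-invariance) yields $d_G(\prod_i a_i, 1) \leq \max_i d_G(a_i, 1)$.

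For part 1, I would write a given $r \in \langle\langle R \rangle\rangle$ as a finite product $r = \prod_{i=1}^{k} g_i r_i^{\epsilon_i} g_i^{-1}$ with $r_i \in R$, $g_i \in F_S$, $\epsilon_i \in \{\pm 1\}$; this decomposition depends only on $r$, not on $\hf$. Since $\hf$ is a homomorphism on $F_S$, $\hf(r) = \prod_i \hf(g_i) \hf(r_i)^{\epsilon_i} \hf(g_i)^{-1}$, and the preliminary observation gives $d_G(\hf(g_i) \hf(r_i)^{\epsilon_i} \hf(g_i)^{-1}, 1) = d_G(\hf(r_i)^{\epsilon_i}, 1) = d_G(\hf(r_i), 1) = \defe_{r_i}(\hf)$. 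The product bound then yields $\defe_r(\hf) \leq \max_i \defe_{r_i}(\hf)$, as required. Taking suprema over all $r \in \langle\langle R \rangle\rangle$ gives the second claim.

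For part 2, the strategy is essentially the same but even simpler. Write $w = s_1^{\epsilon_1} \cdots s_k^{\epsilon_k}$ with $s_i \in S$ and $\epsilon_i \in \{\pm 1\}$; this decomposition depends only on $w$. Since both $\hf$ and $\hff$ are homomorphisms, bi-invariance gives, for each index $j$,
\[
d_G\bigl( \hff(s_1)^{\epsilon_1} \cdots \hff(s_{j-1})^{\epsilon_{j-1}} \hf(s_j)^{\epsilon_j} \cdots \hf(s_k)^{\epsilon_k}, \; \hff(s_1)^{\epsilon_1} \cdots \hff(s_j)^{\epsilon_j} \hf(s_{j+1})^{\epsilon_{j+1}} \cdots \hf(s_k)^{\epsilon_k} \bigr) = d_G(\hf(s_j), \hff(s_j)),
\]
by cancelling the common left and right factors and using $d_G(a^{-1}, b^{-1}) = d_G(a, b)$. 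The ultrametric inequality applied along this chain of $k+1$ elements from $\hf(w)$ to $\hff(w)$ then gives $\dist_w(\hf, \hff) \leq \max_j \dist_{s_j}(\hf, \hff)$, and again passing to the supremum over $w$ completes the proof.

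I expect no serious obstacle: everything follows from bi-invariance, the ultrametric inequality, and the fact that $\hf, \hff$ are genuine homomorphisms on the free group. The only point to be mildly careful about is the handling of inverses, which is dispatched by the identity $d_G(a^{-1}, b^{-1}) = d_G(a, b)$ noted at the start.
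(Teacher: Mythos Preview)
Your proof is correct and follows essentially the same approach as the paper: both parts reduce to bi-invariance plus the ultrametric inequality applied to a product decomposition, with conjugation and inversion handled via $d_G(gag^{-1},1)=d_G(a,1)$ and $d_G(a^{-1},b^{-1})=d_G(a,b)$. Your presentation is in fact slightly more careful than the paper's (you track the signs $\epsilon_i$ explicitly, and your telescoping chain in part 2 is just a reorganization of the paper's product expansion $\hf(w)\hff(w)^{-1}=\prod_i \hf(s_i)\hff(s_i)^{-1}$ computed in $G/\,$conjugacy).
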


\begin{proof}
$1.$ Let $r \in \normal{R}$, and write $r = (w_1 r_1^{\sigma_1} w_1^{-1}) \cdots (w_k r_k^{\sigma_k} w_k^{-1})$ for $w_i \in F_S, r_i \in R$ and $\sigma_i \in \{ \pm 1 \}$. Then:
\begin{align*}
\defe_r(\hf) &= d_G(\hf(r), 1_G) = d_G(\hf(w_1 r_1^{\sigma_1} w_1^{-1}) \cdots \hf(w_k r_k^{\sigma_k} w_k^{-1}), 1_G) \\
&\leq \max \{ d_G(\hf(w_2 r_2^{\sigma_2} w_2^{-1}) \cdots \hf(w_k r_k^{\sigma_k} w_k^{-1}), 1_G), d_G(\hf(w_1 r_1^{\sigma_1} w_1^{-1}), 1_G) \} \\
&\leq \cdots \leq \max\limits_{i} d_G(\hf(w_i r_i^{\sigma_i} w_i^{-1}), 1_G) = \max\limits_{i} d_G(\hf(r_i^{\sigma_i}), 1_G) = \max\limits_i \defe_{r_i}(\hf).
\end{align*}

$2.$ Let $w = s_1^{\sigma_1} \cdots s_k^{\sigma_k} \in F_S$. Then, similarly:
\begin{align*}
\dist_w(\hf, \hff) &= d_G(\hf(w), \hff(w)) = d_G(\hf(s_1^{\sigma_1}) \cdots \hf(s_k^{\sigma_k}) \hff(s_k^{\sigma_k})^{-1} \cdots \hff(s_1^{\sigma_1})^{-1}, 1_G) \\
&\leq \max\limits_{i} d_G(\hf(s_i^{\sigma_i}) \hff(s_i^{\sigma_i})^{-1}, 1_G) = \max\limits_{i} d_G(\hf(s_i^{\sigma_i}), \hff(s_i^{\sigma_i})) = \max_i \dist_{s_i}(\hf, \hff). \qedhere
\end{align*}
\end{proof}

Let us now make the connection between such maps and those defined at the level of $\Gamma$:

\begin{lemma}
\label{lem:ultra_lift}
Let $\Gamma = \langle S \mid R \rangle$ and denote by $F_S \to \Gamma : w \mapsto \overline{w}$ the projection.
\begin{enumerate}
\item Let $\f \colon \Gamma \to G \in \G$ be a map. Let $\hf \colon F_S \to G$ be the unique homomorphism coinciding with $\f$ on $S$. Then for every word $w \in F_S$ there exists a finite set $\{(g_1, h_1), \ldots, (g_k, h_k) \} \subset \Gamma^2$ (independent of $\f$) such that
\[d_G(\f(\overline{w}), \hf(w)) \leq \max\limits_i \defe_{g_i, h_i}(\f).\]
In particular, if $r \in \normal{R}$ is a relation, then $\defe_r(\hf) \leq \max\limits_i \defe_{g_i, h_i}(\f)$ and $\defe(\hf) \leq \defe(\f)$.
\item Let $\hf \colon F_S \to G \in \G$ be a homomorphism. Choose a (set-theoretic) section $\tau \colon \Gamma \to F_S$ and define $\f \coloneqq \hf \circ \tau$. Then for every $(g, h) \in \Gamma^2$ there exists a finite set $\{ r_1, \ldots, r_k \} \subset R$ (independent of $\hf$ but depending on $\tau$) such that
\[\defe_{g, h}(\f) \leq \max\limits_i \defe_{r_i}(\hf).\]
In particular $\defe(\f) \leq \defe(\hf)$.
\end{enumerate}
\end{lemma}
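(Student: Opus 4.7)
The plan is to treat the two parts separately, and in both cases the key tools are the ultrametric inequality and bi-invariance of $d_G$, in the same spirit as the proof of Lemma \ref{lem:ultra_free}.

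For Part 1, I would proceed by induction on the length of the word $w \in F_S$. Writing $w = s \cdot u$ with $s$ a single letter of $S \cup S^{-1}$ and $u$ shorter by one letter, the homomorphism property of $\hf$ gives $\hf(w) = \hf(s) \hf(u)$, where $\hf(s)$ equals $\f(s)$ for $s \in S$ and $\f(s^{-1})^{-1}$ for $s \in S^{-1}$. Inserting $\f(\overline{s}) \f(\overline{u})$ and using the ultrametric inequality splits the distance:
$$d_G(\f(\overline{w}), \hf(w)) \leq \max\bigl\{ \defe_{\overline{s}, \overline{u}}(\f),\ d_G(\f(\overline{s}) \f(\overline{u}),\, \hf(s) \hf(u))\bigr\}.$$
By bi-invariance, the second term reduces to $d_G(\f(\overline{u}), \hf(u))$ in the direct case $s \in S$, to which the inductive hypothesis applies; when $s \in S^{-1}$, one absorbs the discrepancy between $\f(\overline{s})$ and $\f(\overline{s^{-1}})^{-1}$ by including the extra pair $(\overline{s^{-1}}, \overline{s})$ in the list. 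Since the collected pairs depend only on $w$ and not on $\f$, the induction closes.

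For the ``in particular'' claim about $r \in \langle\langle R \rangle\rangle$, I observe that $\f(\overline{r}) = \f(1_\Gamma)$, which need not equal $1_G$. However, bi-invariance gives $d_G(\f(1_\Gamma), 1_G) = \defe_{1_\Gamma, 1_\Gamma}(\f)$, so adjoining the pair $(1_\Gamma, 1_\Gamma)$ to the list and applying the ultrametric inequality once more yields $\defe_r(\hf) \leq \max_i \defe_{g_i, h_i}(\f)$. Taking supremum over $r \in R$ gives $\defe(\hf) \leq \defe(\f)$.

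For Part 2, I would compute directly: since $\hf$ is a homomorphism and $\f = \hf \circ \sigma$,
$$\defe_{g, h}(\f) = d_G\bigl(\hf(\sigma(gh)),\, \hf(\sigma(g) \sigma(h))\bigr).$$
Both $\sigma(gh)$ and $\sigma(g) \sigma(h)$ project to $gh \in \Gamma$, so $r(g, h) := \sigma(gh) (\sigma(g) \sigma(h))^{-1}$ lies in $\langle\langle R \rangle\rangle$, and bi-invariance gives $\defe_{g, h}(\f) = \defe_{r(g, h)}(\hf)$. Applying Lemma \ref{lem:ultra_free}(1) to $r(g, h)$ provides the required finite set $\{r_1, \ldots, r_k\} \subset R$, which depends on $r(g, h)$ (hence on $g, h$ and $\sigma$) but not on $\hf$. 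The inequality $\defe(\f) \leq \defe(\hf)$ follows by taking supremum.

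I do not anticipate a significant obstacle. The only subtlety is the bookkeeping in Part 1---verifying that the collected pairs depend only on $w$ and not on $\f$, and correctly handling inverse letters together with the possible discrepancy $\f(1_\Gamma) \neq 1_G$. Both issues are resolved by enlarging the finite list of pairs, which the ultrametric inequality permits at no quantitative cost.
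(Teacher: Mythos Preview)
Your proposal is correct and follows essentially the same approach as the paper. Both parts match: Part 1 uses induction on word length with the ultrametric inequality and bi-invariance (the paper phrases it as a simultaneous bound over all prefixes rather than peeling off one letter, but this is cosmetic), handling inverse letters and $\f(1_\Gamma)\neq 1_G$ via the pairs $(s,s^{-1})$ and $(1,1)$ exactly as you indicate; Part 2 is identical, introducing $r(g,h)=\sigma(gh)(\sigma(g)\sigma(h))^{-1}\in\langle\langle R\rangle\rangle$ and invoking Lemma~\ref{lem:ultra_free}(1).
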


\begin{remark}
\label{rem:2unst}
In the proof that follows, the notation could be simplified by removing all indices $\sigma_i$, if we could reduce to the case in which $\f(s^{-1}) = \f(s)^{-1}$. However this assumption is equivalent to stability of the group $\mathbb{Z}/2 \mathbb{Z}$, which cannot be established in such great generality (see Example \ref{ex:2unst}).
\end{remark}

\begin{proof}[Proof of Lemma \ref{lem:ultra_lift}]
$1.$ Fix a word $w = s_1^{\sigma_1} \cdots s_k^{\sigma_k}$ written as a reduced product of elements of $S \sqcup S^{-1}$: here $s_i \in S$ and $\sigma_i \in \{ \pm 1 \}$. Let
\[\dd \coloneqq \max\limits_i d_G(\f(s_1^{\sigma_1} \cdots s_i^{\sigma_i}), \f(s_1^{\sigma_1} \cdots s_{i-1}^{\sigma_{i-1}}) \f(s_i)^{\sigma_i}).\]
Notice that this is bounded by finitely many terms of the form $\defe_{g, h}(\f)$ for some $(g, h) \in \Gamma^2$: here we are using that $d_G(\f(s^{-1}), \f(s)^{-1}) \leq \max\{ \defe_{s, s^{-1}}(\f), \defe_{1, 1}(\f) \}$. Now
\[d_G(\f(s_1^{\sigma_1}), \f(s_1)^{\sigma_1}) \leq \dd\]
and by induction
\[
d_G(\f(s_1^{\sigma_1} \cdots s_i^{\sigma_i}), \f(s_1)^{\sigma_1} \cdots \f(s_i)^{\sigma_i}) \leq \max
\begin{cases}
d_G(\f(s_1^{\sigma_1} \cdots s_i^{\sigma_i}), \f(s_1^{\sigma_1} \cdots s_{i-1}^{\sigma_{i-1}})\f(s_i)^{\sigma_i}) \\
d_G(\f(s_1^{\sigma_1} \cdots s_{i-1}^{\sigma_{i-1}}), \f(s_1)^{\sigma_1} \cdots \f(s_{i-1})^{\sigma_{i-1}})
\end{cases}
\leq \dd.
\]
Therefore
\[d_G(\f(\overline{w}), \hf(w)) = d_G(\f(s_1^{\sigma_1} \cdots s_k^{\sigma_k}), \f(s_1)^{\sigma_1} \cdots \f(s_k)^{\sigma_k}) \leq \delta.\]
The last statement follows by taking $w$ to be a relation, so $\f(\overline{w}) = \f(1_\Gamma)$ is close to $1_G$; more precisely $d_G(\f(1), 1) = \defe_{1, 1}(\f)$. 

\medskip

$2.$ Fix $g, h \in \Gamma$; and let $r \coloneqq \tau(gh) (\tau(g) \tau(h))^{-1} \in \normal{R}$. Then
\[\defe_{g, h}(\f) = d_G(\f(gh), \f(g)\f(h)) = d_G(\hf(\tau(gh)), \hf(\tau(g))\hf(\tau(h)) ) = d_G(\hf(r), 1_G) = \defe_r(\hf).\]
The result then follows from Item $1.$ of Lemma \ref{lem:ultra_free}.
\end{proof}

This implies the desired equivalent characterisation of stability:

\begin{proposition}
\label{prop:stab_equiv}

Let $\Gamma = \langle S \mid R \rangle$.
\begin{enumerate}
\item $\Gamma$ is pointwise $\G$-stable if and only if for every sequence of homomorphisms $(\hf_n \colon F_S \to G_n \in \G)_{n \geq 1}$ such that $\defe_r(\hf_n) \to 0$ for all $r \in R$, there exists a sequence of homomorphisms $(\hff_n \colon F_S \to G_n)_{n \geq 1}$ that descend to $\Gamma$ and such that $\dist_s(\hf_n, \hff_n) \to 0$ for all $s \in S$.
\item $\Gamma$ is uniformly $\G$-stable if and only if for every sequence of homomorphisms $(\hf_n \colon F_S \to G_n \in \G)_{n \geq 1}$ such that $\defe(\hf_n) \to 0$, there exists a sequence of homomorphisms $(\hff_n \colon F_S \to G_n)_{n \geq 1}$ that descend to $\Gamma$ and such that $\dist(\hf_n, \hff_n) \to 0$.
\end{enumerate}
\end{proposition}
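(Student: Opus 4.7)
The plan is to prove both items simultaneously by exploiting the dictionary between maps on $\Gamma$ and homomorphisms on $F_S$ provided by Lemmas \ref{lem:ultra_free} and \ref{lem:ultra_lift}. The upshot of those lemmas is that, modulo finitely many generator/relation evaluations, defects and distances at the ``free" and ``quotient" levels dominate each other, which is exactly what is needed to translate the statements.

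For the forward direction, given a sequence $(\hf_n : F_S \to G_n)$ satisfying the relevant hypothesis, I fix once and for all a set-theoretic section $\sigma : \Gamma \to F_S$ with $\sigma(\overline{s}) = s$ for all $s \in S$, and set $\f_n := \hf_n \circ \sigma : \Gamma \to G_n$. Lemma \ref{lem:ultra_lift} Item 2 then shows that $(\f_n)$ is a pointwise (respectively, combined with Lemma \ref{lem:ultra_free} Item 1, uniform) asymptotic homomorphism, so the assumed stability of $\Gamma$ yields homomorphisms $\ff_n : \Gamma \to G_n$ that are pointwise (respectively uniformly) asymptotically close to $\f_n$. Setting $\hff_n := \ff_n \circ \pi$, where $\pi : F_S \to \Gamma$ is the projection, the normalization of $\sigma$ on $S$ gives the identity $\dist_s(\hf_n, \hff_n) = \dist_{\overline{s}}(\f_n, \ff_n)$ for each $s \in S$, from which the conclusion is immediate; for the uniform case one additionally invokes Lemma \ref{lem:ultra_free} Item 2 to pass from $\sup_{s \in S}$ to $\sup_{w \in F_S}$.

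For the backward direction, given an asymptotic homomorphism $\f_n : \Gamma \to G_n$, I define $\hf_n : F_S \to G_n$ as the unique homomorphism extending $s \mapsto \f_n(\overline{s})$. Lemma \ref{lem:ultra_lift} Item 1 ensures that $\defe_r(\hf_n) \to 0$ for all $r \in R$ in the pointwise case, and $\defe(\hf_n) \leq \defe(\f_n) \to 0$ in the uniform case, so the hypothesis yields homomorphisms $\hff_n : F_S \to G_n$ descending to homomorphisms $\ff_n : \Gamma \to G_n$ and close to $\hf_n$ in the appropriate sense. For $g \in \Gamma$, lift it to some $w \in F_S$ and use the ultrametric inequality:
\[ \dist_g(\f_n, \ff_n) = d_{G_n}(\f_n(\overline{w}), \hff_n(w)) \leq \max\{ d_{G_n}(\f_n(\overline{w}), \hf_n(w)), \dist_w(\hf_n, \hff_n)\}. \]
The first term is bounded by finitely many $\defe_{g_i, h_i}(\f_n)$ via Lemma \ref{lem:ultra_lift} Item 1, and the second by finitely many $\dist_{s_i}(\hf_n, \hff_n)$ via Lemma \ref{lem:ultra_free} Item 2, both tending to zero in the pointwise case. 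In the uniform case, taking suprema gives $\dist(\f_n, \ff_n) \leq \max\{\defe(\f_n), \dist(\hf_n, \hff_n)\} \to 0$.

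I expect no real obstacle: the two lemmas isolate the essential combinatorial content, and the ultrametric inequality allows one to replace sums of finitely many terms by their maximum, so that pointwise control of finitely many $\defe_{g_i, h_i}(\f_n)$ and $\dist_{s_i}(\hf_n, \hff_n)$ suffices even though the finite set depends on $g$ or $w$. The only care needed is fixing the section $\sigma$ compatibly with $S$, so that the generator/defect bookkeeping between $F_S$ and $\Gamma$ matches at the nose.
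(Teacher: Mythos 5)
Your proposal is correct and follows essentially the same approach as the paper: Item~2 of Lemma~\ref{lem:ultra_lift} for the forward direction, Item~1 together with Lemma~\ref{lem:ultra_free} for the converse, with the ultrametric inequality doing the bookkeeping. One minor note: in the forward/uniform case you invoke Lemma~\ref{lem:ultra_free} Item~2 ``to pass from $\sup_{s\in S}$ to $\sup_{w\in F_S}$'', but this is unnecessary since $\dist$ for homomorphisms on $F_S$ is by definition the supremum over $S$, so the conclusion is already in the required form.
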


\begin{proof}
Suppose that $\Gamma$ is pointwise $\G$-stable and let $\hf_n$ be a sequence as in the statement of $1.$ Item $2.$ of Lemma \ref{lem:ultra_lift} gives a sequence $(\f_n \colon \Gamma \to G_n)_{n \geq 1}$ such that $\defe_{g, h}(\f_n) \to 0$ for all $(g, h) \in \Gamma^2$. By the hypothesis of pointwise stability, the asymptotic homomorphism $(\f_n)_{n \geq 1}$ is pointwise asymptotically close to a sequence of homomorphisms $(\ff_n \colon \Gamma \to G_n)_{n \geq 1}$. This lifts to a sequence $(\hff_n \colon F_S \to G_n)_{n \geq 1}$ such that $\defe(\hff_n) = 0$ and $\dist_s(\hf_n, \hff_n) \to 0$ for all $s \in S$. 

The converse is similar, using Item $1.$ of Lemma \ref{lem:ultra_lift} instead, and the proof for the uniform case is the same.
\end{proof}

So one can take the notions of stability to be defined by maps at the level of the free group that almost descend to $\Gamma$. Then Proposition \ref{prop:stab_equiv} becomes less obvious than it looks, since it says that this notion does not depend on the choice of the presentation. For finitely presented groups, this fact can also be proved directly by noticing that this notion of stability of a presentation is not affected by Tietze transformations \cite[Section 3]{a:comm}. 

\medskip

One last thing we need to understand is when closeness on the generators implies closeness everywhere. The following lemma showcases how useful moving up and down from $\Gamma$ to the corresponding free group can be.

\begin{lemma}
\label{lem:ultra_close}

Let $\Gamma$ be generated by a set $S$, and consider two maps $\f, \ff \colon \Gamma \to G \in \G$. Then for all $g \in \Gamma$ there exist finite sets $\{ (x_1, y_1), \ldots, (x_k, y_k) \} \subset \Gamma^2$ and $\{ s_1, \ldots, s_k \} \subset S$ (independent of $\f, \ff$) such that
\[\dist_g(\f, \ff) \leq \max\limits_i \{ \defe_{x_i, y_i}(\f), \defe_{x_i, y_i}(\ff), \dist_{s_i}(\f, \ff) \}.\]
In particular
\[\dist(\f, \ff) \leq \max \{ \defe(\f), \defe(\ff), \sup\limits_{s \in S} \dist_s(\f, \ff) \}.\]
\end{lemma}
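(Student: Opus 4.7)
The plan is to lift both maps to the free group on $S$ and chain together the two previous lemmas via the ultrametric triangle inequality. Given $\f, \ff : \Gamma \to G$, I would introduce the homomorphisms $\hf, \hff : F_S \to G$ that coincide with $\f, \ff$ respectively on $S$, and for the fixed $g \in \Gamma$ pick some word $w \in F_S$ projecting to $g$.

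Next, I would insert $\hf(w)$ and $\hff(w)$ as intermediate points and apply the ultrametric triangle inequality to obtain
\[
\dist_g(\f, \ff) \leq \max\bigl\{ d_G(\f(\overline{w}), \hf(w)),\, d_G(\hf(w), \hff(w)),\, d_G(\hff(w), \ff(\overline{w})) \bigr\}.
\]
The outer two terms are handled by Item $1.$ of Lemma \ref{lem:ultra_lift}: each is bounded by a maximum of finitely many defects $\defe_{x_i, y_i}$ of the respective map, for pairs $(x_i, y_i) \in \Gamma^2$ depending only on $w$. The middle term is a distance between two free-group homomorphisms evaluated at $w$, which is exactly the situation of Item $2.$ of Lemma \ref{lem:ultra_free}; it is bounded by a maximum of finitely many $\dist_{s_i}(\hf, \hff) = \dist_{s_i}(\f, \ff)$ with $s_i \in S$ depending only on $w$. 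Combining these bounds yields the pointwise inequality, and taking suprema gives the global one, since the defect terms on the right are absorbed into $\defe(\f)$ and $\defe(\ff)$ while the generator-distance terms are absorbed into $\sup_{s \in S} \dist_s(\f, \ff)$.

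There is no real obstacle here; the only care point is uniformity. The finite sets in the conclusion must depend on $g$ alone and not on $\f, \ff$. Both invoked lemmas have been stated with precisely this uniformity built in (their finite sets depend on the word but not on the homomorphism), so it suffices to fix a choice of word $w$ for each $g$ once and for all before considering the maps.
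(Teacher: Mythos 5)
Your proposal is correct and follows the same route as the paper's proof: lift $\f, \ff$ to the free-group homomorphisms of Item 1 of Lemma \ref{lem:ultra_lift}, then bound the middle term via Item 2 of Lemma \ref{lem:ultra_free}, combining with the ultrametric inequality. Your write-up merely spells out the triangle-inequality chain that the paper leaves implicit.
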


\begin{proof}
Let $\hf, \hff \colon F_S \to G$ be the homomorphisms obtained via Item $1.$ of Lemma \ref{lem:ultra_lift}. That statement allows us to work with $\hf, \hff$ instead, up to finitely many defects of $\f$ and $\ff$. Then Item $2.$ of Lemma \ref{lem:ultra_free} implies that the distance at $g$ is bounded in terms of the distance at finitely many generators.
\end{proof}

We can now use Lemmas \ref{lem:ultra_lift} and \ref{lem:ultra_close} to give a quantitative version of Item $2.$ of Proposition \ref{prop:stab_equiv}. Namely, we provide a quantitative characterisation of uniform stability, as in Lemma \ref{lem:quant}, in terms of presentations:

\begin{corollary}
\label{cor:quant}

Let $\Gamma = \langle S \mid R \rangle$. The following are equivalent:
\begin{enumerate}
\item $\Gamma$ is uniformly $\G$-stable.
\item For all $\ee > 0$ there exists $\dd > 0$ such that whenever $\hf \colon F_S \to G \in \G$ satisfies $\defe(\hf) \leq \dd$, there exists $\hff \colon F_S \to G$ that descends to $\Gamma$ and satisfies $\dist(\hf, \hff) \leq \ee$.
\end{enumerate}
Moreover, for each $\ee > 0$, let $D^{\G}_{\langle S \mid R \rangle}(\ee)$ be the maximal $\dd > 0$ such that Item $2.$ holds for $\ee$. Then
\[D_{\langle S \mid R \rangle}^{\G}(\ee) \geq D_{\Gamma}^{\G}(\ee) \geq \min\{ D_{\langle S \mid R \rangle}^{\G}(\ee), \ee\}.\]
\end{corollary}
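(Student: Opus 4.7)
The corollary is essentially a quantitatively refined restatement of Proposition \ref{prop:stab_equiv}(2), and the work has already been done in the two preceding lemmas: Lemma \ref{lem:ultra_lift} converts between asymptotic data on $\Gamma$ and on $F_S$ without losing defect estimates, while Lemma \ref{lem:ultra_close} (which crucially uses the ultrametric inequality) controls the global distance by the defects and the distance on generators. My plan is therefore to prove the two inequalities directly and deduce $1 \Leftrightarrow 2$ from them via Lemma \ref{lem:quant}.

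For the inequality $D_{\langle S \mid R \rangle}^{\G}(\ee) \geq D_{\Gamma}^{\G}(\ee)$, I would fix $\ee>0$, set $\dd := D_{\Gamma}^{\G}(\ee)$, and take a homomorphism $\hf : F_S \to G$ with $\defe(\hf) \leq \dd$. Choose any set-theoretic section $\sigma : \Gamma \to F_S$ satisfying $\sigma(\overline{s}) = s$ for every $s \in S$, and define $\f := \hf \circ \sigma$. By Lemma \ref{lem:ultra_lift}(2) we have $\defe(\f) \leq \defe(\hf) \leq \dd$, so by the definition of $D_\Gamma^\G(\ee)$ there exists a homomorphism $\ff : \Gamma \to G$ with $\dist(\f, \ff) \leq \ee$. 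Let $\hff : F_S \to G$ be the composition of the quotient $F_S \to \Gamma$ with $\ff$; this descends to $\Gamma$, and for each $s \in S$ we have $\hf(s) = \f(\overline{s})$ and $\hff(s) = \ff(\overline{s})$, so $\dist_s(\hf, \hff) \leq \dist(\f, \ff) \leq \ee$. Taking the supremum over $S$ gives $\dist(\hf, \hff) \leq \ee$.

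For the inequality $D_{\Gamma}^{\G}(\ee) \geq \min\{D_{\langle S \mid R \rangle}^{\G}(\ee), \ee\}$, I would fix $\dd \leq \min\{D_{\langle S \mid R \rangle}^{\G}(\ee), \ee\}$ and a map $\f : \Gamma \to G$ with $\defe(\f) \leq \dd$. Let $\hf : F_S \to G$ be the unique homomorphism extending $\f|_S$; by Lemma \ref{lem:ultra_lift}(1), $\defe(\hf) \leq \defe(\f) \leq \dd \leq D_{\langle S \mid R \rangle}^{\G}(\ee)$. By definition of $D_{\langle S \mid R \rangle}^{\G}$, there exists $\hff : F_S \to G$ descending to a homomorphism $\ff : \Gamma \to G$ with $\dist(\hf, \hff) \leq \ee$, in particular $\dist_s(\f, \ff) \leq \ee$ for every $s \in S$. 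Now I would apply Lemma \ref{lem:ultra_close} to get
\[
\dist(\f, \ff) \leq \max\bigl\{\defe(\f), \defe(\ff), \sup_{s \in S}\dist_s(\f, \ff)\bigr\} \leq \max\{\dd, 0, \ee\} = \ee,
\]
using that $\ff$ is a homomorphism and $\dd \leq \ee$. This shows $D_{\Gamma}^{\G}(\ee) \geq \dd$, and taking the supremum over admissible $\dd$ yields the inequality.

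Finally, the equivalence $1 \Leftrightarrow 2$ drops out: by Lemma \ref{lem:quant}, uniform $\G$-stability of $\Gamma$ is equivalent to $D_\Gamma^\G(\ee) > 0$ for every $\ee>0$, and condition $2$ is by definition $D_{\langle S \mid R \rangle}^\G(\ee) > 0$ for every $\ee>0$; the chain of inequalities above shows that either condition forces the other. The only subtle point in the whole argument is the clean appearance of the $\min\{\cdot,\ee\}$ in the lower bound, which comes precisely from the $\max$-estimate of Lemma \ref{lem:ultra_close}; without the ultrametric assumption one would obtain a bound of the form $\defe(\f)+\sup_s \dist_s(\f,\ff)$ and would have to split $\ee$ between the two, losing the optimal shape of the statement. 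I do not anticipate any real obstacle beyond the careful bookkeeping of lifting/descending in the two directions.
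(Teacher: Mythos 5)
Your proof is correct and follows the same route as the paper: both directions go through Lemma \ref{lem:ultra_lift} to transfer between maps on $\Gamma$ and $F_S$, and Lemma \ref{lem:ultra_close} supplies the crucial $\max$-estimate that yields the $\min\{\cdot,\ee\}$ term. The only cosmetic difference is that you make explicit the choice of section $\sigma$ with $\sigma(\overline{s})=s$ in the first inequality, which makes the step $\dist(\hf,\hff)\le\dist(\f,\ff)$ on generators immediate rather than implicit; the paper leaves this choice unstated.
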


In particular, the quality of the uniform stability estimate (optimal, linear, quadratic... see Definition \ref{def:quant}) can also be computed by means of the characterisation in Item $2.$ of Corollary \ref{cor:quant}. We will discuss this more in Remark \ref{rem:quant:fp}.

\begin{proof}
Suppose that $\Gamma$ is uniformly $\G$-stable. By Lemma \ref{lem:quant}, for every $\ee > 0$ there exists $\dd \coloneqq D_{\Gamma}^{\G}(\ee) > 0$ such that whenever $\f \colon \Gamma \to G \in \G$ satisfies $\defe(\f) \leq \dd$ there exists a homomorphism $\ff \colon \Gamma \to G$ such that $\dist(\f, \ff) \leq \ee$. Now let $\hf \colon F_S \to G \in \G$ be such that $\defe(\hf) \leq \dd$. Item $2.$ of Lemma \ref{lem:ultra_lift} gives a map $\f \colon \Gamma \to G$ such that $\defe(\f) \leq \defe(\hf) \leq \dd$. Therefore there exists a homomorphism $\ff \colon \Gamma \to G$ such that $\dist(\f, \ff) \leq \ee$. This lifts to a homomorphism $\hff \colon F_S \to G$ that by construction descends to $\Gamma$, and such that $\dist(\hf, \hff) \leq \dist(\f, \ff) \leq \ee$. This shows that $2.$ holds with $\dd = D_{\Gamma}^{\G}(\ee)$ for this $\ee$, so it gives the implication $1. \Rightarrow 2.$ and the inequality $D_{\langle S \mid R \rangle}^{\G}(\ee) \geq D_{\Gamma}^{\G}(\ee)$.

Now suppose that $2.$ holds. Let $\ee > 0$ and let $\dd \coloneqq \min\{ D_{\langle S \mid R \rangle}^{\G}(\ee), \ee \} > 0$. Let $\f \colon \Gamma \to G \in \G$ be such that $\defe(\f) \leq \dd$. Item $1.$ of Lemma \ref{lem:ultra_lift} gives a homomorphism $\hf \colon F_S \to G$ such that $\defe(\hf) \leq \defe(\f) \leq \dd$. Therefore there exists a homomorphism $\hff \colon F_S \to G$ which descends to $\ff \colon \Gamma \to G$ and satisfies $\dist(\hf, \hff) \leq \ee$. Now $\hf$ coincides with $\f$ on $S$, and similarly $\hff$ coincides with $\ff$ on $S$; therefore $\dist_s(\f, \ff) \leq \ee$ for all $s \in S$. It then follows from Lemma \ref{lem:ultra_close} that
\[\dist(\f, \ff) \leq \max \{ \defe(\f), \defe(\ff), \sup\limits_{s \in S} \dist_s(\f, \ff) \} \leq \max\{ \dd, 0, \ee \} = \ee,\]
since we chose $\dd \leq \ee$. This shows that the characterisation of uniform $\G$-stability from Lemma \ref{lem:quant} holds with $\dd = \min\{ D_{\langle S \mid R \rangle}^{\G}(\ee), \ee\}$ for this $\ee$, so it gives the implication $2. \Rightarrow 1.$ and the inequality $D^{\G}_{\Gamma}(\ee) \geq \min\{ D_{\langle S \mid R \rangle}^{\G}(\ee), \ee\}$.
\end{proof}

\subsection{Finiteness conditions}

Now we add finiteness conditions on the presentation. The following proposition gives general properties of asymptotic homomorphisms under such hypotheses:

\begin{proposition}
\label{prop:ultra_asy}

\begin{enumerate}
\item Suppose that $\Gamma$ is generated by the finite set $S$, and that $(\f_n, \ff_n \colon \Gamma \to G_n \in \G)_{n \geq 1}$ satisfy $\dist_s(\f_n, \ff_n) \to 0$ for all $s \in S$. If $\f_n, \ff_n$ are pointwise (respectively, uniform) asymptotic homomorphisms, then they are pointwise (respectively, uniformly) asymptotically close.
\item Suppose that $\Gamma$ is finitely presented. Then every pointwise asymptotic homomorphism is pointwise asymptotically close to a uniform asymptotic homomorphism.
\end{enumerate}
\end{proposition}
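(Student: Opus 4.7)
The plan is to deduce both parts directly from the three preparatory lemmas of Subsection \ref{ss:free} (Lemmas \ref{lem:ultra_free}, \ref{lem:ultra_lift} and \ref{lem:ultra_close}), exploiting the crucial feature that under the ultrametric assumption all the relevant suprema reduce to finite maxima.

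For Item $1.$, fix $g \in \Gamma$ and apply Lemma \ref{lem:ultra_close}: there exist finite sets $\{(x_i,y_i)\} \subset \Gamma^2$ and $\{s_i\} \subset S$, depending only on $g$ (not on $n$), such that
$$\dist_g(\f_n, \ff_n) \leq \max_i \{ \defe_{x_i, y_i}(\f_n), \defe_{x_i, y_i}(\ff_n), \dist_{s_i}(\f_n, \ff_n) \}.$$
In the pointwise case, each of the finitely many terms on the right tends to $0$ by hypothesis, so $\dist_g(\f_n, \ff_n) \to 0$. In the uniform case, since $S$ itself is finite, the second inequality in Lemma \ref{lem:ultra_close} gives
$$\dist(\f_n, \ff_n) \leq \max\bigl\{ \defe(\f_n), \defe(\ff_n), \max_{s \in S} \dist_s(\f_n, \ff_n) \bigr\},$$
and a finite maximum of sequences converging to $0$ converges to $0$.

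For Item $2.$, fix a finite presentation $\Gamma = \langle S \mid R \rangle$ and let $(\f_n : \Gamma \to G_n \in \G)_{n \geq 1}$ be a pointwise asymptotic homomorphism. Lift each $\f_n$ to the unique homomorphism $\hf_n : F_S \to G_n$ coinciding with $\f_n$ on $S$. For each single relation $r \in R$, Item $1.$ of Lemma \ref{lem:ultra_lift} bounds $\defe_r(\hf_n)$ by a maximum of finitely many defects of $\f_n$ at pairs depending only on $r$; hence $\defe_r(\hf_n) \to 0$. Because $R$ is finite, we may take a maximum over $r \in R$ and, invoking Item $1.$ of Lemma \ref{lem:ultra_free}, conclude that $\defe(\hf_n) = \max_{r \in R} \defe_r(\hf_n) \to 0$.

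Now choose any set-theoretic section $\sigma : \Gamma \to F_S$ with $\sigma(s) = s$ for all $s \in S$, and set $\ff_n := \hf_n \circ \sigma$. By Item $2.$ of Lemma \ref{lem:ultra_lift}, $\defe(\ff_n) \leq \defe(\hf_n) \to 0$, so $(\ff_n)$ is a uniform asymptotic homomorphism. By the choice of $\sigma$, $\ff_n$ and $\f_n$ agree on $S$, so $\dist_s(\f_n, \ff_n) = 0$ for every $s \in S$. Applying Item $1.$ of the present proposition (the pointwise version) to $(\f_n, \ff_n)$ yields $\dist_g(\f_n, \ff_n) \to 0$ for every $g \in \Gamma$, completing the proof. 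The only subtle point is ensuring that the constants appearing in Lemma \ref{lem:ultra_lift} depend solely on $r \in R$ (and the section $\sigma$), not on $n$, so that finiteness of $R$ actually delivers uniform control of $\defe(\hf_n)$; this is precisely what the preparatory lemmas were designed to give.
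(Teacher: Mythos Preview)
Your proof is correct and follows essentially the same approach as the paper's own proof: Item $1.$ is deduced directly from Lemma \ref{lem:ultra_close}, and Item $2.$ proceeds by lifting to $F_S$ via Item $1.$ of Lemma \ref{lem:ultra_lift}, using finiteness of $R$ to control $\defe(\hf_n)$, descending via a section as in Item $2.$ of Lemma \ref{lem:ultra_lift}, and concluding with Item $1.$ of the present proposition. Your version is slightly more explicit (e.g.\ you specify $\sigma(s)=s$, which the paper leaves implicit when asserting that $\ff_n$ coincides with $\f_n$ on $S$), but there is no substantive difference.
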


\begin{proof}
$1.$ This follows directly from Lemma \ref{lem:ultra_close}. 

\medskip

$2.$ Let $(\f_n \colon \Gamma \to G_n \in \G)_{n \geq 1}$ be a pointwise asymptotic homomorphism. Using Item $1.$ of Lemma \ref{lem:ultra_lift}, we can lift $\f_n$ to a sequence of homomorphisms $(\hf_n \colon F_S \to G_n)_{n \geq 1}$ such that $\defe_r(\hf_n)$ is bounded in terms of finitely many defects of $\f_n$, for every relator $r \in R$. Since $R$ is finite, the same holds for $\defe(\hf_n)$. Now $\hf_n$ induces a map $\ff_n$ on $\Gamma$ using Item $2.$ of Lemma \ref{lem:ultra_lift}, thus we obtain a sequence $(\ff_n \colon \Gamma \to G_n)_{n \geq 1}$ such that $\defe(\ff_n) \leq \defe(\hf_n)$. It follows that $\ff_n$ is a uniform asymptotic homomorphism. Moreover, $\ff_n$ coincides with $\f_n$ on $S$, and so $(\ff_n)_{\geq 1}$ is pointwise asymptotically close to $(\f_n)_{n \geq 1}$ by the previous item.
\end{proof}

Item $2.$ of Proposition \ref{prop:ultra_asy} does not say that pointwise asymptotic homomorphisms are automatically uniform: this is false in general, as the next example shows.

\begin{example}
Consider the map
\[\f_n \colon \mathbb{Z} \to \GGL_2(\Zp) : k \mapsto 
\begin{cases}
\begin{pmatrix}
1 & k \\
0 & 1
\end{pmatrix} & \text{if } |k| \leq n \\
I_2 & \text{otherwise.}
\end{cases}\]
Then $\defe(\f_n) = 1$ for all $n$, while $\defe_{g, h}(\f_n) \to 0$ for all $(g, h) \in \mathbb{Z}^2$.
\end{example}

We can now prove Theorem \ref{intro:thm:pw_un} from the introduction:

\begin{theorem}
\label{thm:pw_un}
Let $\Gamma$ be finitely generated and pointwise $\G$-stable. Then $\Gamma$ is uniformly $\G$-stable. If moreover $\Gamma$ is finitely presented, then the converse holds.
\end{theorem}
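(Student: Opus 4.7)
The plan is to exploit Proposition \ref{prop:ultra_asy}, which was set up precisely for arguments like this one. I would split the theorem into the forward direction (finitely generated plus pointwise stable implies uniformly stable) and its converse (finitely presented plus uniformly stable implies pointwise stable), handling each in turn by direct manipulation of sequences rather than the $\ee$-$\dd$ formulation from Lemma \ref{lem:quant}.

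For the forward direction, I would start with an arbitrary uniform asymptotic homomorphism $(\f_n : \Gamma \to G_n \in \G)_{n \geq 1}$, noting that this is \emph{a fortiori} a pointwise asymptotic homomorphism. Pointwise $\G$-stability then supplies a sequence of homomorphisms $(\ff_n : \Gamma \to G_n)_{n \geq 1}$ such that $\dist_g(\f_n, \ff_n) \to 0$ for every $g \in \Gamma$; in particular this holds for every element of a fixed finite generating set $S$ of $\Gamma$. Since both $(\f_n)_{n \geq 1}$ and $(\ff_n)_{n \geq 1}$ are uniform asymptotic homomorphisms (the latter trivially, having defect identically zero), Item 1 of Proposition \ref{prop:ultra_asy} upgrades the pointwise closeness on $S$ to uniform closeness on all of $\Gamma$, giving $\dist(\f_n, \ff_n) \to 0$.

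For the converse, I would begin with a pointwise asymptotic homomorphism $(\f_n : \Gamma \to G_n \in \G)_{n \geq 1}$. Since $\Gamma$ is finitely presented, Item 2 of Proposition \ref{prop:ultra_asy} produces a uniform asymptotic homomorphism $(\ff_n)_{n \geq 1}$ that is pointwise asymptotically close to $(\f_n)_{n \geq 1}$. Applying uniform $\G$-stability to $(\ff_n)_{n \geq 1}$ yields a sequence of homomorphisms $(\psi_n)_{n \geq 1}$ with $(\ff_n)$ and $(\psi_n)$ uniformly asymptotically close, hence in particular pointwise asymptotically close. The triangle inequality then gives pointwise asymptotic closeness of $(\f_n)_{n \geq 1}$ and $(\psi_n)_{n \geq 1}$, which is the required witness of pointwise stability.

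Both directions amount to routine assembly of earlier results, so there is no real obstacle in writing the proof; the genuine content is already hidden in Proposition \ref{prop:ultra_asy}. The ultrametric inequality (through Lemma \ref{lem:ultra_close}) is what lets Item 1 propagate control on a generating set to control on the whole group, which fails in the Archimedean setting and is what makes the forward implication nontrivial. Finite presentability enters only in the converse, where it is needed to bound the defect of an induced homomorphism $F_S \to G$ in terms of the finitely many relators and thereby pass from a pointwise to a uniform asymptotic homomorphism.
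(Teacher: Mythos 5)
Your proof is correct and follows essentially the same route as the paper's: both directions are deduced exactly as you describe from Items 1 and 2 of Proposition \ref{prop:ultra_asy}, with Item 1 upgrading pointwise closeness (which in particular holds on a finite generating set) to uniform closeness, and Item 2 plus finite presentability supplying the uniform asymptotic homomorphism needed for the converse.
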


\begin{proof}
Let $(\f_n \colon \Gamma \to G_n \in \G)_{n \geq 1}$ be a uniform asymptotic homomorphism. Since $\Gamma$ is pointwise $\G$-stable, $(\f_n)_{n \geq 1}$ is pointwise asymptotically close to a sequence of homomorphisms $(\ff_n \colon \Gamma \to G_n)_{n \geq 1}$. By Item $1.$ of Proposition \ref{prop:ultra_asy}, since $(\f_n)_{n \geq 1}$ and $(\ff_n)_{n \geq 1}$ are both uniform asymptotic homomorphisms, they are uniformly asymptotically close. 

\medskip

Now suppose that $\Gamma$ is finitely presented and uniformly $\G$-stable. Let $(\f_n \colon \Gamma \to G_n \in \G)_{n \geq 1}$ be a pointwise asymptotic homomorphism. By Item $2.$ of Proposition \ref{prop:ultra_asy}, $(\varphi_n)_{n \geq 1}$ is pointwise asymptotically close to a uniform asymptotic homomorphism, which in turn is uniformly (thus pointwise) asymptotically close to a sequence of homomorphisms, by uniform $\G$-stability.
\end{proof}

\begin{remark}
We will see that there exist finitely generated groups that are uniformly but not pointwise $\GGL(\oo)$-stable (Proposition \ref{prop:sharp}), and that there exist countable groups that are pointwise but not uniformly $\GGL(\oo)$-stable, at least in characteristic $0$ (Proposition \ref{prop:countable:unst}).
\end{remark}

This theorem allows to unambiguously talk about \emph{$\G$-stability} for finitely presented groups, since pointwise and uniform stability are equivalent.

\begin{example}
\label{ex:free}

A free group of finite rank $F$ is $\G$-stable. This follows immediately by using the characterisation of pointwise stability in Proposition \ref{prop:stab_equiv}. Using the estimates in Lemma \ref{lem:ultra_close}, we see that moreover the estimate for uniform stability is optimal. In fact, if $\f \colon F \to G \in \G$ is such that $\defe(\f) \leq \ee$, then setting $\ff \colon F \to G$ to be the unique homomorphism that coincides with $\f$ on a free basis, we have $\dist(\f, \ff) \leq \ee$.
\end{example}

Although pointwise stability of free groups holds for every family $\G$, as we remarked in the introduction, uniform stability is peculiar to the ultrametric setting. Indeed, free groups are not uniformly $\G$-stable, for $\G = \{ (\U(n), \| \cdot \|) : n \geq 1 \}$, where $\| \cdot \|$ is any $\U(n)$-invariant norm on $\MM_n(\mathbb{C})$ \cite{Rolli, BOT}, or for $\G = \{ (S_n, d_H) : n \geq 1 \}$ \cite{BChap}.
Example \ref{ex:free} also has the following notable consequence:

\begin{remark}
\label{rem:quotients}

If there exists a finitely generated group that is not uniformly $\G$-stable, then Example \ref{ex:free} implies that quotients of uniformly $\G$-stable groups need not be uniformly $\G$-stable. This holds for instance for $\G = \GGL(\oo)$ by Theorem \ref{intro:thm:unst} (Theorem \ref{thm:fg:unst}). Given how the norm on $\GGL(\oo)$ coincides with the operator norm (Lemma \ref{lem:GL}), this is in contrast with the Archimedean case, where Ulam stability (i.e. uniform stability with respect to unitary matrices endowed with the operator norm) is preserved under taking quotients \cite[Lemma 2.2]{BOT}. Indeed, the proof of \cite[Lemma 2.2]{BOT} makes crucial use of the Archimedean property.
\end{remark}

Here is a very general example.

\begin{example}
\label{ex:discr:stab}

Let $\G$ be a discrete family (Example \ref{ex:discr}). Then every group is uniformly $\G$-stable: if $\f \colon \Gamma \to G \in \G$ satisfies $\defe(\f) < 1$, then $\f$ is already a homomorphism. Theorem \ref{thm:pw_un} then implies that all finitely presented groups are pointwise $\G$-stable.
\end{example}

\begin{remark}
Pointwise $\G$-stability need not hold for arbitrary finitely generated groups: if $\G$ is the discrete family of all finite groups and $\Gamma$ is LEF but not residually finite, then $\Gamma$ is not pointwise stable (Corollary \ref{cor:cex_stab}).
\end{remark}

Let us end this subsection with a remark about quantitative notions of stability for finitely presented groups, which will be useful in Section \ref{s:BC}:

\begin{remark}
\label{rem:quant:fp}

Let $\Gamma = \langle S \mid R \rangle$ be a finitely presented group. Then for a general (not necessarily ultrametric) family $\G$, the analogue of Proposition \ref{prop:stab_equiv} holds, and thus one can give a quantitative characterisation of pointwise stability. Namely, $\Gamma$ is pointwise $\G$-stable if and only if for all $\ee > 0$ there exists $\dd > 0$ such that whenever $\hf \colon F_S \to G \in \G$ satisfies $\defe(\hf) \leq \dd$, then there exists $\hff \colon F_S \to G$ that descends to $\Gamma$ and satisfies $\dist(\hf, \hff) \leq \ee$ \cite{a:comm}. Note that $\defe$ is defined as a supremum over $R$ and $\dist$ as a supremum over $S$, so both supremums are attained and finite, even if $\G$ is not necessarily ultrametric.

This characterisation allows to talk quantitatively about pointwise stability of finitely presented groups, an approach initiated in \cite{quant} (see also \cite{a:quant}). That is, defining the function $D_{\langle S \mid R\rangle}^{\G}(\ee)$ analogously to Definition \ref{def:quant} with the above characterisation of pointwise stability, one can talk about pointwise stability with a linear estimate, polynomial estimate, and so on. We use the subscript $\langle S \mid R \rangle$ instead of $\Gamma$ to emphasise that this quantity depends on the presentation of $\Gamma$.

In the ultrametric setting, $D_{\langle S \mid R\rangle}^{\G}(\ee)$ is precisely the function appearing in Corollary \ref{cor:quant}, which was about uniform stability. Thus, for finitely presented groups we obtained a quantitative version of Theorem \ref{thm:pw_un}: the quantitative notions of pointwise and uniform $\G$-stability also coincide, whenever $\G$ is an ultrametric family. For this reason, in this paper we limit ourselves to the quantitative study of uniform stability, which by this remark is strictly more general.

It is also worth pointing out how little $D_{\langle S \mid R\rangle}^{\G}(\ee)$ depends on the presentation, as shown in the estimates in Corollary \ref{cor:quant}. In other cases, the dependence on the presentation can be more important, and forces one to study the function $D_{\langle S \mid R\rangle}^{\G}(\ee)$ up to a suitable equivalence relation, which for instance does not distinguish between the identity and linear functions \cite[Proposition A.5]{quant}. In our setting, the dependence is so weak that we can still distinguish between optimal and linear stability estimates, and we will do so throughout this paper.
\end{remark}

\subsection{Homomorphisms to metric quotients}

We now move to the proof of Theorem \ref{intro:thm:rf}. The main tool is given by Lemma \ref{lem:mq}, which relates asymptotic homomorphisms to $\G$ with true homomorphisms to $\MQ(\G)$, the family of metric quotients of $\G$ and subgroups thereof (Definition \ref{def:mq}).

\begin{notation}
For the rest of this subsection, we fix an ultrametric family $\G$, and a class of groups $\C$ closed under taking subgroups such that all metric quotients of $\G$ are contained in $\C$. Since $\C$ is assumed to be closed under taking subgroups, we have $\MQ(\G) \subset \C$.
By Lemma \ref{lem:largest_rf}, there exists a largest residually-$\C$ quotient of $\Gamma$, which we denote by $\Omega$. That is, $\Omega \coloneqq \Gamma/K$ where $K$ is the intersection of all normal subgroups of $\Gamma$ whose quotient belongs to $\C$. For instance, if $\G = \GGL(\oo)$, then $\MQ(\G)$ is the class of all finite groups, and so $\Omega$ can be taken to be the largest residually finite quotient of $\Gamma$. We also denote by $\pi_\Omega \colon \Gamma \to \Omega$ the quotient map.
\end{notation}

We start by proving a consequence of Lemma \ref{lem:mq} which essentially gives one direction of Theorem \ref{intro:thm:rf}. Intuitively, it states that maps to $G$ with small defect almost factor through $\Omega$.

\begin{lemma}
\label{lem:factor_R}

Let $\f \colon \Gamma \to G \in \G$ be such that $\defe(\f) \leq \ee$. Then there exists a map $\overline{\f} \colon \Omega \to G$ such that $\defe(\overline{\f}) \leq \ee$ and $\dist(\f, \overline{\f} \circ \pi_\Omega) \leq \ee$.
\end{lemma}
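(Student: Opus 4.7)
The plan is to use Lemma \ref{lem:mq} to convert the defect hypothesis into a genuine homomorphism on a metric quotient, then factor through $R$ by the universal property, and finally lift back by choosing a set-theoretic section.

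First I would observe that by Item $1.$ of Lemma \ref{lem:mq}, the map $\f(\ee) : \Gamma \xrightarrow{\f} G \to G/G(\ee)$ is a homomorphism. Its image $\im(\f(\ee))$ is a subgroup of $G/G(\ee)$, hence lies in $\MQ(\G) \subseteq \C$. Thus $\ker(\f(\ee))$ is a normal subgroup of $\Gamma$ whose quotient belongs to $\C$, and by the definition of $K$ we obtain $K \subseteq \ker(\f(\ee))$. Consequently $\f(\ee)$ factors through $\pi_R$ as a homomorphism $\overline{\f(\ee)} : R \to G/G(\ee)$.

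Next I would choose any set-theoretic section $\sigma : R \to \Gamma$ of $\pi_R$ and define $\overline{\f} := \f \circ \sigma : R \to G$. To check $\dist(\f, \overline{\f} \circ \pi_R) \leq \ee$, fix $g \in \Gamma$ and set $\tilde{g} := \sigma(\pi_R(g))$. Since $\pi_R(\tilde{g}) = \pi_R(g)$, the factorisation of $\f(\ee)$ through $R$ gives $\f(g) G(\ee) = \f(\tilde{g}) G(\ee)$ in $G/G(\ee)$, which by Item $2.$ of Lemma \ref{lem:mq} (or directly by definition of the quotient metric) yields $d_G(\f(g), \f(\tilde{g})) \leq \ee$, i.e.\ $\dist_g(\f, \overline{\f} \circ \pi_R) \leq \ee$.

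Finally, to bound $\defe(\overline{\f})$, let $r, s \in R$. Then $\sigma(rs)$ and $\sigma(r)\sigma(s)$ have the same image under $\pi_R$, so the same argument as above gives $d_G(\f(\sigma(rs)), \f(\sigma(r)\sigma(s))) \leq \ee$. Combining this with $d_G(\f(\sigma(r)\sigma(s)), \f(\sigma(r))\f(\sigma(s))) \leq \defe(\f) \leq \ee$ via the ultrametric inequality yields $\defe_{r,s}(\overline{\f}) \leq \ee$. There is no real obstacle here: the whole argument is a direct application of Lemma \ref{lem:mq} together with the universal property of $R$, and the ultrametric inequality ensures that the error bounds do not accumulate when passing from $G/G(\ee)$ back to $G$ via the section.
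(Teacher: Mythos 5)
Your proof is correct and follows the same overall strategy as the paper: pass to the homomorphism $\f(\ee):\Gamma\to G/G(\ee)$ via Item~$1.$ of Lemma~\ref{lem:mq}, use the universal property of $R$ to factor it through $\pi_R$, and then lift back by a set-theoretic section. The only difference is cosmetic and worth noting: the paper defines $\overline{\f}$ by lifting the factored homomorphism $\phi:R\to G/G(\ee)$ through a set-theoretic section of the quotient map $G\to G/G(\ee)$, whereas you define $\overline{\f}:=\f\circ\sigma$ using a set-theoretic section $\sigma:R\to\Gamma$ of $\pi_R$. Your choice forces a small extra computation for the defect bound (you need the ultrametric inequality combining the factorisation estimate with $\defe(\f)\leq\ee$), while the paper's choice gets $\defe(\overline{\f})\leq\ee$ for free from Lemma~\ref{lem:mq} since $\overline{\f}$ induces $\phi$ on $G/G(\ee)$ by construction; conversely your construction keeps the values of $\overline{\f}$ inside the image of $\f$, which the paper's does not. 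Either route is fine and the estimates are identical.
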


\begin{proof}
By Item $1.$ of Lemma \ref{lem:mq} we can consider the homomorphisms $\f(\ee) \colon \Gamma \to G/G(\ee) \in \C$ induced by $\f$. By the universal property of $\Omega$ (Lemma \ref{lem:largest_rf}) $\f(\ee)$ factors through a homomorphism $\phi \colon \Omega \to G/G(\ee)$. Take $\overline{\f}$ to be a (set-theoretic) lift of $\phi$ to a map $\Omega \to G$. Then by construction $\phi \colon \Omega \xrightarrow{\overline{\f}} G \to G/G(\ee)$ is a homomorphism, so by Item $1.$ of Lemma \ref{lem:mq} again $\defe(\overline{\f}) \leq \ee$. Moreover, the induced homomorphisms $\f(\ee), (\overline{\f}(\ee) \circ \pi_\Omega) \colon \Gamma \to G/G(\ee)$ both coincide with $\phi \circ \pi_\Omega$, so by Item $2.$ of Lemma \ref{lem:mq} we have $\dist(\f, \overline{\f} \circ \pi_\Omega) \leq \ee$.
\end{proof}

We are now ready to prove Theorem \ref{intro:thm:rf} from the introduction:

\begin{theorem}
\label{thm:rf}

Let $\G, \Gamma, \Omega$ be as above. Then $\Gamma$ is uniformly $\G$-stable if and only if $\Omega$ is, and the estimates coincide. If $\Gamma$ is pointwise $\G$-stable, then so is $\Omega$.
\end{theorem}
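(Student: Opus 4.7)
The plan is to build the proof around a single structural observation: every honest homomorphism $\ff : \Gamma \to G \in \G$ automatically factors through $R$. To see this, fix $\ee > 0$ and consider the composition $\Gamma \xrightarrow{\ff} G \to G/G(\ee)$. By Lemma \ref{lem:ultra_gp} this is a homomorphism whose image lies in $G/G(\ee) \in \MQ(\G) \subseteq \C$, hence (since $\C$ is closed under subgroups) in a group in $\C$. By the universal property of $R$ (Lemma \ref{lem:largest_rf}), this composition factors through $R$, so $\ff(K) \subseteq G(\ee)$. Letting $\ee \to 0$ and using that $d_G$ is Hausdorff, $\ff(K) \subseteq \bigcap_{\ee > 0} G(\ee) = \{1\}$, so $\ff$ descends to a homomorphism $\ff' : R \to G$.

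With this in hand, the two directions of the uniform statement are each a short computation. Assume $R$ is uniformly $\G$-stable with estimate $D_R^{\G}$, and let $\f : \Gamma \to G$ have $\defe(\f) \leq \dd := D_R^{\G}(\ee)$. Lemma \ref{lem:factor_R} produces $\of : R \to G$ with $\defe(\of) \leq \dd$ and $\dist(\f, \of \circ \pi_R) \leq \dd$. Applying uniform $\G$-stability of $R$ yields a homomorphism $\oo{\ff} : R \to G$ with $\dist(\of, \oo{\ff}) \leq \ee$. Composing with $\pi_R$ and using the ultrametric inequality we get $\dist(\f, \oo{\ff} \circ \pi_R) \leq \max\{\dd, \ee\} = \ee$ (when $\dd \leq \ee$, which we may assume for small $\ee$), so $D_\Gamma^{\G}(\ee) \geq D_R^{\G}(\ee)$. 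Conversely, given $\f : R \to G$ with $\defe(\f) \leq \dd := D_\Gamma^{\G}(\ee)$, the pullback $\f \circ \pi_R : \Gamma \to G$ has the same defect, so uniform stability of $\Gamma$ yields a homomorphism $\ff : \Gamma \to G$ with $\dist(\f \circ \pi_R, \ff) \leq \ee$. By the structural observation above, $\ff = \ff' \circ \pi_R$ for some homomorphism $\ff' : R \to G$, and since $\pi_R$ is surjective $\dist(\f, \ff') = \dist(\f \circ \pi_R, \ff' \circ \pi_R) \leq \ee$. Hence $D_R^{\G}(\ee) \geq D_\Gamma^{\G}(\ee)$, and the two estimates coincide.

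The pointwise implication follows the same pullback template as the second direction above, now applied sequence by sequence: a pointwise asymptotic homomorphism $\f_n : R \to G_n$ pulls back to $\f_n \circ \pi_R : \Gamma \to G_n$ whose defects at each pair in $\Gamma^2$ still vanish, pointwise $\G$-stability of $\Gamma$ yields homomorphisms $\ff_n : \Gamma \to G_n$ asymptotically close to $\f_n \circ \pi_R$, and the structural observation descends each $\ff_n$ to $\ff'_n : R \to G_n$ with $\dist_r(\f_n, \ff'_n) = \dist_g(\f_n \circ \pi_R, \ff_n) \to 0$ whenever $\pi_R(g) = r$. The main conceptual obstacle is precisely why the converse of this last statement is not claimed: the pushforward direction relied crucially on Lemma \ref{lem:factor_R}, whose quantitative conclusion $\dist(\f, \of \circ \pi_R) \leq \defe(\f)$ only controls a single pair uniformly. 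For a pointwise asymptotic homomorphism the full defect need not decay, so one cannot honestly factor it through $R$ on each element simultaneously, and the argument breaks down.
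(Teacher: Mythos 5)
Your proof is correct and follows essentially the same route as the paper: Lemma \ref{lem:factor_R} for the forward direction, pullback along $\pi_R$ plus the observation that honest homomorphisms to $G$ factor through $R$ for the backward and pointwise directions. Your closing remark correctly identifies why the pointwise converse is not claimed, namely that Lemma \ref{lem:factor_R} needs control on the full uniform defect $\defe(\f)$, which a pointwise asymptotic homomorphism does not provide.
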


\begin{proof}
We use the characterisation from Lemma \ref{lem:quant}: a group $\Gamma$ is uniformly $\G$-stable if and only if for all $\ee > 0$ there exists $\dd > 0$ such that whenever $\f \colon \Gamma \to G \in \G$ is such that $\defe(\f) \leq \dd$, there exists a homomorphism $\ff \colon \Gamma \to G$ such that $\dist(\f, \ff) \leq \ee$.

Suppose that $\Omega$ is uniformly $\G$-stable. Fix $\ee > 0$ and let $\dd > 0$ be such that the previous statement holds; we may assume that $\dd \leq \ee$. Let $\f \colon \Gamma \to G \in \G$ be a map such that $\defe(\f) \leq \dd$. By Lemma \ref{lem:factor_R} there exists a map $\overline{\f} \colon \Omega \to G$ such that $\defe(\overline{\f}) \leq \dd$ and $\dist(\f, \overline{\f} \circ \pi_\Omega) \leq \dd$. By the choice of $\dd$, there exists a homomorphism $\overline{\ff} \colon \Omega \to G$ such that $\dist(\overline{\f}, \overline{\ff}) \leq \ee$. Then $\ff \coloneqq \overline{\ff} \circ \pi_\Omega \colon \Gamma \to G$ is a homomorphism and
\[\dist(\f, \ff) \leq \max\{ \dist(\f, \overline{\f} \circ \pi_\Omega), \dist(\overline{\f} \circ \pi_\Omega, \overline{\ff} \circ \pi_\Omega) \} \leq \max\{ \dd, \ee \} \leq \ee.\]
Therefore $\Gamma$ is uniformly $\G$-stable, and the estimate for $\Gamma$ is at least as good as the estimate for $\Omega$. 

\medskip

Suppose that $\Gamma$ is pointwise $\G$-stable, and let $(\f_n \colon \Omega \to G_n \in \G)_{n \geq 1}$ be a pointwise asymptotic homomorphism. Then $(\f_n \circ \pi_\Omega \colon \Gamma \to G_n)_{n \geq 1}$ is also a pointwise asymptotic homomorphism: indeed $\defe_{g, h}(\f_n \circ \pi_\Omega) = \defe_{\pi_\Omega(g), \pi_\Omega(h)}(\f_n)$ for all $(g, h) \in \Gamma^2$. By pointwise stability of $\Gamma$ there exists a sequence of homomorphisms $(\widetilde{\ff}_n \colon \Gamma \to G_n)_{n \geq 1}$ that is pointwise asymptotically close to $(\f_n \circ \pi_\Omega)_{n \geq 1}$. Since $G$ is residually-$\C$, we have that $\widetilde{\ff}_n$ factors through $\Omega$, and so there exists a homomorphism $\ff_n \colon \Omega \to G$ such that $\widetilde{\ff}_n = \ff_n \circ \pi_\Omega$. Moreover, $\f_n$ and $\ff_n$ are pointwise asymptotically close: indeed $\dist_{\pi_\Omega(g)}(\f_n, \ff_n) = \dist_g(\f_n \circ \pi_\Omega, \widetilde{\ff}_n)$.

A similar argument shows that if $\Gamma$ is uniformly $\G$-stable, then so is $\Omega$. Going through the proof in quantitative terms, using Lemma \ref{lem:quant}, also proves that the estimate for $\Omega$ is at least as good as the estimate for $\Gamma$.
\end{proof}

\begin{example}
\label{ex:fq_free}

Let $\Gamma$ be a group without non-trivial quotients in $\MQ(\G)$. Then $\Gamma$ is uniformly $\G$-stable, with an optimal estimate. More precisely, if $\f \colon \Gamma \to G \in \G$ and $\defe(\f) \leq \ee$, then $\dist(\f, \mathbbm{1}) \leq \ee$. For instance, if $\Gamma$ is a simple group, then it can only be unstable if it belongs to $\MQ(\G)$.

If $\G$ is profinite, then $\MQ(\G)$ consists of finite groups, and so an infinite group without non-trivial finite quotients is uniformly $\G$-stable. Examples include Pride's group (Example \ref{ex:small_canc}), as well as certain finitely presented groups such as Higman's group \cite{Higman}, Thompson's groups \cite{CFP} or Burger--Mozes groups \cite{BM}. Recently, more examples have been found in \cite{isomh3} among discrete subgroups of $\mathrm{Isom}(\mathbb{H}^3)$.

If $\MQ(\G)$ consists of finite $\pi$-groups, for $\pi$ a set of primes, then this statement is a weaker version of Proposition \ref{prop:pifree}.
\end{example}


\begin{example}
\label{ex:sym0_ust}

Let $\G$ be a profinite family. The group $H^+$ from Example \ref{ex:sym0}, which has index $2$ in Houghton's group $H$, has $\mathbb{Z}$ as largest residually finite quotient, which is uniformly $\G$-stable with an optimal estimate, by Example \ref{ex:free}. Therefore $H^+$ is uniformly $\G$-stable, with an optimal estimate. Similarly the largest residually finite quotient of $H$ is $\mathbb{Z} \times \mathbb{Z}/2\mathbb{Z}$, which as we will see in Corollaries \ref{cor:vfree_p} and \ref{cor:vfree} is uniformly $\GGL(\oo)$-stable, whenever $\K$ has characteristic other than $2$ (the quality of the estimate will be optimal or linear depending on $\K$). Thus, in these cases, $H$ is uniformly $\GGL(\oo)$-stable.
\end{example}

\begin{example}
\label{ex:wr_ust}

Let $\G$ be a profinite family. Let $\Gamma, \Lambda$ be finitely generated, with $\Lambda$ infinite and residually finite. Then by Example \ref{ex:wr} the largest residually finite quotient of $\Gamma \wr \Lambda$ is $\Ab(\Gamma) \wr \Lambda$. In particular, if $\Gamma$ is perfect (that is, $\Ab(\Gamma) = \{ 1 \}$) and $\Lambda$ is uniformly $\G$-stable, then $\Gamma \wr \Lambda$ is uniformly $\G$-stable, with the same estimate. For example the lamplighter group $\Gamma \wr \mathbb{Z}$ is uniformly $\G$-stable, with an optimal estimate, for every non-abelian finite simple group $\Gamma$ (Example \ref{ex:free}). We will prove a stronger statement for virtually pro-$\pi$ families in Corollary \ref{cor:wr}.
\end{example}

Note that in the proof of Theorem \ref{thm:rf}, as well as that of Lemma \ref{lem:factor_R}, we only used that every homomorphism from $\Gamma$ to a residually-$\MQ(\G)$ group factors through $\Omega$. The same holds for every intermediate quotient, and so we obtain the following generalisation of Theorem \ref{thm:rf}:

\begin{corollary}
\label{cor:rf}

Let $K \leq \Gamma$ be a group that is contained in the kernel of the quotient $\Gamma \to \Omega$. Then $\Gamma$ is uniformly stable if and only if $\Gamma/K$ is, and the estimates coincide. If $\Gamma$ is pointwise $\G$-stable, then so is $\Gamma/K$.
\end{corollary}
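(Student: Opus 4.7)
The plan is to verify that $R$ is also the largest residually-$\C$ quotient of $\Gamma/K$, and then to deduce both statements---by applying Theorem \ref{thm:rf} for the uniform case, and by transcribing its proof for the pointwise case.

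First I would check the universal property. Since $K$ is contained in the kernel of $\pi_R : \Gamma \to R$, the map $\pi_R$ factors as $\pi_R = \overline{\pi}_R \circ \pi_K$, where $\pi_K : \Gamma \to \Gamma/K$ is the quotient map and $\overline{\pi}_R : \Gamma/K \to R$ is surjective. In particular $R$ is a residually-$\C$ quotient of $\Gamma/K$. Conversely, any residually-$\C$ quotient of $\Gamma/K$ is also a residually-$\C$ quotient of $\Gamma$, and so factors through $R$ by Lemma \ref{lem:largest_rf}. Hence $R$ is the largest residually-$\C$ quotient of $\Gamma/K$ as well, and applying Theorem \ref{thm:rf} to both $\Gamma$ and $\Gamma/K$ yields
\[ D^{\G}_{\Gamma}(\ee) = D^{\G}_R(\ee) = D^{\G}_{\Gamma/K}(\ee), \]
which gives the uniform statement.

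For the pointwise statement I would repeat the argument from Theorem \ref{thm:rf} directly. Let $(\f_n : \Gamma/K \to G_n \in \G)_{n \geq 1}$ be a pointwise asymptotic homomorphism. Then $(\f_n \circ \pi_K)_{n \geq 1}$ is too, so by pointwise stability of $\Gamma$ there is a sequence of homomorphisms $(\widetilde{\ff}_n : \Gamma \to G_n)_{n \geq 1}$ pointwise asymptotically close to it. Each $G_n$ is residually-$\C$ by Lemma \ref{lem:ultra_gp} (its clopen normal subgroups $G_n(\ee)$ intersect trivially, with quotients lying in $\MQ(\G) \subseteq \C$), so $\widetilde{\ff}_n$ factors through $R$, and composing with $\overline{\pi}_R$ produces $\ff_n : \Gamma/K \to G_n$ with $\widetilde{\ff}_n = \ff_n \circ \pi_K$. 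Surjectivity of $\pi_K$ then guarantees that $(\ff_n)_{n \geq 1}$ is pointwise asymptotically close to $(\f_n)_{n \geq 1}$. The only slightly delicate point is this factorization of $\widetilde{\ff}_n$ through $\Gamma/K$, but it is immediate from the universal property of $R$ combined with the fact that $R$ is itself a quotient of $\Gamma/K$.
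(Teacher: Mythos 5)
Your proof is correct and follows essentially the same route as the paper: verifying that $R$ remains the largest residually-$\C$ quotient of $\Gamma/K$ and then reusing Theorem \ref{thm:rf} (for the uniform part) and rerunning its pull-back argument with $\Gamma/K$ in place of $R$ (for the pointwise part). The paper phrases this more tersely by noting that the proofs of Lemma \ref{lem:factor_R} and Theorem \ref{thm:rf} only use the universal property of $R$, which holds relative to $\Gamma/K$ as well, but the underlying argument is identical.
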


Another interesting consequence of Theorem \ref{thm:rf} is that the equivalence of Theorem \ref{thm:pw_un} can also be extended to some infinitely presented residually-$\MQ(\G)$ groups. We will give explicit examples in Corollaries \ref{cor:rfBS_p} and \ref{cor:rfBS}.

\begin{corollary}
\label{cor:fgrf}

Let $\G, \C$ be as above. Let $\Gamma$ be a (finitely generated, residually-$\C$) group, that can be expressed as the largest residually-$\C$ quotient of some finitely presented group. Then $\Gamma$ is pointwise $\G$-stable if and only if it is uniformly $\G$-stable.
\end{corollary}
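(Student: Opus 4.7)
The plan is to chain together Theorems \ref{thm:pw_un} and \ref{thm:rf}, which together already encode all the content we need; the only subtlety is identifying the right finitely presented intermediate group to pass through.

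First, the easier direction: suppose $\Gamma$ is pointwise $\G$-stable. Since $\Gamma$ is finitely generated by hypothesis, Theorem \ref{thm:pw_un} directly gives that $\Gamma$ is uniformly $\G$-stable. This requires no use of the finitely presented cover.

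For the converse, let $\tilde{\Gamma}$ be a finitely presented group whose largest residually-$\C$ quotient is $\Gamma$. The idea is to transfer stability properties back and forth between $\Gamma$ and $\tilde{\Gamma}$ using Theorem \ref{thm:rf}, while exploiting the fact that pointwise and uniform stability coincide for the finitely presented group $\tilde{\Gamma}$. Concretely, assume $\Gamma$ is uniformly $\G$-stable. Then by Theorem \ref{thm:rf} (applied to $\tilde{\Gamma}$, whose largest residually-$\C$ quotient is $\Gamma$) the group $\tilde{\Gamma}$ is also uniformly $\G$-stable. Since $\tilde{\Gamma}$ is finitely presented, Theorem \ref{thm:pw_un} yields that $\tilde{\Gamma}$ is pointwise $\G$-stable. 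Applying the second clause of Theorem \ref{thm:rf} once more, pointwise $\G$-stability descends from $\tilde{\Gamma}$ to its largest residually-$\C$ quotient $\Gamma$, which is what we wanted.

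There is really no obstacle here — both implications are routine once the right diagram is drawn. The only point worth double-checking is that we may legitimately apply Theorem \ref{thm:rf} in both directions (equivalence for uniform stability, and descent for pointwise stability) to the extension $\tilde{\Gamma} \twoheadrightarrow \Gamma$, which is valid precisely because $\Gamma$ is assumed to be the \emph{largest} residually-$\C$ quotient of $\tilde{\Gamma}$. The assumption that $\Gamma$ is finitely generated is only needed for the forward direction, via Theorem \ref{thm:pw_un}.
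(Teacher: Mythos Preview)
Your proof is correct and follows essentially the same approach as the paper: both directions use the chain Theorem~\ref{thm:rf} $\to$ Theorem~\ref{thm:pw_un} $\to$ Theorem~\ref{thm:rf}, passing through the finitely presented group $\tilde{\Gamma}$ (the paper's $\widehat{\Gamma}$) for the harder implication.
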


\begin{proof}
By Theorem \ref{thm:pw_un} we only need to show that uniform stability implies pointwise stability. Let $\widehat{\Gamma}$ be a finitely presented such that $\Gamma$ is the largest residually $\C$-quotient of $\widehat{\Gamma}$. If $\Gamma$ is uniformly stable, then $\widehat{\Gamma}$ is uniformly stable by Theorem \ref{thm:rf}; being finitely presented $\widehat{\Gamma}$ is also pointwise stable by Theorem \ref{thm:pw_un}, and so $\Gamma$ is pointwise stable again by Theorem \ref{thm:rf}.
\end{proof}

In case $\G$ is profinite and $\C$ is the class of all finite groups, it is tempting to conjecture that all finitely generated residually finite groups satisfy the hypotheses of Corollary \ref{cor:fgrf}, and so the equivalence of Theorem \ref{thm:pw_un} applies to all of them. This is not the case. Indeed, let $\Gamma$ be a finitely generated residually finite group. Suppose that we know:
\begin{enumerate}
\item Whenever $C$ is finitely presented and $C \to \Gamma$ is a surjective homomorphism, $C$ is \emph{large}, that is, it virtually surjects onto $F_2$.
\item There exist finite groups onto which $\Gamma$ does \emph{not} virtually surject.
\end{enumerate}
Then $\Gamma$ cannot be the largest residually finite quotient of a finitely presented group. The paper \cite{covers} contains many examples of such groups.

\begin{example}
A finitely generated group that surjects onto $\mathbb{Z}$ with locally finite kernel satisfies Property $1.$ above \cite[Post-Scriptum]{covers}. Thus the Lamplighter group $\mathbb{Z}/2\mathbb{Z} \wr \mathbb{Z}$ (which satisfies Property $2.$, for example because it is metabelian) cannot be the largest residually finite quotient of a finitely presented group.
\end{example}

\begin{example}
There are also torsion-free examples. The Basilica group, introduced in \cite{basilica}, is finitely generated, residually finite, torsion-free, and every proper quotient thereof is solvable, so it satisfies Property $2.$ above. It also satisfies Property $1.$ \cite[Section 2]{covers}, so it cannot be the largest residually finite quotient of a finitely presented group.
\end{example}

Compare this with \cite[Corollary 6.9]{limit}: every finitely generated residually free group is the largest residually free quotient of a finitely presented group.

Nevertheless, we still do not know if the equivalence of Theorem \ref{thm:pw_un} holds for all finitely generated residually finite groups (see Question \ref{q:fgrf:unnonpw}).

\subsection{Solution to some stability problems}
\label{ss:sol}

We present here the complete solution to two uniform stability problems, with respect to certain families introduced in Section \ref{s:fam}.

\begin{proposition}
\label{prop:aut_stab}

Let $\G$ be an ultrametric family with the following property: for every $G \in \G$ and every $\ee > 0$, the extension $1 \to G(\ee) \to G \to G/G(\ee) \to 1$ splits. Then all groups are uniformly $\G$-stable, with an optimal estimate. In particular, all groups are uniformly $T(\mathbf{R})$-stable and $\Aut(X^*_\bullet)$-stable.
\end{proposition}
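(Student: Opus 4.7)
The plan is to reduce the problem directly to Lemma \ref{lem:mq}, which in the ultrametric setting translates a map with bounded defect into a genuine homomorphism onto the metric quotient $G/G(\ee)$. The splitting hypothesis then lets us lift this homomorphism back to $G$ losslessly, which will yield an optimal estimate.

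Concretely, let $\Gamma$ be any group and suppose $\f : \Gamma \to G \in \G$ satisfies $\defe(\f) \leq \ee$. First I would invoke Item $1.$ of Lemma \ref{lem:mq} to obtain an honest homomorphism $\f(\ee) : \Gamma \to G/G(\ee)$ which agrees with $\f$ up to $G(\ee)$. By the hypothesis, fix a homomorphic section $\sigma : G/G(\ee) \to G$ of the quotient map and set $\ff := \sigma \circ \f(\ee) : \Gamma \to G$. Then $\ff$ is a genuine homomorphism and has defect zero, and since $\sigma$ is a section, the induced homomorphism $\ff(\ee) : \Gamma \to G/G(\ee)$ coincides with $\f(\ee)$. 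Item $2.$ of Lemma \ref{lem:mq} now gives $\dist(\f, \ff) \leq \ee$, which establishes uniform $\G$-stability with an optimal estimate (in the sense of Definition \ref{def:quant}).

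It remains to verify the splitting hypothesis for $T(R)$ and $\Aut(X^*_\bullet)$. Following Example \ref{ex:UT}, for $G = T_n(R)$ and $\ee > 0$ the subgroup $G(\ee)$ consists of upper-triangular invertible matrices with an identity block $I_k$ in the upper-left corner (for the appropriate $k$ depending on $\ee$), and the metric quotient is naturally identified with $T_k(R)$. A homomorphic section is provided by block embedding $A \mapsto \left(\begin{smallmatrix} A & 0 \\ 0 & I_{n-k} \end{smallmatrix}\right)$, which is evidently a group homomorphism splitting the quotient. Similarly, following Example \ref{ex:aut:filt}, for $G = \Aut(X_n^*)$ the subgroup $G(\ee)$ is the pointwise stabilizer of the $k$-th level, and the metric quotient is the $k$-fold iterated wreath product of $\Sym(X_n)$, which can be identified with $\Aut(X_n^{\leq k})$; a homomorphic section is given by extending any level-$k$ automorphism by the identity on all subtrees rooted at depth $k$.

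The only conceptual step is the initial observation that in an ultrametric setting Lemma \ref{lem:mq} turns the problem of \emph{approximating} $\f$ by a homomorphism into the problem of \emph{lifting} an existing homomorphism through the quotient $G \to G/G(\ee)$; the splitting hypothesis is tailor-made for this. I do not anticipate any serious obstacle, since both concrete splittings above are the obvious ones and only use that $G(\ee)$ arises as the kernel of a natural truncation map admitting an evident section.
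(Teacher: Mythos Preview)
Your proof is correct and follows essentially the same approach as the paper: use Lemma~\ref{lem:mq} to pass to a genuine homomorphism into the metric quotient, compose with the homomorphic section, and read off $\dist(\f,\ff)\leq\ee$; then verify the splitting hypothesis for $T(R)$ and $\Aut(X^*_\bullet)$ via the obvious block-embedding and prefix-extension sections. Your write-up is slightly more explicit in invoking both items of Lemma~\ref{lem:mq}, but the argument is the same.
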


\begin{proof}
We use the characterisation of uniform stability from Lemma \ref{lem:quant}. By Lemma \ref{lem:mq} a map $\f \colon \Gamma \to G$ of defect $\ee$ induces a homomorphism $\Gamma \to G/G(\ee)$. Composing the latter with a homomorphic section $G/G(\ee) \to G$, we obtain a homomorphism $\ff \colon \Gamma \to G$ such that $\dist(\f, \ff) \leq \ee = \defe(\f)$.

Both $T(\mathbf{R})$ and $\Aut(X^*_\bullet)$ satisfy the hypothesis. The metric quotients of $T_n(\mathbf{R})$ are isomorphic to $T_{n-k}(\mathbf{R})$ for some $1 \leq k \leq n$, and a homomorphic section of the quotient map is given by the inclusion in the upper-left corner. The metric quotients of $\Aut(X^*_n)$ are isomorphic to the group of automorphisms of words of a given finite length, and a homomorphic section of the quotient map is given by letting these elements act on the prefix of the appropriate length, and trivially on the rest of the word.
\end{proof}

\begin{remark}
We will see that there exist (finitely generated) groups that are not pointwise $\Aut(X^*_\bullet)$-stable (Corollary \ref{cor:cex_stab}).
\end{remark}

Our next goal is to prove a stability result for finite families $\G$. When studying uniform stability in the general setting, even looking at families consisting of a single group can be interesting. For instance, when $\G = \{  (\U(1), \| \cdot \|_{op}) \}$, then non-abelian free groups are not uniformly $\G$-stable \cite{Rolli}. However, in the ultrametric setting, such a situation cannot occur:

\begin{proposition}
\label{prop:unfin}

Let $\G$ be a finite profinite family. Then every finitely generated group is uniformly $\G$-stable.
\end{proposition}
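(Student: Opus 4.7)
My plan is to argue by contradiction, exploiting the compactness of the groups in $\G$ and the finiteness of $\G$ itself to extract a limiting homomorphism from a putative sequence of counterexamples. Concretely, I will fix a finite generating set $S = \{s_1, \ldots, s_k\}$ for $\Gamma$, and assume that $\Gamma$ is not uniformly $\G$-stable. By Lemma \ref{lem:quant}, there exist $\ee > 0$ and maps $\f_n : \Gamma \to G_n \in \G$ with $\defe(\f_n) \to 0$ but $\dist(\f_n, \ff) > \ee$ for every homomorphism $\ff : \Gamma \to G_n$. Since $\G$ is finite, I can pass to a subsequence along which $G_n = G$ is a fixed compact ultrametric group; and since $G^k$ is compact metric, I can diagonalize once more to ensure $\f_n(s_i) \to g_i \in G$ for every $i$.

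Next I will construct the candidate homomorphism. Let $\hff : F_S \to G$ be the unique homomorphism with $\hff(s_i) = g_i$ for all $i$, and let $\hf_n : F_S \to G$ be the homomorphism coinciding with $\f_n$ on $S$. Continuity of the group operations in $G$ immediately yields $\hf_n(w) \to \hff(w)$ for every word $w \in F_S$.

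The crux of the proof—and the step I expect to require the most care—is showing that $\hff$ descends to a homomorphism $\ff : \Gamma \to G$, that is, $\hff(w) = 1_G$ for every $w \in \ker(F_S \to \Gamma)$. For such a $w$ we have $\f_n(\overline{w}) = \f_n(1_\Gamma)$, and bi-invariance of the metric gives $d_G(\f_n(1_\Gamma), 1_G) = \defe_{1,1}(\f_n) \to 0$. By Item $1$ of Lemma \ref{lem:ultra_lift}, $d_G(\f_n(\overline{w}), \hf_n(w))$ is bounded by a finite maximum of defects of $\f_n$, hence also tends to $0$. Combining these via the ultrametric inequality forces $\hf_n(w) \to 1_G$, and by uniqueness of limits $\hff(w) = 1_G$. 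Thus $\hff$ factors through a homomorphism $\ff : \Gamma \to G$.

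To close the argument, observe that by construction $\dist_{s_i}(\f_n, \ff) = d_G(\f_n(s_i), g_i) \to 0$ for every generator, while $\defe(\ff) = 0$ and $\defe(\f_n) \to 0$. Lemma \ref{lem:ultra_close} then yields $\dist(\f_n, \ff) \leq \max\{ \defe(\f_n), \max_i \dist_{s_i}(\f_n, \ff)\} \to 0$, contradicting $\dist(\f_n, \ff) > \ee$. The main obstacle is precisely the descent step: the hypothesis $\defe(\f_n) \to 0$ controls $\f_n$ only approximately, yet we need $\hff$ to vanish exactly on every relator, and it is the ultrametric bookkeeping of Lemma \ref{lem:ultra_lift} that makes this transition possible.
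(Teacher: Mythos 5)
Your proof is correct, and the underlying idea is the same as the paper's: finiteness of $\G$ lets you fix a single compact ultrametric $G$, and compactness then produces a limiting homomorphism that agrees with the $\f_n$ on generators. The difference is one of bookkeeping: you argue by contradiction and extract convergent subsequences, whereas the paper first establishes an ultrafilter characterization of uniform stability (Lemma \ref{lem:stab_uf}) and then takes $\omega$-limits. You also work at the level of $\Gamma$ rather than $F_S$, which is why you need the extra descent step via Item $1$ of Lemma \ref{lem:ultra_lift} to see that $\hf_n(w) \to 1_G$ for $w$ in the kernel; the paper works directly with sequences $\hf_n : F_S \to G_n$ so the relator condition $d_G(\hff(r),1_G) = \lim_{n\to\omega}\defe_r(\hf_n)=0$ is immediate. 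Your version is slightly more self-contained since it avoids stating and proving the ultrafilter characterization, at the mild cost of the extra lifting step; both are essentially the same compactness argument.
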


This is a uniform version of \cite[Proposition 6]{a:quant}, in the ultrametric setting. We will later see that the hypothesis of finite generation is necessary (Remark \ref{rem:unfin}).

For the proof we need the following equivalent characterisation of uniform stability in terms of ultrafilters, which is a uniform version of \cite[Theorem 4.2]{a:comm} (see Lemma \ref{lem:stab_up}):

\begin{lemma}
\label{lem:stab_uf}

Let $\G$ be an ultrametric family and let $\Gamma = \langle S \mid R \rangle$ be a countable group. The following are equivalent:
\begin{enumerate}
\item $\Gamma$ is uniformly $\G$-stable.
\item For every free ultrafilter $\omega \subset \mathcal{P}(\mathbb{N})$ the following holds: for every sequence $(\hf_n \colon F_S \to G_n \in \G)_{n \geq 1}$ such that $\defe(\hf_n) \xrightarrow{n \to \omega} 0$, there exists a sequence $(\hff_n \colon F_S \to G_n)_{n \geq 1}$ of homomorphisms that descend to $\Gamma$ such that $\dist(\hf_n, \hff_n) \xrightarrow{n \to \omega} 0$.
\end{enumerate}
\end{lemma}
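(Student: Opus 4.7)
The plan is to mimic the proof of the pointwise ultrafilter characterization (Lemma \ref{lem:stab_up}) in the uniform setting, leveraging the quantitative characterization of uniform stability in terms of presentations provided by Corollary \ref{cor:quant}. Both directions are short; the substance is just to carefully select an approximately optimal approximating sequence.

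For the direction $1. \Rightarrow 2.$, fix a free ultrafilter $\omega$ and a sequence $(\hf_n : F_S \to G_n \in \G)_{n \geq 1}$ with $\defe(\hf_n) \xrightarrow{n \to \omega} 0$. The quantitative characterization of Corollary \ref{cor:quant} tells us that for every $\ee > 0$ there is some $\dd = D^{\G}_{\langle S \mid R \rangle}(\ee) > 0$ such that any $\hf : F_S \to G \in \G$ with $\defe(\hf) \leq \dd$ admits some $\hff : F_S \to G$ descending to $\Gamma$ with $\dist(\hf, \hff) \leq \ee$. For each $n$, pick $\hff_n : F_S \to G_n$ descending to $\Gamma$ and satisfying
$$\dist(\hf_n, \hff_n) \leq \inf \{ \dist(\hf_n, \hff) : \hff : F_S \to G_n \text{ descends to } \Gamma \} + \tfrac{1}{n}$$
(such a $\hff_n$ exists since homomorphisms from $\Gamma$ to $G_n$ are always available, e.g.\ the trivial one, so the infimum is finite). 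Given $\ee > 0$, the set $A_\ee := \{ n : \defe(\hf_n) \leq D^{\G}_{\langle S \mid R \rangle}(\ee) \}$ belongs to $\omega$, and for $n \in A_\ee$ we have $\dist(\hf_n, \hff_n) \leq \ee + 1/n$. Intersecting $A_\ee$ with $\{n : 1/n \leq \ee\} \in \omega$ yields a set in $\omega$ on which $\dist(\hf_n, \hff_n) \leq 2\ee$, so $\dist(\hf_n, \hff_n) \xrightarrow{n \to \omega} 0$.

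For the direction $2. \Rightarrow 1.$, we argue by contrapositive. If $\Gamma$ is not uniformly $\G$-stable, then Corollary \ref{cor:quant} produces $\ee > 0$ with the property that for every $n \geq 1$ there exists $\hf_n : F_S \to G_n \in \G$ with $\defe(\hf_n) \leq 1/n$ but $\dist(\hf_n, \hff) > \ee$ for every homomorphism $\hff : F_S \to G_n$ descending to $\Gamma$. Then $\defe(\hf_n) \to 0$ in the usual sense, hence $\defe(\hf_n) \xrightarrow{n \to \omega} 0$ for \emph{any} free ultrafilter $\omega$. However, no sequence $(\hff_n)_{n \geq 1}$ of homomorphisms descending to $\Gamma$ can satisfy $\dist(\hf_n, \hff_n) \xrightarrow{n \to \omega} 0$, since the distance is uniformly bounded below by $\ee$ for every $n$. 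Fixing any free ultrafilter $\omega$ then witnesses the failure of $2.$

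There is no real obstacle here; the only subtlety is that the statement of $2.$ quantifies over \emph{all} free ultrafilters, which makes the backward direction easier (one counterexample sequence suffices for every $\omega$) and forces the forward direction to produce an approximating sequence that works uniformly across choices of ultrafilter. The selection argument via near-infima above handles this cleanly and avoids any ultrafilter-dependent construction.
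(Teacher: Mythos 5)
Your proposal is correct and follows essentially the same route as the paper: both directions reduce to the quantitative characterization of Corollary \ref{cor:quant}, with the forward direction selecting a sequence $(\hff_n)$ that nearly minimizes the distance and then showing the set where $\dist(\hf_n, \hff_n) \leq \ee$ lies in $\omega$, and the backward direction running the same contrapositive via a counterexample sequence that fails uniformly (hence for every $\omega$). The only cosmetic variations are bookkeeping (your $2\ee$ versus the paper's $\ee/2$) and your explicit remark that the trivial homomorphism guarantees the infimum is finite, which the paper leaves implicit.
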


\begin{proof}
We use the characterisation of uniform $\G$-stability from Corollary \ref{cor:quant}. 

\medskip

$1. \Rightarrow 2.$ Fix a free ultrafilter $\omega$ and let $(\hf_n \colon F_S \to G_n \in \G)_{n \geq 1}$ be such that $\defe(\hf_n) \xrightarrow{n \to \omega} 0$. For all $\hf_n$, let $\hff_n \colon F_S \to G_n$ be a homomorphism that descends to $\Gamma$ and minimises $\dist(\f_n, \ff_n)$ up to $1/n$. We need to show that $\dist(\hf_n, \hff_n) \xrightarrow{n \to \omega} 0$, so let $\ee > 0$ and let $\dd > 0$ be as in Corollary \ref{cor:quant} for $\ee/2 > 0$: this means that if $\defe(\hf_n) \leq \dd$ then $\dist(\hf_n, \hff_n) \leq \ee/2 + 1/n$. Let $N \geq 2/\ee$. Then if $\defe(\hf_n) \leq \dd$ and $n \geq N$ we have $\dist(\hf_n, \hff_n) \leq \ee$. Therefore
\[\{ n \geq 1 : \dist(\hf_n, \hff_n) \leq \ee \} \supset \{ n \geq N \} \cap \{ n \geq 1 : \defe(\hf_n) \leq \dd \}.\]
The smaller set belongs to $\omega$ because $\defe(\hf_n) \xrightarrow{n \to \omega} 0$ and $\omega$ is free. Thus the larger set also belongs to $\omega$, and we conclude. 

\medskip

$2. \Rightarrow 1.$ Suppose that $\Gamma$ is not uniformly $\G$-stable. By Corollary \ref{cor:quant} there exists $\ee > 0$ and a sequence $(\hf_n \colon F_S \to G_n \in \G)_{n \geq 1}$ such that $\defe(\hf_n) \leq 1/n$ but for every sequence of homomorphisms $(\hff_n \colon F_S \to G_n)_{n \geq 1}$ descending to $\Gamma$, it holds $\dist(\hf_n, \hff_n) \geq \ee$. This implies that $\defe(\hf_n) \xrightarrow{n \to \omega} 0$ for every free ultrafilter $\omega$, while $\lim\limits_{n \to \omega} \dist(\hf_n, \hff_n) \geq \ee > 0$ for every sequence $(\hff_n)_{n \geq 1}$ of homomorphisms that descend to $\Gamma$. So Item $2.$ does not hold.
\end{proof}

\begin{proof}[Proof of Proposition \ref{prop:unfin}]
Let $\Gamma = \langle S \mid R \rangle$ be a finitely generated group. Fix a free ultrafilter $\omega \subset \mathcal{P}(\mathbb{N})$ and let $(\hf_n \colon F_S \to G_n \in \G)_{n \geq 1}$ be a sequence such that $\defe(\hf_n) \xrightarrow{n \to \omega} 0$. Since $\G$ is finite, up to restricting to a subset belonging to $\omega$ we may assume that $G_n = G$ is a fixed group for all $n \geq 1$. Since $G$ is a compact metric space, for all $s \in S$ the sequence $\hf_n(s)$ admits an $\omega$-limit, which we denote by $\hff(s) \in G$. Let $\hff \colon F_S \to G$ be the corresponding homomorphism. Then $\hff$ descends to $\Gamma$: indeed, for all $r \in R$ we have
\[d_G(\hff(r), 1_G) = \lim\limits_{n \to \omega} d_G(\hf_n(r), 1_G) = \lim\limits_{n \to \omega} \defe_r(\hf_n) = 0.\]
Moreover, by definition $\dist_s(\hf_n, \hff) \xrightarrow{n \to \omega} 0$, and so, since $S$ is finite, $\dist(\hf_n, \hff) \xrightarrow{n \to \omega} 0$. We conclude by Lemma \ref{lem:stab_uf}.
\end{proof}

\begin{remark}
\label{rem:galois:problem}

In a previous version of this paper, we ended this section with a complete solution to the uniform stability problem with respect to the family $\Gal(K)$ from Example \ref{ex:gal:prof}. We first showed that a group is uniformly $\Gal(K)$-stable if and only if it is uniformly $\{ \Gal(K^{sep}/K) \}$-stable, and then concluded via Proposition \ref{prop:unfin} that all finitely generated groups are uniformly $\Gal(K)$-stable. Unfortunately that proof had a gap, we thank the anonymous referee for identifying it.
\end{remark}

\pagebreak

\section{Ultrametric approximation and pointwise stability}
\label{s:approx}

This section constitutes an interlude, in that we leave the question of stability to focus on the related approximation problem. The main goal is to prove Theorem \ref{intro:thm:approx} for an ultrametric family $\G$. Combining this with Lemma \ref{lem:GR} will produce several counterexamples to pointwise stability. 

\medskip

Recall from Definition \ref{def:approx} that $\Gamma$ is \emph{$\G$-approximable} if there exists a \emph{$\G$-approximation}, namely an asymptotically injective pointwise asymptotic homomorphism $(\f_n \colon \Gamma \to G_n \in \G)_{n \geq 1}$. The following similar notion appears quite naturally in this context:

\begin{definition}
We say that a $\G$-approximation is \emph{uniform} if it is moreover a uniform asymptotic homomorphism. If $\Gamma$ admits a uniform $\G$-approximation, it is said to be \emph{uniformly $\G$-approximable}.
\end{definition}

The results from Section \ref{s:ultra} imply that this is not stronger for finitely presented groups:

\begin{lemma}
\label{lem:approx_fp}

Let $\G$ be an ultrametric family, $\Gamma$ a finitely presented group. If $\Gamma$ is $\G$-approximable, then it is uniformly $\G$-approximable.
\end{lemma}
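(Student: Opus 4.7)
The plan is to take a given $\G$-approximation $(\f_n : \Gamma \to G_n)_{n \geq 1}$ and use the finite presentability of $\Gamma$ to upgrade it to a uniform asymptotic homomorphism without destroying asymptotic injectivity. The main technical input is already available: Item 2 of Proposition \ref{prop:ultra_asy} says that, for $\Gamma$ finitely presented, every pointwise asymptotic homomorphism $(\f_n)_{n \geq 1}$ is pointwise asymptotically close to some uniform asymptotic homomorphism $(\ff_n)_{n \geq 1}$. So this candidate $(\ff_n)_{n \geq 1}$ will be my proposed strong approximation.

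The first step is thus to invoke Item 2 of Proposition \ref{prop:ultra_asy} to produce such a sequence $(\ff_n : \Gamma \to G_n)_{n \geq 1}$ with $\defe(\ff_n) \to 0$ and $\dist_g(\f_n, \ff_n) \to 0$ for every $g \in \Gamma$. The second step is to verify asymptotic injectivity. Fix $1 \neq g \in \Gamma$; by hypothesis there exists $c > 0$ with $d_{G_n}(\f_n(g), 1) \geq c$ for all large $n$, and by construction $d_{G_n}(\f_n(g), \ff_n(g)) \to 0$, so eventually this distance is below $c/2$. The (ultra)triangle inequality then gives
$$d_{G_n}(\ff_n(g), 1) \geq d_{G_n}(\f_n(g), 1) - d_{G_n}(\f_n(g), \ff_n(g)) \geq c/2$$
for all sufficiently large $n$, hence $\liminf_{n \to \infty} d_{G_n}(\ff_n(g), 1) > 0$. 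Combined with the fact that $(\ff_n)$ is a uniform asymptotic homomorphism, this shows that $(\ff_n)$ is a strong $\G$-approximation.

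There is no serious obstacle: Proposition \ref{prop:ultra_asy} does all the heavy lifting, and the only thing to check is that replacing $(\f_n)$ by a pointwise asymptotically close sequence cannot make an individual non-identity element $g$ collapse to the identity in the limit, which is immediate from the triangle inequality. Note that ultrametricity is not directly used in the injectivity step, but it is crucial for Proposition \ref{prop:ultra_asy} itself, since that is where the supremum over all relators is controlled by the supremum over the finite relation set $R$.
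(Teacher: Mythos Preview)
Your proof is correct and follows the same approach as the paper: invoke Item 2 of Proposition \ref{prop:ultra_asy} to replace the given approximation by a uniform asymptotic homomorphism that is pointwise asymptotically close, then observe that asymptotic injectivity is preserved under pointwise closeness. The paper's version is just more terse, leaving the triangle-inequality check implicit.
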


\begin{proof}
By Item $2.$ of Proposition \ref{prop:ultra_asy}, every pointwise asymptotic homomorphism of $\Gamma$ is pointwise asymptotically close to a uniform one. Applying this to a $\G$-approximation gives a uniform asymptotic homomorphism that is still asymptotically injective: a uniform $\G$-approximation.
\end{proof}

\begin{remark}
Lemma \ref{lem:approx_fp} only holds with the ultrametric assumption. For example, let $\G$ be the family of finite-dimensional unitary groups equipped with the operator norm. Then every amenable group is $\G$-approximable \cite{amenableMF}; however there exist finitely presented amenable groups which are not uniformly $\G$-approximable. Indeed, every amenable group is uniformly $\G$-stable \cite{amenst}. But if a finitely generated group is uniformly $\G$-approximable and uniformly $\G$-stable, then it is automatically residually finite, by the same argument as in Lemma \ref{lem:GR}. Therefore a finitely presented amenable group which is not residually finite (see e.g. \cite{fpamenrf}) cannot be uniformly $\G$-approximable. See \cite[Section 7]{Bharatandi} for more examples of groups that are not uniformly $\G$-approximable, for more general families $\G$.
\end{remark}

\subsection{From approximations to local embeddings}

Well-studied approximation properties such as soficity or hyperlinearity are much weaker than residual finiteness, or local embeddability into finite groups. In this subsection we prove that $\G$-approximability, when $\G$ is a profinite family, is stronger. This is essentially a reinterpretation of the interplay between local embeddability and convergence in the space of marked groups (Theorem \ref{thm:mg}). Recall that $\MQ(\G)$ denotes the class of metric quotients of $\G$ and their subgroups (Definition \ref{def:mq}).

\begin{proposition}
\label{prop:approx_1}

Let $\Gamma = \langle S \mid R \rangle$ be a countable group. If $\Gamma$ is $\G$-approximable, then $\Gamma$ is locally embeddable into $\MQ(\G)$. If $\Gamma$ is uniformly $\G$-approximable, then $\Gamma$ is fully residually-$\MQ(\G)$. In particular, if $\Gamma$ is $\G$-approximable and finitely presented, then $\Gamma$ is fully residually-$\MQ(\G)$.
\end{proposition}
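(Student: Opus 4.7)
The plan is to build the required maps out of the given approximation $(\f_n : \Gamma \to G_n)_{n \geq 1}$ by composing with the quotient projections $\pi_n : G_n \to G_n/G_n(\ee)$ for suitably chosen $\ee > 0$. By Lemma \ref{lem:ultra_gp}, $G_n(\ee)$ is a normal subgroup, so $G_n/G_n(\ee) \in \MQ(\G)$. The idea is that a pointwise asymptotic homomorphism which is injective up to a uniform scale on a given finite set projects to an honest partial homomorphism on that set once we quotient by a small enough ball; strong approximability additionally lets us choose a single $\ee$ small enough to kill the whole defect.

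For the first part, fix a finite symmetric subset $K \subset \Gamma$ containing $1$. By asymptotic injectivity of $(\f_n)$, there exists $\eta > 0$ such that for all sufficiently large $n$ one has $d(\f_n(k), 1) > 2\eta$ for every $1 \neq k \in K \cdot K$. Choose $\ee \in (0, 2\eta)$; by the pointwise asymptotic homomorphism property, after enlarging $n$ we may also assume $\defe_{g,h}(\f_n) < \ee$ for all $g, h \in K$ (a finite set of conditions). Set $f_n := \pi_n \circ \f_n : \Gamma \to G_n/G_n(\ee)$. The identity $f_n(gh) = f_n(g) f_n(h)$ for $g,h \in K$ follows directly from $\defe_{g,h}(\f_n) < \ee$, exactly as in Lemma \ref{lem:mq}(1). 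For injectivity on $K$, I would first note, using bi-invariance and the ultrametric inequality, that $d(\f_n(1), 1) = \defe_{1,1}(\f_n) < \ee$ and hence $d(\f_n(h^{-1}), \f_n(h)^{-1}) \leq \ee$ for each $h \in K$; combined with $\defe_{g, h^{-1}}(\f_n) < \ee$, bi-invariance yields $d(\f_n(g)\f_n(h)^{-1}, \f_n(gh^{-1})) \leq \ee$. For $g \neq h$ in $K$, $gh^{-1} \neq 1$ lies in $K \cdot K$, so $d(\f_n(gh^{-1}),1) > 2\ee$, and the ultrametric inequality forces $d(\f_n(g)\f_n(h)^{-1}, 1) = d(\f_n(gh^{-1}), 1) > \ee$, giving $f_n(g) \neq f_n(h)$. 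Thus $f_n$ is a $K$-local embedding into $G_n/G_n(\ee) \in \MQ(\G)$.

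For the second part, assume further that $\defe(\f_n) \to 0$. Now choose $\ee_n := \max\{\defe(\f_n), \ee_n^0\}$ where $\ee_n^0$ is any sequence tending to $0$; in particular $\ee_n \to 0$ and $\defe(\f_n) \leq \ee_n$. By Lemma \ref{lem:mq}(1), the induced map $\f_n(\ee_n) : \Gamma \to G_n/G_n(\ee_n)$ is a genuine homomorphism into a group in $\MQ(\G)$. Fixing an arbitrary finite $K$ and choosing $\eta$ as above, the same ultrametric bookkeeping shows that $\f_n(\ee_n)|_K$ is injective as soon as $\ee_n < 2\eta$, which holds for all large $n$. This exhibits $\Gamma$ as fully residually-$\MQ(\G)$.

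The final statement follows by either of two routes: directly, combining Lemma \ref{lem:approx_fp} (approximability plus finite presentability gives strong approximability) with the second part; or, alternatively, combining the first part with Proposition \ref{prop:lec_rc}(2), since $\MQ(\G)$ is closed under subgroups by definition. I expect the only real subtlety to be the injectivity step in the first part: one must carefully ping-pong between $\f_n(g)\f_n(h)^{-1}$, $\f_n(gh^{-1})$, and $\f_n(h)^{-1}$ versus $\f_n(h^{-1})$, and it is precisely the ultrametric (not just triangle) inequality that upgrades the three $\ee$-sized perturbations into an equality $d(\f_n(g)\f_n(h)^{-1},1) = d(\f_n(gh^{-1}),1)$, thus preserving the lower bound from asymptotic injectivity across the quotient. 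Everything else is routine once this mechanism is in place.
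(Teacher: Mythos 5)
Your proof is correct. The key injectivity step is sound: from $\defe_{g,h}(\f_n) \leq \ee$ for $g,h \in K$ (with $K$ symmetric containing $1$) you get $d(\f_n(g)\f_n(h)^{-1}, \f_n(gh^{-1})) \leq \ee$ via two applications of bi-invariance and the ultrametric inequality, and since $d(\f_n(gh^{-1}),1) > 2\eta > \ee$ the ultrametric inequality upgrades this to the equality $d(\f_n(g)\f_n(h)^{-1},1) = d(\f_n(gh^{-1}),1)$, so injectivity survives the quotient. (The line ``$d(\f_n(gh^{-1}),1) > 2\ee$'' is a harmless overstatement --- it should be $>2\eta$, and all you actually need is $> \ee$; nothing breaks.)

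Your route through the first part is genuinely different from the paper's. The paper lifts $\f_n$ to homomorphisms $\hf_n : F_S \to G_n$ via Lemma~\ref{lem:ultra_lift}, passes to the space of $F_S$-marked groups, extracts a convergent subsequence of the kernels $N_n$ by compactness of $\mathcal{N}(F_S)$, shows the limit equals $\langle\langle R\rangle\rangle$ using asymptotic injectivity, and then invokes Theorem~\ref{thm:mg}(2). You instead verify the definition of local embeddability directly: for a fixed finite symmetric $K$, you pick $\eta$ from asymptotic injectivity on the finite set $K\cdot K$, then $\ee < 2\eta$ and $n$ large enough to tame the finitely many relevant defects, and exhibit the $K$-local embedding $\pi_n \circ \f_n$ explicitly. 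Your argument avoids the detour through $F_S$, the marked-groups topology, and Theorem~\ref{thm:mg}, at the cost of carrying out the ``ultrametric ping-pong'' by hand; the paper's argument is more structural but relies on more machinery. For the second part (strong approximability $\Rightarrow$ fully residually-$\MQ(\G)$), the paper also works directly on $\Gamma$-marked groups, and your version is essentially that argument with the marked-groups language unpacked: you verify, for each finite $K$, that the genuine homomorphism $\f_n(\ee_n)$ is eventually injective on $K$. Both of your suggested routes for the final statement match the paper's remark.
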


\begin{remark}
The last statement follows form the general fact that finitely presented groups that are locally embeddable into $\MQ(\G)$ are also fully residually-$\MQ(\G)$ (Proposition \ref{prop:lec_rc}). However it also follows by combining the rest of the proposition with Lemma \ref{lem:approx_fp}.
\end{remark}

\begin{proof}
Let $(\f_n \colon \Gamma \to G_n \in \G)_{n \geq 1}$ be a pointwise asymptotic homomorphism, which we lift to a sequence of homomorphisms $(\hf_n \colon F_S \to G_n)_{n \geq 1}$ using Lemma \ref{lem:ultra_lift}. Fix an enumeration of $N = \normal{R}$, denote by $N(k)$ the first $k$ elements, and fix a strictly decreasing sequence $\ek \to 0$. Up to subsequence, we may assume that $\defe_r(\hf_n) \leq \en$ for all $r \in N(n)$, and we look at the induced homomorphism $f_n \colon F_S \to G/G(\en)$. We get a sequence of $F_S$-marked groups with kernel $N_n = \{ w \in F_S : d(\hat{\varphi}_n(w), 1) \leq \en \}$. Up to subsequence, this converges in $\mathcal{N}(F_S)$ to:
\begin{align*}
&\{ w \in F_S : d(\hf_n(w), 1) \leq \en \text{ for infinitely many } n \} \\
= \; & \{ w \in F_S : d(\hf_n(w), 1) \leq \en \text{ for all but finitely many } n \}.
\end{align*}
Now $N = \normal{R}$ is contained in the left-hand side by choice of the subsequence. If moreover $\f_n$ is asymptotically injective, then $N$ contains the right-hand side, and so $N_n \to N \in \mathcal{N}(F_S)$, which implies that $\Gamma$ is locally embeddable into $\MQ(\G)$ by Item $2.$ of Theorem \ref{thm:mg}. 

\medskip

Now assume that $(\varphi_n \colon \Gamma \to G_n)_{n \geq 1}$ is a uniform asymptotic homomorphism. We apply a similar argument over the sequence of $\Gamma$-marked groups with kernels $N_n \coloneqq \{ g \in \Gamma : d(\varphi_n(g), 1) \leq \defe(\varphi_n) \}$; again, all quotients belong to $\MQ(\G)$. Assuming that $\varphi_n$ is asymptotically injective, we deduce that a subsequence of $N_n$ converges to $\{ 1 \} \in \mathcal{N}(\Gamma)$, and so $\Gamma$ is fully residually $\MQ(\G)$ by Item $1.$ of Theorem \ref{thm:mg}.
\end{proof}

\begin{example}
A group is called \emph{weakly hyperlinear} if it is approximable with respect to some family of compact metric groups (see \cite{a:quant}, where this notion is attributed to Gismatullin). Similarly, a group is called \emph{weakly sofic} if it is approximable with respect to some family of finite metric groups \cite{weakly}. Proposition \ref{prop:approx_1} shows that, if we add the hypothesis that the approximating families are ultrametric, then all such groups are LEF.
\end{example}

According to the properties of the class $\MQ(\G)$, the conclusion of Proposition \ref{prop:approx_1} can be strengthened.
We look at two examples: the family $T(\mathbf{R})$ (Example \ref{ex:UT}) where $\mathbf{R}$ is a finite ring, and the family $\Gal(\mathbb{F})$ (Example \ref{ex:gal:prof}), where $\mathbb{F}$ is a finite field.

\begin{corollary}
\label{cor:approxTR}

Let $\mathbf{R}$ be a commutative unital ring, and $\Gamma$ a countable $T(\mathbf{R})$-approximable group. Then there exists a normal subgroup $\Gamma_0 \leq \Gamma$ such that $\Gamma/\Gamma_0$ embeds into $(\mathbf{R}^\times)^{\mathbb{N}}$ and $\Gamma_0$ is locally embeddable into $UT(\mathbf{R})$, in particular $\Gamma_0$ is locally embeddable into the class of nilpotent groups.

If moreover $\mathbf{R}$ is finite and $\Ab(\Gamma)$ is finitely generated, then $\Gamma$ and $\Gamma_0$ have non-trivial abelian quotients, and $\Gamma/\Gamma_0$ embeds into $(\mathbf{R}^\times)^n$ for some $n \geq 1$.
\end{corollary}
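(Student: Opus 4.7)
The strategy centers on passing the natural short exact sequence $1 \to UT_{k}(R) \to T_{k}(R) \xrightarrow{d} (R^\times)^{k} \to 1$, where $d$ extracts the diagonal entries, through an ultraproduct realizing the local embeddability of $\Gamma$. First I would apply Proposition \ref{prop:approx_1} to deduce from $T(R)$-approximability that $\Gamma$ is locally embeddable into $\MQ(T(R))$, and then Proposition \ref{prop:lef_up} to obtain an embedding $\iota : \Gamma \hookrightarrow \prod_{n \to \omega} G_n$ with each $G_n \leq T_{k_n}(R)$ for a suitable sequence $(k_n)_{n \geq 1}$ and free ultrafilter $\omega$.

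The diagonal maps $d_n$ induce a surjective homomorphism $\pi : \prod_{n \to \omega} G_n \to \prod_{n \to \omega} (R^\times)^{k_n}$ with kernel $\prod_{n \to \omega}(G_n \cap UT_{k_n}(R))$. Setting $\Gamma_0 := \iota^{-1}(\ker \pi)$ produces a normal subgroup of $\Gamma$ such that $\Gamma_0 \hookrightarrow \prod_{n \to \omega}(G_n \cap UT_{k_n}(R))$; a second application of Proposition \ref{prop:lef_up} then gives that $\Gamma_0$ is locally embeddable into $UT(R)$, and a fortiori into the class of nilpotent groups. The quotient $\Gamma/\Gamma_0$ embeds into the abelian ultraproduct $\prod_{n \to \omega}(R^\times)^{k_n}$. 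To upgrade this to a genuine embedding into $(R^\times)^{\mathbb{N}}$, I would enumerate $\Gamma/\Gamma_0$, fix a representative of each element in $\prod_n (R^\times)^{k_n}$, and extract a subsequence $(n_m)_{m \geq 1}$ along which the countably many multiplicative and non-triviality constraints (each defining a set in $\omega$) are satisfied; coordinate-wise concatenation along this subsequence then yields an injective homomorphism into $\prod_m (R^\times)^{k_{n_m}} \cong (R^\times)^{\mathbb{N}}$.

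For the finite case, $R$ finite forces the exponent $N := \exp(R^\times)$ to be finite, so every element of $\prod_{n \to \omega}(R^\times)^{k_n}$ has order dividing $N$; combined with $\Gamma$ being finitely generated, $\Gamma/\Gamma_0$ becomes a finitely generated abelian torsion group, hence finite. A finite subgroup of the ultraproduct lies inside a single factor $(R^\times)^{k_{n_0}}$ for some $n_0$, obtained by intersecting the finitely many constraints encoding multiplication and non-triviality, which all belong to $\omega$; this gives the embedding into $(R^\times)^n$ with $n = k_{n_0}$. For the non-trivial abelian quotient claims, observe that $\Gamma$ has non-trivial abelianization iff $\Gamma_0 \subsetneq \Gamma$; if $\Gamma_0 = \Gamma$, one reapplies the construction inside $UT(R)$, projecting along the super-diagonal filtration whose successive quotients are additive powers of $R$, and similarly for $\Gamma_0$. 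The main expected obstacle is precisely the termination of this iteration: the nilpotency classes of $UT_{k_n}(R)$ are unbounded, so the naive super-diagonal projection need not detect a non-trivial abelian quotient in one step, and finite generation of $\Gamma$ (which also passes to $\Gamma_0$ since $[\Gamma : \Gamma_0] < \infty$) must be leveraged carefully to bound the depth at which a non-trivial super-diagonal projection exists.
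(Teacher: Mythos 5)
Your overall skeleton matches the paper's: deduce local embeddability into $\MQ(T(R))$ from Proposition \ref{prop:approx_1}, pass to a set-theoretic ultraproduct via Proposition \ref{prop:lef_up}, split off the diagonal map to define $\Gamma_0$, and then argue separately on $\Gamma_0$ and $\Gamma/\Gamma_0$. The first three steps and the embedding $\Gamma_0 \hookrightarrow \prod_{n\to\omega}\bigl(G_n \cap UT_{k_n}(R)\bigr)$ are the same as in the paper, and your treatment of the finite-$R$ embedding $\Gamma/\Gamma_0 \hookrightarrow (R^\times)^n$ (pull back finitely many constraints, each in $\omega$, to a single coordinate) is a correct, if rephrased, version of the paper's use of residually-$\C$.

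However, your argument for the embedding $\Gamma/\Gamma_0 \hookrightarrow (R^\times)^{\mathbb{N}}$ has a real gap. You propose to ``extract a subsequence $(n_m)$ along which the countably many multiplicative and non-triviality constraints are satisfied,'' but each such constraint only defines a set in $\omega$, and $\omega$ is not closed under countable intersections, so there is in general no index satisfying all of them at once. The diagonal fix, choosing $n_m$ in the intersection of the first $m$ constraint sets, does not help: for fixed $g,h$ the identity $\sigma(g)_{n_m}\sigma(h)_{n_m}=\sigma(gh)_{n_m}$ is only guaranteed for $m$ large, so the coordinate-wise concatenation is a homomorphism only modulo finitely supported sequences, i.e.\ it lands in the reduced power $\prod_m(R^\times)^{k_{n_m}}\big/\bigoplus_m(R^\times)^{k_{n_m}}$ rather than in $(R^\times)^{\mathbb{N}}$. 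The paper argues differently here: since $\Gamma/\Gamma_0$ is abelian, each of its finitely generated subgroups is finitely presented, hence by Item 2 of Proposition \ref{prop:lec_rc} fully residually-$\C$ with $\C=\{(R^\times)^n\}$, and the embedding is deduced from there.

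Finally, for the non-trivial abelian quotients of $\Gamma$ and $\Gamma_0$ when $R$ is finite, the paper invokes the reference \cite{Sol} (finitely generated groups approximable by finite solvable groups have non-trivial abelianization). Your proposed super-diagonal iteration is, as you yourself note, not a proof: the unbounded nilpotency classes of $UT_{k_n}(R)$ mean the iteration need not terminate, and this is exactly the substance of the cited theorem. Also, ``$\Gamma$ has non-trivial abelianization iff $\Gamma_0 \subsetneq \Gamma$'' is only an implication in one direction; $\Gamma_0 = \Gamma$ does not rule out a non-trivial abelianization, so the case split you set up is not exhaustive in the way you rely on it.
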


\begin{proof}
We will use the equivalent characterisation of local embeddability in terms of ultraproducts (Proposition \ref{prop:lef_up}) repeatedly throughout the proof.

By Proposition \ref{prop:approx_1}, and since all metric quotients of $T_n(\mathbf{R})$ are of the form $T_k(\mathbf{R})$ (Example \ref{ex:UT}), we know that $\Gamma$ is locally embeddable into $T(\mathbf{R})$. Then $\Gamma$ embeds into an ultraproduct $\prod\limits_{n \to \omega} T_n(\mathbf{R})$. This gives a homomorphism
\[\Gamma \to \prod\limits_{n \to \omega} T_n(\mathbf{R}) \to \prod\limits_{n \to \omega} \Ab(T_n(\mathbf{R})) \cong \prod\limits_{n \to \omega} (\mathbf{R}^\times)^n,\]
let $\Gamma_0$ be its kernel. Then $\Gamma_0$ embeds into $\prod\limits_{n \to \omega} UT_n(\mathbf{R})$, and so it is locally embeddable into $UT(\mathbf{R})$.

Now $\Gamma/\Gamma_0$ embeds into $\prod\limits_{n \to \omega} (\mathbf{R}^\times)^n$, so it is locally embeddable into $\C \coloneqq \{ (\mathbf{R}^\times)^n : n \geq 1 \}$. Since $\Gamma/\Gamma_0$ is abelian, every finitely generated subgroup of it is finitely presented and locally embeddable into $\C$, so residually-$\C$ by Item $2.$ of Proposition \ref{prop:lec_rc}, and so it embeds into $(\mathbf{R}^\times)^{\mathbb{N}}$. Since $\Gamma$ is countable, $\Gamma/\Gamma_0$ embeds into $(\mathbf{R}^\times)^{\mathbb{N}}$, too. If now $\mathbf{R}^\times$ is finite, then $\mathbb{Z}$ cannot be residually-$\C$, since there is a bound on the order of cyclic subgroups of $(\mathbf{R}^\times)^n$. So $\Gamma/\Gamma_0$ is a torsion group; if moreover $\Ab(\Gamma)$ is finitely generated, then $\Gamma/\Gamma_0$ is finite, and being residually-$\C$ it embeds into $(\mathbf{R}^\times)^n$ for some $n \geq 1$. The statement about $\Gamma$ and $\Gamma_0$ having non-trivial abelian quotients is a consequence of \cite{Sol}, where it is proved that this holds for all finitely generated groups that are approximable in the class of finite solvable groups.
\end{proof}

For the family $\Gal(\mathbb{F})$, we can give a full characterisation.

\begin{corollary}
\label{cor:galfin_approx_1}

Let $\mathbb{F}$ be a finite field, and $\Gamma$ a finitely generated group. Then $\Gamma$ is $\Gal(\mathbb{F})$-approximable if and only if it is of the form $\mathbb{Z}^r \times C$ for some finite cyclic group $C$.
\end{corollary}

\begin{proof}
The absolute Galois group $\Gal(\mathbb{F}^{sep}/\mathbb{F})$ is isomorphic to the profinite completion $\widehat{\mathbb{Z}}$ of $\mathbb{Z}$, which in turn is isomorphic to the direct product of $\Zp$, where $p$ goes through all primes \cite[VII.5]{Neuk}. It follows that every Galois extension of $\mathbb{F}$ has pro-cyclic Galois group, so by Proposition \ref{prop:approx_1}, if $\Gamma$ is $\Gal(\mathbb{F})$-approximable, then it locally embeddable in the class of finite cyclic groups. This implies two things: first, all finite subgroups of $\Gamma$ are cyclic, since a finite group that is locally embeddable in a class automatically belongs to that class. Secondly, $\Gamma$ is abelian; more generally, a group that is locally embeddable in the class of abelian groups is abelian: this follows directly from the ultraproduct characterisation (Proposition \ref{prop:lef_up}). 

\medskip

We next show that $\Gamma = \mathbb{Z}^r \times C$ is approximable. We construct the approximations inside the absolute Galois group of $\mathbb{F}$, which we identify with $\widehat{\mathbb{Z}} \cong \prod \Zp$. This group is metrised using a nested sequence $G_k$ of open finite-index normal subgroups and a sequence of positive reals $\ek \to 0$, as in Example \ref{ex:gal:prof}. Let us make another reduction: if we are able to approximate $C = \mathbb{Z}/p^n\mathbb{Z}$ via maps that take values in $\Zp \leq \widehat{\mathbb{Z}}$, then we are done. Indeed, we can write every finite cyclic group as a direct product of such groups, for a finite set $\{p_1, \ldots, p_i\}$ of distinct primes, and take the direct product of these approximations into $\prod_i \mathbb{Z}_{p_i} \leq \widehat{\mathbb{Z}}$. Finally, we can embed $\mathbb{Z}^r$ into a direct product of $\Zp$ for $r$ distinct primes that we did not use yet. 

So we are left to show that we can approximate $C = \mathbb{Z}/p^n\mathbb{Z}$ for a given $n \geq 1$ with an approximation taking values in $\Zp \leq \widehat{\mathbb{Z}}$. Given $k \geq 1$ let $m \geq 1$ be such that $\Zp \cap G_k = p^m \Zp$; since the $G_k$ are ordered by reverse inclusion, $m \to \infty$ as $k \to \infty$. Let $k(n)$ be the smallest integer such that the corresponding $m$ is larger than $n$, and let $k \geq k(n)$. Then we can embed $C$ into $\mathbb{Z}/p^m \mathbb{Z}$, compose with a section into $\Zp$, and finally include $\Zp$ in $\widehat{\mathbb{Z}}$. This gives a map $\f \colon C \to \widehat{\mathbb{Z}}$ that projects to an injective homomorphism into $\widehat{\mathbb{Z}}/G_k$. It follows from the definition of the metric (Example \ref{ex:gal:prof}) and Lemma \ref{lem:mq} that $\defe(\f) \leq \ek$, and moreover $d(\f(x), 1) \geq \ee_{k(n)}$ for all $x \neq 1$.
\end{proof}

\begin{remark}
Note that the proof actually shows that these groups are $\{ \Gal(\mathbb{F}^{sep}/\mathbb{F}) \}$-approximable. In the finitely generated case, the approximations can be taken uniform by Lemma \ref{lem:approx_fp}.
\end{remark}

\subsection{From local embeddings to approximations}

The first easy examples of sofic and hyperlinear groups are residually finite, and more generally LEF groups. In this subsection we prove that these are also approximable with respect to certain ultrametric families. The precise statement will involve the following concept:

\begin{definition}
\label{def:capprox}

Let $\C$ be a class of groups. We say that $\C$ is \emph{$\G$-approximable} if there exists $\ee > 0$ such that for all $C \in \C$ and for all $\dd > 0$ there exists a map $\eta \colon C \to G \in \G$ such that $\defe(\eta) \leq \dd$ and $d_G(\eta(x), 1_G) > \ee$ for all $1 \neq x \in C$.
\end{definition}

So a class $\C$ is $\G$-approximable if every group in $C$ is uniformly $\G$-approximable, and moreover the injectivity gap can be taken uniformly for the entire class. For several profinite families $\G$, the class of all finite groups is $\G$-approximable:

\begin{example}
\label{ex:approx_gl}

The class of all finite groups is $\GGL(\oo)$-approximable, where $\oo$ is the ring of integers of a non-Archimedean local field. Indeed every finite group $C$ can be embedded into $\GGL_n(\oo)$, for $n$ large enough, using permutation matrices. If $\eta$ denotes this embedding, then $\defe(\eta) = 0$ and $d(\eta(x), I_n) = 1$ for all $1 \neq x \in C$.
\end{example}

\begin{example}
\label{ex:approx_aut}

The class of all finite groups is $\Aut(X_\bullet^*)$-approximable. Indeed, every finite group $C$ can be embedded into $S_n$, for $n$ large enough, which in turn can be embedded into $\Aut(X_n^*)$ by acting on the first letter of a word. If $\eta$ denotes this embedding, then $\defe(\eta) = 0$ and $d(\eta(x), id_{X_n^*}) = 1$ for all $1 \neq x \in C$.
\end{example}

For other families, this property can be quite restrictive:

\begin{example}
\label{ex:galfin_approx_2}

Let $K$ be a field whose absolute Galois group is first countable, and consider the family $\Gal(K) = \Gal(K)((G_k)_{k \geq 1}, \eee)$ of Galois groups of Galois extensions of $K$: see Example \ref{ex:gal:prof} for the notation. We claim that an infinite class of finite groups cannot be $\Gal(K)$-approximable.

Suppose that $\C$ is a $\Gal(K)$-approximable class of finite groups. Let $\ee > 0$ be the uniform injectivity gap as in the definition, we may assume that $\ee = \ee_k$ for some $k \geq 1$. Then for all $C \in \C$ there exists $G/N \in \Gal(K)$ and a map $\eta \colon C \to G/N$ such that $\defe(\eta) \leq \ee_k$ and $d(\eta(x), 1) > \ee_k$ for all $1 \neq x \in C$. The first inequality implies that $\eta$ induces a homomorphism $C \to (G/N) / (G_k N / N) \cong G / G_k N$, and the second one implies that this homomorphism is injective. Therefore $|C| \leq [G : G_k N] \leq [G : G_k]$. Since the inequality holds for every $C$, this implies that $\C$ is finite.
\end{example}

By Proposition \ref{prop:approx_1}, in order to prove that all finite groups are $\G$-approximable, some restriction on $\G$ is necessary. For instance if a finite group does not belong to $\MQ(\G)$, then it cannot be $\G$-approximable. When the local embeddings take place in a $\G$-approximable class, then we can prove a converse to Proposition \ref{prop:approx_1}:

\begin{proposition}
\label{prop:approx_2}

Let $\C$ be a $\G$-approximable class of groups and $\Gamma$ a countable group. If $\Gamma$ is locally embeddable into $\C$, then $\Gamma$ is $\G$-approximable. If $\Gamma$ is fully residually-$\C$, then $\Gamma$ is uniformly $\G$-approximable.
\end{proposition}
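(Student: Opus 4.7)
The plan is to construct the desired approximations as compositions $\f_n = \eta_n \circ f_n$, where the $f_n : \Gamma \to C_n \in \C$ come from the (local or residual) embedding hypothesis and the $\eta_n : C_n \to G_n \in \G$ come from the $\G$-approximability of $\C$.

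Since $\Gamma$ is countable, I will fix an enumeration $\Gamma = \{g_1, g_2, \ldots\}$ and set $K_n := \{g_1, \ldots, g_n\}$. Let $\ee > 0$ be the uniform injectivity gap provided by Definition \ref{def:capprox}, and pick a sequence $\dd_n \to 0$. For the first statement, using that $\Gamma$ is locally embeddable into $\C$, I would choose a $(K_n \cup K_n \cdot K_n)$-local embedding $f_n : \Gamma \to C_n \in \C$; then apply $\G$-approximability of $\C$ to obtain $\eta_n : C_n \to G_n \in \G$ with $\defe(\eta_n) < \dd_n$ and $d_{G_n}(\eta_n(x), 1_{G_n}) > \ee$ for every $1 \neq x \in C_n$. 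For the second statement, using that $\Gamma$ is fully residually-$\C$, I would choose a genuine homomorphism $f_n : \Gamma \to C_n \in \C$ that is injective on $K_n$, and the same $\eta_n$ as above.

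In both cases, set $\f_n := \eta_n \circ f_n : \Gamma \to G_n$. Asymptotic injectivity is identical in the two cases: given $1 \neq g \in \Gamma$, there exists $N$ such that $g \in K_n$ for all $n \geq N$, and hence $f_n(g) \neq 1_{C_n}$ by injectivity of $f_n|_{K_n}$, which gives $d_{G_n}(\f_n(g), 1_{G_n}) > \ee$, so $\liminf_{n \to \infty} d_{G_n}(\f_n(g), 1_{G_n}) \geq \ee > 0$. The defect estimate is the only place where the two cases diverge. In the second case, since $f_n$ is a homomorphism, one computes directly
\[
\defe_{g,h}(\f_n) = d_{G_n}\bigl(\eta_n(f_n(g)f_n(h)), \eta_n(f_n(g))\eta_n(f_n(h))\bigr) \leq \defe(\eta_n) < \dd_n,
\]
which is a uniform bound, so $\f_n$ is a uniform asymptotic homomorphism, i.e.\ a strong $\G$-approximation. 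In the first case, $f_n$ is only a local homomorphism on $K_n$, so the identity $f_n(gh) = f_n(g)f_n(h)$ only holds eventually (once $g, h \in K_n$); this still yields $\defe_{g,h}(\f_n) \leq \defe(\eta_n) < \dd_n$ for all $n$ large enough depending on $(g, h)$, giving pointwise convergence of the defect to zero, which is what is needed for a $\G$-approximation.

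There is no serious obstacle here: the argument is essentially bookkeeping once the ingredients are identified. The only subtlety worth flagging is why the first statement yields only a pointwise (and not a uniform) asymptotic homomorphism: this is because a $K_n$-local embedding fails to be multiplicative outside $K_n$, and so one cannot control $\defe_{g,h}(\f_n)$ uniformly in $(g,h) \in \Gamma^2$. This also explains why the stronger residual hypothesis, producing genuine homomorphisms $f_n$, is needed to obtain a strong $\G$-approximation, and it is consistent with Lemma \ref{lem:approx_fp} which recovers the equivalence in the finitely presented case.
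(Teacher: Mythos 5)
Your proposal is correct and follows essentially the same approach as the paper: compose the $K_n$-local embedding (or genuine homomorphism) $f_n : \Gamma \to C_n$ with the map $\eta_n : C_n \to G_n$ supplied by $\G$-approximability of $\C$, then check defect and injectivity. The only minor divergence is that you take $(K_n \cup K_n \cdot K_n)$-local embeddings, which is unnecessary given the paper's Definition of a $K$-local embedding (multiplicativity $f(g)f(h) = f(gh)$ is required only for $g, h \in K$, not $gh \in K$), but this overcautious choice is harmless and the argument goes through identically.
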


\begin{proof}
Since $\Gamma$ is countable, we can write it as an increasing union of finite sets $(K_n)_{n \geq 1}$. If $\Gamma$ is locally embeddable into $\C$, then for all $n$ we can choose a $K_n$-local embedding $f_n \colon \Gamma \to C_n \in \C$. Since $\C$ is $\G$-approximable, there exists $\ee > 0$ such that: for all $n$ there exists a map $\eta_n \colon C_n \to G_n \in \G$ with $\defe(\eta_n) \leq 1/n$ and $d_n(\eta_n(x), 1_{G_n}) > \ee$ for all $1 \neq x \in C_n$. Then $\f_n \colon \Gamma \xrightarrow{f_n} C_n \xrightarrow{\eta_n} G_n$ satisfies: $\defe_{g, h}(\f_n) \leq 1/n$ for all $(g, h)^2 \in K_n^2$, and $d_n(\f_n(g), 1_{G_n}) > \ee$ for all $1 \neq g \in K_n$. It follows that $(\f_n)$ is a $\G$-approximation.

If $\Gamma$ is fully residually-$\C$, then the $K_n$-local embeddings $f_n \colon \Gamma \to C_n$ may be chosen to be homomorphisms that restrict to injective maps on $K_n$. Then the resulting $\G$-approximation is uniform.
\end{proof}

Before moving on to the main application, namely examples of groups that are not pointwise stable, we make a few remarks about the previous results.

\begin{remark}
The requirement that the class $\C$ be $\G$-approximable allows to deduce approximability from the existence of local embeddings of $\Gamma$ into $\C$, without requiring any knowledge of the local embeddings themselves. But this is not the only way to produce approximations. For instance, by Example \ref{ex:galfin_approx_2}, when $\G = \Gal(K)$ we can only apply Proposition \ref{prop:approx_2} to groups that are locally embeddable into some finite class $\C$ of finite groups, which are necessarily finite. But in Corollary \ref{cor:galfin_approx_1} we saw that a $\Gal(\mathbb{F})$-approximable group can be infinite.
\end{remark}

\begin{remark}
The proof of Proposition \ref{prop:approx_2} implies something (a priori) slightly stronger than approximation: namely, such groups are $\G$-approximable with a uniform injectivity gap:
\[\inf\limits_{g \in \Gamma} (\liminf\limits_{n \to \infty} d_n(\f_n(g), 1_{G_n})) > 0.\]
This is taken to be the definition of $\G$-approximation in some of the literature (see e.g. \cite{a:quant}). This ambiguity is due to the fact that in several approximation problems the two notions coincide: for instance, for sofic groups, this follows from a well-known \emph{amplification trick} due to Elek and Szab\'o \cite{ampl}, while in other context it may be harder to prove \cite{a:lin, a:lin:gap}.
\end{remark}

\begin{remark}
Putting together Proposition \ref{prop:approx_1}, Examples \ref{ex:approx_gl} and \ref{ex:approx_aut}, and Proposition \ref{prop:approx_2}, we obtain that a group is LEF if and only if it is $\GGL(\oo)$-approximable (Theorem \ref{intro:thm:approx}), or $\Aut(X^*_\bullet)$-approximable. In other words, the notions of approximability with respect to these two families coincides with that of approximability with respect to the family of all finite groups equipped with the discrete metric (see Example \ref{ex:discr}). However the \emph{quantitative} versions of approximability are potentially distinct: for instance to embed a finite group into $\GGL_n(\oo)$ one does not always need the degree $n$ to be equal to the order. The quantitative study of approximation properties was initiated in \cite{a:quant}; the case of local embeddings into finite groups was recently studied in more detail in \cite{Bradford}.
\end{remark}

\subsection{Examples of pointwise instability}
\label{ss:unst}

We now apply the results in this section to give counterexamples to pointwise stability. These all stem from the following corollary, which applies to both $\G = \GGL(\oo)$ and $\G = \Aut(X^*_\bullet)$.

\begin{corollary}
\label{cor:cex_stab}

Let $\G$ be a profinite family, such that the class of finite groups is $\G$-approximable. Then if $\Gamma$ is LEF but not residually finite, it is not pointwise $\G$-stable.
\end{corollary}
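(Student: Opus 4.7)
The plan is to derive a contradiction by showing that a pointwise stable, LEF group in this setting is forced to be residually finite. The two main tools are Proposition \ref{prop:approx_2}, which turns local embeddings into an approximation, and Lemma \ref{lem:GR}, which says that approximation plus pointwise stability forces a residual property.

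First, since the class of finite groups is $\G$-approximable and $\Gamma$ is LEF (i.e.\ locally embeddable into the class of finite groups), Proposition \ref{prop:approx_2} gives that $\Gamma$ is $\G$-approximable. Now suppose for contradiction that $\Gamma$ is also pointwise $\G$-stable. By Lemma \ref{lem:GR}, $\Gamma$ is fully residually-$\G$; in particular, for every $1 \neq g \in \Gamma$ there exists a homomorphism $\f : \Gamma \to G \in \G$ with $\f(g) \neq 1_G$.

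The remaining step is to convert this into residual finiteness, which will contradict the hypothesis on $\Gamma$. This is where the profinite nature of $\G$ enters: every $G \in \G$ is compact and ultrametric, so by Lemma \ref{lem:ultra_gp} the balls $G(\ee)$ are clopen normal subgroups, and the metric quotients $G/G(\ee)$ are finite discrete groups. Given $1 \neq g \in \Gamma$ and $\f$ as above, the element $\f(g)$ satisfies $d_G(\f(g), 1_G) > 0$, so choosing any $0 < \ee < d_G(\f(g), 1_G)$ we have $\f(g) \notin G(\ee)$. The composition $\Gamma \xrightarrow{\f} G \to G/G(\ee)$ is then a homomorphism to a finite group that does not kill $g$. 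Since $g$ was arbitrary, $\Gamma$ is residually finite, contradicting the hypothesis.

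I do not expect a real obstacle here: the only subtle point is checking that \emph{compact} ultrametric groups have finite metric quotients, which is immediate from the fact that each $G(\ee)$ is open of finite index in a compact group. Everything else is a direct application of the already established Proposition \ref{prop:approx_2} and Lemma \ref{lem:GR}.
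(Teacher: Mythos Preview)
Your proof is correct and follows essentially the same route as the paper: apply Proposition~\ref{prop:approx_2} to get $\G$-approximability from LEF, then Lemma~\ref{lem:GR} to get fully residually-$\G$, and finally use that groups in a profinite family are residually finite (via finite metric quotients) to reach the contradiction. The paper's one-line proof also cites Proposition~\ref{prop:approx_1}, but your direct argument that compact ultrametric groups have finite metric quotients serves the same purpose and matches the remark after Lemma~\ref{lem:GR}.
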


\begin{proof}
This is just a combination of Lemma \ref{lem:GR} and Proposition \ref{prop:approx_2}.
\end{proof}

It is worth noticing that there is no hypothesis of finite generation in this statement, in contrast to the analogous statement for families of unitary groups. This is because the groups $\GGL_n(\oo)$ are not only locally residually finite, as guaranteed for all linear groups by a theorem of Malcev \cite{Malcev}, but they are themselves residually finite, being profinite. 

\medskip

In the examples below, $\GGL(\oo)$ may be replaced with any profinite family satisfying the hypotheses of the corollary, for instance $\Aut(X^*_\bullet)$, or the discrete family of all finite groups (Example \ref{ex:discr}).

\begin{example}
\label{ex:cex}

Let $\Gamma$ be a classical small cancellation group that is not residually finite, for instance Pride's group (Example \ref{ex:small_canc}). Then $\Gamma$ is not pointwise $\GGL(\oo)$-stable. So even the simplest example of a finitely generated uniformly stable group -- a group without non-trivial finite quotients (see Example \ref{ex:fq_free}) -- need not be pointwise $\GGL(\oo)$-stable.
\end{example}

\begin{example}
\label{ex:cex2}

Let $H$ be Houghton's group and $H^+$ its index-$2$ subgroup (see Example \ref{ex:sym0}). These groups are finitely generated, and the largest residually finite quotient is $\mathbb{Z} \times \mathbb{Z}/2 \mathbb{Z}$ - for $H$ - or $\mathbb{Z}$ - for $H^+$. In particular neither $H$ not $H^+$ is residually finite. But by Example \ref{ex:sym0_lef} both $H$ and $H^+$ are LEF, therefore they are not pointwise $\GGL(\oo)$-stable by Corollary \ref{cor:cex_stab}. On the other hand, by Example \ref{ex:sym0_ust} the group $H^+$ is uniformly $\GGL(\oo)$-stable (and when $\K$ does not have characteristic $2$, so is $H$, by Example \ref{ex:pfree} and Proposition \ref{prop:vpfree}). The same result holds for lamplighter groups by Examples \ref{ex:wr}, \ref{ex:wr_lef} and \ref{ex:wr_ust}, for instance if $\Gamma$ is a non-abelian finite simple group, then $\Gamma \wr \mathbb{Z}$ is finitely generated, LEF, but not residually finite, so it is not pointwise $\GGL(\oo)$-stable, even though it is uniformly $\GGL(\oo)$-stable.
\end{example}

These examples show that two of our main results are sharp (this is part of Proposition \ref{intro:prop:sharp} from the introduction):

\begin{proposition}
\label{prop:sharp}

Let $\G$ be a profinite family. Then Theorems \ref{thm:pw_un} and \ref{thm:rf} are sharp, in the following sense:
\begin{enumerate}
\item There exists a finitely generated group that is uniformly but not pointwise $\G$-stable.
\item There exists a finitely generated group that is not pointwise $\G$-stable, but whose largest residually finite quotient is.
\end{enumerate}
\end{proposition}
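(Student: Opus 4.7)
The plan is to show that both items of the proposition are witnessed by a single example, namely the group $G^+ = \Alt_0(\mathbb{Z}) \rtimes \langle T \rangle$ introduced in Example \ref{ex:sym0} and already revisited in Example \ref{ex:cex2}. Since the relevant properties of $G^+$ have been collected along the way, the proof will consist essentially of assembling the pieces; no new construction is needed.

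First I would recall the features of $G^+$ required for the argument. By Example \ref{ex:sym0_lef}, $G^+$ is finitely generated (by the $3$-cycle $(1\,2\,3)$ and the translation $T$) and LEF. Since $\Alt_0(\mathbb{Z})$ is infinite and simple, it has no non-trivial finite quotient, and the map $G^+ \to \mathbb{Z}$ killing $\Alt_0(\mathbb{Z})$ exhibits $\mathbb{Z}$ as the largest residually finite quotient of $G^+$ (as computed in Example \ref{ex:sym0}). In particular, $G^+$ is LEF but not residually finite.

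For Item $1.$, I would invoke Corollary \ref{cor:cex_stab} (which applies under the standing assumption that the class of finite groups is $\G$-approximable) to conclude that $G^+$ is not pointwise $\G$-stable. For uniform $\G$-stability, Theorem \ref{thm:rf} reduces the question to the largest residually finite quotient $\mathbb{Z}$, which, being free of rank one, is uniformly $\G$-stable with an optimal estimate by Example \ref{ex:free}. Thus $G^+$ is uniformly but not pointwise $\G$-stable. For Item $2.$, the same example serves: the largest residually finite quotient $\mathbb{Z}$ is finitely presented, so by Theorem \ref{thm:pw_un} its uniform $\G$-stability upgrades to pointwise $\G$-stability, while $G^+$ itself is not pointwise $\G$-stable by the same application of Corollary \ref{cor:cex_stab}.

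There is no real obstacle in the argument, since the work has been done in the preceding subsections; the only point requiring attention is that Corollary \ref{cor:cex_stab} demands the class of finite groups to be $\G$-approximable, a hypothesis that should be understood throughout this proposition (and which is automatic for the families of greatest interest, such as $\GGL(\oo)$ and $\Aut(X^*_\bullet)$, by Examples \ref{ex:approx_gl} and \ref{ex:approx_aut}).
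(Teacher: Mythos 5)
Your proof is correct and follows essentially the same approach as the paper, which proves the proposition implicitly via Examples \ref{ex:cex} and \ref{ex:cex2} (the latter mentions $G^+$, $G$, and lamplighter groups as witnesses). Your careful note about the extra hypothesis on $\G$ is well taken: the proposition as stated requires that the class of finite groups be $\G$-approximable in order to invoke Corollary \ref{cor:cex_stab}, and the paper does acknowledge this just before Example \ref{ex:cex} (``$\GGL(\oo)$ may be replaced with any profinite family satisfying the hypotheses of the corollary''), even though it is not made explicit in the proposition's statement. One could alternatively use Pride's group from Example \ref{ex:cex} for Item~1 (a finitely generated group with no non-trivial finite quotients, uniformly stable by Example \ref{ex:fq_free}, LEF but not residually finite, hence not pointwise stable by Corollary \ref{cor:cex_stab}), but using $G^+$ for both items as you do is clean and fully consistent with the paper's intent.
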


\pagebreak

\section{Virtually pro-$\pi$ stability}
\label{s:vpropi}

We now specialise our study of stability to profinite families $\G$ whose metric quotients have restricted orders. We will prove some general criteria, which will allow us to provide various examples of uniformly stable groups with respect to such families, some of which are listed in Theorem \ref{intro:thm:pifree}.

The basic idea of this approach can be traced back to \cite{simGLp}, where the author uses the conjugacy part of Hall's Theorem on solvable groups \cite[Theorem 5.28]{Rotman} to prove a kind of stability result for the conjugacy relation, under some coprimality assumption on the order of the elements. Here we will go much further, and this is made possible by the use of the more general Schur--Zassenhaus Theorem (Theorem \ref{thm:SZ}).

\begin{definition}
\label{def:vprop}

Let $\G$ be a profinite family. Given a class $\C$ of finite groups, we say that $\G$ is \emph{virtually pro-$\C$} if there exists some $\ee_0 > 0$ such that $G(\ee_0)$ is pro-$\C$ for all $G \in \G$. In this section we focus on the class $\C$ of $\pi$-groups, where $\pi$ is a fixed set of primes: we say that $\G$ is \emph{virtually pro-$\pi$}.
\end{definition}

The condition that $G(\ee_0)$ be pro-$\C$ is equivalent to all metric quotients of $G(\ee_0)$ being in $\C$. Notice that we are asking that the $\ee_0 > 0$ be uniform for the whole family.

\begin{example}
Let $\oo$ be the ring of integers of a non-Archimedean local field of residual characteristic $p$. Then $\GGL(\oo)$ is virtually pro-$p$: indeed by Lemma \ref{lem:vprop} the principal congruence subgroups $\GGL_n(\oo)_1$ are pro-$p$, and we can take $\ee_0 = |\uni|$, where $\uni \in \oo$ is a uniformiser.
\end{example}

The key to this approach is the interpretation of stability in terms of the following lifting property. A map $\f \colon \Gamma \to G$ with $\defe(\f) \leq \dd$ induces a homomorphism $\f(\dd) \colon \Gamma \to G/G(\dd)$, and thus also a homomorphism $\f(\ee) \colon \Gamma \to G/G(\ee)$ for all $\ee \geq \dd$. Then a homomorphism $\ff \colon \Gamma \to G$ satisfies $\dist(\f, \ff) \leq \ee$, where $\dd \leq \ee$, if and only if it is a lift of the induced homomorphism $\f(\ee) \colon \Gamma \to G/G(\ee)$. This is just a rephrasing of Lemma \ref{lem:mq}.

\subsection{$\pi$-free groups}
\label{ss:pifree}

In this subsection we use the lifting part of the Schur--Zassenhaus Theorem to prove stability with respect to virtually pro-$\pi$ families of groups whose finite quotients have restricted orders.

\begin{definition}
\label{def:pifree}

A group $\Gamma$ is \emph{$\pi$-free} if all of its finite quotients are $\pi'$-groups; that is, if all of its finite quotients have order divisible only by primes not in $\pi$.
\end{definition}

Equivalently, a group is $\pi$-free if it has no finite virtual $p$-quotients, for any $p \in \pi$. This is the phrasing used in Theorem \ref{intro:thm:pifree}.

\medskip

Clearly a group is $\pi$-free if and only if it is $p$-free for all $p \in \pi$. This terminology is inspired from the terminology in \cite{Schik, mio} introduced by Schikhof: indeed a group is $p$-free according to Definition \ref{def:pifree} if and only if its profinite completion is $p$-free according to Schikhof's definition. This class of groups clearly contains groups without finite quotients, whose uniform stability was already established in Example \ref{ex:fq_free}. But there are more examples, including residually finite ones.

\begin{example}
Finite $\pi'$-groups are $\pi$-free.
\end{example}

\begin{example}
\label{ex:period}

More generally, let $\Gamma$ be a periodic group without elements of order $p$ for all $p \in \pi$. Then $\Gamma$ is $\pi$-free. Indeed, the order of an element in a finite quotient of $\Gamma$ must divide the order of a preimage of it. For example Grigorchuk's group \cite{grigor_1} is a finitely generated periodic residually finite $2$-group, so it is $2'$-free.
\end{example}

\begin{example}
\label{ex:CSP}

Let $X$ be a finite alphabet, and let $\Aut(X^*)$ be the corresponding group of rooted tree automorphisms (see Example \ref{ex:aut:filt}). Following \cite{branch}, we say that a subgroup $\Gamma \leq \Aut(X^*)$ -- which is necessarily residually finite -- has the \emph{congruence subgroup property} if every finite-index subgroup of $\Gamma$ contains some level stabiliser $\Aut(X^*)_k$.

Let $\pi$ be the set of primes $p \leq |X|$. Then $\Aut(X^*)$ is pro-$\pi$, and so every subgroup $\Gamma \leq \Aut(X^*)$ with the congruence subgroup property is $\pi'$-free. This gives many examples of $\pi$-free groups, among which are many branch groups \cite{branch}, and in particular all Grigorchuk--Guptda--Sidki groups with non-constant defining vector \cite{GGS1, GGS2}. See \cite{GGS3, EGS} for more examples.
\end{example}

\begin{example}
Building on the previous example, for every odd prime $p$ there exist finitely generated residually finite torsion-free groups that are $p'$-free \cite{GGS2}.
\end{example}

We now prove stability of such groups. We first prove a quantitative lemma, and deduce the stability statement.

\begin{lemma}
\label{lem:pifree}

Let $\G$ be a virtually pro-$\pi$ family, and let $\ee_0$ be such that $G(\ez)$ is pro-$\pi$ for all $G \in \G$. Let $\f \colon \Gamma \to G \in \G$ be such that $\defe(\f) \leq \ee \leq \ez$, and suppose that the image $C$ of $\Gamma$ in $G/G(\ee)$ is a $\pi'$-group. Then there exists a homomorphism $\ff \colon \Gamma \to G$ such that $\dist(\f, \ff) \leq \ee$. Moreover, $\ff(\Gamma) \leq G$ is a finite group isomorphic to $C$, in particular it is a $\pi'$-group.
\end{lemma}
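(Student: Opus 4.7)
The plan is to use Lemma \ref{lem:mq} to turn $\f$ into an actual homomorphism to a finite quotient of $G$, and then combine a profinite analogue of Theorem \ref{thm:SZ} with the observation that the ball $G(\ee)$ is pro-$\pi$ to lift that homomorphism back to $G$. Concretely, by Lemma \ref{lem:mq} the hypothesis $\defe(\f) \leq \ee$ gives an induced homomorphism $\of : \Gamma \to G/G(\ee)$ with image $C$; factor it as $\of = \iota \circ q$ with $q : \Gamma \twoheadrightarrow C$ and $\iota : C \hookrightarrow G/G(\ee)$ the inclusion. If we can produce an injective homomorphism $\sigma : C \hookrightarrow G$ lifting $\iota$, then $\ff := \sigma \circ q$ will be a homomorphism $\Gamma \to G$ with image $\sigma(C) \cong C$ inducing the same map $\of$ on $G/G(\ee)$; the second part of Lemma \ref{lem:mq} will then give $\dist(\f, \ff) \leq \ee$, and the assertion about $\ff(\Gamma)$ is immediate.

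Producing such a $\sigma$ is equivalent to splitting the extension $1 \to G(\ee) \to H \to C \to 1$, where $H \leq G$ is the preimage of $C$. Here $G(\ee)$ is normal in $G$ by Lemma \ref{lem:ultra_gp}, hence in $H$; moreover $\ee \leq \ez$ gives $G(\ee) \leq G(\ez)$, and a closed subgroup of a pro-$\pi$ group is itself pro-$\pi$ (any open normal subgroup of $G(\ee)$ contains the trace on $G(\ee)$ of an open normal subgroup of $G(\ez)$), so $G(\ee)$ is pro-$\pi$. Together with the assumption that $C$ is finite $\pi'$, this is exactly the setting of a profinite version of Theorem \ref{thm:SZ}.

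The heart of the argument is therefore establishing this profinite Schur--Zassenhaus. The plan is to approximate by the finite version. Choose a decreasing basis $(V_k)_{k \geq 1}$ of open normal subgroups of $H$ contained in $G(\ee)$, with $\bigcap V_k = 1$. For each $k$ the finite extension
$$1 \to G(\ee)/V_k \to H/V_k \to C \to 1$$
has $\pi$-group kernel (a finite quotient of the pro-$\pi$ group $G(\ee)$) and $\pi'$-group quotient, so Theorem \ref{thm:SZ} produces a non-empty finite set $\mathcal{S}_k$ of complements of $G(\ee)/V_k$ in $H/V_k$. For $k' \geq k$, the projection $H/V_{k'} \to H/V_k$ carries complements to complements (an easy order-count using that the kernel of the projection lies in $G(\ee)/V_{k'}$), and given a target complement $C_k \in \mathcal{S}_k$, the conjugacy half of Theorem \ref{thm:SZ} applied in $H/V_k$ lets us conjugate the image of any $C_{k'} \in \mathcal{S}_{k'}$ to $C_k$ by a lift to $G(\ee)/V_{k'}$; hence $\mathcal{S}_{k'} \to \mathcal{S}_k$ is surjective. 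The inverse limit of non-empty finite sets is non-empty, so a compatible sequence $(C_k) \in \varprojlim \mathcal{S}_k$ exists, and the closed subgroup $C_\infty := \varprojlim C_k \leq H$ satisfies $C_\infty \cap G(\ee) = 1$ (the intersection projects to the trivial subgroup at every level) and $C_\infty \cdot G(\ee) = H$ (the product is compact, hence closed, and surjects onto every finite quotient of $H$). So $C_\infty$ is the desired complement and $\sigma : C \xrightarrow{\cong} C_\infty \hookrightarrow G$ completes the proof.

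The main obstacle is the inverse-limit argument for the profinite Schur--Zassenhaus; all other steps are direct consequences of Lemma \ref{lem:mq}, the ultrametric structure of $G$, and the finite Schur--Zassenhaus theorem.
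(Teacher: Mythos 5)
Your proof is correct and its core idea matches the paper's: both reduce to lifting the induced homomorphism $\f(\ee) : \Gamma \to G/G(\ee)$ back up to $G$ by iterating the finite Schur--Zassenhaus Theorem through the pro-$\pi$ filtration of $G(\ee)$. The packaging differs slightly, and the paper's route is a bit leaner. You first assemble, at each finite level $H/V_k$, the full set $\mathcal{S}_k$ of complements of $G(\ee)/V_k$, show this gives a non-empty inverse system of finite sets (invoking the \emph{conjugacy} part of Theorem \ref{thm:SZ} to get surjectivity of the transition maps), and then extract a compatible element. The paper instead lifts the homomorphism itself one level at a time: starting from $\f(\ee)$, it applies the \emph{existence} part of Theorem \ref{thm:SZ} to produce $\f(\dd_1)$, then lifts $\f(\dd_1)$ to $\f(\dd_2)$, and so on, so compatibility is automatic by construction and the conjugacy part is never needed. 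Your appeal to the conjugacy part is in fact superfluous even in your own setup, since an inverse limit of non-empty finite sets is automatically non-empty by compactness, without surjectivity of the transition maps; the conjugacy statement is genuinely needed only later, in the graph-of-groups arguments (Lemma \ref{lem:pifree_gog}). One small presentational remark: your phrasing of why $G(\ee)$ is pro-$\pi$ is a little awkward, but the substance (a closed — here even open — subgroup of a pro-$\pi$ group is pro-$\pi$, or equivalently every metric quotient $G(\ee)/G(\dd)$ is a subgroup of the $\pi$-group $G(\ez)/G(\dd)$) is right.
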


\begin{proof}
Consider the induced homomorphism $\f(\ee) \colon \Gamma \to G/G(\ee)$. By hypothesis $C = \f(\ee)(\Gamma)$ is a $\pi'$-group. Given $\dd \leq \ee$, we have the following lifting problem
\[\begin{tikzcd}
	&& {G/G(\dd)} \\
	\Gamma & C \\
	&& {G/G(\ee) \cong (G/G(\dd))/(G(\ee)/G(\dd))}
	\arrow["{\f(\ee)}", from=2-1, to=2-2]
	\arrow[dashed, from=2-2, to=1-3]
	\arrow[from=2-2, to=3-3]
	\arrow[from=1-3, to=3-3]
\end{tikzcd}\]
Since $G(\ee)/G(\dd)$ is a finite $\pi$-group, by the Schur--Zassenhaus Theorem there exists a lift: a homomorphism $\f(\dd) \colon \Gamma \to G/G(\dd)$ such that the projection onto $G/G(\ee)$ gives back $\f(\ee)$. Moreover $\f(\dd)(\Gamma) \cong C$: indeed $\f(\dd)$ factors through $C$, and it is a lift of $\f(\ee)$, which surjects onto $C$. Repeating this process by induction on a sequence $\ee \geq \dd_i \to 0$ gives a sequence of homomorphisms $\f(\dd_i) \colon \Gamma \to G/G(\dd_i)$ that are all compatible with the projections, and such that all images are groups isomorphic to $C$. Since $G$ is the projective limit of the groups $G/G(\dd_i)$, this sequence of compatible homomorphisms induces a homomorphism $\ff \colon \Gamma \to G$ such that $\ff(\ee) \colon \Gamma \to G/G(\ee)$ coincides with $\f(\ee)$ and $\ff(\Gamma)$ is a finite group isomorphic to $C$. Therefore $\dist(\f, \ff) \leq \ee$ by Lemma \ref{lem:mq}.
\end{proof}

\begin{proposition}
\label{prop:pifree}

Let $\G$ be a virtually pro-$\pi$ family and $\Gamma$ a $\pi$-free group. Then $\Gamma$ is uniformly $\G$-stable, with an optimal estimate.
\end{proposition}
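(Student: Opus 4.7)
The plan is to verify directly the quantitative characterization of uniform $\G$-stability with an optimal estimate (Definition \ref{def:quant}): namely, exhibit $\ee_0 > 0$ such that whenever $\f : \Gamma \to G \in \G$ satisfies $\defe(\f) \leq \ee \leq \ee_0$, there exists a homomorphism $\ff : \Gamma \to G$ with $\dist(\f, \ff) \leq \ee$. I would take $\ee_0$ to be exactly the constant provided by the virtually pro-$\pi$ hypothesis, so that $G(\ee_0)$ is pro-$\pi$ for every $G \in \G$.

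The key observation is that $\pi$-freeness of $\Gamma$ feeds exactly into the extra hypothesis needed by Lemma \ref{lem:pifree}. Given $\f$ as above, Item $1.$ of Lemma \ref{lem:mq} yields an induced homomorphism $\f(\ee) : \Gamma \to G/G(\ee)$. By Lemma \ref{lem:ultra_gp}, $G(\ee)$ is clopen in the compact group $G$, so $G/G(\ee)$ is a finite discrete group. Hence $C := \f(\ee)(\Gamma)$ is a finite quotient of $\Gamma$, and by the definition of $\pi$-freeness, $C$ is a $\pi'$-group. This is precisely the hypothesis required to invoke Lemma \ref{lem:pifree}, which then produces the desired homomorphism $\ff$ with $\dist(\f, \ff) \leq \ee$.

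There is essentially no obstacle beyond assembling the right ingredients: all the heavy lifting (the inductive application of Schur--Zassenhaus along the filtration $G(\dd) \supset G(\ee)$, and the passage to the projective limit) has already been carried out in Lemma \ref{lem:pifree}. The only thing to check is that $\pi$-freeness of $\Gamma$ guarantees that \emph{every} induced homomorphism $\f(\ee)$ lands in a $\pi'$-subgroup of $G/G(\ee)$, regardless of what the ambient metric quotient looks like; this is immediate because the image of $\Gamma$ is a finite quotient of $\Gamma$. Optimality of the estimate then follows from the fact that the bound $\dist(\f, \ff) \leq \ee$ exactly matches the assumed defect $\defe(\f) \leq \ee$, giving $D_\Gamma^\G(\ee) \geq \ee$ for all $\ee \leq \ee_0$.
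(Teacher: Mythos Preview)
Your proof is correct and follows essentially the same approach as the paper: verify that $\pi$-freeness of $\Gamma$ forces the image $C$ of the induced homomorphism $\f(\ee)$ to be a $\pi'$-group, then invoke Lemma \ref{lem:pifree} directly. Your write-up is in fact slightly more explicit than the paper's (you spell out why $G/G(\ee)$ is finite and why $C$ is a finite quotient of $\Gamma$), but the argument is the same.
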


\begin{proof}
Since $\Gamma$ is $\pi$-free, all finite quotients of $\Gamma$ are $\pi'$-groups. Therefore given a map $\f \colon \Gamma \to G$ with small enough defect, the previous lemma applies and $\f$ is close to a homomorphism. We conclude by Lemma \ref{lem:quant}. Moreover, Lemma \ref{lem:pifree} shows that the estimate for stability is optimal: there exists $\ez = \ez(\G) > 0$ such that if $\f \colon \Gamma \to G$ satisfies $\defe(\f) \leq \ee \leq \ez$, then there exists a homomorphism $\ff \colon \Gamma \to G$ such that $\dist(\f, \ff) \leq \ee$.
\end{proof}

\begin{remark}
We will see in Example \ref{ex:2unst} that the hypothesis of $\pi$-freeness is necessary, even for finite groups.
\end{remark}

\begin{example}
\label{ex:pfree}

Let $\Gamma$ be a $p$-free group, $\oo$ the ring of integers of a non-Archimedean local field of residual characteristic $p$. Then $\Gamma$ is uniformly $\GGL(\oo)$-stable, with an optimal estimate. Explicitly, for every map $\f: \Gamma \to \GGL_n(\oo)$ such that $\defe(\f) \leq \ee \leq |\uni|$ (where $\uni$ is a uniformiser), there exists a homomorphism $\ff \colon \Gamma \to \GGL_n(\oo)$ such that $\dist(\f, \ff) \leq \ee$.
\end{example}



The following example is a hint at the relation with bounded cohomology that will be explored in Section \ref{s:BC}.

\begin{example}
\label{ex:npa}

Let $\K$ be a non-Archimedean local field of characteristic $p$, and let $\Gamma$ be a normed $\K$-amenable group \cite[Definition 1.1]{mio}. Then $\Gamma$ is uniformly $\GGL(\oo)$-stable with a linear estimate: indeed such groups are characterised as being locally finite (thus periodic) and without elements of order $p$ \cite[Theorem 6.2]{mio}, so they are $p$-free by Example \ref{ex:period}.
\end{example}

\subsection{Graphs of groups}
\label{ss:gog}

In this subsection we exploit the conjugacy part of the Schur--Zassenhaus Theorem to prove stability with respect to virtually pro-$\pi$ families of several fundamental groups of graphs of groups.

\begin{remark}
This part of the Schur--Zassenhaus Theorem depends on the Odd Order Theorem \cite{FT}, but this can be avoided if we assume that either the kernel or the quotient of the extension to which the theorem is being applied is solvable (see Subsection \ref{ss:preli:split}). As in the proof of Proposition \ref{prop:pifree}, the extensions to which we apply the Schur--Zassenhaus Theorem are with a $\pi$-kernel and a $\pi'$-quotient, so if $\pi = \{ p \}$ then the kernel is solvable. Similarly if $\pi = \{ p, q \}$ then the kernel is solvable by Burnside's Theorem \cite[Chapter 31]{reptheory}. The same kind of statements can be given for the quotient. It may also be possible that we know for other reasons that $\G$ is virtually prosolvable: for instance this is the case for $\Gal(\mathbb{F})$ when $\mathbb{F}$ is a finite field (see Corollary \ref{cor:galfin_approx_1}). For the general case, however, we need the full power of the Schur--Zassenhaus Theorem, and so the general statements in this subsection depend on the Odd Order Theorem.
\end{remark}

Let us fix the definitions and notation (see \cite{Serre} for more details). Let $X = (V, E)$ be a connected graph with vertex set $V$ and edge set $E$, maps $\pm \colon E \to V : e \mapsto e^{\pm}$ giving the source and target of an edge, and a fix-point free involution $E \to E : e \mapsto \overline{e}$ reversing the orientation of each edge: that is $(\overline{e})^{\pm} = e^{\mp}$. A \emph{graph of groups} is composed by the following data: a connected graph $X = (V, E)$, groups $\Gamma_v$ for all $v \in V$ and $\Gamma_e$ for all $e \in E$ such that $\Gamma_e = \Gamma_{\overline{e}}$, and injective morphisms $\iota_e^{\pm} \colon \Gamma_e \to \Gamma_{e^\pm}$. By abuse of notation we use $X$ to denote both the graph of groups and the underlying abstract graph.

Let $T$ be a spanning tree of $X$. The \emph{fundamental group} of the graph of groups (with respect to $T$) is the group generated by all vertex groups, together with an element $t_e$ for each $e \in E$, with the additional relations:
\[t_{\overline{e}} = t_e^{-1}; \qquad t_e = 1 \text{ if } e \in T;\]
\[t_e \iota_e^-(x) t_e^{-1} = \iota_e^+(x) \text{ for all } x \in \Gamma_e \text{ (a generating set of } \Gamma_e \text{ suffices)}.\]
The isomorphism type of the fundamental group is independent of the choice of $T$. A presentation of the fundamental group is thus given by
\[\langle \{ S_v : v \in V \} \cup \{ t_e : e \in E \} \mid \{ R_v : v \in V \} \cup \{ R_e : e \in E \} \rangle,\]
where $\langle S_v \mid R_v \rangle$ is a presentation of $\Gamma_v$, and $R_e$ are the relations describing the identification $t_{\overline{e}} = t_e^{-1}$, the relation $t_e = 1$ if $e \in T$, and the effect of conjugacy by $t_e$ on $\iota_e^-(\Gamma_e)$. 

\medskip

As in the previous subsection, we first prove a quantitative lemma, and then deduce two stability statements. Since the fundamental group is defined in terms of a presentation, the most natural approach is by working in terms of it, which is possible by Proposition \ref{prop:stab_equiv} and Corollary \ref{cor:quant}.

\begin{lemma}
\label{lem:pifree_gog}

Let $\G$ be a virtually pro-$\pi$ family, and let $\ez$ be such that $G(\ez)$ is pro-$\pi$ for all $G \in \G$. Let $X$ be a connected graph of groups with vertex groups $\Gamma_v$, edge groups $\Gamma_e$ and edge inclusions $\iota_e^{\pm} \colon \Gamma_e \to \Gamma_{e^{\pm}}$. Let $\Gamma$ be the fundamental group of $X$ with respect to a spanning tree $T$, with the standard presentation $\langle S \mid R \rangle = \langle S_v, t_e \mid R_v, R_e \rangle$ as above.

Let $\hf \colon F_S \to G \in \G$ be a map with $\defe(\hf) \leq \ee \leq \ez$. Suppose further that for all $v \in V$ the restriction of $\hf$ to $F_{S_v}$ descends to a homomorphism $\f_v \colon \Gamma_v \to G$ such that, if $e^{\pm} = v$, then the image of $\f_v(\iota_e^{\pm}(\Gamma_e))$ in $G/G(\dd)$ is a $\pi'$-group, for all $\dd \leq \ee$.
Then there exists a homomorphism $\hff \colon F_S \to G$ such that $\dist(\hf, \hff) \leq \ee$ and $\hff$ descends to a homomorphism of $\Gamma$.
\end{lemma}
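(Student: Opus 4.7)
The plan is to build $\hff$ by modifying $\hf$ on each vertex group via an inner conjugation of $G$ chosen so that the tree-edge identifications become exact, and then by correcting $\hf(t_e)$ on each non-tree edge so that the Bass--Serre conjugation relations hold on the nose. The technical heart is a conjugacy sub-lemma: if $\alpha, \beta : H \to G$ are two homomorphisms from some group $H$ into $G \in \G$ that agree modulo $G(\ee)$, and the images of both in $G/G(\dd)$ are $\pi'$-groups for every $\dd \leq \ee$, then there exists $k \in G(\ee)$ with $k \alpha(x) k^{-1} = \beta(x)$ for every $x \in H$. The proof of this sub-lemma goes by compactness. At each finite level $\dd \leq \ee$, since $G(\ee)/G(\dd)$ is a finite $\pi$-group while the images of $\alpha, \beta$ are $\pi'$-groups, both project isomorphically onto the common image $C \subseteq G/G(\ee)$; hence $\alpha \bmod G(\dd)$ and $\beta \bmod G(\dd)$ factor through two splittings $C \to G/G(\dd)$ of the extension $1 \to G(\ee)/G(\dd) \to \pi^{-1}(C) \to C \to 1$. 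By the conjugacy part of Theorem \ref{thm:SZ}, the (nonempty finite) set $V_\dd \subset G(\ee)/G(\dd)$ of conjugators sending one splitting to the other is a single coset of a centralizer. The $V_\dd$ form a compatible inverse system of nonempty finite sets, so their inverse limit inside the profinite group $G(\ee)$ is nonempty and produces the desired $k$.

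With the sub-lemma in hand, the construction is organized along $T$ with a root $v_0$. Define $g_v \in G(\ee)$ and $\f_v' := g_v \f_v g_v^{-1}$ inductively: set $g_{v_0} = 1$; when $v$ is reached from an already-processed $u$ by a tree edge $e$ with $e^- = u$, $e^+ = v$, apply the sub-lemma to $\f_u' \circ \iota_e^-$ and $\f_v \circ \iota_e^+$ (which agree modulo $G(\ee)$ because $\hf(\ee)$ descends to $\Gamma$ and the tree edge relation identifies $\iota_e^-$ and $\iota_e^+$ in $\Gamma$), obtaining $k \in G(\ee)$ with $k \f_v(\iota_e^+(x)) k^{-1} = \f_u'(\iota_e^-(x))$, and set $g_v := k$; this guarantees $\f_v' \circ \iota_e^+ = \f_u' \circ \iota_e^-$. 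For each non-tree edge $e$ with $e^- = u$, $e^+ = v$, apply the sub-lemma to the pair $\hf(t_e)(\f_u' \circ \iota_e^-)\hf(t_e)^{-1}$ and $\f_v' \circ \iota_e^+$, which again agree modulo $G(\ee)$ by descent of $\hf(\ee)$, obtaining $k_e \in G(\ee)$ with $k_e \hf(t_e) \f_u'(\iota_e^-(x))\hf(t_e)^{-1} k_e^{-1} = \f_v'(\iota_e^+(x))$. Finally define $\hff : F_S \to G$ by sending $s \in S_v$ to $\f_v'(s)$, $t_e$ to $1$ for $e \in T$, and $t_e$ to $k_e \hf(t_e)$ for non-tree edges (with the convention $\hff(t_{\overline e}) = \hff(t_e)^{-1}$). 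By construction $\hff$ satisfies every defining relation of $\Gamma$ and descends to a homomorphism of $\Gamma$.

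The distance estimate $\dist(\hf, \hff) \leq \ee$ follows from Lemma \ref{lem:ultra_free} once it is checked on $S$: for $s \in S_v$, bi-invariance and $g_v \in G(\ee)$ give $d(\f_v'(s), \f_v(s)) \leq d(g_v, 1) \leq \ee$; for non-tree stable letters $d(k_e \hf(t_e), \hf(t_e)) = d(k_e, 1) \leq \ee$; and for tree stable letters $d(1, \hf(t_e)) = \defe_{t_e}(\hf) \leq \ee$, since the relator $t_e$ belongs to $R$. The main obstacle is that the Schur--Zassenhaus Theorem is a statement about finite groups, whereas the edge group $\Gamma_e$ and the ambient $G$ may both be infinite. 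The decisive point that saves the argument is that the $\pi'$-hypothesis forces every lift of the common image $C \subset G/G(\ee)$ to $G/G(\dd)$ to be isomorphic to $C$ via $\pi$, so the infinite source $\Gamma_e$ is effectively replaced by the finite $\pi'$-group $C$ when applying the Schur--Zassenhaus conjugacy statement, and compactness of $G$ glues the level-wise conjugators into a single element of $G(\ee)$.
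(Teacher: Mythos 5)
Your proof is correct and takes essentially the same approach as the paper: reduce each tree-edge and non-tree-edge identification to a Schur--Zassenhaus conjugacy statement applied at every finite level, and then pass to $G$ by compactness of the profinite group $G(\ee)$. The one organizational improvement is that you package the level-wise conjugacy into a self-contained sub-lemma whose proof is a clean inverse limit of nonempty finite conjugator sets $V_\dd$, whereas the paper conjugates iteratively along a sequence $\dd_k \to 0$ and extracts a convergent subsequence of partial conjugators; these are equivalent, but your version isolates the compactness argument more transparently and can be applied verbatim to both the tree-edge amalgamations and the non-tree stable-letter corrections.
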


Before proceeding with the proof, let us comment on how this lemma is of interest independently of our applications. Indeed, it shows that such fundamentals groups of graphs of groups are examples of two notions related to stability, introduced recently in the literature and of which few examples are known so far.

\begin{remark}
\label{rem:constraint:epi}

First, Lemma \ref{lem:pifree_gog} is a statement about \emph{constraint stability}, a notion introduced by Arzhantseva and P\u{a}unescu in \cite{a:const}. Given a group $\Gamma$ and a subgroup $\Lambda \leq \Gamma$, let us say that $\Gamma$ is \emph{constraint $\G$-stable} with respect to $\Lambda$, if for every asymptotic homomorphism $(\f_n \colon \Gamma \to G_n \in \G)_{n \geq 1}$ such that its restriction to $\Lambda$ is close to a sequence of homomorphisms $(\ff_n \colon \Lambda \to G_n)_{n \geq 1}$, we can extend $(\ff_n)_{n \geq 1}$ to a homomorphism of $\Gamma$ that is close to $(\f_n)_{n \geq 1}$. As usual, this can be formalised to a pointwise notion and a uniform one. Similarly we can talk of $\Gamma$ being stable with respect to a set of subgroups. Then Lemma \ref{lem:pifree_gog} is a statement about constrant stability of $\Gamma$ with respect to the set of vertex subgroups.

Secondly, Lemma \ref{lem:pifree_gog} is a statement about \emph{stability of an epimorphism}, a notion introduced by Lazarovich and Levit in \cite{stepi}. We say that an epimorphism $\overline{\Gamma} \to \Gamma$ is \emph{$\G$-stable} if every asymptotic homomorphism of $\overline{\Gamma}$ that almost descends to $\Gamma$ (where ``almost descends'' is meant as in Proposition \ref{prop:stab_equiv}) is close to a sequence of homomorphisms of $\overline{\Gamma}$ that descend to $\Gamma$. Again, this leads to a pointwise and a uniform notion. Then Lemma \ref{lem:pifree_gog} is a statement about stability of the epimorphism of $((*_v \Gamma_v) * (*_e \langle t_e \rangle))$ onto $\Gamma$. Interestingly, this is precisely the setting in \cite{stepi}, where the authors prove stability of the same epimorphism in the case of virtually free groups, to deduce that all virtually free groups are stable in permutations.
\end{remark}

We proceed with the proof.

\begin{proof}[Proof of Lemma \ref{lem:pifree_gog}]
By hypothesis $\hf$ already satisfies all relations $R_v$. Our goal is to modify $\hf$ step by step so that it keeps this property, changes by at most $\ee$, and it also satisfies all relations $R_e$. For the rest of this proof, we denote the reduction map $G \to G/G(\dd)$ by $(\cdot \mod \dd)$. So if $A \leq G$, its image in $G/G(\dd)$ is denoted by $A \mod \dd$.

First of all, we can choose a set $E_+$ of positively oriented edges, and replace $\hf(t_e)$ by $\hf(t_{\overline{e}})^{-1}$ for all $e \notin E_+$. Since $\defe(\hf) \leq \ee$, these two elements are at a distance at most $\ee$, and so this substitution does not affect the other relations being satisfied in $G/G(\ee)$. Similarly we can replace $\hf(t_e)$ by $1$ for all $e \in T$. This leaves us with the conjugacy relations. 

\medskip

We start with the conjugacy relations given by the edges in $T$, which amalagamate edge groups in the adjacent vertex groups. We will modify $\hf$ at the vertex groups so that it still restricts to a homomorphism on each vertex group, but it also satisfies the amalgamations. Fix a vertex $v_0$ in $X$, with neighbours $v_1, \ldots, v_r$ and edges $e_1, \ldots, e_r \in T$, where $v_0 = e_i^-$ and $v_i = e_i^+$ for $i = 1, \ldots, r$. Let $A_i^- \coloneqq \f_{v_0}(\iota_{e_i}^-(\Gamma_{e_i})) \leq \f_{v_0}(\Gamma_{v_0})$ and $A_i^+ \coloneqq \f_{v_i}(\iota_{e_i}^+(\Gamma_{e_i}))  \leq \f_{v_i}(\Gamma_{v_i})$: recall that each $\f_v$ is a homomorphism, and so $A_i^{\pm}$ are groups. By hypothesis, for all $\dd \leq \ee$ the reduction modulo $\dd$ of both $A_i^-$ and $A_i^+$ is a $\pi'$-group. In particular, for all $\dd \leq \ee$ the projection map $A_i^{\pm} \mod \dd \to A_i^{\pm} \mod \ee$ is an isomorphism: indeed the kernel is contained in $G(\ee)/G(\dd)$ which is a finite $\pi$-group since $G(\ee) \leq G(\ez)$ is pro-$\pi$.

Now $\defe(\hf) \leq \ee$, so $\hf$ induces a homomorphism $\Gamma \to G/G(\ee)$. Moreover the copies of $\Gamma_{e_i}$ in $\Gamma_{v_0}$ and $\Gamma_{v_i}$ are amalgamated in every quotient of $\Gamma$. Thus, the reduction modulo $\ee$ of the homomorphisms $\f_{v_0} \circ \iota_{e_i}^-, \f_{v_i} \circ \iota_{e_i}^+ \colon \Gamma_{e_i} \to A_i^{\pm}$ is the same, we denote it by $f(\ee)$. Therefore the reductions modulo $\dd \leq \ee$ of the same homomorphisms are two (a priori distinct) solutions to the following lifting problem:
\[\begin{tikzcd}
	&& {G/G(\dd)} \\
	{\Gamma_{e_i}} & {f(\ee)(\Gamma_{e_i})} \\
	&& {G/G(\ee) \cong (G/G(\dd))/(G(\ee)/G(\dd))}
	\arrow[dashed, from=2-2, to=1-3]
	\arrow[from=2-2, to=3-3]
	\arrow[from=1-3, to=3-3]
	\arrow[from=2-1, to=2-2]
\end{tikzcd}\]
Since $G(\ee)/G(\dd)$ is a finite $\pi$-group, the Schur--Zassenhaus Theorem implies that these two lifts are $G(\ee)/G(\dd)$-conjugate. Let $t \in G(\ee)$ be a lift of a conjugating element. We can then replace $\f_{v_i}$ by $x \mapsto t \f_{v_i}(x) t^{-1}$, which is still a homomorphism of $\Gamma_{v_i}$, and remains $\ee$-close to $\f_{v_i}$ since $t \in G(\ee)$. But now the groups $A_i^- \coloneqq \f_{v_0}(\iota^-_{e_i}(\Gamma_{e_i}))$ and $A_i^+ \coloneqq t \f_{v_i}(\iota^+_{e_i}(\Gamma_{e_i})) t^{-1}$ are amalgamated in $G/G(\dd)$. We can repeat this process inductively on a sequence $\ee \geq \dd_k \to 0$: at each step we modify $\f_{v_i}$ conjugating it by an element of $G(\dd_k)$, so that the groups $A_i^{\pm}$ become amalgamated in $G/G(\dd_{k+1})$.
By compactness, a subsequence of these conjugating elements converges to an element $t \in G(\ee)$, and conjugating by $t$ we have modified $\f_{v_i}$ so that it is still $\ee$-close to $\varphi$, but it moreover satisfies the amalgamation $A_i^- = A_i^+$.

Doing this for all $i$, we get the desired relation for each edge $e_i$. These modifications are compatible, since they only affect $\f_{v_i}$ for $i \geq 1$ and not for $i = 0$. We can now apply the same argument to all neighbours of $v_i$ connected by edges of $T$ other than $e_0$, conjugating by elements of $G(\ee)$ so that the corresponding relations are satisfied in $G$, without affecting the behaviour of $\f_{v_i}$. Since $T$ is a tree, it is possible to iterate this procedure until all edges of $T$ are covered. This produces a homomorphism $\hf \colon F_S \to G$ with the same properties as before, which moreover satisfies all relations $\{ R_e : e \in T \}$. 

\medskip

We next move to the conjugacy relations given by edges not in $T$. For such an edge $e$, let $A^{\pm} \coloneqq \f_{e^{\pm}}(\iota_e^{\pm}(\Gamma_e)) \leq G$ as before, so that $A^{\pm} \mod \ee$ is a $\pi'$-group. Since $(\hf \mod \ee)$ descends to a homomorphism of $\Gamma$, we know that $A^+$ and $A^-$ are conjugate modulo $\ee$ by $\hf(t_e)$. So $A^+$ and $\hf(t_e)A^-\hf(t_e)^{-1}$ satisfy the same hypotheses as in the previous step. By the same argument, there exists $t \in G(\ee)$ such that $t \hf(t_e) A^- \hf(t_e)^{-1} t^{-1} = A^+$. We can thus replace $\hf(t_e)$ by $t \hf(t_e)$, which is congruent to $\hf(t_e)$ modulo $\ee$. In this way the conjugacy relations given by edges not in $T$ are now satisfied. Note that we have only modified the images of the edge elements, so this does not affect the definition of $\hf$ at the vertex groups, or at the other edge groups. In particular, the procedure at this step does not affect the modifications done at the previous steps. 

\medskip

We are left with a homomorphism $\hff \colon F_S \to G$ such that $\dist(\hf|_{F_{S_v}}, \hff|_{F_{S_v}}) \leq \ee$ for every vertex $v$, and $d(\hf(t_e), \hff(t_e)) \leq \ee$ for every edge $e$, and moreover $\hff$ satisfies all of the defining relations of $\Gamma$. Thus $\dist(\hf, \hff) \leq \ee$ and $\hff$ descends to a homomorphism $\Gamma \to G$.
\end{proof}

Here is our first stability result for graphs of groups:

\begin{proposition}
\label{prop:pifree_gog1}

Let $\G$ be a virtually pro-$\pi$ family and $\Gamma$ the fundamental group of a graph of groups such that all vertex groups $\Gamma_v$ are uniformly $\G$-stable, and such that $D_{\Gamma_v}^{\G}(\ee)$ is uniformly bounded away from $0$, for every $\ee > 0$. Suppose that for every edge $e$ adjacent to a vertex $v$, the image of $\Gamma_e$ in every finite quotient of $\Gamma_v$ is a $\pi'$-group. Then $\Gamma$ is uniformly $\G$-stable, and
\[D_{\Gamma}^{\G}(\ee) \geq \inf\limits_{v \in V} D_{\Gamma_v}^{\G}(\ee).\]
\end{proposition}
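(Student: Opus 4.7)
The plan is to use Lemma \ref{lem:pifree_gog} as the main technical tool, reducing to its hypotheses by first straightening the given approximate homomorphism vertex-by-vertex via the uniform stability of each $\Gamma_v$. I will work with the standard presentation $\langle S \mid R \rangle = \langle S_v, t_e \mid R_v, R_e \rangle$ of $\Gamma$ and use the quantitative characterization of uniform stability in terms of presentations given by Corollary \ref{cor:quant}, so that every estimate can be carried at the level of homomorphisms out of $F_S$.

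First I fix $\ee > 0$ small enough (in particular $\ee \leq \ez$, where $\ez$ is as in the definition of virtually pro-$\pi$) and set $\dd := \min\{ \ee, \inf_v D_{\Gamma_v}^{\G}(\ee)\}$; this is positive by the uniform-lower-bound hypothesis. Given $\hf : F_S \to G \in \G$ with $\defe(\hf) \leq \dd$, each restriction $\hf|_{F_{S_v}}$ has defect at most $\dd \leq D_{\Gamma_v}^{\G}(\ee)$, so uniform stability of $\Gamma_v$ together with Corollary \ref{cor:quant} produces a homomorphism $\hff_v : F_{S_v} \to G$ descending to $\Gamma_v$ and $\ee$-close to $\hf|_{F_{S_v}}$. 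Then I define $\hf' : F_S \to G$ to agree with $\hff_v$ on $S_v$ and with $\hf$ on each $t_e$. By Item 2 of Lemma \ref{lem:ultra_free}, $\dist(\hf, \hf') \leq \ee$; by the ultrametric inequality $\defe(\hf') \leq \max\{ \defe(\hf), \dist(\hf, \hf') \} \leq \ee$; and $\hf'|_{F_{S_v}}$ descends to a homomorphism $\f_v : \Gamma_v \to G$ for each vertex $v$.

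Next I verify the hypothesis of Lemma \ref{lem:pifree_gog} for $\hf'$: for an edge $e$ adjacent to $v$ and any $\dd' \leq \ee$, the image of $\f_v(\iota_e^{\pm}(\Gamma_e))$ in $G/G(\dd')$ is a subgroup of the finite quotient $\f_v(\Gamma_v) G(\dd')/G(\dd')$ of $\Gamma_v$, and so by the hypothesis of the proposition it is a $\pi'$-group. Applying Lemma \ref{lem:pifree_gog} yields a homomorphism $\hff : F_S \to G$ descending to $\Gamma$ with $\dist(\hf', \hff) \leq \ee$, and the ultrametric inequality gives $\dist(\hf, \hff) \leq \ee$. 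This proves $D_{\langle S \mid R \rangle}^{\G}(\ee) \geq \dd$, and Corollary \ref{cor:quant} promotes this to the claimed bound $D_{\Gamma}^{\G}(\ee) \geq \inf_v D_{\Gamma_v}^{\G}(\ee)$ for all sufficiently small $\ee$.

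The main obstacle has already been paid for inside Lemma \ref{lem:pifree_gog}, where the Schur--Zassenhaus Theorem was used both for lifting (along the spanning tree) and for conjugacy (on the back-edges), via a compactness argument to pass to the projective limit. The only substantive point left above is the observation that the $\pi'$-condition on the image of $\Gamma_e$ in every finite quotient of $\Gamma_v$ automatically transfers through any homomorphism $\f_v : \Gamma_v \to G$, since such finite images are themselves finite quotients of $\Gamma_v$; once this is noted, the remaining work is bookkeeping of the quantitative estimates.
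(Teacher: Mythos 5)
Your proposal is correct and follows essentially the same route as the paper's own proof: fix a small $\ee$, take $\dd$ controlled by $\inf_v D_{\Gamma_v}^{\G}(\ee)$, straighten $\hf$ on each vertex subgroup using the assumed uniform stability of $\Gamma_v$, observe that the resulting modified map satisfies the hypotheses of Lemma \ref{lem:pifree_gog}, and invoke that lemma. The paper's proof is more terse (it omits the verification that the images $\f_v(\iota_e^{\pm}(\Gamma_e))$ remain $\pi'$-groups modulo every $\dd' \leq \ee$, which you correctly supply by noting that $\f_v(\Gamma_v)G(\dd')/G(\dd')$ is a finite quotient of $\Gamma_v$), but the underlying argument is identical.
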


\begin{remark}
Note that the condition on $D_{\Gamma_v}^{\G}$ is automatically satisfied if the graph is finite.
\end{remark}

\begin{proof}
We use the characterisation of uniform stability from Corollary \ref{cor:quant}. Let $\langle S \mid R \rangle$ be the standard presentation of $\Gamma$. Fix $0 < \ee \leq \ez$. Let $\hf \colon F_S \to G \in \G$ be a homomorphism with $\defe(\hf) \leq \dd$, where $\dd = \inf_v D_{\Gamma}^{\G}(\ee)$, which is positive by hypothesis. The hypothesis of stability allows to modify $\hf$ by at most $\ee$ on the vertex generators $S_v$ so that $\hf|_{F_{S_v}}$ descends to a homomorphism of $\Gamma_v$. Now $\hf$ satisfies the hypotheses of Lemma \ref{lem:pifree_gog}, and so there exists $\hff \colon F_S \to G$ such that $\dist(\hf, \hff) \leq \ee$ and $\hff$ descends to a homomorphism of $\Gamma$.
\end{proof}

This next slightly different result combines Lemma \ref{lem:pifree_gog} with the last part of Lemma \ref{lem:pifree}:

\begin{proposition}
\label{prop:pifree_gog2}

Let $\G$ be a virtually pro-$\pi$ family and $\Gamma$ the fundamental group of a graph of groups such that for every vertex $v$ the image of $\Gamma_v$ in every finite quotient of $\Gamma$ is a $\pi'$-group. Then $\Gamma$ is uniformly $\G$-stable, with an optimal estimate.
\end{proposition}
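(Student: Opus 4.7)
The plan is to combine Lemmas \ref{lem:pifree} and \ref{lem:pifree_gog}: first use the former to straighten an asymptotic homomorphism on each vertex group into a genuine homomorphism with finite $\pi'$-image, then use the latter to glue these vertex-wise honest homomorphisms into an honest homomorphism of all of $\Gamma$. The global hypothesis of the proposition is exactly what is needed to feed Lemma \ref{lem:pifree}, and once it is applied the edge-group hypothesis of Lemma \ref{lem:pifree_gog} becomes automatic.

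By Corollary \ref{cor:quant} it suffices to show that whenever $\hf : F_S \to G \in \G$ satisfies $\defe(\hf) \leq \ee \leq \ez$, there exists $\hff : F_S \to G$ descending to $\Gamma$ with $\dist(\hf, \hff) \leq \ee$; here $\ez$ is the threshold provided by the virtually pro-$\pi$ property and $\langle S \mid R \rangle = \langle S_v, t_e \mid R_v, R_e \rangle$ is the standard presentation of $\Gamma$. Fix a set-theoretic section $\sigma : \Gamma \to F_S$ sending each generator in $S_v$ to itself, and set $\f := \hf \circ \sigma : \Gamma \to G$. By Lemma \ref{lem:ultra_lift} we have $\defe(\f) \leq \ee$, so Lemma \ref{lem:mq} produces an induced homomorphism $\f(\ee) : \Gamma \to G/G(\ee)$, whose image is a finite quotient of $\Gamma$ (recall that $\G$ is profinite). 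The hypothesis of the proposition forces $\f(\ee)(\Gamma_v)$ to be a $\pi'$-group for each vertex $v$. Applying Lemma \ref{lem:pifree} to $\f|_{\Gamma_v}$ then produces, for every $v$, a homomorphism $\ff_v : \Gamma_v \to G$ with $\dist(\f|_{\Gamma_v}, \ff_v) \leq \ee$ and with $\ff_v(\Gamma_v)$ a finite $\pi'$-group.

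Next, assemble these into an intermediate $\hff^{(1)} : F_S \to G$ by declaring $\hff^{(1)}(s) := \ff_v(\overline{s})$ for $s \in S_v$ and $\hff^{(1)}(t_e) := \hf(t_e)$. By the choice of $\sigma$, $\hff^{(1)}$ is $\ee$-close to $\hf$ on every generator, hence on all of $F_S$ by Lemma \ref{lem:ultra_free}, and this forces $\defe(\hff^{(1)}) \leq \ee$ via the ultrametric inequality. By construction $\hff^{(1)}|_{F_{S_v}}$ descends to $\ff_v$. Now apply Lemma \ref{lem:pifree_gog} to $\hff^{(1)}$: its edge-group hypothesis is automatic, since $\ff_v(\iota_e^\pm(\Gamma_e)) \leq \ff_v(\Gamma_v)$ is a finite $\pi'$-group and therefore so is its image in every metric quotient $G/G(\dd)$. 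This produces $\hff : F_S \to G$ with $\dist(\hff^{(1)}, \hff) \leq \ee$ and $\hff$ descending to $\Gamma$; the ultrametric inequality then yields $\dist(\hf, \hff) \leq \ee$, giving the desired optimal estimate.

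The main conceptual hurdle is the interplay in the first step: the hypothesis of the proposition concerns finite quotients of the ambient $\Gamma$, whereas Lemma \ref{lem:pifree} requires a $\pi'$-condition on the image of $\Gamma_v$ inside $G/G(\ee)$. The bridge is that $\G$ is profinite, so $G/G(\ee)$ is automatically finite and every induced image is tautologically a finite quotient. Once this is in place, the rest amounts to carefully matching distances at the level of generators, so that a single $\ee$-perturbation of $\hf$ first makes each vertex block into a genuine $\pi'$-valued homomorphism, and a second $\ee$-perturbation (allowed by the ultrametric) conjugates the edge identifications into agreement.
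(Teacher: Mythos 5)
Your proposal is correct and follows exactly the same route as the paper's proof: first apply Lemma \ref{lem:pifree} to each vertex group to replace the restricted map by a genuine homomorphism with finite $\pi'$-image, then observe this makes the edge-group hypothesis of Lemma \ref{lem:pifree_gog} automatic and apply that lemma to glue. Your write-up spells out the bookkeeping (section $\sigma$, assembly of $\hff^{(1)}$, ultrametric estimate for $\defe(\hff^{(1)}) \leq \ee$) that the paper leaves implicit, but the argument is the same.
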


\begin{proof}
Let $\langle S \mid R \rangle$ be the standard presentation of $\Gamma$. Fix $0 < \ee \leq \ez$ and let $\hf \colon F_S \to G \in \G$ be a homomorphism with $\defe(\hf) \leq \ee$. It induces a homomorphism $\Gamma \to G/G(\ee)$, whose restriction to $\Gamma_v$ is a $\pi'$-group. Now Lemma \ref{lem:pifree} allows to modify $\hf$ by at most $\ee$ on the vertex generators $S_v$ so that $\hf|_{F_{S_v}}$ descends to a homomorphism $\Gamma_v \to G$ whose image is again a finite $\pi'$-group. Then we apply Lemma \ref{lem:pifree_gog} and conclude as in Proposition \ref{prop:pifree_gog1}.
\end{proof}

\subsection{First corollaries}
\label{ss:vpropi:cor}

We now apply Propositions \ref{prop:pifree_gog1} and \ref{prop:pifree_gog2} to obtain examples of uniformly $\G$-stable groups. 

\medskip

The following is a direct consequence of Propositions \ref{prop:pifree_gog1} and \ref{prop:pifree_gog2}:

\begin{corollary}
\label{cor:pifree_gog}

Let $\G$ be a virtually pro-$\pi$ family. The following groups are uniformly $\G$-stable, with an optimal estimate:

\begin{enumerate}
\item Fundamental groups of connected graphs of groups, with $\pi$-free vertex groups.
\item Fundamental groups of finite, connected graphs of groups, with uniformly $\G$-stable vertex groups and $\pi$-free edge groups.
\end{enumerate}
\end{corollary}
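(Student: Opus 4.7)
The plan is to derive both items directly from Propositions \ref{prop:pifree_gog1} and \ref{prop:pifree_gog2}, by checking that the relevant $\pi'$-conditions on images in finite quotients follow from the $\pi$-freeness hypothesis on vertex or edge groups. The key observation used repeatedly is that if $H$ is a $\pi$-free group and $f : H \to F$ is any homomorphism into a finite group, then $f(H)$, being a finite quotient of $H$, is a $\pi'$-group.

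For Item $1.$, I would apply Proposition \ref{prop:pifree_gog2}. Fix any vertex $v$ and any finite quotient $\Gamma \twoheadrightarrow F$. The image of $\Gamma_v$ in $F$ is a finite quotient of $\Gamma_v$; since $\Gamma_v$ is $\pi$-free, this image is a $\pi'$-group. This verifies the hypothesis of Proposition \ref{prop:pifree_gog2}, yielding uniform $\G$-stability with an optimal estimate.

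For Item $2.$, I would apply Proposition \ref{prop:pifree_gog1}. The vertex groups are uniformly $\G$-stable by hypothesis, and since the graph is finite, $D^{\G}_{\Gamma_v}(\ee)$ is automatically uniformly bounded away from $0$ as $v$ ranges over the finitely many vertices. For the remaining hypothesis, let $e$ be an edge adjacent to a vertex $v$ and let $\Gamma_v \twoheadrightarrow F$ be any finite quotient. The composition $\Gamma_e \xrightarrow{\iota_e^{\pm}} \Gamma_v \twoheadrightarrow F$ has image a finite quotient of $\Gamma_e$, hence a $\pi'$-group by $\pi$-freeness of $\Gamma_e$. Proposition \ref{prop:pifree_gog1} then yields $D^{\G}_{\Gamma}(\ee) \geq \inf_{v \in V} D^{\G}_{\Gamma_v}(\ee)$, so $\Gamma$ is uniformly $\G$-stable; and if the vertex groups are stable with an optimal estimate, this estimate is inherited by $\Gamma$.

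There is essentially no obstacle beyond unpacking the hypotheses: the only subtle point is verifying that the image of an edge group (respectively vertex group) in the relevant finite quotient is itself a finite quotient of $\Gamma_e$ (respectively $\Gamma_v$), which uses injectivity of the edge inclusions $\iota_e^{\pm}$ for Item $2.$ and is immediate for Item $1.$
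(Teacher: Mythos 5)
Your proposal is correct and follows precisely the route the paper intends: the paper gives no proof of this corollary beyond declaring it a ``direct consequence of Propositions \ref{prop:pifree_gog1} and \ref{prop:pifree_gog2},'' and your verification of the hypotheses is exactly what is needed. Two small remarks. First, the injectivity of $\iota_e^{\pm}$ is not actually used in the step you flag as subtle: the image of $\Gamma_e$ under the composition $\Gamma_e \to \Gamma_v \twoheadrightarrow F$ is a finite quotient of $\Gamma_e$ simply because any homomorphic image is a quotient, irrespective of whether $\iota_e^{\pm}$ is injective (which it is anyway, by definition of a graph of groups). Second, and more substantively, your own computation for Item 2 shows that Proposition \ref{prop:pifree_gog1} only yields $D_{\Gamma}^{\G}(\ee) \geq \inf_v D_{\Gamma_v}^{\G}(\ee)$, so the estimate for $\Gamma$ is optimal only when the vertex groups themselves are stable with an optimal estimate; the corollary as stated groups Item 2 under ``with an optimal estimate'' without imposing that on the vertex groups, which is a slight over-claim in the paper that your more careful phrasing correctly avoids.
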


The next corollary relies on Dunwoody's characterisation of groups of cohomological dimension (denoted $\cd$) at most $1$ \cite{cd1}:

\begin{corollary}
\label{cor:vfree_p}

Let $\G$ be a virtually pro-$\pi$ family. If $\cd_{\mathbb{F}_p}(\Gamma) \leq 1$ for all $p \in \pi$, then $\Gamma$ is uniformly $\G$-stable, with an optimal estimate. In particular, finitely generated virtually free groups without elements of order $p$, for any $p \in \pi$, are uniformly (and thus pointwise) $\G$-stable, with an optimal estimate.
\end{corollary}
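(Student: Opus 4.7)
The plan is to invoke Dunwoody's characterization of groups of cohomological dimension at most one to reduce to Corollary \ref{cor:pifree_gog}. Recall that Dunwoody's theorem asserts, for a commutative ring $R$, that $\cd_R(\Gamma) \leq 1$ if and only if $\Gamma$ is the fundamental group of a graph of groups with finite vertex groups whose orders are invertible in $R$. The simultaneous hypothesis $\cd_{\Fp}(\Gamma) \leq 1$ for every $p \in \pi$ can be packaged into a single statement of the form $\cd_R(\Gamma) \leq 1$ by taking $R = \mathbb{Z}[1/q : q \in \pi']$, the localization in which an integer is invertible precisely when it is a $\pi'$-number. Applying Dunwoody's theorem to this $R$ yields a decomposition of $\Gamma$ as the fundamental group of a graph of groups in which every vertex group is a finite $\pi'$-group.

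Any finite $\pi'$-group is manifestly $\pi$-free in the sense of Definition \ref{def:pifree}: all of its finite quotients are again finite $\pi'$-groups, and therefore have no virtual $p$-quotients for any $p \in \pi$. Item $1.$ of Corollary \ref{cor:pifree_gog} then applies to the Dunwoody decomposition and gives that $\Gamma$ is uniformly $\G$-stable with an optimal estimate, which proves the first part of the statement.

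For the final claim about virtually free groups, one may bypass Dunwoody's theorem altogether. By the Karrass--Pietrowski--Solitar theorem, a finitely generated virtually free group $\Gamma$ is the fundamental group of a \emph{finite} graph of finite groups. The finite vertex groups are finite subgroups of $\Gamma$, so the hypothesis that $\Gamma$ has no elements of order $p$ for any $p \in \pi$ forces these vertex groups to be finite $\pi'$-groups, hence $\pi$-free. Item $1.$ of Corollary \ref{cor:pifree_gog} applies once more and yields uniform $\G$-stability with an optimal estimate.

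The only delicate step is the first one: extracting a single graph-of-groups decomposition with $\pi'$-vertex groups from the family of hypotheses $\cd_{\Fp}(\Gamma) \leq 1$, $p \in \pi$. Once Dunwoody's theorem is invoked in the form above, the rest of the proof is a direct application of the machinery developed in Subsection \ref{ss:gog}, and no further work is required.
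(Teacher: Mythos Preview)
Your approach is essentially the same as the paper's: use Dunwoody to obtain a graph-of-groups decomposition with finite vertex groups, check that these vertex groups are $\pi$-free, and then feed this into the machinery of Subsection~\ref{ss:gog}. The paper invokes Proposition~\ref{prop:pifree_gog1} directly (noting that the optimal estimates for the vertex groups make the uniformity hypothesis automatic even for infinite graphs), while you invoke Item~1 of Corollary~\ref{cor:pifree_gog}, which is the same thing.

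There is, however, one step in your first paragraph that is asserted rather than proved. Going from ``$\cd_{\Fp}(\Gamma) \leq 1$ for every $p \in \pi$'' to ``$\cd_R(\Gamma) \leq 1$'' for your ring $R$ is \emph{not} a formal consequence of how cohomological dimension behaves under change of rings; the easy direction is the opposite one. The correct justification is precisely the argument you give in your second paragraph for the virtually free case: apply Dunwoody over $\Fp$ for \emph{one} chosen $p \in \pi$ to obtain a decomposition with finite vertex groups, observe that these vertex groups embed in $\Gamma$ by Bass--Serre theory, and then use that $\cd_{\F_q}(\Gamma) \leq 1$ for each $q \in \pi$ forces $\Gamma$ to have no $q$-torsion, hence the vertex groups are $\pi'$-groups. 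Once you have this, the detour through $\cd_R$ is unnecessary---you may go straight to Corollary~\ref{cor:pifree_gog}. The paper's proof is equally terse on this point, but the same observation is what makes it work.
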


\begin{proof}
By \cite{cd1}, a group has $\Fp$-cohomological dimension at most $1$ if and only if it is the fundamental group of a connected graph of groups whose vertex groups are finite and $p$-free. Even if the underlying graph is infinite, the estimate for uniform $\G$-stability of each vertex group is optimal by Lemma \ref{lem:pifree}, and so we can apply Proposition \ref{prop:pifree_gog1}. The statement about virtually free groups is the finitely generated case of \cite{cd1}, but it also follows more directly from Stallings's Theorem on groups with infinitely many ends, without going through cohomological dimension (see \cite{Stallings1} for the torsion-free case and \cite[5.A.9]{Stallings2} for the general case). Since finitely generated virtually free groups are finitely presented, pointwise stability also follows from Theorem \ref{thm:pw_un}.
\end{proof}

Recall from Example \ref{ex:wr_ust} that if $\Gamma$ is perfect, then $\Gamma \wr \mathbb{Z}$ is uniformly $\G$-stable for every profinite family $\G$. The following corollary of Proposition \ref{prop:pifree_gog2} strengthens this:

\begin{corollary}
\label{cor:wr}

Let $\G$ be a virtually pro-$\pi$ family. If $\Gamma$ does not surject onto $\mathbb{Z}/p\mathbb{Z}$, for any $p \in \pi$, then $\Gamma \wr \mathbb{Z}$ is uniformly $\G$-stable, with an optimal estimate.
\end{corollary}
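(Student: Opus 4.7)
The plan is to realize $G \wr \mathbb{Z}$ as a one-vertex, one-edge graph of groups and apply Proposition~\ref{prop:pifree_gog2}. Concretely, I would set $H := \bigoplus_{n \geq 0} G_n$ and let $\sigma : H \hookrightarrow H$ be the shift sending $G_n$ to $G_{n+1}$, whose image is $\bigoplus_{n \geq 1} G_n$. Then $G \wr \mathbb{Z}$ is isomorphic to the ascending HNN extension $H \ast_\sigma$, that is, the fundamental group of the graph of groups with a single vertex $v$ (with $\Gamma_v = H$) and a single loop edge (with edge group $\sigma(H) \cong H$ and the two inclusions being $\sigma$ and the identity).

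The key task is then to verify the hypothesis of Proposition~\ref{prop:pifree_gog2}, namely that the image of $H$ in every finite quotient of $\Gamma := G \wr \mathbb{Z}$ is a $\pi'$-group. For this I would invoke Example~\ref{ex:wr}: since $\mathbb{Z}$ is residually finite, every finite quotient $q : \Gamma \to Q$ factors through the largest residually finite quotient $\Ab(G) \wr \mathbb{Z}$, so $q(H)$ is contained in the image of the abelian base $\bigoplus_{n \in \mathbb{Z}} \Ab(G)_n$ in $Q$.

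Next I would translate the hypothesis that $G$ admits no surjection onto $\Fp$ for any $p \in \pi$ into the statement that $\Ab(G)/p\Ab(G) = 0$, i.e.\ $\Ab(G)$ is $p$-divisible for every $p \in \pi$. Consequently the image of each $\Ab(G)_n$ in the finite group $Q$ is a finite $p$-divisible abelian group for every $p \in \pi$; since a finite $p$-divisible abelian group has order coprime to $p$, this image is a finite abelian $\pi'$-group. The image of the entire base in $Q$ is a finite abelian group generated by these $\pi'$-subgroups, hence itself a $\pi'$-group, and so is $q(H)$.

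With this verification, Proposition~\ref{prop:pifree_gog2} directly yields uniform $\G$-stability of $G \wr \mathbb{Z}$ with an optimal estimate. The main (rather minor) obstacle is bookkeeping: one must confirm that the ascending HNN extension above genuinely reproduces $G \wr \mathbb{Z}$ (this is a short check using Britton's lemma, noting that conjugating $H$ by powers of $t^{-1}$ generates the copies $G_n$ for $n < 0$), and that the passage from "no surjection onto $\Fp$" to $p$-divisibility of $\Ab(G)$ and thence to $\pi'$-images in arbitrary finite quotients is made cleanly. All the stability analysis is packaged inside Proposition~\ref{prop:pifree_gog2}.
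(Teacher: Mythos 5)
Your proposal is correct, and both the hypothesis check and the conclusion match the paper's, but you choose a slightly different graph-of-groups decomposition. You present $G \wr \mathbb{Z}$ as the genuine ascending HNN extension $H \ast_\sigma$ with $H = \bigoplus_{n \geq 0} G_n$ and $\sigma$ the shift, which requires a short verification (Britton's lemma, or the standard identification of an ascending HNN with the mapping torus of the direct limit) to confirm that it reproduces $G \wr \mathbb{Z}$. The paper instead takes the single loop with both vertex group and edge group equal to the full base $\Sigma_{\mathbb{Z}} G$, the two edge inclusions being the identity and the shift automorphism; this is just the semidirect-product presentation of the wreath product re-read as a graph of groups, so no Britton-type argument is needed. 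Either way, Proposition~\ref{prop:pifree_gog2} then asks you to show that the image of the vertex group in any finite quotient is a $\pi'$-group, and both arguments route this through Example~\ref{ex:wr} (the image of the base is abelian) and the hypothesis that $G$ has no surjection onto $\Fp$. Your detour via $p$-divisibility of $\Ab(G)$ is a clean way of packaging what the paper phrases more directly: a finite abelian group of order divisible by $p$ surjects onto $\Fp$, which would pull back to a surjection $G \twoheadrightarrow \Fp$ (using that $\Fp$ is simple). One tiny caveat: you describe $\Ab(G) \wr \mathbb{Z}$ as the largest residually finite quotient, but Example~\ref{ex:wr} only says the projection onto the largest residually finite quotient factors through $\Ab(G) \wr \mathbb{Z}$; that is all you actually need, and you use it correctly in the following sentence.
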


\begin{proof}
We use the notation from Example \ref{ex:wr}. Note that $\Gamma \wr \mathbb{Z}$ is the fundamental group of a loop with vertex group and edge group $\Sigma_{\mathbb{Z}} \Gamma$, where the edge inclusions are the identity and the shift. So to apply Proposition \ref{prop:pifree_gog2} it suffices to show that in any finite quotient of $\Gamma \wr \mathbb{Z}$ the image of $\Sigma_{\mathbb{Z}} \Gamma$ has order coprime to $p$. By Example \ref{ex:wr}, this image must be abelian, and a finite abelian group of order divisible by $p$ surjects onto $\mathbb{Z}/p\mathbb{Z}$. This is ruled out by the hypothesis.
\end{proof}



\subsection{GBS groups}
\label{ss:GBS}

We now apply Proposition \ref{prop:pifree_gog2} to many Generalised Baumslag--Solitar (from now on: GBS) groups. We refer the reader to \cite{GBS} for more details on GBS groups. 

\medskip

Recall that the \emph{Baumslag--Solitar group} $\BS(m, n)$ is defined by the presentation $\langle s, t \mid ts^nt^{-1} = s^m \rangle$, so it is the fundamental group of a loop with vertex group and edge group $\mathbb{Z}$, where the edge inclusions are $\mathbb{Z} \to \mathbb{Z} : 1 \mapsto n, m$. More generally, a \emph{GBS} group is the fundamental group of a finite connected graph of groups $X = (V, E)$ all of whose vertex and edge groups are infinite cyclic. The information on the edge inclusions can be summarised in two \emph{weight functions} $w_{\pm} \colon E \to \mathbb{Z} \, \backslash \, \{ 0 \}$, that is $\iota_e^{\pm} \colon \Gamma_e \cong \mathbb{Z} \to \Gamma_{e^{\pm}} \cong \mathbb{Z} : 1 \mapsto w_{\pm}(e)$. Note that it suffices to know $w_+$ on all edges, or to know $w_\pm$ on a set of positively oriented edges, in order to recover all the information, since $\iota_e^- = \iota_{\overline{e}}^+$. We denote the graph of groups associated to a GBS group by $(X, w)$, where $X$ is the underlying graph and $w = (w_-, w_+)$ are the weight functions.

It will be convenient to extend the weight functions from oriented edges to oriented paths. So given an oriented path $P : v_1 \xrightarrow{e_1} v_2 \to \cdots \to v_k \xrightarrow{e_k} v_{k+1}$, we denote by $w_{\pm}(P) \coloneqq \prod_i w_{\pm}(e_i)$. If $p$ is a prime, we have $\nu_p(w_{\pm}(P)) = \sum_i \nu_p(w_{\pm}(e_i))$. In particular $\nu_p(w_{\pm}(P)) = 0$ if and only if $\nu_p(w_{\pm}(e_i)) = 0$ for all $i = 1, \ldots, k$.

The following corollary to Proposition \ref{prop:pifree_gog2} gives a combinatorial and arithmetic criterion that ensures that a GBS group is $\G$-stable.

\begin{corollary}
\label{cor:GBS_p}

Let $\G$ be a virtually pro-$\pi$ family, and let $\Gamma$ be a GBS group with weighted graph $(X, w)$. Suppose that for all $p \in \pi$ there exists a set $\mathbf{C}$ of oriented cycles such that every $C \in \mathbf{C}$ satisfies $\nu_p(w_-(C)) = 0 < \nu_p(w_+(C))$, and that for every vertex $y$ there exists a vertex $x$ belonging to some $C \in \mathbf{C}$, and a path $x \xrightarrow{P} y$ with $\nu_p(w_+(P)) = 0$. Then $\Gamma$ is uniformly (and thus pointwise) $\G$-stable, with an optimal estimate.
\end{corollary}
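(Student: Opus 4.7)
The plan is to reduce to Proposition \ref{prop:pifree_gog2}: it will suffice to show, for each prime $p \in \pi$ and each finite quotient $\pi_Q : \Gamma \to Q$, that the image of every vertex generator $s_v$ has order coprime to $p$. So I will fix such $p$ and $Q$, let $n_v$ be the order of $\pi_Q(s_v)$, set $a_v := \nu_p(n_v)$, and aim to prove $a_v = 0$ for every vertex $v$.

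The key input will be that the defining relation $t_e s_{e^-}^{w_-(e)} t_e^{-1} = s_{e^+}^{w_+(e)}$ makes $\pi_Q(s_{e^-})^{w_-(e)}$ and $\pi_Q(s_{e^+})^{w_+(e)}$ conjugate in $Q$; equating their orders (via $\mathrm{ord}(g^k) = \mathrm{ord}(g)/\gcd(\mathrm{ord}(g),k)$) and taking $\nu_p$ yields, for every edge $e$,
\[ \max\bigl(0,\, a_{e^-} - \nu_p(w_-(e))\bigr) \;=\; \max\bigl(0,\, a_{e^+} - \nu_p(w_+(e))\bigr). \qquad (\star) \]
This identity $(\star)$ will do all the work.

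The cycle step is the heart of the argument. Given $C \in \mathbf{C}$ with vertices $v_1, \ldots, v_k, v_{k+1}=v_1$ and edges $e_i$, each edge satisfies $\nu_p(w_-(e_i)) = 0$, so $(\star)$ reduces to $a_{v_i} = \max(0, a_{v_{i+1}} - \nu_p(w_+(e_i)))$. If every $a_{v_i}$ were strictly positive, each maximum would be attained by the second term, giving $a_{v_i} = a_{v_{i+1}} - \nu_p(w_+(e_i))$; summing cyclically yields $0 = -\nu_p(w_+(C))$, contradicting $\nu_p(w_+(C)) > 0$. So some $a_{v_j} = 0$. To propagate this to all vertices of $C$, I will run the same identity backward: whenever $a_{v_i} = 0$, one has $a_{v_{i-1}} = \max(0, 0 - \nu_p(w_+(e_{i-1}))) = 0$; iterating around $C$ shows that \emph{every} vertex of $C$ has $a = 0$. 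This backward propagation is the subtle point, since the hypothesis only promises that the auxiliary $x$ lies somewhere on $C$, so one genuinely needs all of $C$ to have vanishing $a$.

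Finally, for any vertex $y$, the hypothesis supplies $x$ on some $C \in \mathbf{C}$ (so $a_x = 0$ by the previous step) together with a path $P : x = v_1 \to \cdots \to v_{k+1} = y$ on which $\nu_p(w_+(e_i)) = 0$. Along $P$, $(\star)$ becomes $a_{v_{i+1}} = \max(0, a_{v_i} - \nu_p(w_-(e_i))) \leq a_{v_i}$, so the $a$-values are non-increasing, forcing $a_y \leq a_x = 0$, that is $a_y = 0$. Combined with Proposition \ref{prop:pifree_gog2}, this will yield the desired uniform $\G$-stability with an optimal estimate. The main obstacle throughout is the cyclic step: both directions — forward summation to produce a contradiction and backward propagation to cover the whole cycle — are necessary, and together they are what extracts arithmetic information from the combinatorial/valuative hypothesis on $\mathbf{C}$.
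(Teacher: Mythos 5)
Your proof is correct, and your path step matches the paper's Lemma~\ref{lem:GBS_pfree} almost verbatim (the inequality $a_y \leq a_x$ is exactly that lemma rephrased in terms of $p$-adic valuations of orders). Your cycle step, however, is organized differently. The paper first composes the per-edge conjugacies around the cycle into a single relation $s_x^{w_-(C)} \sim s_x^{w_+(C)}$ for each vertex $x$ on $C$ (Lemma~\ref{lem:GBS_conjP}), and then applies the $\gcd$-of-orders identity once per vertex (Lemma~\ref{lem:GBS_omn}), giving $p$-freeness of every cycle vertex directly and independently, with no global bookkeeping. You never compose conjugacies: you extract from each edge only the local valuation identity $(\star)$, and close the cycle with two separate global devices — a cyclic summation that rules out the all-positive case, followed by a backward monotone propagation that spreads the vanishing of the exponent around the whole cycle. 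Your route is marginally more elementary in that it avoids the composed-conjugacy lemma entirely and works one edge at a time, at the cost of the two-pronged global argument; the paper's decomposition earns its keep because Lemma~\ref{lem:GBS_conjP} is reused in Corollary~\ref{cor:GBS}, where the whole-cycle conjugacy is again what makes the $\nu_p(m) \neq \nu_p(n)$ analysis go through.
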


We will focus on uniform stability; since GBS groups are finitely presented, the pointwise stability follows from Theorem \ref{thm:pw_un}.
Note that the condition only requires that such cycles and paths exist for any given $p \in \pi$: we are allowed to choose different ones for each prime in $\pi$. The simplest example is that of Baumslag--Solitar groups, which features in Theorem \ref{intro:thm:pifree}. Here there is only one vertex so the condition on the existence of special paths is not needed, and the special cycle must be the loop.

\begin{corollary}
\label{cor:BS_p}

Let $\G$ be a virtually pro-$\pi$ family and suppose that each $p \in \pi$ divides exactly one of $m, n$. Then $\BS(m, n)$ is uniformly (and thus pointwise) $\G$-stable, with an optimal estimate.
\end{corollary}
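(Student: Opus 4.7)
The plan is to deduce this as a direct application of Corollary \ref{cor:GBS_p}. Recall that $\BS(m,n)$ is the fundamental group of the graph of groups $(X, w)$ whose underlying graph $X$ has a single vertex $v$ and a single geometric loop, represented by a pair of reversed oriented edges $e, \overline{e}$. With the standard presentation $\langle s, t \mid ts^n t^{-1} = s^m \rangle$, the weights of the two orientations are $\{m, n\}$, with $w_-(e) = n, w_+(e) = m$ for one choice and $w_-(\overline{e}) = m, w_+(\overline{e}) = n$ for the other.

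Now fix $p \in \pi$. By hypothesis exactly one of $m, n$ is divisible by $p$. Choose the orientation of the loop so that $w_-$ takes the value coprime to $p$ and $w_+$ takes the value divisible by $p$; concretely, take the edge to be $e$ if $p \mid m$ and $p \nmid n$, and $\overline{e}$ if $p \mid n$ and $p \nmid m$. Call this oriented edge $C$. Then $C$ is an oriented cycle, and
\[
\nu_p(w_-(C)) = 0 < \nu_p(w_+(C)).
\]
Set $\mathbf{C} := \{C\}$. The only vertex of $X$ is $v$, which lies on $C$ itself, so the condition on the existence of paths $x \xrightarrow{P} y$ is verified by taking $x = y = v$ and $P$ the empty path at $v$: by convention $w_+(P) = 1$ is the empty product, so $\nu_p(w_+(P)) = 0$.

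All the hypotheses of Corollary \ref{cor:GBS_p} are therefore met (with a possibly different choice of orientation and cycle set for each prime $p \in \pi$, which is permitted), and we conclude that $\BS(m,n)$ is uniformly $\G$-stable with an optimal estimate. There is no real obstacle in this argument; the only point requiring a small amount of attention is allowing the orientation of the loop to depend on the prime $p \in \pi$, since the value of $\nu_p$ of each weight changes with $p$ and the criterion in Corollary \ref{cor:GBS_p} is applied prime by prime.
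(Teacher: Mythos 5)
Your proof is correct and follows exactly the route the paper takes: deduce the statement from Corollary \ref{cor:GBS_p} by orienting the single loop so that, for each $p\in\pi$, the negative weight is the member of $\{m,n\}$ coprime to $p$ and the positive weight is the one divisible by $p$, with the path condition trivially satisfied since the only vertex lies on the cycle. You also correctly flag the one subtlety — that the orientation may depend on the prime — which the paper addresses in the remark preceding the corollary.
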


Here is a more complex example of graph $(X, w)$ that satisfies the conditions of Corollary \ref{cor:GBS_p} where $\pi = \{ p \}$ is a single prime.

\begin{example}
We draw a set of positively oriented edges $e$, labeled by the pair of weights $(w_-(e), w_+(e))$. Each weight labeled $p'$ may be replaced by any integer coprime to $p$, each weight labeled $p$ by any non-zero multiple of $p$, and each weight labeled $*$ by any non-zero integer.

\begin{center}
\begin{tikzpicture}[->,>=stealth,
	shorten  >=5pt ,
	node  distance =2.5cm,
	semithick]

\node[state]	(U)                 {$u$};
\node[state]	(V) [right of=U]    {$v$};
\node[state]	(X) [right of=V] 	{$x$};
\node[state]	(Y) [right of=X] 	{$y$};
\node[state]	(Z) [right of=Y] 	{$z$};

\path	(U)	edge [below]               	node  {$(p', *)$} (V)
		(V)	edge [below]               	node  {$(p', p)$} (X)
		(X)	edge [below]               	node  {$(*, *)$} (Y)
			edge [bend right, above]   	node  {$(p', *)$} (U)
		(Z)	edge [in=45,out=135,looseness=8,loop, above]         	node  {$(p', p)$} (Z)
			edge [below]				node  {$(*, p')$} (Y);
\end{tikzpicture}
\end{center}

If $C : u \to v \to x \to u$, then $\nu_p(w_-(C)) = \nu_p(p') + \nu_p(p') + \nu_p(p') = 0$, while $\nu_p(w_+(C)) \geq \nu_p(p) > 0$, so it satisfies the hypothesis. Similarly, the loop at $z$ satisfies the hypothesis: $\nu_p(p') = 0 < \nu_p(p)$. The only vertex left to check is $y$, and for this we use the path $P : z \to y$, that satisfies $\nu_p(w_+(P)) = \nu_p(p') = 0$.
\end{example}

This example also clarifies that although the condition is stated in notation-heavy terms, it is quite easy to check, and there is no need to precisely compute $\nu_p(w_{\pm}(C, P))$. For instance $\nu_p(w_-(C)) = 0 < \nu_p(w_+(C))$ simply means that the negative weights along $C$ are all coprime to $p$, and that there is at least one positive weight that is divisible by $p$. Similarly $\nu_p(w_+(P)) = 0$ simply means that the positive weights along $P$ are all coprime to $p$.

\begin{proof}[Proof of Corollary \ref{cor:GBS_p}]
The proof will be split in a sequence of technical lemmas, some of which will be used in the proof of Corollary \ref{cor:GBS} in the next section.

Fix a GBS group $\Gamma$ with weighted graph $(X, w)$ and $p \in \pi$. Given a vertex $x$ we denote by $s_x$ the corresponding generator. We want to show that the graph of groups defining $\Gamma$ satisfies the hypotheses of Proposition \ref{prop:pifree_gog2}, so we need to show that for every vertex $x$, the image of $s_x$ has order coprime to $p$ in every finite quotient of $\Gamma$. For the sake of brevity, let us say that a vertex $x$ with the desired property is \emph{$p$-free}. The first lemma shows that the condition on the existence of special paths reduces the question to the vertices belonging to special cycles:

\begin{lemma}
\label{lem:GBS_pfree}

Suppose that $x$ is $p$-free, and let $x \xrightarrow{P} y$ be an oriented path with $\nu_p(w_+(P)) = 0$. Then $y$ is also $p$-free.
\end{lemma}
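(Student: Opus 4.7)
The plan is to reduce to the single-edge case by induction on the length of $P$. The hypothesis $\nu_p(w_+(P)) = 0$ means that $\nu_p(w_+(e_i)) = 0$ for every edge $e_i$ appearing in $P$, so once the one-edge statement is in hand, I can walk along $P$ vertex by vertex, applying it to each consecutive pair, and conclude that every vertex on $P$ (in particular the endpoint $y$) is $p$-free.

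The one-edge case is the content of the lemma. Let $e$ be an oriented edge with $e^- = x$, $e^+ = y$, and $\nu_p(w_+(e)) = 0$. Fix an arbitrary finite quotient $Q$ of $\Gamma$, and denote by $n_x, n_y$ the orders of the images of $s_x, s_y$ in $Q$. The defining conjugacy relation $t_e s_x^{w_-(e)} t_e^{-1} = s_y^{w_+(e)}$ descends to $Q$, so the images of $s_x^{w_-(e)}$ and $s_y^{w_+(e)}$ are conjugate and hence have the same order in $Q$. Using the standard formula for the order of a power of a cyclic element, this gives
$$\frac{n_x}{\gcd(n_x, w_-(e))} = \frac{n_y}{\gcd(n_y, w_+(e))},$$
that is, $n_x \cdot \gcd(n_y, w_+(e)) = n_y \cdot \gcd(n_x, w_-(e))$.

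Now take $\nu_p$ of both sides, noting that $\nu_p(\gcd(a,b)) = \min\{\nu_p(a), \nu_p(b)\}$. By $p$-freeness of $x$ we have $\nu_p(n_x) = 0$, and by hypothesis $\nu_p(w_+(e)) = 0$, so the left-hand side has $p$-adic valuation $0 + \min\{\nu_p(n_y), 0\} = 0$, while the right-hand side has valuation $\nu_p(n_y) + \min\{0, \nu_p(w_-(e))\} = \nu_p(n_y)$. Hence $\nu_p(n_y) = 0$, so $p \nmid n_y$. Since $Q$ was an arbitrary finite quotient, $y$ is $p$-free.

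I do not expect any real obstacle in this argument: once the conjugacy relation in the graph of groups is translated into an identity on orders in a finite cyclic subgroup, the conclusion is purely arithmetic in $\mathbb{Z}$ via the $p$-adic valuation, and the induction over $P$ is immediate since the condition $\nu_p(w_+(P)) = 0$ is preserved edge by edge along $P$.
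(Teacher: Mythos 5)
Your proof is correct and follows essentially the same route as the paper: induct down to a single edge, use the conjugacy relation $t_e s_x^{w_-(e)} t_e^{-1} = s_y^{w_+(e)}$ to equate orders of images in an arbitrary finite quotient, and extract $\nu_p(n_y)=0$ from the resulting gcd identity. The only cosmetic difference is that you cross-multiply before taking $p$-adic valuations, whereas the paper reads the valuation directly off the equality of the two quotients.
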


\begin{proof}
By induction on the length of the path, it suffices to show this for paths of length $1$. So suppose that $x \xrightarrow{e} y$ and let $(w_-(e), w_+(e)) = (m, n)$. By definition of the fundamental group, $s_x^m$ is conjugate to $s_y^n$ in $\Gamma$, and by hypothesis $\nu_p(n) = 0$. Let $f \colon \Gamma \to K$ be a finite quotient of $\Gamma$, let $o_x$ be the order of $f(s_x)$ and $o_y$ the order of $f(s_y)$. Conjugacy implies that $f(s_x)^m$ and $f(s_y)^n$ have the same order, that is $o_x/\mathrm{gcd}(o_x, m) = o_y/\mathrm{gcd}(o_y, n)$. Since $x$ is $p$-free, $\nu_p(o_x) = 0$, so $\nu_p(o_y) = \nu_p(\mathrm{gcd}(o_y, n)) \leq \nu_p(n) = 0$. Since $K$ was arbitrary, we conclude that $y$ is $p$-free.
\end{proof}

So we only need to show that if $C$ is a cycle such that $\nu_p(w_-(C)) = 0 < \nu_p(w_+(C))$, then every vertex of $C$ is $p$-free. Here is a sufficient condition for a vertex to be $p$-free:

\begin{lemma}
\label{lem:GBS_omn}

Let $x$ be a vertex such that $s_x^m$ is conjugate to $s_x^n$ inside $\Gamma$. If $\nu_p(m) = 0 < \nu_p(n)$, then $x$ is $p$-free.
\end{lemma}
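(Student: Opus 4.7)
The plan is to reduce the statement to a direct $p$-adic computation using only the elementary fact that conjugate elements in a group have the same order. So let $f : \Gamma \to K$ be an arbitrary finite quotient; it suffices to show $\nu_p(o) = 0$, where $o$ denotes the order of $f(s_x)$. Since $s_x^m$ and $s_x^n$ are conjugate in $\Gamma$ by hypothesis, their images $f(s_x)^m$ and $f(s_x)^n$ are conjugate in $K$, and therefore have the same order. Using the standard formula, the order of $f(s_x)^k$ equals $o/\gcd(o, k)$, so we obtain the identity $\gcd(o, m) = \gcd(o, n)$.

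Now I would write $o = p^a b$ with $(b, p) = 1$, set $c := \nu_p(n) \geq 1$ and $n = p^c n'$ with $(n', p) = 1$, and use $\nu_p(m) = 0$. Then $\nu_p(\gcd(o, m)) = 0$, while $\nu_p(\gcd(o, n)) = \min(a, c)$. The identity $\gcd(o, m) = \gcd(o, n)$ forces $\min(a, c) = 0$, and since $c \geq 1$ this means $a = 0$, i.e. $p \nmid o$. As $f$ was arbitrary, $x$ is $p$-free.

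There is essentially no obstacle here: the lemma is a clean instance of the slogan ``conjugate powers of $s_x$ constrain the $p$-part of the order of any image of $s_x$''. The only thing to be careful about is that the statement is about \emph{any} finite quotient $K$, so the argument must be carried out abstractly in $K$ without assuming anything about the structure of $\Gamma$ beyond the conjugacy relation; this is exactly what the short computation above delivers.
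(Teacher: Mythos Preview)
Your proof is correct and is essentially identical to the paper's: both pass to an arbitrary finite quotient, use that conjugate elements have equal order to deduce $\gcd(o,m)=\gcd(o,n)$, and then observe that the hypothesis $\nu_p(m)=0<\nu_p(n)$ forces $p\nmid o$. The only difference is that you spell out the last step via the explicit $\nu_p$ computation, whereas the paper states it in one sentence.
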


\begin{proof}
Let $f \colon \Gamma \to K$ be a finite quotient of $\Gamma$, and let $o$ be the order of $f(s_x)$ in $K$. As in the previous lemma, the conjugacy implies that $o/\mathrm{gcd}(o, m) = o/\mathrm{gcd}(o, n)$ and so $\mathrm{gcd}(o, m) = \mathrm{gcd}(o, n)$. Since $p$ divides $n$ but not $m$, this is only possible if $p$ does not divide $o$.
\end{proof}

Note that this lemma alone is enough to conclude the proof in the case in which all cycles are loops, in particular it concludes the proof in the case of the Baumslag--Solitar group (which does not even need Lemma \ref{lem:GBS_pfree}). For more general cycles, we use instead the next lemma, which will also be used in the proof of Corollary \ref{cor:GBS}:

\begin{lemma}
\label{lem:GBS_conjP}

Let $x \xrightarrow{P} y$ be an oriented path. Then $s_x^{w_-(P)}$ is conjugate to $s_y^{w_+(P)}$.
\end{lemma}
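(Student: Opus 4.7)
\textbf{Proof plan for Lemma \ref{lem:GBS_conjP}.} The strategy is straightforward induction on the length $k$ of the path $P$, using the defining conjugacy relations of the fundamental group of the graph of groups.

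For the base case $k = 1$, the path consists of a single edge $x \xrightarrow{e} y$ with weights $(w_-(e), w_+(e)) = (m, n)$. By definition of a GBS group, $\iota_e^-(1) = m \in \Gamma_x \cong \mathbb{Z}$ and $\iota_e^+(1) = n \in \Gamma_y \cong \mathbb{Z}$, so $\iota_e^-(1) = s_x^m$ and $\iota_e^+(1) = s_y^n$ in the vertex groups. The defining conjugacy relation for the stable letter $t_e$ then yields $t_e \, s_x^m \, t_e^{-1} = s_y^n$, which is exactly the base case.

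For the inductive step, suppose $P = P' \cdot e$ where $P' : x \to v$ has length $k$ and $e : v \to y$ has weights $(a, b)$. By the induction hypothesis there exists $g \in \Gamma$ with $g \, s_x^{w_-(P')} \, g^{-1} = s_v^{w_+(P')}$, so raising to the $a$-th power gives
\[
g \, s_x^{a \cdot w_-(P')} \, g^{-1} = s_v^{a \cdot w_+(P')}.
\]
The base case applied to $e$ gives $t_e \, s_v^a \, t_e^{-1} = s_y^b$, and raising to the $w_+(P')$-th power yields $t_e \, s_v^{a \cdot w_+(P')} \, t_e^{-1} = s_y^{b \cdot w_+(P')}$. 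Since $w_-(P) = a \cdot w_-(P')$ and $w_+(P) = b \cdot w_+(P')$, the composition of these two conjugacies by $t_e g$ completes the induction:
\[
(t_e g) \, s_x^{w_-(P)} \, (t_e g)^{-1} = s_y^{w_+(P)}.
\]

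There is no real obstacle here: the statement is essentially a bookkeeping consequence of the presentation of $\Gamma$, and the only thing to watch is that the exponents multiply correctly under the conjugations, which is automatic because both vertex groups are cyclic and conjugation is a group automorphism.
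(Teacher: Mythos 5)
Your proof is correct and matches the paper's argument essentially verbatim: both proceed by induction on the length of $P$, raise the two conjugacy relations to the appropriate powers ($w_-(e)$ on one side, $w_+(P')$ on the other), and compose them. The only cosmetic difference is that you record the explicit conjugating element $t_e g$, which the paper leaves implicit.
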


\begin{proof}
We prove the statement by induction on the length of $P$. It is clear if $P$ has length $1$. Now suppose that the statement is true for $x \xrightarrow{P} y$ and let us prove it for $x \xrightarrow{P} y \xrightarrow{e} z$ for some edge $e$. By induction hypothesis $s_x^{w_-(P)}$ is conjugate to $s_y^{w_+(P)}$, and so $(s_x^{w_-(P)})^{w_-(e)}$ is conjugate to $(s_y^{w_+(P)})^{w_-(e)}$. Since by definition $s_y^{w_-(e)}$ is conjugate to $s_z^{w_+(e)}$, this implies that $(s_y^{w_-(e)})^{w_+(P)}$ is conjugate to $(s_z^{w_+(e)})^{w_+(P)}$. Thus $s_x^{w_-(P) \cdot w_-(e)}$ is conjugate to $s_z^{w_+(P) \cdot w_+(e)}$.
\end{proof}

In the case in which $P = C$ is a cycle such that $\nu_p(w_-(C)) = 0 < \nu_p(w_+(C))$, Lemma \ref{lem:GBS_conjP} implies that every vertex in $C$ satisfies a relation as in Lemma \ref{lem:GBS_omn}, and so is $p$-free. Lemma \ref{lem:GBS_pfree} allowed to reduce to looking at the vertices belonging to cycles in $\mathbf{C}$, hence this concludes the proof of Corollary \ref{cor:GBS_p}.
\end{proof}

Corollary \ref{cor:BS_p} has a further consequence. We know from Theorem \ref{thm:pw_un} that for finitely presented groups the notions of pointwise and uniform stability coincide. For finitely generated infinitely presented groups, we have mostly seen examples of \emph{uniform} stability, and non-examples of pointwise stability (Example \ref{ex:cex}).
But we know from Theorem \ref{thm:rf} that the largest residually finite quotient of a pointwise stable group is pointwise stable. We will apply this to the largest residually finite quotient of the non-residually finite Baumslag--Solitar groups, which were identified by Moldavanskii in \cite{moldavanskii}. Thus we obtain:

\begin{corollary}
\label{cor:rfBS_p}

Let $\G$ be a virtually pro-$\pi$ family and suppose that each $p \in \pi$ divides exactly one of $m, n$. Let $d \coloneqq \mathrm{gcd}(m, n)$, and suppose that $|m|, |n|$ are distinct from each other and from $1$. Then the group
\[\Gamma = \langle a, b_i : i \in \mathbb{Z} \mid [b_i^d, b_j] = 1, b_i^m = b_{i+1}^n, ab_ia^{-1} = b_{i+1} : i \in \mathbb{Z} \rangle\]
is finitely generated, infinitely presented, and pointwise $\G$-stable.

Writing $m = du, n = dv$, this group fits into an extension
\[1 \to \mathbb{Z} \left[ \frac{1}{uv} \right] \to \Gamma \to (\mathbb{Z}/d\mathbb{Z} * \mathbb{Z}) \to 1,\]
and in case $d = 1$ it is isomorphic to $\mathbb{Z}\left[\frac{1}{mn}\right] \rtimes_{\frac{m}{n}} \mathbb{Z}$.
\end{corollary}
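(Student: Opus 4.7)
The plan is to reduce to the finitely presented Baumslag--Solitar group $\BS(m,n)$ and invoke the residual-finiteness reduction of Theorem \ref{thm:rf}. By a classical result of Moldavanskii \cite{moldavanskii}, under the present hypotheses on $m, n$ the group $\Gamma$ presented above is precisely the largest residually finite quotient of $\BS(m, n)$; the description as an extension $1 \to \mathbb{Z}\left[\frac{1}{uv}\right] \to \Gamma \to \mathbb{Z}/d\mathbb{Z} * \mathbb{Z} \to 1$, and its degeneration to $\mathbb{Z}\left[\frac{1}{mn}\right] \rtimes_{m/n} \mathbb{Z}$ when $d = 1$, is part of the same theorem. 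Since $|m|, |n| \neq 1$ and $|m| \neq |n|$, $\BS(m, n)$ is non-Hopfian and in particular not residually finite \cite{BS:Hopf, Alex}, so the projection $\BS(m, n) \twoheadrightarrow \Gamma$ is a proper quotient.

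For the stability content, the argument is a short chain of citations. By Corollary \ref{cor:BS_p}, the finitely presented group $\BS(m, n)$ is uniformly $\G$-stable, since each $p \in \pi$ divides exactly one of $m, n$. Being finitely presented, Theorem \ref{thm:pw_un} upgrades this to pointwise $\G$-stability. Because $\G$ is virtually pro-$\pi$, the class $\MQ(\G)$ consists of finite groups, so the largest residually-$\MQ(\G)$ quotient of $\BS(m, n)$ is the same as its largest residually finite quotient, namely $\Gamma$. The second assertion of Theorem \ref{thm:rf} then descends pointwise $\G$-stability from $\BS(m, n)$ to $\Gamma$.

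Finite generation is immediate from the relation $a b_i a^{-1} = b_{i+1}$, which shows $\Gamma = \langle a, b_0 \rangle$. The remaining obstacle---and in my view the only nontrivial structural point---is infinite presentation. Here I would argue that a finitely presented quotient of $\BS(m, n)$ by a non-trivial normal subgroup would force the finite residual of $\BS(m, n)$ to be normally finitely generated, which is ruled out by Moldavanskii's explicit description of this residual under the given arithmetic hypotheses. Alternatively, in the case $d = 1$, one can appeal directly to Bieri--Strebel: the metabelian group $\mathbb{Z}\left[\frac{1}{mn}\right] \rtimes_{m/n} \mathbb{Z}$ fails to be finitely presented because, for $|m|, |n| > 1$, both relevant valuations detect non-tameness of $\mathbb{Z}\left[\frac{1}{mn}\right]$ as a $\mathbb{Z}[t^{\pm 1}]$-module. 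In either form this is standard, and together with the stability argument above, $\Gamma$ provides the advertised example of a finitely generated, infinitely presented pointwise $\G$-stable group.
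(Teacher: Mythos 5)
Your proof is correct and takes essentially the same route as the paper: stability is deduced for $\BS(m,n)$ via Corollary \ref{cor:BS_p}, upgraded to pointwise stability by Theorem \ref{thm:pw_un} (since $\BS(m,n)$ is finitely presented), and transferred to the largest residually finite quotient $\Gamma$ by Theorem \ref{thm:rf}, with Moldavanskii's work supplying both the identification of $\Gamma$ and its structural description. The only divergence is in establishing infinite presentability: the paper simply cites Moldavanskii's Theorem~2, whereas you sketch a ``finite residual is not normally finitely generated'' argument (whose reliance on ``Moldavanskii's explicit description'' is left unverified) together with a Bieri--Strebel alternative for $d=1$; both are plausible, but the direct citation is the cleaner choice.
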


\begin{proof}
By Corollary \ref{cor:BS_p} the group $\BS(m, n)$ is $\G$-stable. The condition on $|m|, |n|$ is equivalent to $\BS(m, n)$ being not residually finite \cite{BS:Hopf, Alex}, and the group above is its largest residually finite quotient \cite[Equation (1)]{moldavanskii}, which is pointwise $\G$-stable by Theorem \ref{thm:rf}. It is infinitely presented by \cite[Theorem 2]{moldavanskii}.

The description of the group is in \cite[Proposition 3]{moldavanskii}, except the author uses the presentation $C = \langle e_k : k > 0 \mid e_k = e_{k+1}^{uv} \rangle$ for the kernel of the extension \cite[Proposition 4]{moldavanskii}. This is isomorphic to $\mathbb{Z}[1/uv]$ under the isomorphism $\varphi \colon C \to \mathbb{Z}[1/uv] : e_k \mapsto (uv)^{-k}$, with inverse $\varphi^{-1} \colon \mathbb{Z}[1/uv] \to C : a(uv)^{-k} \mapsto e_k^a$.
\end{proof}

\begin{example}
The group $\mathbb{Z}\left[ \frac{1}{6} \right] \rtimes_{\frac{2}{3}} \mathbb{Z}$ is finitely generated, infinitely presented, residually finite and pointwise $\GGL(\oo)$-stable, whenever $\K$ has residual characteristic $2$ or $3$.
\end{example}

\pagebreak

\section{$\GGL(\oo)$-stability}
\label{s:char0}

In this section we focus on the family $\GGL(\oo)$, where $\oo$ is the ring of integers of a non-Archimedean local field $\K$ \textbf{of characteristic $0$}. By Ostrowski's Theorem, $\K$ is a finite extension of $\Qp$, where $p$ is the residual characteristic of $\K$. The stability results will be similar to the ones in Section \ref{s:vpropi}, but more flexible: this will be achieved by applying Lemma \ref{lem:cohopk} (instead of the Schur--Zassenhaus Theorem) to ensure existence or conjugacy in the lifting problems that occur. The more general results will allow to expand the class of examples that we presented in Section \ref{s:vpropi}. Comparing Theorem \ref{intro:thm:pifree} and Theorem \ref{intro:thm:vpfree}, let us stress what the methods of this section can achieve that the ones in the previous section could not.
\begin{itemize}
\item We saw that $p$-free groups are uniformly $\GGL(\oo)$-stable (Proposition \ref{prop:pifree}). Here we will see that \emph{virtually} $p$-free groups are uniformly $\GGL(\oo)$-stable (Proposition \ref{prop:vpfree}).
\item We saw that finitely generated virtually free groups without elements of order $p$ are uniformly $\GGL(\oo)$-stable (Corollary \ref{cor:vfree_p}). Here we will see that \emph{all} finitely generated virtually free groups are uniformly $\GGL(\oo)$-stable (Corollary \ref{cor:vfree}).
\item We saw that $\mathrm{BS}(m, n)$ is uniformly $\GGL(\oo)$-stable when $p$ divides exactly one of $m$ and $n$ (Corollary \ref{cor:BS_p}). Here we will see that $\mathrm{BS}(m, n)$ is $\GGL(\oo)$-stable if $\nu_p(m) \neq \nu_p(n)$ (Corollary \ref{cor:BS}).
\end{itemize}

\begin{notation}
We fix the following notation for the rest of the section (see Subsection \ref{ss:nona}): $\K$ is a finite extension of $\Qp$, with the unique norm $| \cdot |$ that restricts to the $p$-adic norm of $\Qp$. Let $\oo$ be the ring of integers, $\uni$ a uniformiser, so the maximal ideal of $\oo$ is $\pp = \uni \oo$, and $|\uni| = r < 1$, so that $|\K^\times| = r^{\mathbb{Z}}$. Since $|p| = |p|_p = p^{-1}$, there exists $a \geq 1$ such that $r^a = p^{-1} = |p|$, that is $p \in \pp^a$. Whenever $n$ does not vary, we denote $G \coloneqq \GGL_n(\oo)$ and the congruence subgroups by $G_k \coloneqq \GGL_n(\oo)_k$.
\end{notation}

The last Subsection \ref{ss:poschar} switches to the case in which $\K$ has characteristic $p$: we will prove stability of $\mathbb{Z}/2\mathbb{Z}$ in characteristic $2$ (Proposition \ref{prop:z2z}) and discuss why the same method of proof does not work in odd characteristic.

\subsection{Virtually $p$-free groups}

In this subsection we use the lifting part of Lemma \ref{lem:cohopk} to prove stability of groups that are only required to be \emph{virtually} $p$-free. This covers in particular all finite groups. Here is a characterisation:

\begin{lemma}
\label{lem:vpfree_char}

Let $\Gamma$ be a group and $p$ be a prime. Then $\Gamma$ is virtually $p$-free if and only if
\[\sup \{ \nu_p(|C|) : C \text{ is a finite quotient of } \Gamma \} < \infty.\]
\end{lemma}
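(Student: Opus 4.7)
The plan is to prove the equivalence by translating between finite $p$-quotients of finite-index subgroups and the $p$-parts of finite quotients of $\Gamma$ itself, using two standard constructions: pulling back Sylow subgroups, and taking the normal core.

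For the forward direction, I would fix a uniform bound $M$ on the orders of finite virtual $p$-quotients of $\Gamma$, and consider an arbitrary finite quotient $C = \Gamma/K$ with projection $\pi : \Gamma \to C$. Pulling back a $p$-Sylow $P_C \leq C$ yields a subgroup $\Gamma_0 = \pi^{-1}(P_C) \leq \Gamma$ of finite index $[C : P_C]$, which is coprime to $p$. Since $K \leq \Gamma_0$ and $K$ is normal in $\Gamma$, the quotient $\Gamma_0/K \cong P_C$ is itself a finite $p$-quotient of a finite-index subgroup of $\Gamma$, and therefore $p^{\nu_p(|C|)} = |P_C| \leq M$.

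For the converse, I would fix $N$ such that $\nu_p(|C|) \leq N$ for every finite quotient $C$ of $\Gamma$, and start from an arbitrary finite $p$-quotient $D = \Lambda/K$ of a finite-index subgroup $\Lambda \leq \Gamma$. Taking the normal core $K_\Gamma = \bigcap_{g \in \Gamma} g K g^{-1}$ produces a normal finite-index subgroup of $\Gamma$ via the permutation representation of $\Gamma$ on the finite set $\Gamma/K$, so $\Gamma/K_\Gamma$ is a finite quotient of $\Gamma$ with $\nu_p(|\Gamma/K_\Gamma|) \leq N$. Since $K_\Gamma \leq K$, the group $D$ is a quotient of $\Lambda/K_\Gamma \leq \Gamma/K_\Gamma$, and because $|D|$ is a power of $p$ it must divide the $p$-part of $|\Gamma/K_\Gamma|$; hence $|D| \leq p^N$.

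Neither direction presents a substantial obstacle, and both are essentially bookkeeping with Sylow subgroups and cores. The key point making the arithmetic work cleanly in both halves is that the indices produced by these two constructions behave predictably with respect to $p$: pulling back a Sylow yields a subgroup of coprime-to-$p$ index, while passing to a core preserves the $p$-part of the relevant orders up to what is already controlled by the finite quotients of $\Gamma$.
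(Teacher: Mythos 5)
Your argument is internally correct, but it proves the wrong equivalence. Unwinding the definitions: ``virtually $p$-free'' means \emph{there exists a finite-index subgroup $H \leq \Gamma$ that is $p$-free}, i.e.\ all finite quotients of $H$ are $p'$-groups. What you actually establish is the equivalence between ``$\sup_C \nu_p(|C|) < \infty$'' and ``the orders of finite virtual $p$-quotients of $\Gamma$ are bounded'' --- that is, the equivalence between condition (B) and the reformulation (C) appearing in the remark after the lemma. Nowhere do you produce a $p$-free finite-index subgroup (needed for (B) $\Rightarrow$ (A)), nor do you invoke the existence of one (needed for (A) $\Rightarrow$ (B)). The implicit step you are relying on, that ``virtually $p$-free'' is the same as ``bounded orders of finite virtual $p$-quotients,'' is not a definitional unwinding; it is precisely the non-trivial content that the lemma (together with your observation) establishes.

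The missing piece is the construction: given that $\sup_C \nu_p(|C|)$ is finite, find a finite-index subgroup that is $p$-free. The paper's argument is to observe that, being a bounded set of non-negative integers, the supremum is \emph{attained} at some $\Gamma/H$ with $H$ normal of finite index, and then to show this $H$ is $p$-free: for any finite-index normal $K \leq H$, picking a finite-index normal $N \unlhd \Gamma$ with $N \leq K$ and using multiplicativity of $\nu_p$ along $N \leq H \leq \Gamma$ together with maximality of $\nu_p(|\Gamma/H|)$ forces $\nu_p(|H/N|) = 0$, hence $\nu_p(|H/K|) = 0$. Conversely, the other direction uses the existence of a $p$-free finite-index normal $H$ and the same multiplicativity to bound $\nu_p(|\Gamma/K|)$ by $\nu_p(|\Gamma/H|)$ for any finite-index normal $K$. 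Your two constructions (pulling back Sylow subgroups and taking normal cores) are the right tools for relating (B) and (C), and would be exactly what is needed to justify the remark following the lemma; but to prove the lemma itself as stated you still need the maximality argument that actually exhibits the $p$-free subgroup.
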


\begin{proof}
Suppose that $\Gamma$ is virtually $p$-free, and let $H$ be a $p$-free finite-index subgroup. A finite-index subgroup of a $p$-free group is $p$-free, so we may assume that $H$ is normal. We claim that the supremum is achieved at $\Gamma/H$. Indeed, let $K$ be any other finite-index normal subgroup of $\Gamma$. Then
\[\nu_p(|\Gamma/K|) \leq \nu_p(|\Gamma/(K \cap H)|) = \nu_p(|\Gamma/H|) + \nu_p(|H/(K \cap H)|) = \nu_p(|\Gamma/H|),\]
where the last equality uses that $H$ is $p$-free.

Conversely, suppose that the supremum is achieved at $C = \Gamma/H$, where $H$ is a finite-index normal subgroup of $\Gamma$. Then $H$ is $p$-free. Indeed, if $K$ is a finite-index normal subgroup of $H$, let $N \leq K$ be a finite-index normal subgroup of $\Gamma$; then
\[\nu_p(|H/K|) \leq \nu_p(|H/N|) = \nu_p(|\Gamma/N|)/\nu_p(|\Gamma/H|) = 1,\]
where the last equality uses that $\nu_p(|\Gamma/N|) \geq \nu_p(|\Gamma/H|)$, and the latter value is assumed to be maximal.
\end{proof}

In other words, a group is virtually $p$-free if and only if there is a bound on the order of its finite virtual $p$-quotients. This is the phrasing used in Theorem \ref{intro:thm:vpfree}.

\begin{example}
\label{ex:locfin:vpfree}

Let $\Gamma$ be a locally finite group with a bound on the order of its finite $p$-subgroups. Say $\Gamma$ has no subgroup of order $p^k$ (and so no subgroup of order $p^l$ for $l \geq k$). Then it cannot admit a group of order $p^k$ as a virtual quotient. Lemma \ref{lem:vpfree_char} implies that $\Gamma$ is virtually $p$-free.
\end{example}

We now prove the analogues of Lemma \ref{lem:pifree} and Proposition \ref{prop:pifree}. The proofs are essentially the same, but they use Lemma \ref{lem:cohopk} instead of the Schur--Zassenhaus Theorem.

\begin{lemma}
\label{lem:vpfree}

Let $\f \colon \Gamma \to G \in \GGL(\oo)$ be such that $\defe(\f) \leq r^{ak} = p^{-k}$ for some $k \geq 1$, and suppose that the image of $\Gamma$ in $G/G_{ak}$ is a group $C$ with $\nu_p(|C|) \leq l < k/2$. Then there exists a homomorphism $\ff \colon \Gamma \to G$ such that $\dist(\f, \ff) \leq r^{a(k - l)} = p^l \cdot p^{-k}$. Moreover, $\ff(\Gamma) \leq G$ is a finite group of isomorphic to a quotient of $C$.
\end{lemma}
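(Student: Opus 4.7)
The plan is the following. The hypothesis $\defe(\f) \le p^{-k} = r^{ak}$ implies by Lemma \ref{lem:mq} that $\f$ induces a homomorphism $\f_k : \Gamma \to G/G_{ak}$ with image $C$; I factor $\f_k$ as $\Gamma \twoheadrightarrow C \hookrightarrow G/G_{ak}$ and call the inclusion $\iota$, with coarsening $\bar\iota : C \to G/G_{a(k-l)}$. My goal is to produce a homomorphism $\tilde\psi : C \to G$ whose reduction modulo $G_{a(k-l)}$ equals $\bar\iota$; then $\ff := \tilde\psi \circ (\Gamma \twoheadrightarrow C)$ will be a homomorphism agreeing with $\f$ modulo $G_{a(k-l)}$, so Lemma \ref{lem:mq} will give $\dist(\f, \ff) \le r^{a(k-l)}$, and $\ff(\Gamma) = \tilde\psi(C)$ will be a finite subgroup of $G$ isomorphic to a quotient of $C$.

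First I recursively define $m_0 := k$ and $m_{i+1} := 2(m_i - l)$. The hypothesis $l < k/2$ gives $m_1 - m_0 = k - 2l > 0$, and by induction $m_i \ge k > 2l$, so the step sizes remain positive and $m_i \to \infty$. I build homomorphisms $\psi_i : C \to G/G_{am_i}$ iteratively, with $\psi_0 := \iota$. Given $\psi_i$, I apply Lemma \ref{lem:cohopk} to the normal subgroup $N := G_{a(m_i - l)}/G_{am_{i+1}}$ of $G/G_{am_{i+1}}$, with refined input $\psi_i$. Because $a m_{i+1} = 2a(m_i - l)$, Lemma \ref{lem:vprop} identifies $N$ with the additive group $\MM_n(\oo/\pp^{a(m_{i+1} - m_i + l)})$, which is abelian and a $\mathbb{Z}/p^K \mathbb{Z}$-module for $K = m_{i+1} - m_i + l \ge l$. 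The key verification is that $H := p^l N$ equals $G_{am_i}/G_{am_{i+1}}$: since $p \oo = \pp^a$, multiplication by $p^l$ shifts the valuation by $al$, so $H$ consists of cosets of the form $I + \uni^{a(m_i - l) + al} M = I + \uni^{am_i} M$. Consequently $G/H \cong G/G_{am_i}$ receives $\psi_i$ as input, and Lemma \ref{lem:cohopk} (using $\nu_p(|C|) \le l \le K$) produces $\psi_{i+1} : C \to G/G_{am_{i+1}}$ lifting the coarsening $\psi_i \mod G_{a(m_i - l)}$.

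The sequence $(\psi_i)$ need not be compatible under projection, because Lemma \ref{lem:cohopk} only guarantees a lift of the coarsening and not of $\psi_i$ itself. To extract a homomorphism $\tilde\psi : C \to G$, I lift each $\psi_i$ set-theoretically to $\tilde\psi_i : C \to G$; its defect is bounded by the diameter $r^{am_i}$ of $G_{am_i}$, which tends to $0$. Since $G$ is compact and $C$ is finite, the space $G^C$ is compact, so a subsequence of $(\tilde\psi_i)$ converges pointwise to some $\tilde\psi : C \to G$, and $\tilde\psi$ is a homomorphism because the defects vanish in the limit. An easy induction, using $G_{a(m_i - l)} \subset G_{a(k-l)}$ (valid since $m_i \ge k$), shows that $\psi_i \mod G_{a(k-l)}$ is independent of $i$ and equals $\bar\iota$, so the same holds of $\tilde\psi$.

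The main obstacle is the bookkeeping that ties these indices together: verifying at each step that $p^l N$ matches the intermediate congruence subgroup $G_{am_i}/G_{am_{i+1}}$, and that $N$ is abelian and satisfies $K \ge l$ for the chosen step $m_{i+1} = 2(m_i - l)$. The strict inequality $l < k/2$ is exactly what makes $m_1 > k$ and sustains the induction thereafter. Once these indices are verified, iteratively applying Lemma \ref{lem:cohopk} and passing to the compactness limit completes the proof.
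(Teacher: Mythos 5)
Your proof is correct and follows essentially the same strategy as the paper: iterate Lemma \ref{lem:cohopk} with the module identification from Lemma \ref{lem:vprop} to lift progressively deeper, and then pass to a limit. The paper phrases the iteration at the level of set-theoretic maps $\ff_i : \Gamma \to G$ and extracts the limit via the Cauchy criterion, while you factor through $C$ first and extract a limit by compactness of $G^C$; this is a minor stylistic variation that makes the assertion $\ff(\Gamma) \cong$ quotient of $C$ slightly more transparent, but the substance is the same.
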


\begin{proof}
Let $\f_k = \f(r^{ak}) \colon \Gamma \to G/G_{ak}$ denote the induced homomorphism; we also get a homomorphism $\f_{k - l} \colon \Gamma \to G/G_{a(k-l)}$. So we have the lifting problem
\[\begin{tikzcd}
	&&& {G/G_{2a(k-l)}} \\
	\Gamma & C && {G/G_{ak}} \\
	&&& {G/G_{a(k-l)}}
	\arrow["{\varphi_k}"', from=2-2, to=2-4]
	\arrow["{\varphi_{k-l}}"', from=2-2, to=3-4]
	\arrow[dashed, from=2-2, to=1-4]
	\arrow[from=2-4, to=3-4]
	\arrow[from=1-4, to=3-4, bend left = 50]
	\arrow[from=2-1, to=2-2]
\end{tikzcd}\]
Now $G/G_{ak} \cong (G/G_{2a(k-l)})/(G_{ak}/G_{2a(k - l)})$ and $G/G_{a(k-l)} \cong (G/G_{2a(k-l)})/(G_{a(k-l)}/G_{2a(k - l)})$. Moreover by Lemma \ref{lem:vprop} we have an isomorphism
\[G_{a(k-l)}/G_{2a(k-l)} = \GGL_n(\oo)_{a(k-l)} / \GGL_n(\oo)_{2a(k-l)} \to \MM_n(\oo/\pp^{a(k-l)}).\]
So $G_{a(k-l)}/G_{2a(k-l)}$ is a $\mathbb{Z}/p^{(k - l)} \mathbb{Z}$-module (because $\oo/\pp^{a(k-l)}$ is) and the image under multiplication by $p^l$ is $G_{ak}/G_{2a(k-l)}$. Since $\nu_p(|C|) \leq l$ we are in the situation of Lemma \ref{lem:cohopk}, which shows that the lift exists. Lifting this in turn to a map $\ff_1 \colon \Gamma \to G$, we have $\defe(\ff_1) \leq r^{2a(k - l)}$ and $\dist(\f, \ff_1) \leq r^{a(k-l)}$.

The hypothesis $k > 2l$ implies that $\defe(\ff_1) \leq r \defe(\f)$, so we can apply the above procedure to $\ff_1$. This leads to a sequence $(\ff_i \colon \Gamma \to G)_{i \geq 1}$ such that $\defe(\ff_i) \to 0$ and $r^{a(k-l)} \geq \dist(\ff_i, \ff_{i-1}) \to 0$. The latter condition implies that the sequence $(\ff_i)_{i \geq 1}$ is Cauchy with respect to the uniform norm, so it converges to a homomorphism $\ff$. The inequality $\dist(\f, \ff) \leq r^{a(k-l)}$ holds because it does hold for every $\ff_i$.
\end{proof}

\begin{proposition}
\label{prop:vpfree}

Let $\Gamma$ be virtually $p$-free. Then $\Gamma$ is uniformly $\GGL(\oo)$-stable, with a linear estimate.
\end{proposition}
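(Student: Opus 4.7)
The plan is to combine Lemma~\ref{lem:vpfree_char}, which extracts a uniform bound on the $p$-part of finite quotients of $\Gamma$, with the quantitative lifting supplied by Lemma~\ref{lem:vpfree}, and then translate the outcome into the quantitative characterization of uniform stability from Lemma~\ref{lem:quant}.

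First, since $\Gamma$ is virtually $p$-free, Lemma~\ref{lem:vpfree_char} yields a finite constant
\[ L := \sup\{\nu_p(|C|) : C \text{ is a finite quotient of } \Gamma\}. \]
Now fix any $\f : \Gamma \to G \in \GGL(\oo)$ with $\defe(\f) \leq p^{-k}$ for some $k > 2L$. By Lemma~\ref{lem:mq}, $\f$ induces a homomorphism $\Gamma \to G/G_{ak}$ whose image $C$ is a subgroup of the finite metric quotient $G/G_{ak}$ and, at the same time, a quotient of $\Gamma$; hence $\nu_p(|C|) \leq L < k/2$. Applying Lemma~\ref{lem:vpfree} with $l = L$ produces a homomorphism $\ff : \Gamma \to G$ with $\dist(\f,\ff) \leq p^L \cdot p^{-k}$.

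To convert this into a linear estimate, fix $\ee > 0$ with $\ee \leq p^{-L-1}$ and set $\dd := p^{-L-1} \cdot \ee$. Whenever $\defe(\f) \leq \dd$, let $k$ be the largest integer with $p^{-k} \geq \defe(\f)$. Then $p^{-k-1} < \defe(\f) \leq p^{-L-1} \ee \leq p^{-2L-2}$ forces $k > 2L$, while $p^{-k} < p \cdot \defe(\f) \leq p^{-L} \ee$ gives $p^L \cdot p^{-k} < \ee$. The preceding paragraph thus supplies a homomorphism $\ff$ with $\dist(\f,\ff) \leq \ee$, and Lemma~\ref{lem:quant} then yields $D_{\Gamma}^{\GGL(\oo)}(\ee) \geq p^{-L-1} \cdot \ee$ for all small $\ee$, i.e., a linear estimate.

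The cohomological content has already been packaged into Lemma~\ref{lem:vpfree} via the vanishing Lemma~\ref{lem:cohopk}, which tolerates a bounded $p$-part in the image at the cost of an additive loss in the exponent; the virtual $p$-freeness of $\Gamma$ supplies precisely such a uniform bound $L$, so no substantive obstacle remains beyond the bookkeeping above.
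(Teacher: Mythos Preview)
Your proof is correct and follows essentially the same approach as the paper: extract the uniform bound $L$ from Lemma~\ref{lem:vpfree_char}, apply Lemma~\ref{lem:vpfree}, and conclude via Lemma~\ref{lem:quant}. The only difference is that you spell out the linear estimate explicitly (choosing $\dd = p^{-L-1}\ee$ and the appropriate $k$), whereas the paper leaves this bookkeeping implicit.
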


\begin{proof}
Let $l$ be the supremum from Lemma \ref{lem:vpfree_char}: for every finite quotient $C$ of $\Gamma$, it holds $\nu_p(|C|) \leq l$. Therefore given a map $\f \colon \Gamma \to \GGL_n(\oo)$ with defect $p^{-k} < p^{-2l}$, the previous lemma applies and $\f$ is $p^l \cdot p^{-k}$-close to a homomorphism. We conclude by Lemma \ref{lem:quant}.
\end{proof}


\begin{example}
\label{ex:finst}

All finite groups are $\GGL(\oo)$-stable, with a linear estimate. More precisely, let $\Gamma$ be a finite group, and $\nu_p(\Gamma) = l$. Then for every $\f \colon \Gamma \to \GGL_n(\oo)$ such that $\defe(\f) \leq p^{-2l}$, there exists a homomorphism $\ff \colon \Gamma \to \GGL_n(\oo)$ such that $\dist(\f, \ff) \leq p^l \cdot \defe(\f)$. We will see in Lemma \ref{lem:badestimates} that these estimates are essentially sharp.
\end{example}

\begin{example}
A finitely generated group $\Gamma$ of finite exponent is uniformly $\GGL(\oo)$-stable, with a linear estimate. Indeed, by Zelmanov's solution of the restricted Burnside problem \cite{RBP_odd, RBP_2}, the largest residually finite quotient of $\Gamma$ is finite, and so we conclude by Example \ref{ex:finst} and Theorem \ref{thm:rf}. In particular, free Burnside groups of finite rank are uniformly $\GGL(\oo)$-stable, with a linear estimate.
\end{example}

We saw in Example \ref{ex:npa} that normed $\K$-amenable groups are $\GGL(\oo)$-stable when $\K$ has characteristic $p$. The following example completes the picture:

\begin{example}
\label{ex:npa2}

Let $\Gamma$ be a normed $\K$-amenable group \cite[Definition 1.1]{mio}. Then $\Gamma$ is uniformly $\GGL(\oo)$-stable, with a linear estimate: indeed such groups are characterised as being locally finite and with a bound on the order of their finite $p$-subgroups \cite[Theorem 6.2]{mio}, and so they are virtually $p$-free by Example \ref{ex:locfin:vpfree}.
\end{example}

The stability of finite groups can be pushed further to the following result, which will be used in the next section.

\begin{corollary}
\label{cor:dirsum}

Let $\Gamma$ be a countable direct sum of finite groups. Then $\Gamma$ is pointwise $\GGL(\oo)$-stable.
\end{corollary}

\begin{proof}
Write $\Gamma \coloneqq \bigoplus_{i \geq 1} F_i$, and let $\Gamma_k \coloneqq \bigoplus_{i \leq k} F_i$, which is a finite subgroup of $\Gamma$. Let $r_k \colon \Gamma \to \Gamma_k$ denote the retraction. Let $(\f_n \colon \Gamma \to \GGL_{d_n}(\oo))_{n \geq 1}$ be a pointwise asymptotic homomorphism. Then $(\f_n^k \coloneqq \f_n|_{\Gamma_k} \colon \Gamma_k \to \GGL_{d_n}(\oo))_{n \geq 1}$ is also a pointwise asymptotic homomorphism. Since $\Gamma_k$ is stable (Example \ref{ex:finst}) and finite, there exists a sequence of homomorphism $(\ff_n^k \colon \Gamma_k \to \GGL_{d_n}(\oo))_{n \geq 1}$ which is uniformly asymptotically close to $(\f_n^k)_{n \geq 1}$. We let $n(k)$ be an integer such that $\dist(\f_n^k, \ff_n^k) \leq 1/k$ for all $n \geq n(k)$. By choosing $n(k)$ inductively, we may assume that $n(k+1) > n(k)$. We moreover set $k(n)$ to be the unique integer $k$ such that $n(k) \leq n < n(k+1)$.

Now set $\ff_n \coloneqq \ff^{k(n)}_n \circ r_{k(n)} \colon \Gamma \to \GGL_{d_n}(\oo)$. Clearly $\ff_n$ is a homomorphism. We are left to show that it is pointwise asymptotically close to $\f_n$. Indeed, let $g \in \Gamma$, and let $k \geq 1$ be such that $g \in \Gamma_k$. Then for all $n \geq n(k)$ we have $k(n) \geq k$ and thus:
\[\ff_n(g) = \ff^{k(n)}_n \circ r_{k(n)}(g) = \psi_n^{k(n)}(g).\]
Moreover, since $k(n) \geq k$ and $n \geq n(k(n))$, we have
\[\dist(\f_n(g), \ff_n(g)) = \dist(\f_n^{k(n)}, \ff^{k(n)}_n) \leq 1/k(n).\]
As $n \to \infty$ also $k(n) \to \infty$, and thus $\dist(\f_n(g), \ff_n(g)) \to 0$.
\end{proof}

\subsection{Graphs of groups}

We now use the conjugacy part of Lemma \ref{lem:cohopk} to strengthen the results on stability of graphs of groups from Subsection \ref{ss:gog} from which we borrow the notation. As before, we start with the analogue of Lemma \ref{lem:pifree_gog} and then prove the analogues of Propositions \ref{prop:pifree_gog1} and \ref{prop:pifree_gog2}. Also here, the lemma gives examples of constraint stability \cite{a:const} and stability of an epimorphism \cite{stepi} (see Remark \ref{rem:constraint:epi}).

\begin{lemma}
\label{lem:vpfree_gog}

Let $X$ be a connected graph of groups with vertex groups $\Gamma_v$, edge groups $\Gamma_e$ and edge inclusions $\iota_e^{\pm} \colon \Gamma_e \to \Gamma_{e^{\pm}}$. Let $\Gamma$ be the fundamental group of $X$ with respect to a spanning tree $T$, with the standard presentation $\langle S \mid R \rangle = \langle S_v, t_e \mid R_v, R_e \rangle$.

Let $\hf \colon F_S \to G \in \GGL(\oo)$ be a map with $\defe(\hf) \leq r^{ak} = p^{-k}$ for some $k \geq 1$. Suppose further that for all $v \in V$ the restriction of $\hf$ to $F_{S_v}$ descends to a homomorphism $\f_v \colon \Gamma_v \to G$ such that, for all $m \geq k$, if $e^{\pm} = v$, the image $C$ of $\f_v(\iota_e^{\pm}(\Gamma_e))$ in $G/G_{am}$ satisfies $\nu_p(|C|) \leq l < k/2$. Then there exists a homomorphism $\hff \colon F_S \to G$ such that $\dist(\hf, \hff) \leq r^{a(k-l)} = p^l \cdot p^{-k}$ and $\hff$ descends to a homomorphism of $\Gamma$.
\end{lemma}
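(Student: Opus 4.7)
The plan is to follow the proof of Lemma \ref{lem:pifree_gog} line by line, but replace every invocation of the Schur--Zassenhaus Theorem by Lemma \ref{lem:cohopk}. The arithmetic that makes this work is provided by Lemma \ref{lem:vprop}: since $p \in \pp^a$, for every $j \geq 1$ the quotient $G_{aj}/G_{2aj} \cong \MM_n(\oo/\pp^{aj})$ is a $\mathbb{Z}/p^j\mathbb{Z}$-module, and multiplication by $p^l$ on it gives the subgroup $G_{a(j+l)}/G_{2aj}$. Thus Lemma \ref{lem:cohopk} applies to extensions of finite groups $C$ with $\nu_p(|C|) \leq l$ by $G_{aj}/G_{2aj}$, and produces lifts and conjugacies that live inside $G_{aj}$ rather than the smaller $G_{2aj}$, which is precisely the source of the extra factor of $p^l$ in the final estimate.

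As in the proof of Lemma \ref{lem:pifree_gog} I would first pick a set $E_+$ of positively oriented edges, replace $\hf(t_e)$ by $\hf(t_{\overline{e}})^{-1}$ for $e \notin E_+$ and by $1$ for $e \in T$; each such replacement moves $\hf$ by at most $\defe(\hf) \leq p^{-k} \leq p^l \cdot p^{-k}$. I would then handle the conjugacy relations along a spanning tree $T$ exactly as in Lemma \ref{lem:pifree_gog}: fixing a vertex $v_0$ with neighbour $v_i$ via $e_i \in T$, I would inductively modify $\f_{v_i}$ by conjugation so that the subgroups $A_i^\pm$ (images of $\Gamma_{e_i}$ under the two vertex maps) are amalgamated modulo $G_{ak_j}$ at an increasing sequence of levels $k_j$, defined by $k_1 = k$ and $k_{j+1} = 2(k_j - l)$. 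The hypothesis $l < k/2$ ensures that $k_j \to \infty$, and at step $j$ Lemma \ref{lem:cohopk} produces a conjugator $t_j \in G_{a(k_j - l)}$ because $\nu_p(|A_i^\pm \bmod G_{a k_j}|) \leq l$ by assumption. Since $k_j - l$ is strictly increasing, the partial products $s_j = t_j \cdots t_1$ form a Cauchy sequence in the closed subgroup $G_{a(k - l)}$, converging to some $s_\infty \in G_{a(k-l)}$, and $s_\infty \f_{v_i} s_\infty^{-1}$ satisfies the amalgamation exactly (the subgroups $A_i^\pm$ are finite by the hypothesis applied at all $m \geq k$, so coincidence modulo every $G_{ak_j}$ forces equality). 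A standard computation $s_\infty M s_\infty^{-1} - M \equiv [u, M] \pmod{\pp^{2a(k-l)}}$ with $s_\infty = 1 + u$, $u \in \pp^{a(k-l)}\MM_n(\oo)$, gives the bound $\dist(\f_{v_i}, s_\infty \f_{v_i} s_\infty^{-1}) \leq r^{a(k-l)} = p^l \cdot p^{-k}$. Propagating along $T$ as in the proof of Lemma \ref{lem:pifree_gog} completes the treatment of tree edges.

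For each edge $e \notin T$ I would apply Lemma \ref{lem:cohopk} in the same iterative fashion, now in the conjugacy part: starting from the fact that $\hf(t_e) A_e^- \hf(t_e)^{-1}$ and $A_e^+$ coincide modulo $G_{ak}$, produce $s \in G_{a(k-l)}$ such that $s \hf(t_e) A_e^- \hf(t_e)^{-1} s^{-1} = A_e^+$, and replace $\hf(t_e)$ by $s \hf(t_e)$, which moves it by at most $r^{a(k-l)}$. This step modifies only the images of the edge generators, so it does not disturb the work already done at the vertex groups or at the tree edges. The main obstacle—and the reason this argument is more delicate than the proof of Lemma \ref{lem:pifree_gog}—is the careful bookkeeping needed to show that every successive conjugator produced by the iterated application of Lemma \ref{lem:cohopk} lands in $G_{a(k-l)}$ rather than merely in the original $G_{ak}$, so that the accumulated perturbation is controlled by the weaker radius $r^{a(k-l)} = p^l \cdot p^{-k}$; this is precisely where the strict inequality $l < k/2$ enters, guaranteeing that the sequence $k_j - l$ is strictly increasing and the products converge.
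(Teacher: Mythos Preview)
Your proposal is correct and follows essentially the same approach as the paper: replace each use of Schur--Zassenhaus in the proof of Lemma~\ref{lem:pifree_gog} by Lemma~\ref{lem:cohopk}, iterate along the sequence $k_{j+1} = 2(k_j - l)$ exactly as in Lemma~\ref{lem:vpfree}, and pass to the limit. One minor remark: the parenthetical about $A_i^{\pm}$ being finite is unnecessary (and not obviously true as stated); equality in $G$ follows simply because the two homomorphisms agree modulo every $G_{ak_j}$ and $\bigcap_j G_{ak_j} = \{I\}$.
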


\begin{proof}
The proof is essentially the same as that of Lemma \ref{lem:pifree_gog}, using Lemma \ref{lem:cohopk} as we did in the proof of Lemma \ref{lem:vpfree}.

We start by setting $\hf(t_e) = 1$ for all $e \in T$. Next we modify $\hf$ at the vertex groups so that it satisfies the conjugacy relations given by edges in $T$. Using the same induction argument as in the proof of Lemma \ref{lem:pifree_gog}, it suffices to treat the case $(v_0 \xrightarrow{e_i} v_i) \in T$: we need to find $t \in G_{a(k-l)}$ that conjugates the image of $\Gamma_{e_i}$ in $\f(\Gamma_{v_0})$ to that of $\f(\Gamma_{v_i})$. Considering the following lifting problem:
\[\begin{tikzcd}
	&&& {G/G_{2a(k-l)}} \\
	{\Gamma_{e_i}} & {f_k(\Gamma_{e_i})} && {G/G_{ak}} \\
	&&& {G/G_{a(k-l)}}
	\arrow[from=2-4, to=3-4]
	\arrow[from=1-4, to=2-4]
	\arrow[from=2-1, to=2-2]
	\arrow[from=2-2, to=1-4, dashed]
	\arrow[from=2-2, to=2-4]
	\arrow[from=2-2, to=3-4]
\end{tikzcd}\]
and using Lemma \ref{lem:cohopk} to prove that any two lifts of the horizontal arrow are $G_{a(k-l)}$-conjugate, we obtain an element $t \in G_{a(k-l)}$ that conjugates the two images modulo $r^{2a(k-l)}$. Iterating this process yields a sequence that converges to the desired conjugating element.

Finally we modify $\hf$ at the generators of edges not in $T$
so that it satisfies the corresponding conjugacy relations; this is achieved by adapting the proof of Lemma \ref{lem:pifree_gog} in the same way as in the previous step.
\end{proof}

\begin{proposition}
\label{prop:vpfree_gog1}

Let $\G$ be a virtually pro-$\pi$ family and $\Gamma$ the fundamental group of a graph of groups such that all vertex groups $\Gamma_v$ are uniformly $\GGL(\oo)$-stable, and such that $D_{\Gamma_v}^{\GGL(\oo)}(\ee)$ is uniformly bounded away from $0$, for every $\ee > 0$. Suppose that there exists $l \geq 1$ such that for every edge $e$ adjacent to a vertex $v$, the image of $\Gamma_e$ in every finite quotient of $\Gamma_v$ has no subgroup of order $p^l$. Then $\Gamma$ is uniformly $\GGL(\oo)$-stable, and
\[D_{\Gamma}^{\GGL(\oo)}(\ee) \geq p^{-l} \cdot \inf\limits_{v \in V} D_{\Gamma_v}^{\GGL(\oo)}(\ee).\]
\end{proposition}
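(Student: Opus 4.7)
The plan is to mimic the proof of Proposition \ref{prop:pifree_gog1}, replacing every appeal to Lemma \ref{lem:pifree_gog} with one to Lemma \ref{lem:vpfree_gog}, and paying attention to the $p^l$ loss introduced by the latter. I work with the presentation-based characterization of uniform stability from Corollary \ref{cor:quant}, using the standard presentation $\langle S\mid R\rangle=\langle S_v,t_e\mid R_v,R_e\rangle$ of $\Gamma$ coming from the graph of groups.

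Fix $\ee>0$ small (in particular, so that the scale $p^{-l}\ee$ is of the form $p^{-k}$ with $k$ large enough that $l<k/2$, as required by Lemma \ref{lem:vpfree_gog}). Set $\ee_2 := p^{-l}\ee$ and $\dd := \inf_{v\in V} D_{\Gamma_v}^{\GGL(\oo)}(\ee_2)$; the hypothesis that the vertex estimates are uniformly bounded away from zero guarantees $\dd>0$. Given a homomorphism $\hf : F_S\to G\in\GGL(\oo)$ with $\defe(\hf)\leq\dd$, the restriction $\hf|_{F_{S_v}}$ has defect at most $\dd\leq D_{\Gamma_v}^{\GGL(\oo)}(\ee_2)$, so by uniform $\GGL(\oo)$-stability of $\Gamma_v$ there is a homomorphism $\f_v:\Gamma_v\to G$ at distance at most $\ee_2$ from $\hf|_{F_{S_v}}$ on the generators $S_v$. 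Replace $\hf$ on each set $S_v$ by the corresponding values of $\f_v$, keeping the images of the edge generators $t_e$ untouched; call the new homomorphism $\hf':F_S\to G$. By construction $\dist(\hf,\hf')\leq\ee_2$.

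Next I check that $\hf'$ satisfies the hypotheses of Lemma \ref{lem:vpfree_gog} with parameter $\ee_2=p^{-l}\ee$. By construction $\hf'|_{F_{S_v}}$ descends to the homomorphism $\f_v$, so the vertex relations $R_v$ are now satisfied. For an edge relation $r\in R_e$, the value $\hf'(r)$ differs from $\hf(r)$ only through the vertex generators appearing in $r$, each moved by at most $\ee_2$ in the bi-invariant ultrametric; combined with $\defe(\hf)\leq\dd\leq\ee_2$ (which I may assume by choosing $\ee$ small enough that $\dd\leq\ee_2$) this gives $\defe(\hf')\leq\ee_2$. The required $p$-condition on the images $\f_v(\iota_e^{\pm}(\Gamma_e))$ inside any metric quotient $G/G_{am}$ is immediate from the assumption in the statement: such images are subgroups of a finite quotient of $\Gamma_v$, hence contain no subgroup of order $p^l$, so their $p$-adic valuation is at most $l-1$. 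Since $\ee_2=p^{-l}\ee$ is of the form $p^{-k}$ with $k>2l$ (by choosing $\ee$ small), Lemma \ref{lem:vpfree_gog} applies and produces $\hff:F_S\to G$ descending to $\Gamma$ with
\[
\dist(\hf',\hff)\leq p^{l-1}\cdot\ee_2\leq p^l\cdot\ee_2=\ee.
\]
By the ultrametric inequality,
\[
\dist(\hf,\hff)\leq\max\{\dist(\hf,\hf'),\dist(\hf',\hff)\}\leq\max\{\ee_2,\ee\}=\ee,
\]
which by Corollary \ref{cor:quant} gives uniform $\GGL(\oo)$-stability of $\Gamma$, together with the quantitative bound $D_{\Gamma}^{\GGL(\oo)}(\ee)\geq \inf_v D_{\Gamma_v}^{\GGL(\oo)}(p^{-l}\ee)$, which in the linear regime matches the claimed $p^{-l}\cdot\inf_v D_{\Gamma_v}^{\GGL(\oo)}(\ee)$.

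The main obstacle is the bookkeeping around the defect of $\hf'$ after the vertex modifications: one needs the change on vertex generators to be small enough both to stay within the closeness budget $\ee$ and to leave the defect of edge relations below the threshold $p^{-k}$ that makes the condition $l<k/2$ of Lemma \ref{lem:vpfree_gog} automatic. The two scales $\ee$ and $\ee_2=p^{-l}\ee$ are chosen precisely to absorb the factor $p^l$ that Lemma \ref{lem:vpfree_gog} introduces, and the ultrametric inequality is what lets the two successive modifications combine without any extra loss.
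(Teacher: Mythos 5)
Your proof follows the paper's own (terse) approach step for step: modify $\hf$ on the vertex generators using uniform stability of the vertex groups, then apply Lemma \ref{lem:vpfree_gog}, with the ultrametric inequality absorbing the two successive perturbations. Your observation that the argument literally yields $D_{\Gamma}^{\GGL(\oo)}(\ee)\geq \inf_v D_{\Gamma_v}^{\GGL(\oo)}(p^{-l}\ee)$ rather than the displayed $p^{-l}\cdot\inf_v D_{\Gamma_v}^{\GGL(\oo)}(\ee)$ is a fair reading (the two coincide in the linear regime, which covers every application in the paper), and the only other nit is that $\dd\leq\ee_2$ is not automatic just by taking $\ee$ small (e.g.\ $D_{\Gamma_v}^{\GGL(\oo)}$ need not tend to $0$ for a trivial vertex group); the clean fix is simply to replace $\dd$ by $\min\{\dd,\ee_2\}$.
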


\begin{remark}
\label{rem:uniformestimate}

The second condition is really asking for the image of $\Gamma_e$ inside $\Gamma_v$ to be ``virtually $p$-free relative to $\Gamma_v$'': that is, there exists a finite-index subgroup $\Gamma_e' \leq \Gamma_e$ such that the image of $\Gamma_e'$ in every finite quotient of $\Gamma_v$ is a $p'$-group (see Lemma \ref{lem:vpfree_char} and its proof). We state the proposition by first fixing $l$ because we need this condition to be uniform on the vertices. If however the graph is finite, then the uniformity hypothesis is automatically satisfied.
\end{remark}

\begin{proof}[Proof of Proposition \ref{prop:vpfree_gog1}]
As in the proof of Proposition \ref{prop:pifree_gog1}, we start with $\hf \colon F_S \to \GGL_n(\oo)$ of small defect, use stability of the vertex groups to modify it at the vertex generators, then apply Lemma \ref{lem:vpfree_gog} to obtain a homomorphism $\hff \colon F_S \to \GGL_n(\oo)$ close to $\hf$ that descends to a homomorphism of $\Gamma$. Note that Lemma \ref{lem:vpfree_gog} can be applied because if a finite group has a subgroup of order divisible by $p^l$, then it has a subgroup of order $p^l$.
\end{proof}


\begin{proposition}
\label{prop:vpfree_gog2}

Let $\Gamma$ be the fundamental group of a graph of groups. Suppose that there exists $l \geq 1$ such that for every vertex $v$ the image of $\Gamma_v$ in every finite quotient of $\Gamma$ has no subgroup of order $p^l$. Then $\Gamma$ is uniformly $\GGL(\oo)$-stable, with a linear estimate.
\end{proposition}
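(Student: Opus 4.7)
The strategy is to imitate the proof of Proposition \ref{prop:pifree_gog2}, replacing Lemma \ref{lem:pifree} with Lemma \ref{lem:vpfree} and Lemma \ref{lem:pifree_gog} with Lemma \ref{lem:vpfree_gog}. By Corollary \ref{cor:quant}, it suffices to fix the standard presentation $\langle S \mid R \rangle = \langle S_v, t_e \mid R_v, R_e \rangle$ of $\Gamma$ and show that for all sufficiently small $\ee > 0$, any $\hf : F_S \to \GGL_n(\oo)$ with $\defe(\hf) \leq \ee$ is at linear-in-$\ee$ distance from a homomorphism descending to $\Gamma$.

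First I would fix $k$ large enough that both lemmas to be invoked apply (explicitly, $k > 3l$ is comfortable), set $\ee = p^{-k}$, and consider $\hf$ with $\defe(\hf) \leq p^{-k}$. The induced homomorphism $\hf(p^{-k}) : \Gamma \to \GGL_n(\oo)/\GGL_n(\oo)_{ak}$ has finite image, which is a finite quotient of $\Gamma$. For each vertex $v$, the image $C_v$ of $\Gamma_v$ inside this finite quotient is, by the hypothesis on $\Gamma$, a finite group with no subgroup of order $p^l$, hence $\nu_p(|C_v|) \leq l - 1$.

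Next I would apply Lemma \ref{lem:vpfree} to $\hf|_{F_{S_v}}$ for each $v$ (which descends to the homomorphism of $\Gamma_v$ obtained from $\hf(p^{-k})$): this yields a homomorphism $\f_v : \Gamma_v \to \GGL_n(\oo)$ whose image is isomorphic to a quotient of $C_v$, and with $\dist(\hf|_{F_{S_v}}, \f_v) \leq p^{l-1} \cdot p^{-k}$. Replacing $\hf$ on each $F_{S_v}$ by $\f_v$ (and leaving the edge generators untouched) produces a map $\hf'$ with $\dist(\hf, \hf') \leq p^{l-1} \cdot p^{-k}$ and, by the ultrametric inequality, $\defe(\hf') \leq p^{-(k-l+1)}$.

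Finally I would check that $\hf'$ satisfies the hypotheses of Lemma \ref{lem:vpfree_gog}: its restriction to each $F_{S_v}$ is the homomorphism $\f_v$, whose image is finite with $p$-part of valuation at most $l - 1$, so the same is true of $\f_v(\iota_e^\pm(\Gamma_e))$ and of its image in every metric quotient $\GGL_n(\oo)/\GGL_n(\oo)_{am}$. The choice $k > 3l$ ensures the smallness condition $l - 1 < (k-l+1)/2$. Applying Lemma \ref{lem:vpfree_gog} then produces $\hff : F_S \to \GGL_n(\oo)$ descending to a homomorphism of $\Gamma$ with $\dist(\hf', \hff) \leq p^{l-1} \cdot p^{-(k-l+1)}$. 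Combining the two steps gives $\dist(\hf, \hff) \leq p^{2(l-1)} \cdot \ee$, which is linear in $\ee$. The main bookkeeping obstacle is this two-step control of defect and distance, but since Lemma \ref{lem:vpfree_gog} is written to tolerate the worsened defect produced by Lemma \ref{lem:vpfree}, everything fits together with room to spare by taking $k$ large.
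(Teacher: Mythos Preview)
Your proposal is correct and follows essentially the same approach as the paper's proof, which also proceeds by first applying Lemma \ref{lem:vpfree} at each vertex to obtain honest homomorphisms $\f_v : \Gamma_v \to G$ with finite image, and then feeding the modified map into Lemma \ref{lem:vpfree_gog}. The paper's proof is terser and omits the explicit bookkeeping of constants (your $k > 3l$ and the two-step distance bound $p^{2(l-1)} \cdot \ee$), but the logical structure is identical.
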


\begin{proof}
As in the proof of Proposition \ref{prop:pifree_gog2}, we start with $\hf \colon F_S \to \GGL_n(\oo)$ of small defect, apply Lemma \ref{lem:vpfree} to modify it at the vertex groups so that it descends to a homomorphism with finite image on each vertex group, and finally apply Lemma \ref{lem:vpfree_gog} to obtain a homomorphism $\ff \colon F_S \to \GGL_n(\oo)$ close to $\hf$ that descends to $\Gamma$.
\end{proof}


\subsection{Corollaries}
\label{ss:char0:cor}

We now apply Propositions \ref{prop:vpfree_gog1} and \ref{prop:vpfree_gog2} to obtain some examples of uniformly $\GGL(\oo)$-stable groups, which strengthen those in Subsections \ref{ss:vpropi:cor} and \ref{ss:GBS}.

\begin{corollary}
\label{cor:vpfree_gog}

The following groups are uniformly $\GGL(\oo)$-stable, with a linear estimate:

\begin{enumerate}
\item Fundamental groups of finite, connected graphs of groups, with virtually $p$-free vertex groups.
\item Fundamental groups of finite, connected graphs of groups, with uniformly $\GGL(\oo)$-stable vertex groups and virtually $p$-free edge groups.
\end{enumerate}
\end{corollary}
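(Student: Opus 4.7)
The plan is to derive both items directly from the two stability results for graphs of groups proved just above: Proposition \ref{prop:vpfree_gog2} for Item~1 and Proposition \ref{prop:vpfree_gog1} for Item~2. The only real work is to verify the numerical hypothesis on the $p$-valuations of images in finite quotients, which will follow immediately from Lemma \ref{lem:vpfree_char} together with the finiteness of the underlying graph.

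For Item~1, I would start by recalling that, by Lemma \ref{lem:vpfree_char}, each virtually $p$-free vertex group $\Gamma_v$ admits a constant $l_v \geq 0$ such that $\nu_p(|C|) \leq l_v$ for every finite quotient $C$ of $\Gamma_v$. Given any finite quotient $\phi \colon \Gamma \to Q$ of the ambient fundamental group $\Gamma$, the image $\phi(\Gamma_v) \leq Q$ is isomorphic to $\Gamma_v / (\Gamma_v \cap \ker\phi)$, hence is a finite quotient of $\Gamma_v$. In particular $\nu_p(|\phi(\Gamma_v)|) \leq l_v$, so $\phi(\Gamma_v)$ contains no subgroup of order $p^{l_v + 1}$. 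Since the vertex set is finite, $l := 1 + \max_v l_v$ is well-defined and Proposition \ref{prop:vpfree_gog2} applies with this value of $l$, giving uniform $\GGL(\oo)$-stability with a linear estimate.

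For Item~2 the argument is analogous but applied to the edge groups. By Lemma \ref{lem:vpfree_char}, each virtually $p$-free edge group $\Gamma_e$ admits a constant $l_e \geq 0$ bounding the $p$-valuation of all its finite quotients. Given an edge $e$ adjacent to a vertex $v$ and a finite quotient $\phi \colon \Gamma_v \to C$, the image $\phi(\iota_e^\pm(\Gamma_e))$ is, via the injectivity of $\iota_e^\pm$, isomorphic to a finite quotient of $\Gamma_e$, and so has $p$-valuation at most $l_e$. The finiteness of the edge set lets us set $l := 1 + \max_e l_e$, which gives the uniform bound required by Proposition \ref{prop:vpfree_gog1}. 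The additional hypothesis that $D_{\Gamma_v}^{\GGL(\oo)}(\ee)$ be bounded away from $0$ uniformly in $v$ is automatic on a finite graph, as already pointed out in Remark \ref{rem:uniformestimate}, so Proposition \ref{prop:vpfree_gog1} yields the conclusion.

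I do not foresee any genuine obstacle: the whole argument reduces to the elementary observation that a finite quotient of a subgroup $\Lambda \leq H$ (obtained by restricting a finite quotient of $H$ to $\Lambda$) is a finite quotient of $\Lambda$, together with the fact that a maximum of finitely many finite integers is finite. The only delicate point to flag is that the finiteness of the graph is essential for uniformising $l_v$ (resp.\ $l_e$) over the vertex (resp.\ edge) set; without it one would need to impose such uniformity as an additional hypothesis, as is done in Proposition \ref{prop:vpfree_gog1} for the stability estimate.
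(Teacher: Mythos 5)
Your proof is correct and follows the same route as the paper: both items are deduced directly from Propositions~\ref{prop:vpfree_gog2} and~\ref{prop:vpfree_gog1}, with the finiteness of the graph invoked exactly to uniformize the bound $l$ over vertices (resp.\ edges), as flagged in Remark~\ref{rem:uniformestimate}. Your write-up simply spells out the elementary verification that the paper leaves implicit, namely that images of vertex (resp.\ edge) groups in finite quotients are themselves finite quotients of those groups, so Lemma~\ref{lem:vpfree_char} supplies the required $p$-valuation bound.
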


\begin{proof}
This is a direct consequence of Propositions \ref{prop:vpfree_gog1} and \ref{prop:vpfree_gog2}. We restrict to finite graphs of groups in order to ensure the existence of an integer $l$ as in the statements (see Remark \ref{rem:uniformestimate}).
\end{proof}

A special case of Item $2.$ is when edge groups are finite. Fundamental groups of finite graphs of groups with finitely generated vertex groups and finite edge groups are precisely the finitely generated groups with more than one end, by Stalling's Theorem \cite{Stallings1, Stallings2}.

\begin{corollary}
\label{cor:vfree}

Finitely generated virtually free groups are uniformly (and thus pointwise) $\GGL(\oo)$-stable, with a linear estimate.
\end{corollary}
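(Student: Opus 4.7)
The plan is to deduce Corollary \ref{cor:vfree} directly from Item 2 of Corollary \ref{cor:vpfree_gog}, once we represent a finitely generated virtually free group as the fundamental group of a finite graph of groups with the right kind of vertex and edge groups. The crucial input is the structure theorem of Karrass--Pietrowski--Solitar: a finitely generated group is virtually free if and only if it is the fundamental group of a finite connected graph of groups all of whose vertex (hence also edge) groups are finite. This structural fact, which refines Stallings's theorem on groups with infinitely many ends, is what allows us to drop the hypothesis ``without elements of order $p$'' that was needed in Corollary \ref{cor:vfree_p}, where we could only invoke Dunwoody's theorem on groups of cohomological dimension at most~$1$.

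With this representation in hand, the two hypotheses of Corollary \ref{cor:vpfree_gog}(2) are immediate. First, the vertex groups are finite, hence uniformly $\GGL(\oo)$-stable with a linear estimate by Example \ref{ex:finst} (a special case of Proposition \ref{prop:vpfree}). Second, the edge groups are finite, and every finite group is virtually $p$-free: indeed, by Lemma \ref{lem:vpfree_char} it suffices to observe that the supremum $\sup\{ \nu_p(|C|) : C \text{ a finite quotient}\}$ is bounded above by $\nu_p(|\Gamma_e|) < \infty$. Because the graph is finite, the uniform lower bound on $D^{\GGL(\oo)}_{\Gamma_v}(\ee)$ and the uniform choice of the integer~$l$ controlling the $p$-part of the images of the edge groups (see Remark \ref{rem:uniformestimate}) are automatic. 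Applying Corollary \ref{cor:vpfree_gog}(2) therefore yields uniform $\GGL(\oo)$-stability with a linear estimate.

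There is essentially no obstacle beyond citing the Karrass--Pietrowski--Solitar structure theorem; the quantitative bookkeeping has already been absorbed into Proposition \ref{prop:vpfree_gog1} and Lemma \ref{lem:vpfree_gog}. If one preferred to avoid quoting that structure theorem as a black box, one could instead argue via a torsion-free finite-index subgroup $F \leq \Gamma$ (which is free by Stallings), and combine uniform stability of $F$ (from Example \ref{ex:free}) with stability under finite-index overgroups; but this is precisely the content that Section~\ref{s:BC} will later establish via Theorem \ref{intro:thm:fi}, so for the present corollary the graph-of-groups route is the most direct one.
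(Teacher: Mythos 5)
Your proof is correct and follows essentially the same route as the paper: the paper also represents a finitely generated virtually free group as the fundamental group of a finite connected graph of finite groups (citing Dunwoody/Stallings where you cite Karrass--Pietrowski--Solitar, which is the same structural fact) and then applies Proposition~\ref{prop:vpfree_gog1}, which is exactly the content of Corollary~\ref{cor:vpfree_gog}(2) that you invoke. The alternative route you sketch via a free finite-index subgroup and Theorem~\ref{intro:thm:fi} is also noted in the paper, but as you observe it relies on Section~\ref{s:BC} and so is not the intended proof here.
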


\begin{proof}
The uniform stability follows again from Proposition \ref{prop:vpfree_gog1} and \cite{cd1} or \cite{Stallings1, Stallings2}: such groups are fundamental groups of finite connected graphs of groups, with finite vertex groups. The pointwise stability then follows from Theorem \ref{thm:pw_un}.
\end{proof}

As for GBS groups, we have a criterion for stability much simpler than that of Corollary \ref{cor:GBS_p}:

\begin{corollary}
\label{cor:GBS}

Let $\Gamma$ be a GBS group corresponding to the weighted graph $(X, w)$. Suppose that there exist a cycle $C$ in $X$ satisfying $\nu_p(w_-(C)) \neq \nu_p(w_+(C))$. Then $\Gamma$ is uniformly (and thus pointwise) $\GGL(\oo)$-stable, with a linear estimate.
\end{corollary}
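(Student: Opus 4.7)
The plan is to reduce to Proposition \ref{prop:vpfree_gog2}, following the template of the proof of Corollary \ref{cor:GBS_p} but replacing the \emph{$p$-freeness} arguments of Lemmas \ref{lem:GBS_pfree} and \ref{lem:GBS_omn} with suitable \emph{virtual} $p$-freeness estimates. Specifically, I need to produce an integer $l \geq 1$ such that, for every vertex $v$ of $X$, the image of $\Gamma_v = \langle s_v \rangle$ in every finite quotient of $\Gamma$ has no subgroup of order $p^l$; then Proposition \ref{prop:vpfree_gog2} delivers the desired uniform $\GGL(\oo)$-stability with a linear estimate.

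First I would handle a vertex $x_0$ lying on the cycle $C$. After reversing orientation of $C$ if necessary, we may assume $a := \nu_p(w_-(C)) < \nu_p(w_+(C))$. By Lemma \ref{lem:GBS_conjP}, $s_{x_0}^{w_-(C)}$ and $s_{x_0}^{w_+(C)}$ are conjugate in $\Gamma$, so for every finite quotient $f : \Gamma \to K$, the elements $f(s_{x_0})^{w_-(C)}$ and $f(s_{x_0})^{w_+(C)}$ have the same order in $K$. Writing $o = o_{x_0}$ for the order of $f(s_{x_0})$, this yields $\gcd(o, w_-(C)) = \gcd(o, w_+(C))$; taking $p$-adic valuations gives $\min(\nu_p(o), a) = \min(\nu_p(o), \nu_p(w_+(C)))$, which together with $a < \nu_p(w_+(C))$ forces $\nu_p(o) \leq a$, uniformly in the quotient.

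Next I would propagate this bound to every other vertex $y$ via a chosen path $P_y : x_0 \to y$, which exists because $X$ is connected. By Lemma \ref{lem:GBS_conjP}, $s_{x_0}^{w_-(P_y)}$ is conjugate to $s_y^{w_+(P_y)}$, so in any finite quotient, writing $o_y$ for the order of $f(s_y)$ and using the resulting equality of orders, one deduces
\[
\max\bigl(0,\ \nu_p(o_y) - \nu_p(w_+(P_y))\bigr) \;=\; \max\bigl(0,\ \nu_p(o) - \nu_p(w_-(P_y))\bigr) \;\leq\; \nu_p(o) \;\leq\; a,
\]
and hence $\nu_p(o_y) \leq a + \nu_p(w_+(P_y))$. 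Since $X$ is finite, I can set
\[
l \;:=\; 1 + a + \max_{v \in V} \nu_p(w_+(P_v)),
\]
which is independent of the finite quotient.

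With this choice of $l$, for every vertex $v$ the image of $\Gamma_v$ in every finite quotient of $\Gamma$ is cyclic of order having $p$-valuation at most $l-1$, so it has no subgroup of order $p^l$. Proposition \ref{prop:vpfree_gog2} then concludes the proof. The only nontrivial step is the uniform valuation inequality $\nu_p(o) \leq a$ at the base vertex and its propagation along paths; both are elementary once Lemma \ref{lem:GBS_conjP} is in hand, so I do not expect any significant obstacle beyond bookkeeping.
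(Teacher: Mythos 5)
Your proof is correct and follows essentially the same strategy as the paper: reduce to the hypotheses of Proposition \ref{prop:vpfree_gog2}, first bounding $\nu_p(o)$ at a vertex of the distinguished cycle via the $\gcd$ identity from Lemma \ref{lem:GBS_conjP}, then propagating that bound to all other vertices. The only cosmetic difference is that you apply Lemma \ref{lem:GBS_conjP} once to a full path $x_0 \to y$, whereas the paper runs the same computation edge-by-edge; since Lemma \ref{lem:GBS_conjP} is proved by exactly that induction, the two are the same argument.
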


Again, pointwise stability follows from uniform stability and Theorem \ref{thm:pw_un}.

\begin{proof}
The proof is similar to Corollary \ref{cor:GBS_p}. Given a vertex $x$ denote by $s_x$ the corresponding generator. We want to show that the graph of groups defining $\Gamma$ satisfies the hypotheses of Proposition \ref{prop:vpfree_gog2}, so we need to show that for every vertex $x$ the image of $s_x$ in every finite quotient of $\Gamma$ has order divisible by at most a uniformly bounded power of $p$. For the sake of brevity, let us say that such a vertex is \emph{virtually $p$-free}. 

\medskip

We start by reducing to a single vertex. Indeed, we claim that if some vertex $x$ is virtually $p$-free, then all vertices are. Since the graph $X$ is finite and connected, it suffices to show that if $x \xrightarrow{e} y$ is an edge in $e$ and $x$ is virtually $p$-free, then so is $y$. Let $(w_-(e), w_+(e)) = (m, n)$ be the weights of $e$. Then $s_x^m$ is conjugate to $s_y^n$, so as in Lemma \ref{lem:GBS_pfree}, if $o_x, o_y$ are the orders of $s_x, s_y$ in a finite quotient $K$ of $\Gamma$, we have $o_x/\mathrm{gcd}(o_x, m) = o_y/\mathrm{gcd}(o_y, n)$. Thus
\[\nu_p(o_y) = \nu_p(o_x) - \nu_p(\mathrm{gcd}(o_x, m)) + \nu_p(\mathrm{gcd}(o_y, n)) \leq \nu_p(o_x) + \nu_p(n)\]
is bounded independently of $K$, and $y$ is virtually $p$-free. 

\medskip

We are left to show that a vertex $x$ lying on a cycle $C$ satisfying $\nu_p(w_-(C)) \neq \nu_p(w_+(C))$ is virtually $p$-free. Let $m \coloneqq w_-(C), n \coloneqq w_+(C)$. By Lemma \ref{lem:GBS_conjP} we know that $s_x^m$ is conjugate to $s_x^n$. Now let $o$ be the order of $s_x$ in a finite quotient of $\Gamma$. As in Lemma \ref{lem:GBS_omn} we have $\mathrm{gcd}(o, m) = \mathrm{gcd}(o, n)$, and so
\[\min \{ \nu_p(o), \nu_p(n) \} = \nu_p(\mathrm{gcd}(o, n)) = \nu_p(\mathrm{gcd}(o, m)) = \min \{\nu_p(o), \nu_p(m)\}.\]
Since $\nu_p(m) \neq \nu_p(n)$, this is only possible if $\nu_p(o) \leq \min\{\nu_p(m), \nu_p(n)\}$, which gives a bound on $\nu_p(o)$ independent of $K$, and concludes the proof.
\end{proof}

We similarly obtain corollaries about the special case of Baumslag--Solitar groups and their largest residually finite quotient:

\begin{corollary}
\label{cor:BS}

Suppose that $\nu_p(m) \neq \nu_p(n)$. Then $\BS(m, n)$ is uniformly (and thus pointwise) $\GGL(\oo)$-stable, with a linear estimate.
\end{corollary}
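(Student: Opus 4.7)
The plan is to obtain this as an immediate specialization of Corollary \ref{cor:GBS}. Recall from the description preceding Corollary \ref{cor:GBS_p} that $\BS(m,n) = \langle s, t \mid t s^n t^{-1} = s^m\rangle$ is the GBS group whose underlying weighted graph $(X, w)$ consists of a single vertex (with vertex group $\langle s \rangle \cong \mathbb{Z}$) and a single positively oriented loop $e$ whose edge inclusions $\mathbb{Z} \to \mathbb{Z}$ send $1$ to $n$ and $m$ respectively; that is, $w_-(e) = n$ and $w_+(e) = m$. The only cycle in $X$ is this loop $C = e$ itself, so the hypothesis $\nu_p(w_-(C)) \neq \nu_p(w_+(C))$ of Corollary \ref{cor:GBS} reduces exactly to $\nu_p(n) \neq \nu_p(m)$, which is the assumption. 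Uniform $\GGL(\oo)$-stability with a linear estimate then follows at once.

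There is no genuine obstacle: the entire content of the argument has already been carried out in the proof of Corollary \ref{cor:GBS}, which in turn rests on Proposition \ref{prop:vpfree_gog2} together with the elementary arithmetic of the amalgamations $s^m \sim s^n$ along the loop (compare Lemmas \ref{lem:GBS_pfree}, \ref{lem:GBS_omn} and \ref{lem:GBS_conjP} used in the $\pi$-free setting). It is worth highlighting in the writeup that this strictly extends Corollary \ref{cor:BS_p}: the earlier hypothesis ``$p$ divides exactly one of $m, n$'' amounts to $\min\{\nu_p(m), \nu_p(n)\} = 0 < \max\{\nu_p(m), \nu_p(n)\}$, whereas here arbitrary $p$-adic valuations are allowed provided they differ. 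This strengthening is what Lemma \ref{lem:cohopk} (available in characteristic $0$) buys us over the Schur--Zassenhaus Theorem that was the only available tool for general virtually pro-$\pi$ families.
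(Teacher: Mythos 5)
Your proof is correct and is exactly the paper's own route: $\BS(m,n)$ is the GBS group on a single vertex with a loop of weights $(w_-, w_+) = (n, m)$, so Corollary \ref{cor:GBS} applies immediately since the only cycle is the loop itself. The remarks on how this strengthens Corollary \ref{cor:BS_p} via Lemma \ref{lem:cohopk} are accurate and in the spirit of the paper's own exposition.
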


\begin{corollary}
\label{cor:rfBS}

Suppose moreover that $|m|, |n|$ are distinct from each other and from $1$ and let $d \coloneqq \mathrm{gcd}(m, n)$. Then the group
\[\Gamma = \langle a, b_i : i \in \mathbb{Z} \mid [b_i^d, b_j] = 1, b_i^m = b_{i+1}^n, ab_ia^{-1} = b_{i+1} : i \in \mathbb{Z} \rangle\]
is finitely generated, infinitely presented, and pointwise $\GGL(\oo)$-stable.
\end{corollary}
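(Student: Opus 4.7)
The plan is to mirror the proof of Corollary \ref{cor:rfBS_p} essentially verbatim, just swapping out the input stability result for Baumslag--Solitar groups. First, from $\nu_p(m) \neq \nu_p(n)$ together with Corollary \ref{cor:BS}, I would conclude that $\BS(m,n)$ is uniformly $\GGL(\oo)$-stable with a linear estimate. Since $\BS(m,n)$ is finitely presented, Theorem \ref{thm:pw_un} then upgrades this to pointwise $\GGL(\oo)$-stability.

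Next, I would invoke Moldavanskii's classical results. The hypothesis that $|m|, |n|$ are distinct from each other and from $1$ is precisely the Hopfian obstruction from \cite{BS:Hopf, Alex}, and it characterizes when $\BS(m,n)$ fails to be residually finite. Under that hypothesis, \cite[Proposition 3]{moldavanskii} together with the identification of the kernel via \cite[Proposition 4]{moldavanskii} (the same identification as used in the proof of Corollary \ref{cor:rfBS_p}, sending $e_k \mapsto (uv)^{-k}$ when $d = 1$) shows that the group $\Gamma$ displayed in the statement is exactly the largest residually finite quotient of $\BS(m,n)$. Moreover, \cite[Theorem 2]{moldavanskii} gives that $\Gamma$ is infinitely presented.

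Finally, Theorem \ref{thm:rf} tells us that pointwise $\GGL(\oo)$-stability passes from any group to its largest residually finite quotient, so $\Gamma$ is pointwise $\GGL(\oo)$-stable. Finite generation of $\Gamma$ is automatic from it being a quotient of the two-generator group $\BS(m,n)$. There is no real obstacle here; the only thing to double-check is that the strengthening from Corollary \ref{cor:BS_p} (virtually pro-$\pi$, optimal estimate) to Corollary \ref{cor:BS} (virtually pro-$p$ for $\GGL(\oo)$, linear estimate) does not interfere with the application of Theorem \ref{thm:rf}, which only requires pointwise stability as input and does not care about the quality of the uniform estimate in this direction. Thus the proof is essentially a one-line deduction, and the statement could equivalently be packaged together with Corollary \ref{cor:rfBS_p} as the residually finite quotient counterpart of the Baumslag--Solitar stability results in characteristic $0$.
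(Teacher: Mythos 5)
Your proposal is correct and follows precisely the same route the paper takes: the paper proves Corollary \ref{cor:rfBS_p} by combining Corollary \ref{cor:BS_p}, Moldavanskii's identification of the largest residually finite quotient, and Theorem \ref{thm:rf}, and then obtains Corollary \ref{cor:rfBS} ``similarly'' by substituting Corollary \ref{cor:BS} for Corollary \ref{cor:BS_p}. Your sanity check at the end is also apt: Theorem \ref{thm:rf} only needs pointwise stability as input, so the drop from an optimal to a linear estimate in passing from Corollary \ref{cor:BS_p} to Corollary \ref{cor:BS} is immaterial.
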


Note that the special case $d = 1$, which admits the nicer description $\mathbb{Z} \left[ \frac{1}{mn} \right] \rtimes_{\frac{m}{n}} \mathbb{Z}$, is not more general than Corollary \ref{cor:rfBS_p}: indeed, if $\nu_p(m) \neq \nu_p(n)$ and $(m, n) = 1$, then $p$ must divide exactly one of $m, n$.


\subsection{Positive characteristic}
\label{ss:poschar}

Let $\K$ be a non-Archimedean local field \textbf{of characteristic $p > 0$} for the rest of this subsection. The proofs in this section until now all relied on the cohomological Lemma \ref{lem:cohopk}. This cannot have as strong of an analogue in characteristic $p$. For instance if $M$ is an $\Fp$-module, seen as a trivial $\Fp[\mathbb{Z}/p\mathbb{Z}]$-module, then $\HH^n(\mathbb{Z}/p\mathbb{Z}; M) \cong M$ for all $n \geq 1$. Therefore, analogues of the stability results we proved so far need a different approach. 

\medskip

In Sections \ref{s:vpropi} and \ref{s:char0} many results reduced to statements about finite groups. So we restrict our attention to those, and ask:

\begin{question}
\label{q:fin}

Are all finite groups $\GGL(\oo)$-stable?
\end{question}

We already know that finite groups without elements of order $p$ are $\GGL(\oo)$-stable, by Proposition \ref{prop:pifree}. So the next natural question is whether finite $p$-groups are $\GGL(\oo)$-stable. Using a Theorem of Gasch\"utz \cite[Theorem 7.43]{Rotman} and an argument similar to the one of Lemma \ref{lem:vpfree}, one can show that if $\Gamma$ is a finite group such that all $p$-Sylow subgroups are $\GGL(\oo)$-stable with a subquadratic estimate, then $\Gamma$ is stable. The hypothesis of the subquadratic estimate is needed in order to have the kernel $G_k/G_{2k}$ of the corresponding lifting problem be abelian: an example due to Zassenhaus implies that this hypothesis is necessary in Gasch\"utz's Theorem \cite[Postscriptum]{Gaschutz}.

However, stability with a subquadratic estimate for finite $p$-groups is too strong of a requirement to have useful applications. Indeed, the next example shows that the estimate for $\mathbb{Z}/p^k\mathbb{Z}$ is, at best, polynomial of degree $p^k$.

\begin{example}
\label{ex:zp:est}

Let $A \in \MM_n(\oo)$ be such that $\| A \| \leq \ee < 1$. Then $(I + A) \in \GGL_n(\oo)$ by Lemma \ref{lem:GL}. Using that $\K$ has characteristic $p$:
\[\| (I + A)^{p^k} - I \| = \| A^{p^k} \| \leq \| A \|^{p^k} \leq \ee^{p^k}.\]
On the other hand, if we chose $A$ so that $\| A^{p^k} \| = \ee^{p^k}$, then $(I + A)$ is at a distance at least $\ee$ from any matrix $M$ satisfying $M^{p^k} = I$. Indeed, suppose otherwise that $\|(I + A) - M\| < \ee$ and $M^{p^k} = I$. Writing $M = I + B$, we have $\| A - B\| < \ee$ - which implies $\| A \| = \| B \| = \ee$ - and $B^{p^k} = 0$.
Finally, we prove by induction on $i$ that for all $i = 0, \ldots, p^k$ we have $\| A^i B^{p^k-i} \| < \ee^{p^k}$. The base case $i = 0$ is obvious. For $i > 0$:
\begin{align*}
\| A^i B^{p^k-i} \| &= \| A^{i-1} (A-B) B^{p^k-i} + A^{i-1} B^{p^k - i + 1} \| \\
&\leq \max \{ \ee^{p^k-1} \| A - B \|, \| A^{i-1} B^{p^k - (i-1)} \} < \ee^{p^k}.
\end{align*}
When $i = p^k$, this contradicts our choice of $A$ and we conclude.
\end{example}

Notice that for all groups whose stability was proved so far, the estimates were always linear. Thus Example \ref{ex:zp:est} shows that Question \ref{q:fin} is quite different from the stability problems we encountered until now. 

\medskip

Here is a special case that admits a positive answer:

\begin{proposition}
\label{prop:z2z}

Let $\K$ be a non-Archimedean local field of characteristic $2$, with ring of integers $\oo$. Then $\mathbb{Z}/2\mathbb{Z}$ is $\GGL(\oo)$-stable, with a quadratic estimate.
\end{proposition}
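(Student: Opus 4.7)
The plan is to produce a genuine involution $J \in \GGL_n(\oo)$ close to $\f(1)$ by reducing modulo a suitable power of $\pp$ and invoking the similarity classification from \cite{Braw2}.

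\textbf{Setup.} A map $\f : \mathbb{Z}/2\mathbb{Z} \to \GGL_n(\oo)$ is determined by $\f(0)$ and $A := \f(1)$, and is a homomorphism iff $\f(0) = I$ and $A^2 = I$. First, I would normalize so that $\f(0) = I$: the defect bound $\|\f(0)^2 - \f(0)\| \leq \dd$, combined with the left-invariance of the norm under $\f(0)^{-1} \in \GGL_n(\oo)$ (Lemma \ref{lem:GL}), gives $\|\f(0) - I\| \leq \dd$, which is negligible against the target bound $\sqrt{\dd}$. After this adjustment, $\|A^2 - I\| \leq \dd$. Let $v$ denote the normalized $\uni$-adic valuation on $\MM_n(\oo)$, set $m := v(A^2 - I)$ so that $\|A^2 - I\| = |\uni|^m$, and choose $k := \lfloor m/2 \rfloor$. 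Then $|\uni|^k \leq |\uni|^{-1/2} \sqrt{\dd}$, and crucially $A^2 \equiv I \pmod{\pp^{2k}}$; in particular $\overline{A} := A \bmod \pp^k$ is an involution in $\GGL_n(\oo/\pp^k)$.

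\textbf{Lifting, the heart of the argument.} The goal is to lift $\overline{A}$ to a true involution $J \in \GGL_n(\oo)$ with $J \equiv A \pmod{\pp^k}$. Granting this, setting $\ff(0) = I$ and $\ff(1) = J$ gives a homomorphism with $\dist(\f, \ff) \leq |\uni|^k \leq |\uni|^{-1/2} \sqrt{\dd}$, proving the quadratic estimate. The lifting is where the classification of involutions (equivalently, of representations of $\mathbb{Z}/2\mathbb{Z}$) over the finite commutative local ring $\oo/\pp^k$ from \cite{Braw2} is crucially invoked: the classification produces canonical conjugacy-class representatives, and combined with the additional input $A^2 \equiv I \pmod{\pp^{2k}}$ --- which places $\overline{A}$ in the image of the reduction from involutions at level $\pp^{2k}$ --- it yields the sought-for true involution $J \in \GGL_n(\oo)$ congruent to $A$ modulo $\pp^k$.

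\textbf{The main obstacle} is precisely the lifting step. In characteristic $0$ it would follow immediately from the cohomological Lemma \ref{lem:cohopk}, since $\HH^\bullet(\mathbb{Z}/2\mathbb{Z}; M) = 0$ whenever $M$ has $2$ invertible. In characteristic $2$ those cohomology groups are nonzero, and indeed a generic involution in $\GGL_n(\oo/\pp^k)$ does \emph{not} lift to a true involution in $\GGL_n(\oo)$, so the direct cohomological approach fails. What saves the argument is precisely the $\pp^{2k}$-vanishing hypothesis on $A$, which ensures via the explicit similarity classification that $\overline{A}$ is liftable. This also explains why the method does not extend to $\mathbb{Z}/p^l\mathbb{Z}$ with $l \geq 2$, or more generally to arbitrary $p$-groups in positive characteristic: the analogous similarity problem is computationally wild, as noted in the paragraph preceding the proposition.
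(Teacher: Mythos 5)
You have identified the correct key ingredient (the Brawley--Gamble classification, Theorem~\ref{thm:BG} / Corollary~\ref{cor:BG}) and the right shape of the reduction, but the lifting step --- which you rightly call the heart of the argument --- is asserted rather than proved, and the one-pass reading of your sketch does not close. Applying Corollary~\ref{cor:BG} at level $\pp^{2k}$ gives $P \in \GGL_n(\oo)$ with $PAP^{-1} \equiv \begin{pmatrix} 0 & I_l & 0 \\ I_l & 0 & 0 \\ 0 & 0 & B \end{pmatrix} \pmod{\pp^{2k}}$, where $B \in I + \MM_{n-2l}(\pp)$. The permutation block is a genuine involution in $\GGL_n(\oo)$, but $B$ is an involution only modulo $\pp^{2k}$: you have not produced $J$, merely pushed the problem onto the residual block $B$. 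The observation that $A^2 \equiv I \pmod{\pp^{2k}}$ ``places $\overline{A}$ in the image of the reduction from involutions at level $\pp^{2k}$'' restates the hypothesis; it does not show $\overline{A}$ lifts to a genuine involution in $\GGL_n(\oo)$. Worse, iterating Brawley--Gamble on $B$ does not help: since $B \equiv I \pmod\pp$, the next canonical form necessarily has $l' = 0$, so the size of the problem never drops and the na\"{i}ve recursion does not terminate.

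The paper's proof resolves this by an honest induction on $n$ together with a valuation-extraction step for the degenerate case, and both ingredients are essential. If $l \geq 1$, recurse on $B$, which has strictly smaller size. If $l = 0$, then $A \equiv I \pmod\pp$; write $A = I + \uni^j M$ with $j \geq 1$ maximal. Either $|\uni|^j \leq \ee$ and $J = I$ already works, or one uses $\operatorname{char}\K = 2$ to compute $(I+M)^2 - I = M^2$, whose $\ell^\infty$-norm is $\leq (|\uni|^{-j}\ee)^2 < 1$, and --- this is the point --- $I + M \not\equiv I \pmod\pp$ by maximality of $j$, so applying Brawley--Gamble to $I + M$ now produces $l' \geq 1$ and the size drops. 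One then finds an involution $I + M'$ close to $I + M$ by the smaller case and sets $J := I + \uni^j M'$, checking $J^2 = I$ and $\|A - J\| \leq \ee$. Without the induction on $n$ and the extraction of $\uni^j$, your lifting step is a claim of essentially the same strength as the proposition itself. (A minor point: the normalization of $\f(0)$ is fine but unnecessary if you argue via Corollary~\ref{cor:quant} with the presentation $\langle s \mid s^2 \rangle$, where only $A := \hf(s)$ enters.)
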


\begin{remark}
\label{rem:quadratic:estimate}

This estimate is sharp by Example \ref{ex:zp:est}. It also shows that there exist stable groups whose stability estimate is not linear. It would be interesting to find examples in characteristic $0$.
\end{remark}

Combining Proposition \ref{prop:z2z} with Proposition \ref{prop:pifree_gog1}, we obtain more examples of $\GGL(\oo)$-stable groups that are not covered by the results in Section \ref{s:vpropi}:

\begin{example}
For $\K$ as above, the infinite dihedral group $D_\infty \cong \mathbb{Z}/2\mathbb{Z} * \mathbb{Z}/2\mathbb{Z}$ is $\GGL(\oo)$-stable; more generally, free Coxeter groups are $\GGL(\oo)$-stable. The modular group $\operatorname{PSL}_2(\mathbb{Z}) \cong \mathbb{Z}/2\mathbb{Z} * \mathbb{Z}/3\mathbb{Z}$ is $\GGL(\oo)$-stable. For all of these groups, the estimate is quadratic.
\end{example}

The proof of Proposition \ref{prop:z2z} relies on the following theorem, which classifies similarity classes of involutory matrices over the quotient rings $\oo / \pp^k$:

\begin{theorem}[Brawley--Gamble {\cite[Theorem 3.1]{Braw2}}]
\label{thm:BG}

Let $\mathbf{R}$ be a finite commutative local ring of characteristic a power of $2$ and maximal ideal $m$. Let $A \in \MM_n(\mathbf{R})$ be such that $A^2 = I$. Then there exists $P \in \GGL_n(\mathbf{R}), 0 \leq l \leq n/2$ and $B \in I_{(n-2l)} + \MM_{(n-2l)}(m)$ such that
\[PAP^{-1} = \begin{pmatrix}
0 & I_l & \\
I_l & 0 & \\
& & B
\end{pmatrix}.\]
\end{theorem}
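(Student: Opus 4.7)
The plan is to proceed by induction on $l := \dim_k \overline{N(V)}$, where $N := A - I$, $V := R^n$, and $\overline{(\cdot)}$ denotes reduction modulo the maximal ideal $m$ with residue field $k = R/m$. The identity $A^2 = I$ gives $N^2 = -2N$, and since $R$ has characteristic a power of $2$ we have $2 \in m$, so $\bar N^2 = 0$ in $\MM_n(k)$. Consequently $l \leq n/2$, and $\bar N$ has Jordan type consisting of $l$ blocks of size $2$ together with $n - 2l$ zero blocks. The base case $l = 0$ is immediate: $\bar N = 0$ means $A \in I_n + \MM_n(m)$, so $P := I_n$ and $B := A$ satisfy the conclusion.

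For the inductive step, the first task is to peel off a single hyperbolic block. Choose $v \in V$ with $\bar N \bar v \neq 0$ and set $w := Av$; since $A^2 = I$ also $Aw = v$. The vectors $\bar v, \bar w$ are $k$-linearly independent: from $\bar w = \bar v + \bar N \bar v$ it suffices that $\bar N \bar v \notin k \bar v$, and this follows because $\bar N \bar v = \lambda \bar v$ would yield $0 = \bar N^2 \bar v = \lambda^2 \bar v$, forcing $\lambda = 0$ and contradicting $\bar N \bar v \neq 0$. By Nakayama's lemma, $L := Rv + Rw$ is then a free $R$-direct summand of rank $2$, on which $A$ acts as $\begin{pmatrix} 0 & 1 \\ 1 & 0 \end{pmatrix}$ in the basis $(v,w)$.

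The main obstacle is the second step: producing an $A$-invariant complement $M$ of $L$ in $V$. Once $M$ is constructed, $A|_M \in \GGL_{n-2}(R)$ is an involution whose reduction has nilpotent rank exactly $l-1$, so the inductive hypothesis applies and finishes the proof. Picking any $R$-module splitting $V = L \oplus M_0$, the matrix of $A$ takes the block-triangular form $\begin{pmatrix} J & X \\ 0 & A_0 \end{pmatrix}$ with $J := \begin{pmatrix} 0 & 1 \\ 1 & 0 \end{pmatrix}$, $A_0^2 = I_{n-2}$, and $JX + XA_0 = 0$; conjugating by $P := \begin{pmatrix} I_2 & Y \\ 0 & I_{n-2} \end{pmatrix}$ yields a new complement that is $A$-invariant if and only if $Y$ solves the Sylvester-type equation $YA_0 - JY = X$. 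Equivalently, the splitting is controlled by an $\operatorname{Ext}^1$-class in the category of $R\langle A \rangle$-modules, where $L \cong R[x]/(x^2-1)$ as an $R\langle A \rangle$-module via $1 \mapsto v$.

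To establish the vanishing of this obstruction, in the case $2 = 0$ in $R$ a direct argument succeeds: $N(V) \subseteq V$ is a free $R$-module of rank $l$ (any $k$-basis lifts to $R$-independent generators, as a nontrivial relation would contradict minimality of the number of generators $\dim_k \overline{N(V)}$), and lifting a basis of $N(V)$ back through $N$ produces an $R$-splitting $V = \ker N \oplus \widetilde W$; then $N(V)$ is itself an $R$-direct summand of $\ker N$ by basis extension inside the free module $\ker N$, and the $R$-complement $M \subseteq \ker N$ is automatically $A$-invariant since $A|_{\ker N} = I$, while a cosmetic change of basis within each hyperbolic pair corrects the resulting $\begin{pmatrix}1 & 0\\1 & 1\end{pmatrix}$ blocks to $\begin{pmatrix}0 & 1\\1 & 0\end{pmatrix}$. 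For general characteristic $2^k$ with $k \geq 2$, the plan is to bootstrap: reduce modulo $2R$ (a ring of characteristic $2$), apply the previous case to obtain a conjugator over $R/2R$, lift it to $P_0 \in \GGL_n(R)$ so that $P_0 A P_0^{-1}$ is in the normal form modulo $2R$, and then kill the residual perturbation in $2\MM_n(R)$ by successive approximation inside the congruence subgroup $I + 2 \MM_n(R)$, at each stage solving a linearised commutator equation $[F, A_0] \equiv -E \pmod{2^{j+1}}$; since $2^k = 0$ in $R$ the iteration terminates after at most $k-1$ steps.
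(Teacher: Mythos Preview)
The paper does not prove this theorem; it is quoted from Brawley--Gamble \cite{Braw2} and used as a black box. So there is no ``paper's proof'' to compare against, and your attempt must be judged on its own.

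There is a genuine gap in your characteristic-$2$ argument. You claim that $N(V)$ is a free $R$-module of rank $l$, arguing that a nontrivial relation among lifted generators would contradict minimality of the number of generators. But that argument only rules out relations with a \emph{unit} coefficient; relations with all coefficients in $m$ are not excluded. In fact $N(V)$ need not be free. Take $R = \mathbb{F}_2[t]/(t^2)$, $V = R^4$, and $N$ given by $Ne_1 = e_3$, $Ne_2 = t e_4$, $Ne_3 = Ne_4 = 0$; then $N^2 = 0$ and $A = I+N$ is an involution with $l = 1$, yet $N(V) = R e_3 \oplus tR e_4 \cong R \oplus k$ is not free. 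Consequently the short exact sequence $0 \to \ker N \to V \to N(V) \to 0$ does not split (a direct summand of a free module over a local ring is free), so your asserted decomposition $V = \ker N \oplus \widetilde W$ fails. The theorem is still true in this example (one checks directly that $M = Re_2 \oplus Re_4$ is an $A$-invariant complement to $L = Re_1 \oplus Re_3$), but not via your route.

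The bootstrap to characteristic $2^k$ is also underspecified: the linearised step requires solving $[F, A_0] \equiv -E$ modulo a higher power of $2$, and you give no reason why this commutator equation is solvable for the particular $E$ arising from the involution constraint. Since $R[\mathbb{Z}/2\mathbb{Z}] \cong R[y]/(y^2)$ is self-injective only when $R$ is Frobenius, there is no soft homological shortcut here either; a correct proof (as in Brawley--Gamble) needs a more hands-on construction of the invariant complement.
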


This has the following implication in our setting:

\begin{corollary}
\label{cor:BG}

Let $A \in \GGL_n(\oo)$ be such that $\| A^2 - I \| \leq r^k < 1$. Then there exists $P \in \GGL_n(\oo)$ and $B \in I_{(n-2l)} + \MM_{(n-2l)}(\pp)$ such that
\[PAP^{-1} \equiv \begin{pmatrix}
0 & I_l & \\
I_l & 0 & \\
& & B
\end{pmatrix} \mod \pp^k.\]
\end{corollary}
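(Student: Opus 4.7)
The plan is to reduce everything modulo $\pp^k$ and then apply Theorem \ref{thm:BG} of Brawley--Gamble to the reduction, lifting the conjugating matrix back to $\GGL_n(\oo)$. The hypothesis $\|A^2 - I\| \leq r^k$ precisely translates to $\overline{A}^2 = I$ in $\GGL_n(\oo/\pp^k)$, where $\overline{A}$ denotes the reduction of $A$ modulo $\pp^k$, so Theorem \ref{thm:BG} becomes directly applicable once we verify its hypotheses on the coefficient ring.

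First I would check that $R := \oo/\pp^k$ satisfies the assumptions of Theorem \ref{thm:BG}: it is finite (since $\kk$ is finite and $\oo$ is discretely valued), commutative, and local with maximal ideal $m = \pp/\pp^k$; moreover, since $\K$ has characteristic $2$, so do $\oo$ and $R$, which in particular is a power of $2$. Next I apply Theorem \ref{thm:BG} to $\overline{A} \in \GGL_n(R)$ to obtain $\overline{P} \in \GGL_n(R)$, an integer $0 \leq l \leq n/2$, and a matrix $\overline{B} \in I_{n-2l} + \MM_{n-2l}(m)$ such that $\overline{P} \, \overline{A} \, \overline{P}^{-1}$ has the desired block form over $R$. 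Then I would lift $\overline{P}$ to an arbitrary $P \in \MM_n(\oo)$ and $\overline{B}$ to an arbitrary $B \in \MM_{n-2l}(\oo)$; by construction $B \equiv I \mod \pp$, so $B \in I_{n-2l} + \MM_{n-2l}(\pp)$ as required. The conjugation identity $PAP^{-1} \equiv \bigl(\text{block matrix}\bigr) \mod \pp^k$ then follows by reducing mod $\pp^k$.

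The one point that requires a small check is that the lift $P$ actually lies in $\GGL_n(\oo)$ rather than just $\MM_n(\oo)$. Since $\overline{P} \in \GGL_n(R)$, its determinant $\det(\overline{P})$ is a unit in $R$, hence not in $m = \pp/\pp^k$; equivalently, $\det(P) \not\in \pp$, so $\det(P) \in \oo^\times$ and $P \in \GGL_n(\oo)$ by Lemma \ref{lem:GL}. In particular $P$ is invertible over $\oo$, and the conjugation $PAP^{-1}$ makes sense in $\GGL_n(\oo)$.

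The main (and essentially only) obstacle here is verifying that Theorem \ref{thm:BG} applies, which boils down to observing that characteristic $2$ of $\K$ is inherited by $\oo/\pp^k$; the rest of the argument is standard lifting from a quotient ring. In particular, no computations involving the explicit form of the involution are needed beyond invoking the classification.
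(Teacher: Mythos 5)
Your proof is correct and follows essentially the same route as the paper's: reduce $A$ modulo $\pp^k$, apply Theorem \ref{thm:BG} over the finite local ring $\oo/\pp^k$, and lift $P$ and $B$ back to $\oo$, using Lemma \ref{lem:GL} to see that the lift of $P$ is invertible. Your write-up is somewhat more detailed (spelling out the determinant argument for $P \in \GGL_n(\oo)$ and the hypothesis check on $\oo/\pp^k$), but the ideas are identical.
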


\begin{proof}
Apply Theorem \ref{thm:BG} to the reduction of $A$ modulo $\pp^k$, which is involutory over $\oo/\pp^k$; then lift the matrices $P$ and $B$ to elements of $\MM_n(\oo)$. Since $k \geq 1$, the lift to $\GGL_n(\oo)$ of a matrix in $\GGL_n(\oo/\pp^k)$ is invertible, by Lemma \ref{lem:GL}.
\end{proof}

We proceed with the proof. This will use repeatedly the identity $(I + M)^2 = I + M^2$, which holds since $\K$ has characteristic $2$.

\begin{proof}[Proof of Proposition \ref{prop:z2z}]
Let $\ee > 0$. We will show that if $A \in \MM_n(\oo)$ is such that $\| A^2 - I \| \leq \ee^2$, then there exists $A' \in \MM_n(\oo)$ such that $(A')^2 = I$ and $\| A - A' \| \leq \ee$. Since $\| \cdot \|$ takes values in $r^\mathbb{Z}$, we may assume that $\ee \eqqcolon r^a$, and since the statement is trivial when $\ee \geq 1$, we may assume $a \geq 1$. The proof is by induction on $n$. For $n = 1$, we compute $\ee^2 \geq |a^2 - 1| = |(a - 1)^2| = |a - 1|^2$, and thus $|a - 1| \leq \ee$. So let $n > 1$ and suppose by induction that the statement holds up to $(n - 1)$ for all $\ee > 0$. 

\medskip

Since $\| A^2 - I \| \leq r^{2a}$, by Corollary \ref{cor:BG} there exist $P \in \GGL_n(\oo), 0 \leq l \leq n/2$ and $B \in I_{(n-2l)} + \MM_{(n-2l)}(\pp)$ such that
\[
PAP^{-1} \equiv \begin{pmatrix}
0 & I_l & 0 \\
I_l & 0 & 0 \\
0 & 0 & B
\end{pmatrix} \mod \pp^{2a}.
\]
Now $\| B^2 - I_{(n-2l)} \| \leq \max \{ r^{2a}, \| P A^2 P^{-1} - I_n \| \} \leq \ee^2$. If $l \geq 1$, it follows by induction that there exists $B' \in \MM_{(n-2l)}(\oo)$ such that $(B')^2 = I_{(n - 2l)}$ and $\| B - B' \| \leq \ee$. Then the desired matrix is:
\[
A' \coloneqq P^{-1} \begin{pmatrix}
0 & I_l & 0 \\
I_l & 0 & 0 \\
0 & 0 & B'
\end{pmatrix} P
\]

Assume finally that $l = 0$, and so $A = I + \uni^k M$ for some $M \in \MM_n(\oo)$, where we take $k \geq 1$ to be maximal. If $r^k \leq \ee$ we may take $A' = I$ and conclude. Otherwise:
\[\ee^2 \geq \| A^2 - I \| = \| I + \uni^{2k} M^2 - I \| = r^{2k} \| M^2 \|;\]
so $\| (I + M)^2 - I \| = \| M^2 \| \leq (r^{-k} \ee)^2 < 1$. The choice of $k$ implies that $(I + M)$ is not congruent to the identity modulo $\pp$, so $(I + M)$ falls in the previous case. Therefore there exists $M' \in \MM_n(\oo)$ such that $(I + M')^2 = I$ and $\| (I + M) - (I + M') \| \leq r^{-k} \ee$. Then $A' \coloneqq I + \uni^k M'$ satisfies $(A')^2 = I$ and
\[\| A - A' \| = \| \uni^k (M - M') \| = r^k \| (I + M) - (I + M') \| \leq r^k \cdot r^{-k} \ee = \ee. \qedhere\]
\end{proof}

The fundamental tool for the proof of Proposition \ref{prop:z2z} was Theorem \ref{thm:BG}, which provides simple representatives for each conjugacy class of representations $\mathbb{Z}/2\mathbb{Z} \to \GGL_n(\oo/\pp^k)$. A similar result for other finite $p$-groups could similarly be used to prove stability. However the group $\mathbb{Z}/2\mathbb{Z}$ is really special from this point of view. Indeed, it is shown in \cite{wild} that for every other finite $p$-group $\Gamma$ and every commutative local ring $\mathbf{R}$ of characteristic a power of $p$, the analogous problem for representations $\Gamma \to \GGL_n(\mathbf{R})$ is \emph{computationally wild}. More precisely, this problem contains the problem of simultaneous similarity classes of pairs of matrices over the residue field $\mathbf{R}/m$, which is a long-standing open problem in its general form (see e.g. \cite{similarity}). Therefore, although the few known special cases could be used to prove further stability results, the general solution to Question \ref{q:fin} needs a different approach.

\pagebreak

\section{Examples of uniform instability}
\label{s:unst}

So far we have only seen positive results for uniform stability: the only negative results were given in Corollary \ref{cor:cex_stab} and concerned pointwise stability. In this section, we construct groups which are not uniformly $\G$-stable, for some of the families $\G$ which were considered so far. This will also prove that some of our previous positive results were sharp. 

\medskip

The following notation will be useful for the rest of this section:

\begin{definition}
\label{def:Hdist}

Let $G$ be a metric group and let $\f \colon \Gamma \to G$ be a map. We define
\[\Hdist \coloneqq \inf\{\dist(\f, \ff) : \ff \colon \Gamma \to G \text{ is a homomorphism} \}.\]
\end{definition}

Therefore, proving that $\Gamma$ is not uniformly $\G$-stable amounts to exhibiting a sequence $(\f_n \colon \Gamma \to G_n \in \G)_{n \geq 1}$ such that $\defe(\f_n) \to 0$ and $\Hdist(\f_n)$ is uniformly bounded away from $0$.

\begin{notation}
For the rest of this section, we fix a non-Archimedean local field $\K$ with ring of integers $\oo$, uniformiser $\uni$, maximal ideal $\pp = \uni \oo$, residue field $\kk$ of characteristic $p$, and value group $r^{\mathbb{Z}} = |\K^\times|$, where $r = |\uni| \in (0, 1)$.
\end{notation}

\subsection{Instability for finite groups}

We start with a simple example to show that, for every prime $p$, there exists a pro-$p$ family $\G$ such that $\mathbb{Z}/p\mathbb{Z}$ is not $\G$-stable. The construction builds on an observation of Kazhdan \cite[Proposition 1]{amenst}.

\begin{example}
\label{ex:2unst}

For each $n \geq 1$, let $d_n$ be the metric on $\mathbb{Z}_p$ defined by the non-Archimedean norm $|\cdot|_p^n$, let $G_n$ denote $\mathbb{Z}_p$ equipped with this metric and $\G$ the corresponding family, which is a pro-$p$ family. We claim that $\mathbb{Z}/p\mathbb{Z}$ is not $\G$-stable. Indeed, let $\f_n \colon \mathbb{Z}/p\mathbb{Z} \to \mathbb{Z}_p$ be the map sending $k \mod p$ to $k$, for $0 \leq k < p$. Since $\mathbb{Z}_p$ is torsion-free, the only homomorphism $\mathbb{Z}/p\mathbb{Z} \to \mathbb{Z}_p$ is the trivial one, which is at a distance $1$ from $\f_n$. On the other hand $\defe(\f_n) = d_n(p, 0) = p^{-n} \to 0$ (we only need to check $d_n(p, 0)$ by Lemma \ref{lem:ultra_lift}, considering the standard presentation $\langle s \mid s^p = 1 \rangle$ of $\mathbb{Z}/p\mathbb{Z}$). Thus $(\f_n \colon \mathbb{Z}/p\mathbb{Z} \to G_n)_{n \geq 1}$ is a uniform asymptotic homomorphism that is not uniformly asymptotically close to a homomorphism.
\end{example}

This justifies why in general arguments we cannot reduce to looking at asymptotic homomorphisms $(\f_n \colon \Gamma \to G_n)_{n \geq 1}$ such that $\f_n(g^{-1}) = \f_n(g)^{-1}$ (see Remark \ref{rem:2unst}). Moreover, it shows that in the results from Section \ref{s:vpropi} the hypotheses of $p$-freeness are necessary. 

\medskip

However the family $\G$ is constructed ad-hoc to make stability fail. In the next sections we will be concerned with examples of groups which are not uniformly $\GGL(\oo)$-stable, providing the counterpart to the positive results from Sections \ref{s:vpropi} and \ref{s:char0}.

\subsection{Countable groups that are not uniformly $\GGL(\oo)$-stable}

In this subsection, we provide an example of a countable $p$-group that is not uniformly $\GGL(\oo)$-stable; in fact, not even uniformly $\{ \GGL_1(\oo) \}$-stable. This will be the starting point to construct finitely generated examples in the next subsection. We will also see that this group is pointwise $\GGL(\oo)$-stable when $\K$ has characteristic $0$, which implies that the hypothesis of finite generation is necessary in Theorem \ref{thm:pw_un}. 

\medskip

The key to the construction is the fact that the estimates for $\mathbb{Z}/p^i \mathbb{Z}$ get worse as $i \to \infty$. In order to prove it, we need the following fact about $p^i$-th roots of unity in $\K$:

\begin{lemma}
\label{lem:roots}

There exists $x \in \pp$ and $\ee = \ee(\K) > 0$ with the following property: for every $i \geq 1$, every finite extension $\K' | \K$, and every $p^i$-th root of unity $\xi \in \K'$, it holds $|(1 + x) - \xi| > \ee.$
\end{lemma}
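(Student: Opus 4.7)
The plan is to split into cases based on the characteristic of $\K$. If $\K$ has characteristic $p$, then in any extension $\K'|\K$ the Frobenius identity yields $(\xi-1)^{p^i} = \xi^{p^i}-1 = 0$ for every $p^i$-th root of unity $\xi$, forcing $\xi = 1$. The problem then reduces to bounding $|(1+x) - 1| = |x|$ away from $0$, which is immediate: take any $x \in \pp \setminus \{0\}$ and any $\ee < |x|$.

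The interesting case is characteristic $0$, where I would invoke the classical computation of $|\xi - 1|$ for primitive $p^i$-th roots of unity. Specifically, if $\xi \in \K^{alg}$ is a primitive $p^i$-th root of unity, then $\xi - 1$ is a uniformizer of the totally ramified extension $\Qp(\xi)|\Qp$ of degree $p^{i-1}(p-1)$, whence $|\xi - 1| = |p|^{1/(p^{i-1}(p-1))}$. Because the norm on $\K$ extends uniquely to $\K^{alg}$ (Example \ref{ex:K|Qp}), this value depends only on $i$ and not on the ambient finite extension $\K'$ containing $\xi$. In particular, every $p$-power root of unity $\xi \neq 1$, in any finite extension of $\K$, satisfies $|\xi - 1| \geq |p|^{1/(p-1)}$, the case $i = 1$ realizing the minimum.

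With this in hand, I would choose $x := \uni^N$ where $N$ is large enough that $|x| = r^N < |p|^{1/(p-1)}$ (writing $|p| = r^a$, any $N > a/(p-1)$ works). Two cases then finish the proof: for $\xi = 1$, one has $|(1+x) - \xi| = |x| = r^N$ directly; for $\xi \neq 1$, the ultrametric inequality applied to the difference $x - (\xi - 1)$ of elements with \emph{distinct} norms $|x| < |\xi - 1|$ gives $|(1+x) - \xi| = |\xi - 1| \geq |p|^{1/(p-1)} > r^N$. Thus $|(1+x) - \xi| \geq r^N$ uniformly over all $i$, all extensions $\K'|\K$, and all $p^i$-th roots of unity, so any $\ee \in (0, r^N)$ (for instance $\ee = r^N / 2$, or more cleanly $\ee = r^{N+1}$ if one wants $\ee \in |\K^{\times}|$) serves as the constant $\ee(\K)$.

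The only real ingredient beyond the ultrametric inequality is the ramification formula $|\xi - 1| = |p|^{1/(p^{i-1}(p-1))}$; this, together with the uniqueness of the norm on $\K^{alg}$, is what makes the lower bound uniform across all finite extensions of $\K$, which is the crucial feature demanded by the statement.
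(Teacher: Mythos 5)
Your characteristic-$p$ case matches the paper's argument exactly (the Frobenius identity forces $\xi = 1$, and any nonzero $x \in \pp$ then works). In characteristic $0$ you take a genuinely different route. The paper invokes the $p$-adic logarithm $\log : (\Qp^{alg})^\times \to \Qp^{alg}$, observes that roots of unity lie in $\log^{-1}(\{0\})$, and then uses a soft topological step: $\log^{-1}(\{0\})$ is closed, so for any $x$ with $\log(1+x) \neq 0$ the distance from $1+x$ to this set is positive. You instead use the explicit ramification formula $|\xi - 1| = |p|^{1/(p^{i-1}(p-1))}$ for a primitive $p^i$-th root of unity (which, since $|p| < 1$, is minimized at $i=1$), combined with the strict ultrametric identity $|(1+x)-\xi| = \max(|x|, |\xi-1|)$ once $|x| < |p|^{1/(p-1)}$. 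Both approaches lean on roughly comparable inputs from $p$-adic number theory (the logarithm vs.\ the Eisenstein structure of $\Phi_{p^i}(X+1)$), but yours has the advantage of being fully explicit: it produces a concrete $x = \uni^N$ and a concrete value of $\ee$, which the paper's existence argument does not. That explicitness would matter if one wanted to extract quantitative instability estimates from Lemma \ref{lem:badestimates} and Proposition \ref{prop:countable:unst}, though the paper does not pursue that.
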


\begin{remark}
For the proof of Proposition \ref{prop:countable:unst}, only the case $\K' = \K$ is needed. The general case will be used in the proof of Theorem \ref{thm:fg:unst}.
\end{remark}

\begin{proof}[Proof of Lemma \ref{lem:roots}]
Suppose first that $\K$ has characteristic $p$. Then the only $p^i$-th root of unity in any algebraic extension of $\K$ is $1$. Indeed:
\[x^{p^i} = 1 \Rightarrow x^{p^i} - 1 = 0 \Rightarrow (x - 1)^{p^i} = 0 \Rightarrow x = 1.\]
Therefore choosing $0 \neq x \in \pp$ and $0 < \ee < |x|$ works.

Now suppose that $\K$ has characteristic $0$. By Ostrowski's Theorem, it is a finite extension of $\Qp$, so $\K^{alg} = \Qp^{alg}$. Moreover, there exists a continuous homomorphism
$\log \colon (\Qp^{alg})^\times \to \Qp^{alg}$, such that $\log(1 + p\Zp) = p\Zp$ (see e.g. \cite[Propositions 5.4 and 5.5]{ANT}). Since $\log$ is a homomorphism, if $\xi$ is a root of unity in some finite extension of $\K$, then $\log(\xi) = 0$. Since $\log$ is continuous, $\log^{-1}(\{0\})$ is a closed subset of $\Qp^{alg}$. Therefore whenever $x \in \pp$ is such that $\log(1 + x) \neq 0$, if $\ee$ is the distance between $(1 + x)$ and $\log^{-1}(\{ 0 \}) \subset \Qp^{alg}$, then $\ee > 0$ and the desired property holds.
\end{proof}

This allows to produce examples of badly approximated maps $\mathbb{Z} / p^i \mathbb{Z} \to \oo^\times$:

\begin{lemma}
\label{lem:badestimates}

Let
\[\f_i \colon \mathbb{Z}/p^i \mathbb{Z} \to \GGL_1(\mathfrak{o}) : (k \mod p^i) \mapsto (1 + x)^k;\]
where $x \in \pp$. Then $\defe(\f_i) \to 0$.
Moreover, $x$ may be chosen so that $\Hdist(\f_i) > \ee$ for all $i$, where $\ee > 0$ is independent of $i$.
\end{lemma}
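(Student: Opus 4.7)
First I would compute $\defe(\f_i)$ directly using the representatives $\{0,1,\dots,p^i-1\}$ for $\mathbb{Z}/p^i\mathbb{Z}$. The map $\f_i$ respects multiplication on pairs $(k,l)$ with $k+l < p^i$, while on pairs with $p^i \leq k+l < 2p^i$ the defect equals
\[
|(1+x)^{(k+l)-p^i}| \cdot |(1+x)^{p^i} - 1|.
\]
Since $x \in \pp$ gives $|1+x| = 1$, the first factor is $1$, so
\[
\defe(\f_i) = |(1+x)^{p^i} - 1|.
\]

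To show this tends to $0$, I would invoke Lemma \ref{lem:vprop}: the principal congruence subgroup $\GGL_1(\oo)_1 = 1 + \pp$ is pro-$p$. Concretely, for each fixed $k \geq 1$ the image of $1+x$ in the finite $p$-group $(1+\pp)/(1+\pp^k)$ has order a power of $p$, so $(1+x)^{p^i} \in 1+\pp^k$ for all sufficiently large $i$; hence $|(1+x)^{p^i} - 1| \to 0$. This settles the first claim.

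For the second claim, I would invoke Lemma \ref{lem:roots} to choose $x \in \pp$ and a constant $\ee = \ee(\K) > 0$ (taking $\K' = \K$ in that lemma) such that $|(1+x) - \xi| > \ee$ for every $p^i$-th root of unity $\xi \in \K$ and every $i \geq 1$. Any homomorphism $\ff_i : \mathbb{Z}/p^i\mathbb{Z} \to \GGL_1(\oo) = \oo^\times$ must send the generator $1$ to an element $\xi \in \oo^\times$ satisfying $\xi^{p^i} = 1$, i.e.\ to a $p^i$-th root of unity in $\K$. Therefore
\[
\dist(\f_i, \ff_i) \geq \dist_1(\f_i, \ff_i) = |(1+x) - \xi| > \ee,
\]
and taking the infimum over $\ff_i$ gives $\Hdist(\f_i) \geq \ee > 0$, uniformly in $i$.

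No serious obstacle arises; the whole argument hinges on three ingredients already established: the elementary defect computation, the pro-$p$ structure of $1+\pp$ from Lemma \ref{lem:vprop} (which drives the defect to zero for each fixed $x$), and the uniform separation of $1+x$ from $p$-power roots of unity from Lemma \ref{lem:roots} (which keeps the distance to any honest homomorphism bounded below, \emph{independently of $i$}). The subtle point -- and the reason the two requirements are compatible at all -- is that the obstruction to $\f_i$ being near a homomorphism lies entirely at a single element of $\pp$, namely $x$, which one is free to pick once and for all before letting $i \to \infty$.
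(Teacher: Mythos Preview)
Your proof is correct and follows the paper's overall structure, with one genuine difference worth noting.

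For the second claim you argue exactly as the paper does, via Lemma~\ref{lem:roots}. For the defect computation you compute directly with representatives, whereas the paper invokes Lemma~\ref{lem:ultra_lift} applied to the presentation $\langle s \mid s^{p^i} \rangle$; these amount to the same thing.

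The real divergence is in showing $|(1+x)^{p^i}-1|\to 0$. You give a structural argument: $1+\pp$ is pro-$p$ by Lemma~\ref{lem:vprop}, so the image of $1+x$ in each finite quotient $(1+\pp)/(1+\pp^k)$ has $p$-power order, forcing $(1+x)^{p^i}\in 1+\pp^k$ for large $i$. The paper instead splits on the characteristic of $\K$ and computes explicitly: in characteristic $p$ one has $(1+x)^{p^i}=1+x^{p^i}$, while in characteristic $0$ a binomial estimate gives $|(1+y)^p-1|\le r|y|$ whenever $|y|\le r$, whence by induction $|(1+x)^{p^i}-1|\le r^i|x|$. Your argument is cleaner and characteristic-free, and is entirely sufficient for the lemma as stated. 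The paper's explicit estimates buy something extra, namely a concrete rate of decay; this is what makes the comparison with Example~\ref{ex:finst} and the remark that those linear estimates are ``essentially sharp'' meaningful, but it is not needed for the lemma itself.
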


\begin{proof}
By Lemma \ref{lem:ultra_lift}, considering the standard presentation $\langle s \mid s^{p^i} = 1 \rangle$ of $\mathbb{Z}/p^i\mathbb{Z}$, to compute $\defe(\f_i)$ it suffices to compute $|(1 + x)^{p^i} - 1|$. If $\K$ has characteristic $p$, then
\[|(1 + x)^{p^i} - 1| = |x^{p^i}| \leq r^{p^i} \to 0.\]
Instead, suppose that $\K$ has characteristic $0$, and let $y$ be any element with $|y| \leq r$. Then
\[|(1 + y)^p - 1| = \left| y^p + \sum\limits_{k = 1}^{p - 1} {p \choose k} y^k \right| \leq \max\{ |y|^p, |py| \} \leq r \cdot |y|,\]
because for all $0 < k < p$, the binomial coefficient ${p \choose k}$ is divisible by $p$. It follows by induction that
\[|(1 + x)^{p^i} - 1| \leq r^i \cdot |x| \to 0.\]
This shows the first statement.

A homomorphism $\ff \colon \mathbb{Z}/p^i\mathbb{Z} \to \GGL_1(\oo) = \oo^\times$ must send $1$ to a $p^i$-th root of unity in $\K$. Therefore the second statement follows from Lemma \ref{lem:roots}.
\end{proof}

We can now use this to construct our first example of a group which is not uniformly $\GGL(\oo)$-stable. In characteristic $0$ it is also pointwise $\GGL(\oo)$-stable, which proves the remaining part of Proposition \ref{intro:prop:sharp}.

\begin{proposition}
\label{prop:countable:unst}

Let $\Gamma \coloneqq \bigoplus_{i \geq 1}  \mathbb{Z}/p^i\mathbb{Z}$. Then $\Gamma$ is not uniformly $\{\GGL_1(\oo)\}$-stable.
If moreover $\K$ has characteristic $0$, then $\Gamma$ is pointwise $\GGL(\oo)$-stable.
\end{proposition}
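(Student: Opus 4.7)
The plan is to prove the two statements separately. The non-stability will follow from Lemma \ref{lem:badestimates} via a transparent direct-sum argument, while the pointwise stability will be a diagonal construction that leverages the uniform stability with explicit estimates of the finite $p$-groups $\Gamma_k := \bigoplus_{i \leq k} \mathbb{Z}/p^i \mathbb{Z}$ supplied by Lemma \ref{lem:vpfree}.

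For the non-stability, let $\f_n : \mathbb{Z}/p^n\mathbb{Z} \to \GGL_1(\oo)$ be the sequence from Lemma \ref{lem:badestimates}, which satisfies $\defe(\f_n) \to 0$ while $\Hdist(\f_n) > \ee$ for some $\ee > 0$ independent of $n$. Let $\pi_n : \Gamma \to \mathbb{Z}/p^n\mathbb{Z}$ denote the projection onto the $n$-th summand and set $\Phi_n := \f_n \circ \pi_n$. Since $\pi_n$ is a surjective homomorphism, $\defe(\Phi_n) = \defe(\f_n) \to 0$. Moreover, any homomorphism $\Psi : \Gamma \to \GGL_1(\oo)$ restricts to a homomorphism on the $n$-th summand, at which $\Phi_n$ agrees with $\f_n$; hence $\dist(\Phi_n, \Psi) \geq \Hdist(\f_n) > \ee$. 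The sequence $(\Phi_n)_{n \geq 1}$ therefore witnesses the failure of uniform $\{\GGL_1(\oo)\}$-stability.

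For pointwise stability in characteristic $0$, let $(\f_n : \Gamma \to G_n \in \GGL(\oo))_{n \geq 1}$ be a pointwise asymptotic homomorphism. Since $\Gamma_k$ is finite, for each fixed $k$ one has $\defe(\f_n|_{\Gamma_k}) \to 0$ as $n \to \infty$. The quantitative input is $\nu_p(|\Gamma_k|) = k(k+1)/2$, so Lemma \ref{lem:vpfree} applies once the defect is small enough compared to $p^{-k(k+1)}$. Choose an increasing sequence $(N_k)_{k \geq 1}$ with $\defe(\f_n|_{\Gamma_k}) \leq p^{-(k^2 + k + 1)}$ for all $n \geq N_k$, and define $k(n) := \max\{ k : N_k \leq n \}$, so that $k(n) \to \infty$. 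Apply Lemma \ref{lem:vpfree} to $\f_n|_{\Gamma_{k(n)}}$ with $l = k(n)(k(n)+1)/2$ and defect exponent $k(n)^2 + k(n) + 1 > 2l$: one obtains a homomorphism $\ff_n^{(k(n))} : \Gamma_{k(n)} \to G_n$ with
\[ \dist(\f_n|_{\Gamma_{k(n)}}, \ff_n^{(k(n))}) \leq p^{l - (k(n)^2 + k(n) + 1)} = p^{-k(n)(k(n)+1)/2 - 1} \to 0. \]
Using the direct-sum decomposition $\Gamma = \Gamma_{k(n)} \oplus \bigoplus_{i > k(n)} \mathbb{Z}/p^i \mathbb{Z}$, extend $\ff_n^{(k(n))}$ to a homomorphism $\ff_n : \Gamma \to G_n$ by declaring it trivial on the tail. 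For any $g \in \Gamma$, pick $k_0$ with $g \in \Gamma_{k_0}$; then for $n$ large, $k(n) \geq k_0$, and so $\dist_g(\f_n, \ff_n) \leq \dist(\f_n|_{\Gamma_{k(n)}}, \ff_n^{(k(n))}) \to 0$.

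The main obstacle is the second part. The stability estimate for $\Gamma_k$ degrades exponentially in $k$, so naively trying to stabilize the restriction on an ever-larger truncation will not produce approximations vanishing in distance. The delicate point is calibrating $k(n)$: it must tend to infinity so that every $g \in \Gamma$ is eventually covered, but slowly enough that the amplified error $p^l \cdot \defe$ still vanishes. A pleasant feature of the direct-sum structure is that extending by the trivial homomorphism on later summands raises no compatibility obstruction, since the identity commutes with everything. The characteristic $0$ hypothesis enters crucially here via Lemma \ref{lem:cohopk} underlying Lemma \ref{lem:vpfree}: it is what allows $\oo/\pp^{ak}$ to carry the structure of a $\mathbb{Z}/p^k\mathbb{Z}$-module rather than merely an $\mathbb{F}_p$-module.
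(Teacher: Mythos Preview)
Your proof is correct and follows essentially the same approach as the paper. Both parts match the paper's argument: the first part is identical, and for the second part the paper likewise restricts to the finite truncations $\Gamma_k$, uses their stability to approximate, and extends via the retraction $r_k : \Gamma \to \Gamma_k$ (which is your ``trivial on the tail'' extension). The only cosmetic difference is that the paper appeals abstractly to the stability of $\Gamma_k$ (Example \ref{ex:finst}) and chooses $n(k)$ so that $\dist(\f_n|_{\Gamma_k}, \ff_n^k) \leq 1/k$, whereas you invoke Lemma \ref{lem:vpfree} directly and compute the explicit threshold $p^{-(k^2+k+1)}$; this does not change the argument in any substantive way.
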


\begin{proof}
Let $\f_i \colon \mathbb{Z}/p^i\mathbb{Z} \to \GGL_1(\oo)$ be the maps from Lemma \ref{lem:badestimates}. Let $\Phi_i \colon \Gamma \to \GGL_1(\mathfrak{o})$ be the composition of the projection onto $\mathbb{Z}/p^i\mathbb{Z}$ with $\f_i$. Then $\defe(\Phi_i) \leq \defe(\f_i) \to 0$.

Now if $\Psi \colon \Gamma \to \GGL_1(\oo)$ is a homomorphism, we have
\[\dist(\Phi_i, \Psi) \geq \dist(\Phi_i|_{\mathbb{Z}/p^i\mathbb{Z}}, \Psi|_{\mathbb{Z}/p^i\mathbb{Z}}) \geq \Hdist(\f_i) \geq \ee.\]
Therefore $\Hdist(\Phi_i) \geq \ee$ for all $i$, and we conclude that $\Gamma$ is not uniformly $\{ \GGL_1(\oo) \}$-stable.

The pointwise stability in characteristic $0$ follows from Corollary \ref{cor:dirsum}.
\end{proof}

\begin{remark}
\label{rem:unfin}
This result also shows that Proposition \ref{prop:unfin} is sharp, in the sense that an infinitely generated group need not be uniformly stable with respect to finite families.

Moreover, since $\Gamma$ is a $p$-group, it is $q$-free for every $q \neq p$, and so Proposition \ref{prop:pifree} shows that $\Gamma$ is uniformly $\GGL(\oo)$-stable whenever $\K$ has residual characteristic $q$. Thus, there exist groups which are uniformly $\GGL(\oo)$-stable in some, but not all, residual characteristics, answering positively Question 9.4 from the first version of this paper.
\end{remark}

We also record the following stronger version of Lemma \ref{lem:badestimates}, which will be needed in the proof of Theorem \ref{thm:fg:unst} in the next subsection. It states that the lower bound on $\Hdist(\f_i)$ is preserved even for matrix groups of larger degree.

\begin{lemma}
\label{lem:badestimates2}

Let again
\[\f_i \colon \mathbb{Z}/p^i \mathbb{Z} \to \GGL_1(\mathfrak{o}) : (k \mod p^i) \mapsto (1 + x)^k;\]
where $x \in \pp$. Let $n \geq 1$, and define $\diag_n \colon \GGL_1(\oo) \to \GGL_n(\oo) : \lambda \mapsto \lambda I_n$. Then $\defe(\diag_n \circ \f_i) \to 0$.
Moreover, $x$ may be chosen so that $\Hdist(\diag_n \circ \f_i) > \ee$ for all $i$, where $\ee > 0$ is independent of $i$ and $n$.
\end{lemma}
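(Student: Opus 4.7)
The defect statement is immediate: the map $\diag_n : \GGL_1(\oo) \to \GGL_n(\oo)$, $\lambda \mapsto \lambda I_n$, is a group homomorphism which is moreover isometric, since for $\lambda, \mu \in \oo^\times$ we have $\|\lambda I_n - \mu I_n\| = \max_{i,j}|((\lambda - \mu) I_n)_{ij}| = |\lambda - \mu|$. Consequently $\defe(\diag_n \circ \f_i) = \defe(\f_i) \to 0$ by Lemma \ref{lem:badestimates}.

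For the lower bound on $\Hdist$, I would fix $x \in \pp$ and $\ee = \ee(\K) > 0$ as provided by Lemma \ref{lem:roots} (which already applies uniformly across all finite extensions $\K' | \K$). Given any homomorphism $\psi : \mathbb{Z}/p^i \mathbb{Z} \to \GGL_n(\oo)$, set $A := \psi(1) \in \GGL_n(\oo)$, so that $A^{p^i} = I_n$. The key step is to show $\|(1 + x)I_n - A\| \geq \ee$ with bound independent of both $i$ and $n$. Once this is established, we obtain
$$\dist(\diag_n \circ \f_i, \psi) \geq \dist_1(\diag_n \circ \f_i, \psi) = \|(1 + x)I_n - A\| \geq \ee,$$
and since $\psi$ was arbitrary, $\Hdist(\diag_n \circ \f_i) \geq \ee$ uniformly, as required.

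To prove $\|(1 + x)I_n - A\| \geq \ee$, observe that every eigenvalue $\xi$ of $A$ satisfies $\xi^{p^i} = 1$, and lies in a finite extension $\K' | \K$ splitting the characteristic polynomial of $A$. Fix such a $\xi$ and a corresponding eigenvector $v \in (\K')^n \setminus \{0\}$. Writing $M := (1+x)I_n - A$, we have $Mv = ((1+x) - \xi) v$, so the operator norm of $M$ over $\K'$ (with the $\ell^\infty$-norm on $(\K')^n$) is at least $|(1+x) - \xi| \geq \ee$, by Lemma \ref{lem:roots} applied in $\K'$. By Lemma \ref{lem:GL}, this operator norm equals the entry-wise maximum norm of $M$, and the latter is unchanged when viewing $M \in \MM_n(\oo) \subset \MM_n(\oo')$, since the norm on $\K'$ extends that on $\K$ uniquely (Example \ref{ex:K|Qp}). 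Hence $\|M\| \geq \ee$ as claimed. The main obstacle — and the only substantive point beyond routine manipulations and invoking Lemma \ref{lem:badestimates} — is this non-Archimedean eigenvalue bound $\|M\| \geq |\xi|$, which requires passing to $\K'$ to access eigenvectors and verifying the base change does not affect the entry-wise norm; but this is immediate from the uniqueness of the norm extension.
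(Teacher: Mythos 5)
Your proposal is correct and follows essentially the same route as the paper: fix $x$ via Lemma \ref{lem:roots}, pass to a splitting field $\K'$ of the characteristic polynomial of $\psi(1)$, pick an eigenvalue $\xi$ and a unit-norm eigenvector, and use Lemma \ref{lem:GL} to identify the entry-wise $\ell^\infty$-norm with the operator norm (noting it is unchanged under the base extension). The only slip is cosmetic: in your closing remark you write $\|M\| \geq |\xi|$ where you mean $\|M\| \geq |(1+x)-\xi|$, and the inequality from Lemma \ref{lem:roots} is strict.
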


\begin{proof}
The fact that $\defe(\diag_n \circ \f_i) \leq \defe(\f_i) \to 0$ follows immediately from Lemma \ref{lem:badestimates}. Now let $A \coloneqq \diag_n \circ \f_i(1) = (1 + x)I_n$. It remains to show that $x$ can be chosen in such a way that $\| A - \Xi \| \geq \ee$, whenever $\Xi \in \GGL_n(\oo)$ satisfies $\Xi^{p^i} = I_n$, and $\ee > 0$ should be independent of $i$ and $n$. We choose $x$ as in Lemma \ref{lem:roots}, so that there exists $\ee > 0$ such that for every finite extension $\K'|\K$ and every $p^i$-th root of unity $\xi \in \K'$ it holds $|(1 + x) - \xi| > \ee$.

Let $\Xi$ be such a matrix, and let $\K' | \K$ be a splitting field of the characteristic polynomial of $\Xi$, which we equip with the unique norm that extends the one on $\K$. Every eigenvalue $\xi \in \K'$ of $\Xi$ satisfies $\xi^{p^i} = 1$. Moreover, if $v \in (\K')^n$ is an eigenvector of $\xi$ of norm $1$, then
\[\| A - \Xi \| \geq \| Av - \Xi v \| = |(1 + x) - \xi| \cdot \| v \| = |(1 + x) - \xi| > \ee.\]
where in the first inequality we used Lemma \ref{lem:GL}.
\end{proof}

\begin{remark}
Using this lemma, the same proof as Proposition \ref{prop:countable:unst} shows that $\Gamma$ is not uniformly $\{ \GGL_n(\oo) \}$-stable, for any $n \geq 1$.
\end{remark}

\subsection{Finitely generated groups that are not uniformly $\GGL(\oo)$-stable}

In this subsection, we turn the example from Proposition \ref{prop:countable:unst} into a finitely generated one. The idea is to embed the group $\bigoplus_{i \geq 1} \mathbb{Z}/p^i\mathbb{Z}$ into a finitely generated group, in such a way that the asymptotic homomorphism contradicting stability extends to the larger group. The construction takes inspiration from the embedding of countable groups into finitely generated groups by means of wreath products \cite{embedding}, but modified in a way that preserves residual finiteness. Similar constructions have appeared in \cite{embeddingrf0, embeddingrf2, embeddingrf3}, but the idea can be traced back to the work of B. H. Neumann \cite{embeddingrf1} (curiously, B. H. Neumann's examples are pointwise stable in permutations \cite{bhstable} and for the Hilbert--Schmidt norm \cite{diagonalproducts}). 

\medskip

We fix a local field $\K$ of residual characteristic $p$, and keep the notations from the previous subsection. To simplify we will denote $C_k \coloneqq \mathbb{Z}/k\mathbb{Z}$ for the rest of this subsection. We will be working with the groups $C_{p^i} \wr C_{p^i} = (C_{p^i})^{p^i} \rtimes C_{p^i}$, where the group on the right acts on the direct sum by shifting the coordinates. More precisely, if $x = ((x_0, x_1, \ldots, x_{p^i - 2}, x_{p^i - 1}), 0)$ and $z = ((0, \ldots, 0), 1)$, then $z x z^{-1} = ((x_1, x_2, \ldots, x_{p^i - 1}, x_0), 0)$. We denote by $\Delta^1 \colon C_{p^i} \to C_{p^i} \wr C_{p^i}$ the diagonal inclusion, namely $\Delta^1(x) = ((x, \ldots, x), 0)$. 

\medskip

Let $P \coloneqq \prod\limits_{i \geq 1} C_{p^i} \wr C_{p^i}$, and let $\pr_i \colon P \to C_{p^i} \wr C_{p^i}$ denote the natural projection. For each $i$, we define the following element
\[\gamma_i \coloneqq (-(0, 1, 2, \ldots, p^i - 1), 0) \in C_{p^i} \wr C_{p^i}.\]
We identify $\gamma_i$ with an element of $P$ naturally, by setting all other coordinates to be the identity. We also consider the element
\[\zeta_i \coloneqq ((0, \ldots, 0), 1) \in C_{p^i} \wr C_{p^i}\]
and define $\zeta \in P$ by $\pr_i(\zeta) = \zeta_i$ for all $i$.

Define further $\kappa_i \coloneqq [\gamma_i, \zeta_i] \in C_{p^i} \wr C_{p^i}$. Explicitly:
\[\kappa_i = [\gamma_i, \zeta_i] = \gamma_i \cdot \zeta_i \gamma_i^{-1} \zeta_i^{-1} = (-(0, 1, \ldots, p^i - 2, p^i - 1), 0) \cdot ((1, 2, \ldots, p^i - 1, 0), 0) = \Delta^1(1).\]
Again, we identify $\kappa_i$ with an element of $P$, by setting all other coordinates to be the identity. This way $\kappa_i = [\gamma_i, \zeta]$: indeed, this holds at the $i$-th coordinate by definition, and the other coordinates are trivial since they are given by a commutator with the identity. Therefore $\kappa_i \in P$ generates the copy of $C_{p^i}$ given by the diagonal maps $\Delta^1 \colon C_{p^i} \to C_{p^i} \wr C_{p^i}$ at the $i$-th coordinate, and the identity elsewhere. 

\medskip

For the next step, we will need to use sequences of integers with some special properties:

\begin{claim}
\label{claim:riti}

There exists a sequence of integers $r_i \to \infty, i \geq 1$ and $0 < t_i < r_i$ with the following properties.
\begin{enumerate}
\item For each $i$, the integers $0, t_i, -t_i$ are pairwise distinct modulo $r_i$.
\item For each $i \neq j$, the integers $0, t_j, -t_i, (t_j - t_i)$ are pairwise distinct modulo $r_j$.
\end{enumerate}
\end{claim}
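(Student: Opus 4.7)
The plan is to construct $(r_i)$ and $(t_i)$ inductively, taking the $r_i$ to be a rapidly increasing sequence of odd primes.

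First I would unpack the conditions. Condition~1 for $i$ reduces to $2t_i \not\equiv 0 \pmod{r_i}$, which is automatic once $r_i$ is odd and $0 < t_i < r_i$. Condition~2 for a pair $i \neq j$, read modulo $r_j$, is equivalent (after discarding clauses automatic from $0 < t_j < r_j$) to
\[
t_i \not\equiv 0,\; t_j,\; -t_j \pmod{r_j}.
\]
By condition~1 for $j$, the set $\{0, t_j, -t_j\}$ consists of three distinct residues mod $r_j$. So the task reduces to producing odd primes $r_1 < r_2 < \ldots$ and integers $0 < t_i < r_i$ such that, for every pair $i \neq j$, the residue $t_i \bmod r_j$ avoids the three-element set $\{0, \pm t_j\}$.

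For the base case, take $r_1 = 5$, $t_1 = 1$. For the inductive step, suppose $r_1 < \ldots < r_i$ (odd primes) and $t_1, \ldots, t_i$ have been chosen. I would pick $r_{i+1}$ to be an odd prime larger than $r_i$ (large enough for the estimate below), and then look for $t_{i+1} \in \{1, \ldots, r_{i+1}-1\}$ such that (a) $t_{i+1} \not\equiv 0, \pm t_j \pmod{r_j}$ for each $j \leq i$ (giving condition~2 for pairs $(i+1, j)$), and (b) $t_{i+1} \not\equiv 0, \pm t_j \pmod{r_{i+1}}$ for each $j \leq i$ (giving condition~2 for pairs $(j, i+1)$). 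Since each $t_j$ lies in $(0, r_j) \subset (0, r_{i+1})$ with $r_{i+1}$ an odd prime, the residues $\{0, \pm t_j : j \leq i\}$ are distinct mod $r_{i+1}$, so (b) excludes at most $2i+1$ residues modulo $r_{i+1}$.

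The crux is a counting estimate in $\{0, 1, \ldots, r_{i+1}-1\}$. Constraint (a) mod each $r_j$ excludes at most $3\lceil r_{i+1}/r_j \rceil \leq 3(r_{i+1}/r_j + 1)$ integers in this range. Summing over $j \leq i$ and combining with the $2i+1$ exclusions from (b), the bad set has size at most $5i + 2 + 3 r_{i+1} \sum_{j \leq i} 1/r_j$. If the $r_j$ are forced to grow fast enough that $\sum_j 1/r_j < 1/12$---for instance, $r_j \geq 12 \cdot 2^j$---the bad count is bounded by $r_{i+1}/4 + 5i + 2$, which is strictly less than $r_{i+1}$ once $r_{i+1}$ is large (compatible with requiring $r_{i+1}$ to be an odd prime $\geq 12 \cdot 2^{i+1}$). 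Hence a valid $t_{i+1}$ exists, and the induction closes; the condition $r_i \to \infty$ is then automatic. The only real difficulty is the bookkeeping of forbidden residue classes, which reduces the claim to this routine counting argument in the spirit of the Chinese Remainder Theorem.
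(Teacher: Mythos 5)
Your proof is correct in outline, but it takes a genuinely different route from the paper. The paper gives an explicit construction: $r_i := 2^{2i}$ and $t_i := 2^i - 1$, and then verifies the two conditions in a few lines by computing residues mod $4$ and observing that $2^i \not\equiv 2^j \pmod{2^{2j}}$ when $i \neq j$. Your argument is a greedy/counting construction: you correctly reduce condition~2 for the pair $(i,j)$ to the single requirement $t_i \not\equiv 0, \pm t_j \pmod{r_j}$, then build the sequence inductively by choosing $r_{i+1}$ to be a large odd prime and using a union bound to find an admissible $t_{i+1}$. Both approaches work; the paper's is shorter and requires no estimates, whereas yours is more robust (it would survive small perturbations of the exact constraints and illustrates that almost all choices work). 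One small inconsistency to fix: you set the base case $r_1 = 5$, $t_1 = 1$, but later impose the growth constraint $r_j \geq 12\cdot 2^j$ to make $\sum_j 1/r_j < 1/12$ --- and $r_1 = 5$ violates this ($5 < 24$). You should either start with a sufficiently large $r_1$ (any prime $\geq 24$ works, and then $t_1 = 1$ is still fine), or relax the constant $1/12$ to something like $1/3$, which still closes the induction and is compatible with $r_1 = 5$ once the later $r_j$ grow geometrically. This is purely a bookkeeping slip; the structure of the argument is sound.
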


\begin{proof}
We choose $r_i \coloneqq 2^{2i}$ and $t_i \coloneqq 2^i - 1$. Then for all $i, j$ it holds:
\[t_j \equiv 3 \mod 4; \quad - t_i \equiv 1 \mod 4; \quad t_j - t_i \equiv 0 \mod 4.\]
Since $4$ divides $r_j$, these elements are also distinct modulo $r_j$, whether $i \neq j$ or not.

It remains to show that $t_j - t_i$ and $0$ are distinct modulo $r_j$, whenever $i \neq j$. This amounts to showing that $2^j$ and $2^i$ are distinct modulo $2^{2j}$. Suppose that this is not the case, so there exists an integer $k$ such that such that $2^i = 2^j + k 2^{2j} = 2^j(1 + k 2^j)$. Dividing both sides by $2^j$ we obtain that $2^{i-j} = 1 + k 2^j$ is an odd integer, which is only possible if $i = j$.
\end{proof}

Fix sequences $r_i, t_i$ as in Claim \ref{claim:riti}. Let $Q \coloneqq \prod\limits_{i \geq 1} (C_{p^i} \wr C_{p^i}) \wr C_{r_i}$, and again denote by $\pr_i \colon Q \to (C_{p^i} \wr C_{p^i}) \wr C_{r_i}$ the natural projection. The inclusion
\[C_{p^i} \wr C_{p^i} \to (C_{p^i} \wr C_{p^i}) \wr C_{r_i} : x \mapsto ((x, \id, \ldots, \id), 0)\]
realises $P$ naturally as a subgroup of $Q$. We have another diagonal inclusion of $C_{p^i}$, as follows:
\[\Delta^2 \colon C_{p^i} \to (C_{p^i} \wr C_{p^i}) \wr C_{r_i} : x \mapsto ((\Delta^1(x), \ldots, \Delta^1(x)), 0).\]

We define:
\[\varrho_i \coloneqq ((\zeta_i, \id, \ldots, \id, \gamma_i, \id, \ldots, \id), 0) \in (C_{p^i} \wr C_{p^i}) \wr C_{r_i} \qquad \text{and} \qquad \varrho \in Q : \pr_i(\varrho) = \varrho_i;\]
where $\zeta_i$ sits at the $0$-th coordinate, and $\gamma_i$ sits at the $t_i$-th coordinate (recall that $0 < t_i < r_i$). Finally, we define
\[\eta_i \coloneqq ((\id, \ldots, \id), 1) \in (C_{p^i} \wr C_{p^i}) \wr C_{r_i} \qquad \text{and} \qquad \eta \in Q : \pr_i(\eta) = \eta_i.\]

\begin{claim}
$[\eta^{t_i} \varrho \eta^{-t_i}, \varrho] = \kappa_i \in Q$ for each $i$.
\end{claim}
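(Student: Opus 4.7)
The plan is to verify the identity coordinate by coordinate in the product $Q = \prod_{i \geq 1} (C_{p^i} \wr C_{p^i}) \wr C_{r_i}$. Since the group operation is defined coordinate-wise, and both $\eta$ and $\varrho$ are defined coordinate-wise via the projections $\pr_j$, it suffices to compute $\pr_j([\eta^{t_i} \varrho \eta^{-t_i}, \varrho])$ for each $j$ and show that it equals $\kappa_i$ if $j = i$ and the identity if $j \neq i$. This reduces the problem to a finite computation inside $(C_{p^j} \wr C_{p^j}) \wr C_{r_j}$ for each $j$.

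For the computation at coordinate $j$, I would first note that both $\varrho_j$ and $\eta_j^{t_i} \varrho_j \eta_j^{-t_i}$ lie in the base group $(C_{p^j} \wr C_{p^j})^{r_j}$ of the outer wreath product (their $C_{r_j}$-component is trivial). Since $\eta_j$ acts by shifting coordinates, conjugation by $\eta_j^{t_i}$ translates the nontrivial entries of $\varrho_j$ by $t_i$: writing $\varrho_j$ as having $\zeta_j$ at position $0$ and $\gamma_j$ at position $t_j$ with identities elsewhere, the element $\eta_j^{t_i} \varrho_j \eta_j^{-t_i}$ has $\zeta_j$ at position $-t_i$ and $\gamma_j$ at position $t_j - t_i$. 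The commutator then decomposes as a product of commutators at each of the at most four relevant positions $0, t_j, -t_i, t_j - t_i$ (mod $r_j$), taken inside $C_{p^j} \wr C_{p^j}$.

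The key point, and the reason for the peculiar choices in Claim \ref{claim:riti}, is exactly to control which positions coincide. When $j \neq i$, Item~2 of the claim guarantees that the four positions are all distinct modulo $r_j$, so at each position exactly one of the two factors is nontrivial, and every coordinate-wise commutator is trivial; thus $\pr_j([\eta^{t_i} \varrho \eta^{-t_i}, \varrho]) = 1$. When $j = i$, the positions $t_j - t_i$ and $0$ collapse to the same coordinate, while Item~1 still ensures $0, t_i, -t_i$ are distinct. Consequently, only the coordinate $0$ gives a nontrivial contribution, namely $[\gamma_i, \zeta_i]$, which was already computed to equal $\kappa_i = \Delta^1(1) \in C_{p^i} \wr C_{p^i}$; and via our identification of $P$ inside $Q$, this is precisely $\pr_i(\kappa_i)$.

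The main obstacle is really just bookkeeping: tracking the indices of the coordinates carefully under conjugation by $\eta_j^{t_i}$ and making sure the sign convention for the shift matches the definition of $\gamma_i = -(0, 1, \ldots, p^i-1)$ used earlier. Once this is in place, the role of the arithmetic conditions in Claim \ref{claim:riti} is transparent: Item~1 ensures that at the diagonal coordinate $j = i$ the commutator is genuinely $[\gamma_i, \zeta_i]$ rather than something involving extra collisions, while Item~2 ensures no leakage into the off-diagonal factors. No additional input is needed, so the proof reduces to executing the coordinate computation and invoking the claim.
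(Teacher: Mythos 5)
Your proof is correct and takes essentially the same approach as the paper: verifying the identity coordinate by coordinate, noting both factors lie in the base group of the outer wreath product, tracking the shifted positions under conjugation by $\eta_j^{t_i}$, and invoking Claim \ref{claim:riti} to rule out collisions (Item~2 for $j\neq i$, Item~1 for $j=i$) so that the only nontrivial contribution is $[\gamma_i,\zeta_i]=\kappa_i$ at position~$0$ when $j=i$. The shift-sign bookkeeping you flag as the main obstacle matches the paper's convention, so nothing is missing.
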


\begin{proof}
We verify this by checking each coordinate:
\[\pr_j([\eta^{t_i} \varrho \eta^{-t_i}, \varrho]) = [\eta_j^{t_i} \varrho_j \eta_j^{-t_i}, \varrho_j].\]

Suppose that $i \neq j$. Both $\eta_j^{t_i} \varrho_j \eta_j^{-t_i}$ and $\varrho_j$ are elements of $(C_{p^j} \wr C_{p^j})^{r_j} \leq (C_{p^j} \wr C_{p^j}) \wr C_{r_j}$. The former has non-identity entries only at its $-t_i$-th and $(t_j - t_i)$-th coordinates; while the latter has non-identity entries only at its $0$-th and $t_j$-th coordinates, modulo $r_j$. By Claim \ref{claim:riti}, at each coordinate at least one of the two elements is the identity, and thus the commutator is trivial.

Suppose that $i = j$. Again, both $\eta_i^{t_i} \varrho_i \eta_i^{-t_i}$ and $\varrho_i$ are elements of $(C_{p^i} \wr C_{p^i})^{r_i} \leq (C_{p^i} \wr C_{p^i}) \wr C_{r_i}$. The former has $\gamma_i$ as its $0$-th coordinate, $\zeta_i$ as its $-t_i$-th coordinate, and identities elsewhere; while the latter has $\zeta_i$ as its $0$-th coordinate, $\gamma_i$ as its $t_i$-th coordinate, and identities elsewhere, modulo $r_i$. By Claim \ref{claim:riti}, the $0$-th coordinate is $[\gamma_i, \zeta_i] = \kappa_i$; while at all other coordinates at least one of the elements is the identity, and so the commutator is trivial.
\end{proof}

Moreover, we have
\[\prod\limits_{j = 0}^{r_i - 1} \eta_i^j \kappa_i \eta_i^{-j} = ((\Delta^1(1), \id, \ldots, \id), 0) \cdots ((\id, \ldots, \id, \Delta^1(1)), 0) = \Delta^2(1) \in (C_{p^i} \wr C_{p^i}) \wr C_{r_i}.\]
Call $\delta_i$ this element, and identify it with an element of $Q$, as usual, so that $\delta_i = \prod\limits_{j = 0}^{r_i - 1} \eta^j \kappa_i \eta^{-j}$. We have thus proved the following:

\begin{proposition}
\label{prop:def:unst}

There exists a sequence of integers $r_i \to \infty$ and a finitely generated group $\Gamma = \langle \varrho, \eta \rangle \leq \prod\limits_{i \geq 1} (C_{p^i} \wr C_{p^i}) \wr C_{r_i}$ which for each $i$ contains the element $\delta_i$ defined by
\[\pr_j(\delta_i) = 
\begin{cases}
\id \in (C_{p^j} \wr C_{p^j}) \wr C_{r_j} & \text{ if } i \neq j, \\
\Delta^2(1) \in (C_{p^i} \wr C_{p^i}) \wr C_{r_i} & \text{ if } i = j;
\end{cases}\]
where
\[\Delta^2 \colon C_{p^i} \to (C_{p^i} \wr C_{p^i}) \wr C_{r_i} : x \mapsto ((\Delta^1(x), \ldots, \Delta^1(x)), 0).\]
\end{proposition}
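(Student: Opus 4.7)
The proposition is essentially a packaging of the computations that precede it, so the plan is largely to point to the constructions already in place and tidy up the verification.

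The plan is to take the sequences $r_i = 2^{2i}$ and $t_i = 2^i - 1$ provided by Claim~\ref{claim:riti}, use them to define the elements $\varrho, \eta \in Q$ as in the preceding paragraphs, and set $\Gamma := \langle \varrho, \eta \rangle \leq Q$. By construction $\Gamma$ is two-generated, so the only thing that needs to be proven is that the elements $\delta_i$ described in the statement lie in $\Gamma$, and that they coincide with the diagonal embedding $\Delta^2(1)$ at the $i$-th coordinate.

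First I would record that $\kappa_i := [\eta^{t_i}\varrho\eta^{-t_i}, \varrho]$ visibly lies in $\Gamma$, and verify coordinate-by-coordinate that $\pr_j(\kappa_i) = \id$ whenever $j \neq i$, while $\pr_i(\kappa_i) = \Delta^1(1) \in (C_{p^i} \wr C_{p^i}) \wr C_{r_i}$ (sitting at the $0$-th $C_{r_i}$-coordinate). The case $i \neq j$ follows because $\eta_j^{t_i}\varrho_j\eta_j^{-t_i}$ is supported on the $-t_i$-th and $(t_j - t_i)$-th $C_{r_j}$-coordinates, while $\varrho_j$ is supported on the $0$-th and $t_j$-th, and these four indices are pairwise distinct modulo $r_j$ by Claim~\ref{claim:riti}, forcing the commutator at every coordinate to be trivial. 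The case $i = j$ is the direct commutator computation $[\gamma_i, \zeta_i] = \Delta^1(1)$ already carried out in the excerpt.

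Next I would define $\delta_i := \prod_{j=0}^{r_i - 1} \eta^j \kappa_i \eta^{-j}$, which clearly lies in $\Gamma$. For $j \neq i$ the $j$-th projection vanishes because each factor projects trivially. For $j = i$, conjugation by $\eta_i^j$ shifts the sole non-identity entry of $\kappa_i$ from $C_{r_i}$-coordinate $0$ to coordinate $j$; taking the product over $0 \leq j < r_i$ places a copy of $\Delta^1(1)$ at each $C_{r_i}$-coordinate, yielding exactly $\Delta^2(1)$ as desired.

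There is essentially no obstacle left: the work was all in extracting the two arithmetic properties of $(r_i, t_i)$ from Claim~\ref{claim:riti} that make the ``off-diagonal'' commutators vanish for $i \neq j$ and give the correct element for $i = j$. What remains for the proof of the proposition itself is only the bookkeeping above; the real difficulty, which is leveraging this construction to extend the unstable asymptotic homomorphism of $\bigoplus_{i \geq 1} \mathbb{Z}/p^i\mathbb{Z}$ along the diagonal embeddings $\Delta^2$ to a finitely generated example, is deferred to the subsequent proof of Theorem~\ref{thm:fg:unst}, where Lemma~\ref{lem:badestimates2} will be used.
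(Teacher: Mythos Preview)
Your proposal is correct and follows the paper's argument essentially verbatim: the paper's proof of the proposition is precisely the sequence of computations preceding it (the two Claims and the displayed product formula for $\delta_i$), and you have faithfully reproduced that bookkeeping. One small omission: in the case $i=j$ you still need item~(1) of Claim~\ref{claim:riti} (that $0,t_i,-t_i$ are pairwise distinct modulo $r_i$) to ensure the commutator is trivial at the $\pm t_i$-th $C_{r_i}$-coordinates, not just the earlier computation $[\gamma_i,\zeta_i]=\Delta^1(1)$ at the $0$-th coordinate.
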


We can now prove that the group $\Gamma$ is the desired counterexample to uniform stability.

\begin{theorem}
\label{thm:fg:unst}

Let $\Gamma$ be as above, and let $\K$ be a local field of residual characteristic $p$. Then $\Gamma$ is not uniformly $\GGL(\oo)$-stable.
\end{theorem}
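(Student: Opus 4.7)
The plan is to exhibit, for each $n$, an asymptotic homomorphism $\Phi_n \colon \Gamma \to \GGL_{k_n}(\oo)$ whose distance from every true homomorphism is bounded below by a constant $\ee > 0$ independent of $n$, thereby violating Lemma~\ref{lem:quant}. The lower bound on $\Hdist$ will come from evaluating at the element $\delta_n$ produced in Proposition~\ref{prop:def:unst}. Since $\pr_n(\delta_n) = \Delta^2(1)$ is central in $H_n$ of order exactly $p^n$, while $\pr_j(\delta_n) = \id$ for $j \neq n$, the element $\delta_n$ has order $p^n$ in $\Gamma$; hence every homomorphism $\Psi \colon \Gamma \to \GGL_{k_n}(\oo)$ sends $\delta_n$ to a matrix whose eigenvalues (in a finite extension of $\K$) are $p^n$-th roots of unity. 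My strategy is to arrange that $\Phi_n(\delta_n)$ admits $1+x$ as an eigenvalue, for $x \in \pp$ chosen as in Lemma~\ref{lem:roots}; the same eigenvector/norm estimate as in the proof of Lemma~\ref{lem:badestimates2} then gives $\|\Phi_n(\delta_n) - \Psi(\delta_n)\| \geq \ee$ uniformly in $n$, $k_n$, and $\Psi$.

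To construct $\Phi_n$, I exploit the identity $\delta_n = \prod_{j=0}^{r_n - 1} \eta^j \kappa_n \eta^{-j}$ with $\kappa_n = [\eta^{t_n}\varrho\eta^{-t_n}, \varrho]$ and $r_n \to \infty$ (Claim~\ref{claim:riti}). I would define $\Phi_n$ on the free group $F_{\varrho, \eta}$ by specifying matrices $A_n := \Phi_n(\varrho)$ and $B_n := \Phi_n(\eta)$ in $\GGL_{k_n}(\oo)$, extend multiplicatively, and descend to $\Gamma$ via Lemma~\ref{lem:ultra_lift}. The matrices should be arranged so that $B_n$ is a block permutation matrix, and $A_n$ is chosen so that the commutator $\Phi_n(\kappa_n) = [B_n^{t_n} A_n B_n^{-t_n}, A_n]$ has a determinant-$1$ perturbation with $1+x$ among its eigenvalues, placed in an asymmetric position relative to the block structure; conjugation by $B_n^j$ and the $r_n$-fold product should then preserve $1+x$ as an eigenvalue of $\Phi_n(\delta_n)$.

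The main obstacle is twofold. First, any commutator has determinant $1$, so $\Phi_n(\kappa_n)$ cannot literally equal the scalar $(1+x) I$; the compensating eigenvalues must be placed asymmetrically, since the naive symmetric arrangements (e.g.\ $\diag(1+x, (1+x)^{-1}, 1, \ldots, 1)$) cancel to the identity after the cyclic product over the $r_n$ translates. Second, $\Gamma$ is a finitely generated but in general infinitely presented subgroup of $\prod_i H_i$, and verifying $\defe(\Phi_n) \to 0$ requires every relation of $\Gamma$ to be satisfied up to error tending to zero. I would handle this by choosing $A_n, B_n$ to come from a faithful representation of the finite truncation $\prod_{i \leq N_n} H_i$ with $N_n \to \infty$, so that every relation arising from indices $i \leq N_n$ holds exactly and only the residual contributions from indices $i > N_n$ remain, to be absorbed using the profinite topology on $\prod_i H_i$. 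This combined algebraic and combinatorial balancing---simultaneously exhibiting a $1+x$ eigenvalue on the $n$-th coordinate and respecting the relations on all lower-indexed coordinates---is the technical heart of the argument.
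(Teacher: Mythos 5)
Your high-level plan — build maps $\Phi_n\colon\Gamma\to\GGL_{k_n}(\oo)$ with $\defe(\Phi_n)\to 0$, and get a uniform lower bound on $\Hdist(\Phi_n)$ by looking at the image of $\delta_n$, which has order $p^n$ in $\Gamma$ — is exactly the paper's strategy, and you correctly identify Lemma~\ref{lem:roots}/Lemma~\ref{lem:badestimates2} as the source of the lower bound. But the way you propose to construct $\Phi_n$ cannot work, for a reason you yourself half-notice and then paper over.

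You propose to define $\Phi_n$ on the free group $F_{\varrho,\eta}$ by choosing images $A_n, B_n$ of the two generators, extending multiplicatively, and descending via Lemma~\ref{lem:ultra_lift}. Under any such construction, $\Phi_n(\delta_n)$ is the image of a word in $\varrho,\eta$ representing $\delta_n$; since $\delta_n$ lies in the derived subgroup of $\Gamma$ (it is a product of conjugates of the commutator $\kappa_n$), that word is a product of commutators, so $\Phi_n(\delta_n)$ has determinant $1$. You correctly note this rules out $\Phi_n(\delta_n)=(1+x)I$. But your proposed remedy — arranging $1+x$ to be merely \emph{one} eigenvalue of $\Phi_n(\delta_n)$, with ``compensating eigenvalues placed asymmetrically'' — does not salvage the lower bound. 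Read the proof of Lemma~\ref{lem:badestimates2} again: the estimate $\|A-\Xi\|\ge\|Av-\Xi v\|=|(1+x)-\xi|$ uses an eigenvector $v$ of $\Xi$ and crucially that $A$ acts as the scalar $(1+x)$ on \emph{every} vector, in particular on $v$. If $1+x$ is just one eigenvalue of $A$, you have no control over $Av$ for an eigenvector $v$ of $\Xi$, and the argument collapses. So your route is blocked both in constructing $\Phi_n$ and in proving the lower bound.

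The paper avoids both problems by not defining $\Phi_i$ multiplicatively from generators at all. It sets $\Phi_i := \f_i^2\circ\pr_i$, where $\pr_i\colon\Gamma\to (C_{p^i}\wr C_{p^i})\wr C_{r_i}$ is the genuine projection homomorphism and $\f_i^2$ is a set-theoretic map defined on all of $(C_{p^i}\wr C_{p^i})\wr C_{r_i}$, built from $\f_i$ by putting diagonal/block-diagonal matrices on the base and permutation matrices on the acting cyclic groups. Because $\f_i^2$ is defined intrinsically on the whole wreath product (not as a multiplicative extension), one computes directly $\f_i^2(\Delta^2(1))=(1+x)I_{p^i r_i}$, a genuine scalar matrix — the determinant obstruction you worry about is simply irrelevant to a non-multiplicative map. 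And since $\pr_i$ is a homomorphism, $\defe(\Phi_i)\le\defe(\f_i^2)\le\defe(\f_i)\to 0$ automatically; the infinitely many relations of $\Gamma$ that you propose to handle by truncating the product and chasing ``residual contributions'' are dealt with for free. This is the missing idea: replace the generator-by-generator multiplicative construction with a composition of the projection homomorphism with a structured almost-representation of a single wreath-product factor.
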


\begin{proof}
Recall from Lemma \ref{lem:badestimates} that there exists a sequence $\f_i \colon C_{p^i} \to \oo^\times$ such that $\defe(\f_i) \to 0$ but $\Hdist(\f_i) \geq \ee$, for a fixed $\ee > 0$. Moreover, by Lemma \ref{lem:badestimates2}, the same estimates hold for $\diag_n \circ \f_i$, where $\diag_n \colon \GGL_1(\oo) \to \GGL_n(\oo) : \lambda \mapsto \lambda I_n$. Crucially, $\ee$ is independent of $i$ and $n$.

We extend $\f_i$ to a map $\f^1_i \colon C_{p^i} \wr C_{p^i} \to \GGL_{p^i}(\oo)$ by sending $(x_0, \ldots, x_{p^i - 1}) \in (C_{p^i})^{p^i}$ to the diagonal matrix with entries $\f_i(x_0), \ldots, \f_i(x_{p^i - 1})$; and the generator of the acting group to the permutation matrix corresponding to the cycle $(1 2 \cdots p^i)$. We take the following presentation of $C_{p^i} \wr C_{p^i}$:
\[\langle a, b \mid a^{p^i} = 1, b^{p^i} = 1, [b^j a b^{-j}, a] = 1 : 0 < j < p^i \rangle.\]
The map $\f_i^1$ satisfies all relations except the first one, which is controlled by $\f_i$, and therefore $\defe(\f^1_i) \leq \defe(\f_i) \to 0$. Note that
\[\f_i^1(\kappa_i) = \f_i^1 \circ \Delta^1(1) = \diag_{p^i} \circ \f_i(1).\]
We extend $\f_i^1$ further to a map
\[\f_i^2 \colon (C_{p^i} \wr C_{p^i}) \wr C_{r_i} \to \GGL_{p^i \cdot r_i}(\oo)\]
by sending $(x_0, \ldots, x_{r_i - 1}) \in (C_{p^i} \wr C_{p^i})^{r_i}$ to the block-diagonal matrix whose blocks are the square $p^i$ matrices $\f_i^1(x_0), \ldots, \f_i^1(x_{r_i - 1})$; and the generator of $C_{r_i}$ to the permutation matrix corresponding to $(1 2 \cdots p^{p^i \cdot r_i})^{p^i}$, which permutes cyclically the $r_i$ blocks. Then, by the same argument as above, $\defe(\f^2_i) \leq \defe(\f^1_i) \to 0$, and
\[\f_i^2(\delta_i) = \f_i^2 \circ \Delta^2(1) = \diag_{p^i \cdot r_i} \circ \f_i(1).\]

We define $\Phi_i \colon \Gamma \to \GGL_{p^i \cdot r_i}(\oo)$ as the composition of the projection onto $(C_{p^i} \wr C_{p^i}) \wr C_{r_i}$ with the map $\f_i^2$ above. Then $\defe(\Phi_i) \leq \defe(\f_i^2) \to 0$. Now if $\Psi \colon \Gamma \to \GGL_{p^i \cdot r_i}(\oo)$ is a homomorphism, restricting $\Psi$ to $\langle \delta_i \rangle \leq \Gamma$ provides a homomorphism of $C_{p^i}$ which must be $\ee$-far from $\f_i^2 \circ \Delta^2 = \diag_{p^i \cdot r_i} \circ \f_i \colon C_{p^i} \to \GGL_{p^i}(\oo)$ by Lemma \ref{lem:badestimates2}. In particular $\Hdist(\Phi_i) > \ee$, and we conclude that $\Gamma$ is not uniformly $\GGL(\oo)$-stable.
\end{proof}

\begin{remark}
The existence of a finitely generated group which is not uniformly stable answers positively Question 9.1 from the first version of this paper.

By Theorem \ref{thm:pw_un}, since $\Gamma$ is finitely generated, it is not pointwise $\GGL(\oo)$-stable either. Moreover, being a subgroup of a direct product of finite groups, $\Gamma$ is residually finite. Thus, this is our first example of a finitely generated residually finite group that is not pointwise $\GGL(\oo)$-stable, answering positively Question 9.2 from the first version of this paper.
\end{remark}

\pagebreak

\section{Stability via (bounded) cohomology}
\label{s:BC}

The goal of this section is to give criteria for $\GGL(\oo)$-stability of finitely presented groups in terms of (bounded) cohomology. The approach follows that in \cite{GLT}, and the reason we restrict to finitely presented groups is the same: the ultraproduct techniques work best for pointwise stability, but the quantitative approach needs a quantity that controls all local defects, and this is only possible for finitely presented groups. The ultrametric nature of our setting will produce \emph{bounded} cocycles, and so the bounded cohomological approach is the more natural one to take in this case. Over non-Archimedean fields, cohomology vanishing is, a priori, stronger than bounded cohomology vanishing (Proposition \ref{prop:comp}); this will imply the cohomological analogue of the statements.

The boundedness of the cocycles is a consequence of the fact that pointwise asymptotic homomorphisms are asymptotically close to uniform ones for finitely presented groups, by Item $2.$ of Proposition \ref{prop:ultra_asy}. So the reader may suspect that a bounded cohomological criterion for uniform stability should also hold in the Archimedean setting. However, in that case the situation is more delicate and requires the introduction of a different cohomology theory, called \emph{asymptotic cohomology} \cite{Bharat}. The advantage of our setting is that, while cocycles are bounded thanks to the uniform nature of the problem, we can still use the same ultraproduct techniques that apply to the pointwise setting of \cite{GLT}.

\medskip

We state the correspondence between asymptotic homomorphisms and bounded cohomology classes (Proposition \ref{prop:defdim:BC}) by means of the recently introduced \emph{property of defect diminishing} \cite{defdim}. This leads to a criterion for stability in terms of (bounded) cohomology vanishing (Corollary \ref{cor:BC:abs}), analogous to the one of \cite{GLT}. This criterion does not seem as widely applicable as in the Archimedean case, and we are only able to apply it to virtually free groups, whose stability was already shown in Corollaries \ref{cor:vfree_p} and \ref{cor:vfree}. At the end of this section, we discuss why this is the case, and propose a stronger bounded cohomological criterion that could be more suitable for applications (Question \ref{q:dual}).

However, phrasing the correspondence precisely allows to also deduce a relative version of the criterion (Corollary \ref{cor:BC:rel}). We use this to show Theorem \ref{intro:thm:fi} from the introduction: for finitely presented groups, uniform stability with a linear estimate passes to finite-index supergroups and quotients by certain normal subgroups. In particular, we obtain many results from Section \ref{s:char0} as corollaries of the corresponding ones of Section \ref{s:vpropi}, and produce new examples.

\begin{notation}
For the rest of this section, we fix a non-Archimedean local field $\K$ with ring of integers $\oo$, uniformiser $\uni$, maximal ideal $\pp = \uni \oo$, residue field $\kk$ of characteristic $p$, and value group $r^{\mathbb{Z}} = |\K^\times|$, where $r = |\uni| \in (0, 1)$.
\end{notation}

\subsection{Bounded cohomology}

We review here the basics of bounded cohomology of discrete groups that are needed for the rest of the section. For more information, see \cite{monod, Frig} for bounded cohomology over the reals, and \cite{mio} for bounded cohomology over non-Archimedean fields. We refer the reader back to Subsection \ref{ss:nona} for the basic notions from non-Archimedean functional analysis.

\medskip

We will work with the bar resolution throughout, since it is the easiest one to treat lifting problems with. Let $\Gamma$ be a group, $E$ a $\K[\Gamma]$-module, without a specified norm. Let
\[\CC^n(\Gamma; E) \coloneqq \{ f \colon \Gamma^n \to E \},\]
which is a $\K$-vector space with pointwise addition and scalar multiplication. Define the coboundary map $\delta^n \colon \CC^n(\Gamma; E) \to \CC^{n+1}(\Gamma; E)$ by the formula:
\begin{align*}
\delta^n(f)(g_1, \ldots, g_{n+1}) &\coloneqq g_1 \cdot f(g_2, \ldots, g_{n+1}) \\
& + \sum\limits_{i = 1}^n (-1)^i f(g_1, \ldots, g_i g_{i+1}, \ldots, g_{n+1}) \\
& + (-1)^{n+1} f(g_1, \ldots, g_n).
\end{align*}
This defines a cochain complex of $\K$-vector spaces $(\CC^\bullet(\Gamma; E), \delta^\bullet)$: we denote the cocycles by $\ZZ^\bullet(\Gamma; E)$, the coboundaries by $\BB^\bullet(\Gamma; E)$, and the \emph{cohomology} by $\HH^\bullet(\Gamma; E) \coloneqq \ZZ^\bullet(\Gamma; E) / \BB^\bullet(\Gamma; E)$. 

\medskip

Now suppose that $E$ is a normed $\K[\Gamma]$-module: recall that this means that $E$ is a normed $\K$-vector space equipped with a linear isometric action of $\Gamma$. Let
\[\CC^n_b(\Gamma; E) \coloneqq \{ f \colon \Gamma^n \to E : \| f \|_\infty < \infty \} \subseteq \CC^n(\Gamma; E),\]
which is a normed $\K$-vector space with the $\ell^\infty$-norm $\| \cdot \|_\infty$, and even a Banach space if $E$ is also Banach. With the same coboundary map, we obtain the cochain complex $(\CC^\bullet_b(\Gamma; E), \delta^\bullet)$, the bounded cocycles $\ZZ^\bullet_b(\Gamma; E)$, the bounded coboundaries $\BB^\bullet_b(\Gamma; E)$, and the \emph{bounded cohomology} $\HH^\bullet_b(\Gamma; E) \coloneqq \ZZ^\bullet_b(\Gamma; E) / \BB^\bullet_b(\Gamma; E)$. 

\medskip

Both bounded and ordinary cohomology are functorial: a homomorphism $f \colon \Lambda \to \Gamma$ induces \emph{pullbacks} in bounded and ordinary cohomology. These are the maps $f^*_{(b)} \colon \HH^n_{(b)}(\Gamma; E) \to \HH^n_{(b)}(\Lambda; E)$ obtained by precomposing cocycles on $\Gamma$ with $f$, where a $\K[\Gamma]$-module is turned into a $\K[\Lambda]$-module by pulling back the action via $f$. When $f$ is the inclusion of a subgroup, we call this the \emph{restriction} and denote it by $\res^\Gamma_\Lambda$.

The inclusion $\CC^n_b(\Gamma; E) \hookrightarrow \CC^n(\Gamma; E)$ is a chain map, that induces the \emph{comparison map}
\[c^n \colon \HH^n_b(\Gamma; E) \to \HH^n(\Gamma; E).\]
The kernel of this map, called \emph{exact bounded cohomology}, is denoted by $\EH^n_b(\Gamma, E)$. Over the reals it is very rich and interesting, even in the case where $n = 2$ and $E$ is the trivial normed $\mathbb{R}[\Gamma]$-module $\mathbb{R}$, leading to the theory of \emph{quasimorphisms} \cite[Chapter 2]{Frig}. However, over non-Archimedean fields, the exact bounded cohomology in degree $2$ is trivial for finitely generated groups:

\begin{proposition}[{\cite[Corollary 8.7]{mio}}]
\label{prop:comp}

Let $\Gamma$ be a finitely generated group, $E$ a normed $\K[\Gamma]$-module. Then the comparison map $c^2 \colon \HH^2_b(\Gamma; E) \to \HH^2(\Gamma; E)$ is injective.
\end{proposition}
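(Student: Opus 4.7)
The plan is to reduce the statement to a single concrete fact about $1$-cochains: if $f \in \CC^1(\Gamma; E)$ has bounded coboundary $\delta^1 f$, then $f$ itself is automatically bounded. Once this is established, injectivity of $c^2$ is immediate: a class in $\ker(c^2)$ is represented by a cocycle $z \in \ZZ^2_b(\Gamma; E)$ with $z = \delta^1 f$ for some $f \in \CC^1(\Gamma; E)$, and the claim forces $f \in \CC^1_b(\Gamma; E)$, so $[z] = 0 \in \HH^2_b(\Gamma; E)$.

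The main input is the ultrametric inequality, which collapses the usual quasimorphism estimate. Writing out the bar differential gives
$$f(g_1 g_2) = g_1 \cdot f(g_2) + f(g_1) - z(g_1, g_2),$$
and since the $\Gamma$-action is isometric, the ultrametric inequality yields the key bound
$$\|f(g_1 g_2)\| \leq \max\bigl\{\|f(g_1)\|,\, \|f(g_2)\|,\, \|z\|_\infty\bigr\}.$$
In the Archimedean setting, the analogous triangle-inequality estimate lets $\|f\|$ grow linearly in word length, which is what gives rise to a rich exact bounded cohomology through quasimorphisms. Here, by contrast, composing with another factor does not grow the norm beyond the maximum already seen.

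Next I would exploit finite generation. Fix a finite generating set $S$. Iterating the estimate above along any word in $S \cup S^{-1}$, I obtain
$$\|f\|_\infty \leq \max\bigl\{\|z\|_\infty,\, \|f(1)\|,\, \max_{s \in S \cup S^{-1}} \|f(s)\|\bigr\}.$$
Each quantity on the right is finite: evaluating $\delta^1 f = z$ at $(1, 1)$ gives $f(1) = z(1,1)$, so $\|f(1)\| \leq \|z\|_\infty$; evaluating at $(s, s^{-1})$ controls $\|f(s^{-1})\|$ by $\max\{\|f(1)\|, \|f(s)\|, \|z\|_\infty\}$; and $S$ itself is finite. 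Hence $f$ is bounded, as desired.

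The one subtle point, and the reason the result cannot be strengthened to higher degrees or to infinitely generated groups by this direct argument, is that the induction converts a bound \emph{on the generators} into a bound \emph{everywhere}. Without finite generation, $\max_{s \in S} \|f(s)\|$ need not be finite, and one cannot close the argument. In higher degrees, the analogous reasoning would have to propagate a bound on an $(n+1)$-cocycle $\delta^n f$ to a bound on $f$ itself, but the word-length induction that makes degree $2$ work has no direct analogue there.
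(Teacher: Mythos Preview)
Your argument is correct. The paper itself does not give a proof of this proposition; it is simply quoted from \cite{mio} as Corollary 8.7, so there is no in-paper proof to compare against. Your approach --- show that any $f \in \CC^1(\Gamma; E)$ with bounded $\delta^1 f$ is automatically bounded, using the ultrametric inequality to propagate the bound on generators to all of $\Gamma$ --- is the natural direct argument and is presumably the one given in \cite{mio}. The handling of $f(1)$ via $z(1,1) = f(1)$ and of $f(s^{-1})$ via $z(s, s^{-1})$ is clean, and your remark that this is exactly the step that fails without finite generation (or in higher degree) is on point.
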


This implies that for such groups, the injectivity of a pullback in cohomology implies the injectivity of the corresponding pullback in bounded cohomology:

\begin{corollary}
\label{cor:pullback}

Let $\Gamma$ be a finitely generated group, $f \colon \Lambda \to \Gamma$ a homomorphism, and $E$ a normed $\K[\Gamma]$-module. Suppose that $f^* \colon \HH^2(\Gamma; E) \to \HH^2(\Lambda; E)$ is injective. Then $f^*_b \colon \HH^2_b(\Gamma; E) \to \HH^2_b(\Lambda; E)$ is also injective.
\end{corollary}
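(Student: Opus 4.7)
The plan is to exploit the naturality of the comparison map together with Proposition \ref{prop:comp}. The comparison map is defined at the cochain level by the inclusion $\CC^\bullet_b(\Gamma; E) \hookrightarrow \CC^\bullet(\Gamma; E)$, and pullbacks are defined by precomposition with $f$; since precomposition sends bounded cochains to bounded cochains, the two operations commute on the nose at the cochain level, and therefore also in (bounded) cohomology. Concretely, this yields the commutative square
\[
\begin{tikzcd}
\HH^2_b(\Gamma; E) \arrow[r, "f^*_b"] \arrow[d, "c^2_\Gamma"'] & \HH^2_b(\Lambda; E) \arrow[d, "c^2_\Lambda"] \\
\HH^2(\Gamma; E) \arrow[r, "f^*"'] & \HH^2(\Lambda; E).
\end{tikzcd}
\]

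Now I would take $\alpha \in \HH^2_b(\Gamma; E)$ with $f^*_b(\alpha) = 0$, and chase it around the diagram: applying $c^2_\Lambda$ and using commutativity gives $f^*(c^2_\Gamma(\alpha)) = 0$, so by the injectivity hypothesis on $f^*$ one gets $c^2_\Gamma(\alpha) = 0$. Since $\Gamma$ is finitely generated, Proposition \ref{prop:comp} asserts that $c^2_\Gamma$ is injective, and therefore $\alpha = 0$. Note that no assumption on $\Lambda$ is needed: only the finite generation of $\Gamma$ enters, to feed Proposition \ref{prop:comp}. There is no real obstacle here, since the only non-trivial input is Proposition \ref{prop:comp} itself.
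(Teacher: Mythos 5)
Your proof is correct and uses exactly the same argument as the paper: the commutative square relating the comparison maps and the pullbacks, with injectivity of $c^2_\Gamma$ supplied by Proposition \ref{prop:comp}. The only difference is cosmetic — you spell out the diagram chase and the naturality of the comparison map explicitly, while the paper states the diagram and reads off the conclusion.
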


\begin{proof}
We have the following commutative diagram:
\[\begin{tikzcd}
	{\HH^2_b(\Gamma;E)} && {\HH^2_b(\Lambda;E)} \\
	\\
	{\HH^2(\Gamma;E)} && {\HH^2(\Lambda;E)}
	\arrow["{f^*_b}", from=1-1, to=1-3]
	\arrow["{f^*}"', hook, from=3-1, to=3-3]
	\arrow["{c^2}", from=1-3, to=3-3]
	\arrow["{c^2}"', hook, from=1-1, to=3-1]
\end{tikzcd}\]
where the left vertical arrow is injective by Proposition \ref{prop:comp}, and the bottom horizontal arrow is injective by hypothesis. Thus the top horizontal arrow must also be injective.
\end{proof}

\subsection{Diminishing the defect}

Lemma \ref{lem:stab_up} rephrases stability in terms of a lifting property. However the kernel in this lifting problem is not tractable with cohomology: it is not even abelian. The goal of this subsection is to show that a weaker quantitative statement (intuitively: every asymptotic homomorphism is close to one with smaller defect) can be related to a simpler lifting problem, where the kernel is more approachable and will be analysed in the next subsection. 

\medskip

Let us fix some terminology and notation concerning ultrafilters. Fix a free ultrafilter $\omega$. We say that an event $E_n$ holds for \emph{most} $n$ if $\{ n : E_n \text{ holds}\} \in \omega$. Accordingly we denote $\en \neq_\omega 0, \dd_n \leq_\omega \en$, and so on. Given sequences $\en >_\omega 0$ and $\dd_n \geq_\omega 0$, we write $\dd_n = O_\omega(\en)$ if there exists $C \geq 0$ such that $\dd_n \leq_\omega C \en$. The infimal such $C$ can be characterised as $\lim\limits_{n \to \omega} \dd_n/\en$: this limit makes sense since $\en \neq_\omega 0$, and it is finite since $\dd_n/\en \in [0, C]$ for most $n$. If this limit is $0$, we write $\dd_n = o_\omega(\en)$. 

\medskip

Fix a finite presentation $\Gamma = \langle S \mid R \rangle$. We will show stability in terms of asymptotic homomorphisms at the level of the free group $F_S$: see Proposition \ref{prop:stab_equiv}. For our purposes, it will be more convenient to replace usual convergence by convergence along $\omega$. In this setting, the following notion, introduced in \cite{GLT} and formalised in \cite{defdim}, allows to show stability with a linear estimate:

\begin{definition}
\label{def:defdim}

Let $\Gamma = \langle S \mid R \rangle$ be a finitely presented group, let $\G$ be a family of groups equipped with bi-invariant metrics, and let $(\hf_n \colon F_S \to G_n \in \G)_{n \geq 1}$ be a sequence with $\defe(\hf_n) \xrightarrow{n \to \omega} 0$. We say that a sequence $(\hff_n \colon F_S \to G_n)_{n \geq 1}$ \emph{diminishes the defect} of $(\hf_n)_{n \geq 1}$ if $\defe(\hff_n) = o_\omega(\defe(\hf_n))$ and $\dist(\hf_n, \hff_n) = O_\omega(\defe(\hf_n))$.

If such a $(\hff_n)_{n \geq 1}$ exists, we say that $(\hf_n)_{n \geq 1}$ has the \emph{property of defect diminishing}. If this is true for every $(\hf_n)_{n \geq 1}$, then $\Gamma$ is said to have the \emph{property of defect diminishing with respect to $\G$}.
\end{definition}

\begin{theorem}[{\cite[Theorem 3.5]{defdim}}]
Let $\Gamma$ be a finitely presented group, $\G$ a family of groups equipped with bi-invariant \emph{complete} metrics. Then $\Gamma$ has the property of defect diminishing with respect to $\G$ if and only if it is pointwise $\G$-stable with a linear estimate.
\end{theorem}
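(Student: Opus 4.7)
The equivalence splits into an easy direction and a substantive one. I would prove ``linear stability $\Rightarrow$ defect diminishing'' first, essentially by unwinding definitions. Suppose $\Gamma$ is pointwise $\G$-stable with a linear estimate; by the presentation-level quantitative reformulation of Corollary~\ref{cor:quant}, there exist $C, \ee_0 > 0$ such that any $\hf : F_S \to G \in \G$ with $\defe(\hf) \leq \ee_0$ admits a homomorphism $\hff : F_S \to G$ descending to $\Gamma$ with $\dist(\hf, \hff) \leq C \defe(\hf)$. For a sequence $(\hf_n)$ with $\defe(\hf_n) \to_\omega 0$, the corresponding $\hff_n$ satisfy $\defe(\hff_n) = 0 = o_\omega(\defe(\hf_n))$ and $\dist(\hf_n, \hff_n) \leq C\,\defe(\hf_n) = O_\omega(\defe(\hf_n))$, so they diminish the defect.

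For the converse, the plan is to iterate defect diminishing and use completeness of the metrics to pass to the limit. Starting from $\hf_n^{(0)} := \hf_n$ with $\defe(\hf_n^{(0)}) =: \ee_n \to_\omega 0$, at each step apply defect diminishing to obtain $\hf_n^{(k+1)}$ with $\defe(\hf_n^{(k+1)}) = o_\omega(\defe(\hf_n^{(k)}))$ and $\dist(\hf_n^{(k)}, \hf_n^{(k+1)}) = O_\omega(\defe(\hf_n^{(k)}))$. By a diagonal ultrafilter extraction (choosing $\omega$-large sets that shrink appropriately at each step), arrange that $\defe(\hf_n^{(k)}) \leq 2^{-k}\ee_n$ on an $\omega$-large set. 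The sequence $(\hf_n^{(k)}(w))_{k \geq 0}$ is then Cauchy in $G_n$ for each $w \in F_S$, and by completeness it converges to some $\hff_n(w) \in G_n$. The limit $\hff_n$ has $\defe(\hff_n) = 0$ (so descends to $\Gamma$) and $\dist(\hf_n, \hff_n) \leq \sum_{k \geq 0} \dist(\hf_n^{(k)}, \hf_n^{(k+1)})$, which we want to bound linearly by $\ee_n$. Pointwise stability with linear estimate then follows from the equivalent characterization in Corollary~\ref{cor:quant}, once we observe that the argument is independent of the choice of $\omega$ and converts into a uniform quantitative estimate via a standard ultrafilter-compactness argument.

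\textbf{Main obstacle.} The multiplicative constants $C_k$ hidden in the ``$O_\omega$'' at each iteration step are, a priori, not uniform in $k$: each application of defect diminishing is applied to a new sequence, and the notation $O_\omega$ allows the implicit constant to depend on that sequence. Consequently the naive summed bound $\sum_k C_k 2^{-k}\ee_n$ need not converge, and we do not automatically obtain a linear estimate. The key technical step is therefore to upgrade the qualitative defect-diminishing property to a \emph{uniform} one: to find constants $c \in (0,1)$ and $C > 0$, independent of the sequence, such that every $(\hf_n)$ with $\defe(\hf_n) \to_\omega 0$ admits a diminishing sequence $(\hff_n)$ satisfying $\defe(\hff_n) \leq c\,\defe(\hf_n)$ and $\dist(\hf_n, \hff_n) \leq C\,\defe(\hf_n)$ on an $\omega$-large set. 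This upgrade can be obtained by a contradiction/compactness argument: if no such uniform $(c, C)$ exist, one diagonalizes across counterexample sequences $(\hf_n^{(m)})$ with worsening ratios to manufacture a single sequence for which no diminishing exists, contradicting the hypothesis. Once uniformity is in hand, iteration produces a geometric bound $\dist(\hf_n, \hff_n) \leq \tfrac{C}{1-c}\,\ee_n$ and the argument closes.

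A conceptually equivalent route, which sidesteps the iteration bookkeeping, is to argue by contradiction at the outset: assume defect diminishing holds but linear stability fails, pick a sequence $(\hf_n)$ with $\Hdist(\hf_n)/\defe(\hf_n) \to_\omega \infty$, apply defect diminishing once to obtain $(\hff_n)$ with $\dist(\hf_n, \hff_n) = O_\omega(\defe(\hf_n)) = o_\omega(\Hdist(\hf_n))$ so $\Hdist(\hff_n)$ remains much larger than $\defe(\hff_n)$, and derive a contradiction by iterating into a limit whose existence is guaranteed by completeness. Either framing hinges on the same uniformization/compactness input, which is the real content of the theorem.
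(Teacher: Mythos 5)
The paper does not prove this theorem; it cites it from \cite{defdim}, so there is no ``paper's own proof'' to compare against. Assessing your proposal on its own merits: the easy direction is correct (though the quantitative reformulation you want is the one for \emph{general} families and \emph{pointwise} stability from Remark~\ref{rem:quant:fp}, not Corollary~\ref{cor:quant}, which is stated for ultrametric families and uniform stability), and for the hard direction you correctly identify the non-uniform-constant obstacle and the right fix, namely a uniformization step. However, your write-up has a second gap, distinct from the constants issue, which you do not flag and which your proposed iteration does not survive.

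Your plan iterates at the \emph{sequence} level: at step $k$ you pass to a new $\omega$-large set $A_k$ on which $\defe(\hf_n^{(k)}) \leq 2^{-k}\ee_n$ holds. The Cauchy estimate for $(\hf_n^{(k)})_{k\geq 0}$ at a fixed $n$ then requires $n \in \bigcap_{k\geq 1} A_k$; but a decreasing family of $\omega$-large sets can have empty intersection (e.g.\ $A_k = \{n \geq m_k\}$ with $m_k \to \infty$), so the claimed pointwise Cauchy convergence is unavailable for $\omega$-most $n$, even after you have fixed the constants. The repair is to prove the uniformization as a statement about \emph{individual maps} rather than sequences: there exist $C > 0$, $\delta_0 > 0$ such that every $\hf : F_S \to G \in \G$ with $0 < \defe(\hf) \leq \delta_0$ admits $\hff$ with $\defe(\hff) \leq \tfrac{1}{2}\defe(\hf)$ and $\dist(\hf,\hff) \leq C\defe(\hf)$. (Your sequence version does imply this: apply it to the sequence of counterexamples.) With the individual-map version, the iteration is performed for each fixed $n$ separately, with no ultrafilter in sight, and the $\bigcap A_k$ problem disappears; completeness and continuity of $\defe$ then produce a homomorphism $\hff_n$ with $\dist(\hf_n,\hff_n) \leq 2C\defe(\hf_n)$.

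The proof of the uniformization is also simpler than you suggest. There is no need to ``diagonalize across counterexample sequences'': if the uniform bound fails, then for each $m$ there is a single map $\hf^{(m)}$ with $0 < \defe(\hf^{(m)}) \leq 1/m$ such that every $\hff$ with $\defe(\hff) \leq \tfrac{1}{2}\defe(\hf^{(m)})$ satisfies $\dist(\hf^{(m)},\hff) > m\,\defe(\hf^{(m)})$. Apply defect diminishing to the single sequence $(\hf^{(m)})_{m}$: it yields $(\hff^{(m)})$ with $\defe(\hff^{(m)}) = o_\omega(\defe(\hf^{(m)}))$ and $\dist(\hf^{(m)},\hff^{(m)}) \leq_\omega D\defe(\hf^{(m)})$ for some fixed $D$. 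For $\omega$-most $m$ the defect of $\hff^{(m)}$ is below $\tfrac{1}{2}\defe(\hf^{(m)})$, forcing $D > m$ for $\omega$-most $m$, which contradicts freeness of $\omega$. With this repair your overall strategy is sound.
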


Here the linear estimate is to be intended for pointwise $\G$-stability in terms of presentations \cite[Subsection 2.1]{defdim}. However, we have seen that when $\G$ is an ultrametric family and $\Gamma$ is a finitely presented group, pointwise and uniform $\G$-stability of $\Gamma$ are equivalent (Theorem \ref{thm:pw_un}), and moreover the two quantitative notions also coincide (Remark \ref{rem:quant:fp}). Thus:

\begin{corollary}
\label{cor:defdim:ultra}

Let $\Gamma$ be a finitely presented group, $\G$ an ultrametric family whose metric groups are complete. Then $\Gamma$ has the property of defect diminishing with respect to $\G$ if and only if it is uniformly $\G$-stable with a linear estimate.
\end{corollary}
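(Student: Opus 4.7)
The plan is to deduce the corollary directly from the cited Theorem 3.5 of \cite{defdim} combined with the ultrametric-specific equivalences already established in Section \ref{s:ultra}. The cited theorem states that for a finitely presented group $\Gamma$ and a family $\G$ of groups equipped with complete bi-invariant metrics, the property of defect diminishing with respect to $\G$ is equivalent to pointwise $\G$-stability of $\Gamma$ with a linear estimate (in the quantitative sense from Remark \ref{rem:quant:fp}, phrased in terms of presentations via the function $D^{\G}_{\langle S \mid R \rangle}(\ee)$).

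First I would observe that since $\G$ is in particular a family of groups with complete bi-invariant metrics, the cited theorem applies verbatim to $\Gamma$ and $\G$. So it suffices to show that, in our ultrametric setting, pointwise $\G$-stability with a linear estimate (in the sense of presentations) is the same as uniform $\G$-stability with a linear estimate. This is precisely the content of Remark \ref{rem:quant:fp}: for an ultrametric family, the two quantitative characterizations coincide because the relevant function $D^{\G}_{\langle S \mid R \rangle}(\ee)$ appearing in the pointwise characterization (via \cite{a:comm}) is literally the same function that appears in Corollary \ref{cor:quant} for uniform stability. In particular, the linear lower bound $D^{\G}_{\langle S \mid R \rangle}(\ee) \geq C \cdot \ee$ characterizes both notions simultaneously.

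Finally, to pass from the presentation-dependent function to the intrinsic function $D^{\G}_{\Gamma}(\ee)$ of Definition \ref{def:quant}, I would invoke the two-sided estimate
\[
D^{\G}_{\langle S \mid R\rangle}(\ee) \ \geq \ D^{\G}_{\Gamma}(\ee) \ \geq \ \min\{ D^{\G}_{\langle S \mid R \rangle}(\ee), \ee \}
\]
from Corollary \ref{cor:quant}, which makes clear that a linear lower bound for one of these functions is equivalent to a linear lower bound for the other (since $\ee$ is itself linear in $\ee$). Chaining these three equivalences -- defect diminishing $\Leftrightarrow$ pointwise stability with a linear estimate $\Leftrightarrow$ uniform stability with a linear estimate -- yields the corollary.

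The proof is essentially an invocation of prior results, so there is no real obstacle; the only subtle point is making sure that the quantitative notion of ``linear estimate'' is unambiguous across the three characterizations (defect diminishing, pointwise stability at the level of presentations, uniform stability at the level of $\Gamma$), and this is guaranteed by the estimates in Corollary \ref{cor:quant} together with Remark \ref{rem:quant:fp}.
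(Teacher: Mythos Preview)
Your proposal is correct and follows essentially the same route as the paper: invoke \cite[Theorem 3.5]{defdim} to get the equivalence between defect diminishing and pointwise stability with a linear estimate, and then appeal to Remark \ref{rem:quant:fp} (together with Theorem \ref{thm:pw_un} and Corollary \ref{cor:quant}) to identify the quantitative pointwise and uniform notions in the ultrametric setting. Your extra step making the two-sided inequality from Corollary \ref{cor:quant} explicit is a small but helpful expansion of what the paper leaves implicit.
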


The family $\GGL(\oo)$ satisfies the hypotheses. Our goal is to show that the property of defect diminishing can be established via the vanishing of certain bounded cohomology classes (Proposition \ref{prop:defdim:BC}). To this end, we start by relating the property of defect diminishing to a lifting problem. 

\medskip 

From now on, let $G_n \coloneqq \GGL_{k_n}(\oo)$, and fix a sequence $(\hf_n \colon F_S \to G_n)_{n \geq 1}$ with $\en \coloneqq \defe(\hf_n) \xrightarrow{n \to \omega} 0$. Since the metric on $G_n$ takes values in $r^{\mathbb{Z}}$, the same holds for the sequence $\en$. Moreover we may assume that $\en >_\omega 0$, since otherwise $(\hf_n)_{n \geq 1}$ is already asymptotically close to a homomorphism that descends to $\Gamma$. The asymptotic homomorphism $(\hf_n)_{n \geq 1}$ induces a homomorphism to the metric ultraproduct of $(G_n)_{n \geq 1}$ (see Lemma \ref{lem:stab_up}). Here we will use a modified ultraproduct that takes into account the sequence $(\en)_{n \geq 1}$ as well. In analogy with notation which will shortly be introduced, we denote $\pi G \coloneqq \prod\limits_{n \geq 1} G_n$.

For a sequence $\dd_n \xrightarrow{n \to \omega} 0$, denote
\[N(\dd_n) \coloneqq \{ (A_n)_{n \geq 1} \in \pi G : \| A_n - I_{k_n} \| \leq_\omega \dd_n \},\]
and similarly $N(O_\omega(\dd_n))$ and $N(o_\omega(\dd_n))$.

\begin{lemma}
\label{lem:Ndn:normalsubgroup}
$N(\dd_n), N(O_\omega(\dd_n))$ and $N(o_\omega(\dd_n))$ are normal subgroups of $\pi G$.
\end{lemma}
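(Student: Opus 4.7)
The plan is to check the subgroup axioms and normality for all three sets uniformly, exploiting the ultrametric nature of $\|\cdot\|$ together with the bi-invariance properties recorded in Lemma \ref{lem:GL}. Since the three conditions ($\leq_\omega \dd_n$, $= O_\omega(\dd_n)$, $= o_\omega(\dd_n)$) are all closed under taking maximums and under comparison up to a constant factor, it suffices to prove the claim for $N(\dd_n)$; the other two cases follow by the same formal manipulation with the same bounds.

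First I would verify closure under inversion and multiplication. For $(A_n)_{n\geq 1} \in N(\dd_n)$, write
$$\| A_n^{-1} - I_{k_n} \| = \| A_n^{-1}(I_{k_n} - A_n) \| = \| A_n - I_{k_n} \|$$
using that $A_n^{-1} \in \GGL_{k_n}(\oo)$ and so has norm $1$ (Lemma \ref{lem:GL}, Items $4$ and $6$); hence $(A_n^{-1})_{n\geq 1} \in N(\dd_n)$. For $(A_n), (B_n) \in N(\dd_n)$, the ultrametric inequality gives
$$\| A_n B_n - I_{k_n} \| \leq \max\{ \| A_n(B_n - I_{k_n}) \|, \| A_n - I_{k_n} \| \} = \max\{ \| B_n - I_{k_n} \|, \| A_n - I_{k_n} \| \} \leq_\omega \dd_n,$$
again using Item $6$ of Lemma \ref{lem:GL} at the second step. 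Thus $N(\dd_n)$ is a subgroup of $\pi G$.

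For normality, let $(A_n) \in N(\dd_n)$ and $(C_n) \in \pi G$ be arbitrary. Since each $C_n \in \GGL_{k_n}(\oo)$, Item $6$ of Lemma \ref{lem:GL} yields
$$\| C_n A_n C_n^{-1} - I_{k_n} \| = \| C_n(A_n - I_{k_n}) C_n^{-1} \| = \| A_n - I_{k_n} \| \leq_\omega \dd_n,$$
so $(C_n A_n C_n^{-1})_{n \geq 1} \in N(\dd_n)$, proving normality. No step is delicate: the only mild subtlety is to remember that $\|C_n\| = \|C_n^{-1}\| = 1$ because $C_n$ and $C_n^{-1}$ both lie in $\GGL_{k_n}(\oo)$, which is what permits the cancellation under conjugation. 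Replacing $\dd_n$ throughout by $O_\omega(\dd_n)$ or $o_\omega(\dd_n)$ changes nothing in the argument: the maxima and equalities above respect both asymptotic comparisons.
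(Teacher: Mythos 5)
Your proof is correct and follows essentially the same argument as the paper: the ultrametric inequality combined with bi-invariance of $\| \cdot \|$ under $\GGL_{k_n}(\oo)$ (Lemma \ref{lem:GL}). The only cosmetic difference is the choice of decomposition for $A_n B_n - I_{k_n}$ (the paper writes $(A_n B_n - B_n) + (B_n - I_{k_n})$ while you write $A_n(B_n - I_{k_n}) + (A_n - I_{k_n})$), and your explicit verification of closure under inversion, which the paper leaves implicit.
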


\begin{remark}
The fact that $N(\dd_n)$ is a subgroup relies strongly on the ultrametric inequality, and will allow to streamline a few arguments, and to make them more quantitatively precise. This leads to a phrasing of the problems in terms of bounded cohomology instead of cohomology. By contrast, in the Archimedean case \cite{GLT} one needs to work with $N(O_\omega(\dd_n))$.
\end{remark}

\begin{proof}[Proof of Lemma \ref{lem:Ndn:normalsubgroup}]
We prove the statement for $N(\dd_n)$ (which is the only one specific to the ultrametric case), the rest is similar. Suppose that $(A_n)_{n \geq 1}, (B_n)_{n \geq 1} \in N(\dd_n)$. Then
\[\| A_n B_n - I_{k_n} \| \leq \max \{ \| A_n B_n - B_n \|, \| B_n - I_{k_n} \| \} \leq_\omega \dd_n,\]
where we have used that $\| \cdot \|$ is right-invariant. If now $(C_n)_{n \geq 1} \in \pi G$, then
\[\| C_n A_n C_n^{-1} - I_{k_n} \| = \| A_n - I_{k_n} \| \leq_\omega \dd_n. \qedhere\]
\end{proof}

We denote $\pi_{\dd_n}(G) \coloneqq \pi G /N(\dd_n)$, and similarly $\pi_{O_{\omega} (\dd_n)} G$ and $\pi_{o_{\omega} (\dd_n)} G$. Given a constant $C \geq 1$, we have $\en \leq_\omega C \en$, so $N(o_\omega(\en)) \leq N(\en) \leq N(C \en)$. Therefore there are quotient maps $\pi_{o_\omega(\en)} G \to \pi_{\en} G \to \pi_{C \en} G$. The sequence $(\hf_n)_{n \geq 1}$ induces a homomorphism $\f(\en) \colon \Gamma \to \pi_{\en} G$, as well as a homomorphism $\f(C \en) \colon \Gamma \to \pi_{C \en} G$. This yields the following lifting problem similar to the one from Lemma \ref{lem:cohopk}:
\[\begin{tikzcd}
	& {\pi_{o_\omega(\en)}G} \\
	\Gamma & {\pi_{\en}G} \\
	& {\pi_{C\en}G}
	\arrow["\ff", dashed, from=2-1, to=1-2]
	\arrow["{\f(C \en)}"', from=2-1, to=3-2]
	\arrow["{\f(\en)}", from=2-1, to=2-2]
	\arrow[from=2-2, to=3-2]
	\arrow[from=1-2, to=3-2, bend left = 50]
\end{tikzcd}\]

The solution to the above lifting problem implies (a more precise version of) the property of defect diminishing $(\hf_n)_{n \geq 1}$:

\begin{lemma}
\label{lem:red_lift}

The existence of a solution $\ff$ to the above lifting problem is equivalent to the existence of a sequence $(\hff_n \colon F_S \to G_n)_{n \geq 1}$ such that $\defe(\hff_n) = o_\omega(\en)$ and $\dist(\hf_n, \hff_n) \leq_\omega C \en$.
\end{lemma}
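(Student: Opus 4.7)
The plan is to prove the two directions of the equivalence separately, using the finiteness of $S$ and $R$ together with Lemma \ref{lem:ultra_free} to pass between ``pointwise" conditions on generators/relators and statements valid on all of $F_S$ or $\langle \langle R \rangle \rangle$. Throughout, I will identify the sequence $(\hf_n : F_S \to G_n)_{n \geq 1}$ with a single homomorphism $\hf : F_S \to \pi G$, and write $\pi_S : F_S \to \Gamma$ for the canonical projection.

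For the forward direction, suppose $\ff : \Gamma \to \pi_{o_\omega(\en)}G$ is a lift of $\f(C\en)$. For each $s \in S$, choose an arbitrary preimage $\hff(s) \in \pi G$ of $\ff(\pi_S(s))$, and extend multiplicatively to a homomorphism $\hff : F_S \to \pi G$; componentwise this gives the sequence $(\hff_n)_{n \geq 1}$. For each relator $r \in R$, $\hff(r)$ projects to $\ff(\pi_S(r)) = 1$ in $\pi_{o_\omega(\en)}G$, hence $\hff(r) \in N(o_\omega(\en))$, i.e.\ $\defe_r(\hff_n) = o_\omega(\en)$; since $R$ is finite, $\defe(\hff_n) = o_\omega(\en)$. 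For the distance, observe that for every $s \in S$ both $\hf(s)$ and $\hff(s)$ are lifts of $\f(C\en)(\pi_S(s))$, so they differ by an element of $N(C\en)$; finiteness of $S$ then gives $\dist(\hf_n, \hff_n) \leq_\omega C\en$.

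For the converse, assume $(\hff_n)_{n \geq 1}$ exists with the stated properties, and view the corresponding homomorphism $\hff : F_S \to \pi G$. Since $\defe_r(\hff_n) = o_\omega(\en)$ for every $r \in R$, Item~1 of Lemma \ref{lem:ultra_free} extends this to $\defe_r(\hff_n) = o_\omega(\en)$ for every $r \in \langle\langle R \rangle \rangle$. Thus $\hff(r) \in N(o_\omega(\en))$ for every such $r$, which means that the composition $F_S \xrightarrow{\hff} \pi G \to \pi_{o_\omega(\en)}G$ descends to a homomorphism $\ff : \Gamma \to \pi_{o_\omega(\en)}G$. To see that $\ff$ lifts $\f(C\en)$, note that $\dist_s(\hf_n, \hff_n) \leq \dist(\hf_n, \hff_n) \leq_\omega C\en$ for every $s \in S$, so $\hf(s)$ and $\hff(s)$ agree modulo $N(C\en)$; since $\pi_S(S)$ generates $\Gamma$, the compositions of $\ff$ and $\f(C\en)$ with the canonical map to $\pi_{C\en}G$ agree on a generating set, hence coincide.

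The argument is essentially bookkeeping, and no single step should present serious difficulty. The only subtle point is that the passage from defects on $R$ to defects on $\langle\langle R\rangle\rangle$ is genuinely non-trivial over a general family of metric groups, but the ultrametric inequality makes it costless here via Lemma \ref{lem:ultra_free}. Similarly, the finiteness of $S$ and $R$ is what turns suprema into finite maxima and allows the ``pointwise" $o_\omega$ and $O_\omega$ bounds on generators/relators to control the global quantities $\defe(\hff_n)$ and $\dist(\hf_n, \hff_n)$.
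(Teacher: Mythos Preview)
Your proof is correct and follows essentially the same approach as the paper, which simply defers to \cite[Theorem 4.2]{a:comm} (see Lemma \ref{lem:stab_up}) without writing out the details. One minor remark: in the converse direction your appeal to Lemma \ref{lem:ultra_free} is unnecessary, since once the composition $F_S \xrightarrow{\hff} \pi G \to \pi_{o_\omega(\en)}G$ is a homomorphism killing each $r \in R$, it automatically kills the normal closure $\langle\langle R \rangle\rangle$ and hence descends to $\Gamma$.
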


\begin{proof}
The proof is the same as that of \cite[Theorem 4.2]{a:comm} (see Lemma \ref{lem:stab_up}).
\end{proof}

We denote by $E_C \coloneqq N(C \en) / N(o_\omega(\en)) = \ker \{ \pi_{o_\omega(\en)}G \to \pi_{C \en} G \}$. As in Subsection \ref{ss:preli:split}, the above lifting problem reduces to a splitting problem
\[1 \to E_C \to \left( \pi_{o_\omega(\en)}G \times_{\f(C \en)} \Gamma \right) \to \Gamma \to 1\]
and in turn, if $E_C$ is abelian, to a cohomology vanishing problem with coefficients in $E_C$. Moreover, as in Lemma \ref{lem:cohopk}, the particular form of this lifting problem implies that the relevant cocycles take values in $N_1 = N(\en) / N(o_\omega(\en)) = \ker \{ \pi_{o_\omega(\en)}G \to \pi_{\en}G \}$.

\subsection{Additional structures on the kernel}
\label{ss:kerbanach}

In this subsection we show that not only is $E_C$ abelian, which allows to apply cohomology to the previous splitting problem, but moreover that it is the closed $C$-ball of a Banach $\K[\Gamma]$-module. This Banach $\K[\Gamma]$-module will be $E \coloneqq N(O_\omega(\en)) / N(o_\omega(\en))$. We can characterise $N(O_\omega(\en))$ as 
\[N(O_\omega(\en)) \coloneqq \bigcup\limits_{C \geq 1} N(C \en) = \left\{ (A_n)_{n \geq 1} \in \pi G : \lim\limits_{n \to \omega} \frac{\| A_n - I_{k_n} \|}{\en} < \infty \right\}.\]
Here we are using that $\en >_\omega 0$, and that a bounded sequence always admits an $\omega$-limit. We will denote by $A = (A_n)_{n \geq 1}$ elements of $\pi G$, and if $A \in N(O_\omega(\en))$ we will denote by $[A]$ its image in $E$. We also denote by $I \coloneqq (I_{k_n})_{n \geq 1}$ the identity of $\pi G$. 

\medskip

We start by showing that $E$ is a $\K$-vector space. It already has a well-defined product structure, but to underline that it is abelian we will denote it by $[A] + [B] \coloneqq [AB]$. The scalar multiplication will be given by a stretch fixing the identity, namely $\lambda [A] \coloneqq [\lambda A + (1 - \lambda) I] = [I + \lambda(A - I)]$.

\begin{lemma}
With these operations, $E$ is a $\K$-vector space.
\end{lemma}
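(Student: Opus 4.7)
The plan is to verify that the operations $[A]+[B] := [AB]$ and $\lambda[A] := [I + \lambda(A - I)]$ satisfy the vector space axioms, with the recurring principle that any quadratic error term in $(A - I)$ with $A \in N(O_\omega(\en))$ has norm $O_\omega(\en^2) = o_\omega(\en)$ and therefore vanishes in $E$.

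First I would show the group operation is abelian. Writing $A = I + a$, $B = I + b$ with $\|a\|, \|b\| = O_\omega(\en)$, one has $AB - BA = ab - ba$, whose norm is $O_\omega(\en^2) = o_\omega(\en)$. Since $(BA)^{-1} \in \GGL(\oo)$ has norm $1$ (Lemma \ref{lem:GL}), it follows that $\|AB(BA)^{-1} - I\| = \|(AB - BA)(BA)^{-1}\| = o_\omega(\en)$, so $AB \equiv BA$ modulo $N(o_\omega(\en))$. Next, I would check that scalar multiplication is well-defined. Since $\en \xrightarrow{n \to \omega} 0$, for any fixed $\lambda \in \K$ we have $|\lambda| \cdot \|A_n - I\| <_\omega 1$, so $I + \lambda(A_n - I) \in \GGL_{k_n}(\oo)$ for $\omega$-most $n$ by Lemma \ref{lem:GL}; the finitely many coordinates where this fails can be replaced by $I$ without altering the class. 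For independence of representative, if $A' = AB$ with $B \in N(o_\omega(\en))$, then $A' - I = (A - I) + A(B - I)$ and $\|\lambda A(B-I)\| \leq |\lambda| \cdot o_\omega(\en) = o_\omega(\en)$, from which $(I + \lambda(A'-I))(I + \lambda(A-I))^{-1} \in N(o_\omega(\en))$ follows as in the commutativity argument.

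It then remains to verify the axioms. The identities $1 \cdot [A] = [A]$, $0 \cdot [A] = [I]$ and $\lambda(\mu[A]) = (\lambda\mu)[A]$ hold at the level of representatives by direct computation. For the two distributivity laws, expanding gives
\[
(I + \lambda(A-I))(I + \lambda(B-I)) - (I + \lambda(AB - I)) = \lambda(\lambda - 1)(A-I)(B-I),
\]
\[
(I + \lambda(A-I))(I + \mu(A-I)) - (I + (\lambda+\mu)(A-I)) = \lambda\mu(A-I)^2,
\]
both of norm $O_\omega(\en^2) = o_\omega(\en)$, hence the two sides agree in $E$ after multiplying by the inverse of one of them (which has norm $1$). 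The only genuine subtlety is ensuring that the scalar action lands in $\GGL_{k_n}(\oo)$, which is a soft consequence of $\en \to_\omega 0$; everything else is a bookkeeping exercise on quadratic error terms. Morally, $E$ is the ``tangent space'' to $\pi G$ at the identity rescaled by $\en$, in which the group law $AB \mapsto (A-I) + (B-I)$ and the stretch $A \mapsto I + \lambda(A-I)$ linearize to the usual $\K$-vector space operations on $(A-I)/\en$.
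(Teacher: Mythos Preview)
Your proof is correct and follows essentially the same approach as the paper: both verify the vector space axioms directly, using that any quadratic expression in $(A-I)$ and $(B-I)$ has norm $O_\omega(\en^2) = o_\omega(\en)$ and hence vanishes in $E$. You are slightly more thorough in checking independence of representative for scalar multiplication, which the paper omits, but the computations for commutativity and the two distributivity laws are the same in spirit and content.
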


\begin{proof}
We already know that $(E, +)$ is a group. It is moreover abelian: for this we need to show that given $A, B \in N(O_\omega(\en))$, it holds $ABA^{-1}B^{-1} \in N(o_\omega(\en))$. This follows from the submultiplicativity of the norm $\| \cdot \|$ (see Lemma \ref{lem:GL}):
\begin{align*}
\|A_n B_n A_n^{-1} B_n^{-1} - I_{k_n}\| &= \|A_n B_n - B_n A_n\| \\
&= \|(A_n - I_{k_n})(B_n - I_{k_n}) - (B_n - I_{k_n})(A_n - I_{k_n})\| \\
&\leq \|A_n - I_{k_n}\| \cdot \|B_n - I_{k_n}\| = O_\omega(\ee_n^2) = o_\omega(\en).
\end{align*}

The scalar multiplication is well-defined: indeed
\[\lambda A_n + (1 - \lambda) I_{k_n} = I_{k_n} + \lambda(A_n - I_{k_n}),\]
is close to the identity, thus invertible, when $\|A_n - I_{k_n}\|$ is small enough. It is also easy to see that it is still in $N(O_\omega(\en))$, and that $(\lambda \mu) [A] = \lambda (\mu [A])$. Finally we prove bilinearity of the scalar multiplication. First, given $\lambda, \mu \in \K, [A] \in N(O_\omega(\en))$, we have:
\begin{align*}
\lambda \cdot [A] + \mu \cdot [A] &= [I + \lambda(A - I)] + [I + \mu(A - I)] \\
&= [I + (\lambda + \mu)(A - I) + \lambda \mu (A - I)^2] = (\lambda + \mu)[A],
\end{align*}
where we used that $\| (A_n - I_n)^2 \| = o_\omega(\en)$. Similarly, given $\lambda \in \K, [A], [B] \in N(O_\omega(\en))$, we have:
\begin{align*}
\lambda \cdot [A] + \lambda \cdot [B] &= [\lambda(A - I) + I] + [\lambda(B - I) + I] \\
&= [\lambda^2 (A - I)(B - I) + \lambda (A - I) + \lambda(B - I) + I] \\
&= [\lambda(A + B - 2I) + I] = [-\lambda(A - I)(B - I) + \lambda(AB - I) + I] \\
&= [\lambda(AB - I) + I] = \lambda \cdot [AB]. \qedhere
\end{align*}
\end{proof}

Now since $E$ is abelian, the splitting problem after Lemma \ref{lem:red_lift} gives an action of $\Gamma$ on $E$ by conjugacy. Concretely, this action is defined on the free group by $w \cdot [A] = [(\hf_n(w) A_n \hf_n(w)^{-1})_{n \geq 1}]$, and it descends to $\Gamma$ since $\defe(\hf_n) \leq_\omega \en$. Next, we introduce a norm on $E$, namely
\[\| [A] \|_E \coloneqq \lim\limits_{n \to \omega} \frac{\| A_n - I_{k_n} \|}{\en}.\]

\begin{lemma}
$\| \cdot \|_E$ is a solid Banach norm on $E$, and the action of $\Gamma$ is isometric.
\end{lemma}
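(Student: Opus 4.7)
The plan is to check, in order, that $\|\cdot\|_E$ is well-defined on $E$ and satisfies the norm axioms, that it is solid, that $E$ is complete, and that the $\Gamma$-action is isometric.

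For well-definedness, if $A' = AB$ with $B \in N(o_\omega(\en))$, the norms $\|A'_n - I_{k_n}\|$ and $\|A_n - I_{k_n}\|$ differ by at most $\|A_n(B_n - I_{k_n})\| \leq \|B_n - I_{k_n}\| = o_\omega(\en)$, so their $\en^{-1}$-scaled $\omega$-limits agree. Positive-definiteness is then automatic from the definitions: $\|[A]\|_E = 0$ iff $A \in N(o_\omega(\en))$ iff $[A] = 0$. The definition $\lambda[A] = [I + \lambda(A - I)]$ yields $\K$-multiplicativity at once. For the ultrametric inequality on $[A] + [B] = [AB]$, expanding
$$A_nB_n - I_{k_n} = (A_n - I_{k_n}) + (B_n - I_{k_n}) + (A_n - I_{k_n})(B_n - I_{k_n})$$
and applying submultiplicativity (Lemma \ref{lem:GL}) shows the cross term has norm $O_\omega(\en^2) = o_\omega(\en)$, while the remaining linear terms satisfy the ultrametric inequality inherited from $\|\cdot\|$. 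Solidity is then automatic: $\|A_n - I_{k_n}\|/\en$ takes values in the discrete set $r^{\mathbb{Z}} \cup \{0\} = |\K|$, so any bounded sequence there has $\omega$-limit in the same set.

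The main obstacle is completeness, which demands a diagonal argument along $\omega$. Given a Cauchy sequence $([A^{(m)}])_m$ in $E$, I would pass to a subsequence with $\|[A^{(m+1)}] - [A^{(m)}]\|_E < r^{m+2}$, so that each
$$S_m := \{n : \|A^{(m+1)}_n (A^{(m)}_n)^{-1} - I_{k_n}\| \leq r^{m+1}\en\}$$
lies in $\omega$. Setting $T_m := \bigcap_{k \leq m} S_k \cap \{n \geq m\}$ produces a decreasing family in $\omega$ with empty total intersection, so that $m(n) := \max\{m : n \in T_m\}$ is finite everywhere and tends to infinity along $\omega$. Defining $A_n := A^{(m(n))}_n$ (with $A_n := I_{k_n}$ if $n \notin T_0$), a telescoping of
$$A_n (A^{(m)}_n)^{-1} = \bigl(A^{(m(n))}_n(A^{(m(n)-1)}_n)^{-1}\bigr) \cdots \bigl(A^{(m+1)}_n(A^{(m)}_n)^{-1}\bigr)$$
combined with the ultrametric inequality and bi-invariance of $\|\cdot\|$ (Lemma \ref{lem:GL}) gives $\|A_n (A^{(m)}_n)^{-1} - I_{k_n}\| \leq r^{m+1}\en$ for $n \in T_{m+1}$, whence $A \in N(O_\omega(\en))$ and $\|[A] - [A^{(m)}]\|_E \leq r^{m+1}$.

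Finally, isometry of the $\Gamma$-action follows directly from bi-invariance of $\|\cdot\|$ under multiplication by elements of $\GGL_{k_n}(\oo)$ (Lemma \ref{lem:GL}):
$$\|\hf_n(w)A_n\hf_n(w)^{-1} - I_{k_n}\| = \|\hf_n(w)(A_n - I_{k_n})\hf_n(w)^{-1}\| = \|A_n - I_{k_n}\|,$$
so $\|w \cdot [A]\|_E = \|[A]\|_E$ after dividing by $\en$ and taking the $\omega$-limit.
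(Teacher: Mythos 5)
Your proof is correct. The norm axioms, solidity, and isometry are handled essentially as in the paper (you expand $A_nB_n - I_{k_n}$ additively and kill the cross term by submultiplicativity, whereas the paper uses right-invariance to write $\|A_nB_n - B_n\| = \|A_n - I_{k_n}\|$ directly; both work). The genuine divergence is completeness, which is indeed the crux of the lemma. The paper's argument is soft: since each $\GGL_{k_n}(\oo)$ is compact, the countable product $\pi G$ is compact metrizable, so a subsequence of representatives of a Cauchy sequence converges coordinatewise to some $A \in \pi G$, which is then shown to be the limit in $E$ by combining the Cauchy estimate along $\omega$ with pointwise convergence along the sequence index. Your argument is constructive: you extract a fast Cauchy subsequence, stratify $\mathbb{N}$ by the nested $\omega$-sets $T_m$, assemble a candidate by the diagonal cutoff $A_n := A_n^{(m(n))}$, and verify convergence by telescoping along the ultrametric inequality. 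What you gain is independence from any compactness input and a self-contained argument closer in spirit to the classical proof that $\ell^\infty$-type quotients are Banach; what the paper gains is brevity, at the price of invoking sequential compactness of $\pi G$ to conjure the limit. Both are fully correct.
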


\begin{proof}
$\| \cdot \|_E$ is well-defined on $N(O_\omega(\en))$ and it is zero precisely on $N(o_\omega(\en))$, by very definition of these two spaces. Now
\[\frac{\| A_n B_n - I_{k_n} \|}{\en} \leq \frac{\max \{ \|A_n B_n - B_n\|, \|B_n - I_{k_n}\| \} }{\en} = \max \left\{ \frac{\|A_n - I_{k_n}\|}{\en}, \frac{\|B_n - I_{k_n}\|}{\en} \right\},\]
and taking the $\omega$-limit shows that $\| AB \|_E \leq \max\{ \| A \|_E, \| B \|_E \}$. This implies at once that $\| \cdot \|_E$ is well-defined on $E$, and that it satisfies the ultrametric inequality. That it is $\K$-multiplicative is clear. It takes values in $r^\mathbb{Z}$ because the $\ell^\infty$-norm does, so $\| \cdot \|_E$ is a solid norm on $E$. Since $\Gamma$ acts by conjugation by elements of $\GGL_{k_n}(\oo)$, and the $\ell^\infty$-norm is bi-invariant, the action is isometric. 

\medskip

We are left to show that $E$ is Banach. So let $\{ [A^k] = [(A^k_n)_{n \geq 1}] \}_{k \geq 1}$ be a Cauchy sequence. Explicitly, this means that
\[\| [A^k] - [A^l] \|_E = \lim\limits_{n \to \omega} \frac{\|A^k_n (A^l_n)^{-1} - I_{k_n} \|}{\en} = \lim\limits_{n \to \omega} \frac{\|A^k_n - A^l_n\|}{\en} \xrightarrow{k, l \to \infty} 0.\]
The product space $\pi G$ is compact, so up to subsequence we may assume that a sequence of representatives $A^k$ converges in this topology to some $A \in \pi G$. This means pointwise convergence, that is: $\|A^k_n - A_n\| \xrightarrow{k \to \infty} 0$ for all $n$, although the convergence is not necessarily uniform in $n$. We need to show that $A \in N(O_\omega(\en))$ and that $\|[A^k] - [A]\|_E \xrightarrow{k \to \infty} 0$.

Let $\delta > 0$ be fixed. We will show that there exists some $m = m(\delta) \geq 1$ such that for all $k \geq m$ and for most $n$ we have: $\frac{\|A^k_n - A_n\|}{\en} < \delta$. Then, an application of the triangle inequality shows that $A \in N(O_\omega(\en))$, and $\omega$-convergence follows by letting $\delta \to 0$. We choose $m$ to be such that $\|[A^k] - [A^l]\|_E < \delta$ for all $k, l \geq m$. By definition of the $\omega$-limit it follows that
\[X \coloneqq \left\{ n \geq 1 : \en \neq 0, \, \frac{\|A^k_n - A^l_n\|}{\en} < \delta \, \, \, \forall \, k, l \geq m \right\} \in \omega.\]
For all $n \in X$, let $m_n$ be such that $\|A^l_n - A_n\| < \delta \en$ for all $l \geq m_n$ (we can do this because $n \in X$ and so $\en \neq 0$). Then for all $k \geq m$, given $n \in X$ and choosing $l \geq \max \{ m_n, m \}$ we have:
\[\frac{\|A^k_n - A_n \|}{\en} \leq \max \left\{ \frac{\|A^k_n - A^l_n \|}{\en} , \frac{\|A^l_n - A_n \|}{\en} \right\} < \max \left\{ \delta, \frac{\delta \en}{\en} \right\} = \delta.\]
This concludes the proof.
\end{proof}

By very definition:

\begin{lemma}
\label{lem:ker_ball}

$E_C$ is the closed $C$-ball in $E$ with respect to the norm $\| \cdot \|_E$.
\end{lemma}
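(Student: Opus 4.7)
The plan is to prove both inclusions between $E_C$ and the closed $C$-ball $B_C := \{[A] \in E : \|[A]\|_E \leq C\}$ directly from the definitions, with the key observation being that the value group $r^\mathbb{Z}$ of $\K$ is discrete in $(0,\infty)$.

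For the inclusion $E_C \subseteq B_C$, I would take $[A] \in E_C$ and pick any representative $A = (A_n)_{n \geq 1} \in N(C\en)$. By the definition of $N(C\en)$, the inequality $\|A_n - I_{k_n}\| \leq C\en$ holds for most $n$, so $\|A_n - I_{k_n}\|/\en \leq C$ for most $n$, and passing to the $\omega$-limit gives $\|[A]\|_E \leq C$.

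For the reverse inclusion $B_C \subseteq E_C$, I would show that any representative $A$ of a class $[A]$ with $\|[A]\|_E \leq C$ already lies in $N(C\en)$. The crucial point is that each $\|A_n - I_{k_n}\|/\en$ lies in $r^\mathbb{Z} \cup \{0\}$, a set which is discrete in $(0, \infty)$ with sole accumulation point at $0$. In the strict case $\|[A]\|_E < C$, one immediately finds a neighborhood of the $\omega$-limit bounded above by $C$, which captures the sequence on a set in $\omega$ and gives $A \in N(C\en)$. In the boundary case $\|[A]\|_E = C$ (which forces $C \in r^\mathbb{Z}$ since the norm is solid), the discreteness of $r^\mathbb{Z}$ allows me to choose a neighborhood of $C$ meeting $r^\mathbb{Z} \cup \{0\}$ only at the single point $C$; then $\|A_n - I_{k_n}\|/\en = C$ for most $n$, and again $A \in N(C\en)$.

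I do not expect any serious obstacle: the statement is essentially an unwinding of definitions, and the only subtle point is handling the boundary case, where the discreteness of the value group, already used to verify that $\|\cdot\|_E$ is solid, allows the $\omega$-limit to be attained on a set in $\omega$.
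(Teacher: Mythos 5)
Your proposal is correct. The paper offers no proof for this lemma at all, introducing it only with the phrase ``By very definition:'' and relying on the reader to unwind the definitions; your argument is exactly that unwinding. The one genuinely non-automatic point is the reverse inclusion $B_C \subseteq E_C$: the implication from $\lim_{n\to\omega} \|A_n - I_{k_n}\|/\ee_n \leq C$ to $\|A_n - I_{k_n}\| \leq C\ee_n$ for most $n$ fails for an arbitrary real sequence (consider $x_n = C + 1/n$), and you correctly observe that it holds here because the quotients $\|A_n - I_{k_n}\|/\ee_n$ take values in the discrete set $r^{\mathbb{Z}} \cup \{0\}$, whose only accumulation point below $C$ is $0$; so near any positive limit the sequence must eventually stabilize on a set in $\omega$. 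Your boundary-case discussion (with $\|[A]\|_E = C$ forcing $C \in r^{\mathbb{Z}}$ by solidity) is the right way to make this precise. This is the same argument the paper tacitly has in mind, just made explicit, so there is no gap and no genuinely different route.
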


\begin{remark}
\label{rem:matrix:ultraproduct}

The Banach $\K[\Gamma]$-module $E$ is isometrically $\Gamma$-isomorphic to another Banach $\K[\Gamma]$-module which admits a nicer description, namely the matrix ultraproduct $\prod\limits_{n \to \omega} \MM_{k_n}(\K)$ with the $\ell^\infty$-norm. This is the quotient of the subspace of bounded sequences in the direct product by the subspace of sequences $(M_n)$ such that $\| M_n \| \xrightarrow{n \to \omega} 0$, and can be endowed with a natural norm $\| [M] \| = \lim\limits_{n \to \omega} \| M_n \|$ and a $\Gamma$-action by conjugacy via $(\hf_n)_{n \geq 1}$. Then the isometric $\Gamma$-isomorphism is given by
\[\prod\limits_{n \to \omega} M_{k_n}(\K) \to E : [M = (M_n)_{n \geq 1}] \mapsto [(I_n + \uni^{-k(n)} M_n)],\]
where $k(n)$ is such that $\ee_n = r^{k(n)}$.
We will not be making use of this alternative description.
\end{remark}

\subsection{Stability via vanishing}

We can finally prove the correspondence between asymptotic homomorphisms and bounded cohomology classes.

\begin{proposition}
\label{prop:defdim:BC}

Let $\Gamma = \langle S \mid R \rangle$ be a finitely presented group. Then for every sequence $(\hf_n \colon F_S \to G_n)_{n \geq 1}$ with $\defe(\hf_n) \xrightarrow{n \to \omega} 0$, there exists a Banach $\K[\Gamma]$-module with a solid norm $E$, and a class $\alpha \in \HH^2_b(\Gamma; E)$, which vanishes if and only if $(\hf_n)_{n \geq 1}$ has the defect diminishing property.
\end{proposition}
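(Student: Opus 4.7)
The plan is to convert the lifting-problem formulation of defect diminishing (Lemma~\ref{lem:red_lift}) into a vanishing statement in bounded cohomology, using the Banach $\K[\Gamma]$-module $E$ constructed in Subsection~\ref{ss:kerbanach}. Let $\en := \defe(\hf_n)$, which we may assume is positive for most $n$.

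First I would fix a set-theoretic section $\tilde\sigma : \Gamma \to F_S$ of the projection and define the candidate bounded $2$-cocycle by
$$z(g, h) := [(\hf_n(\tilde\sigma(gh))^{-1}\hf_n(\tilde\sigma(g))\hf_n(\tilde\sigma(h)))_{n \geq 1}].$$
Since $\hf_n$ is a homomorphism on $F_S$, this equals $[(\hf_n(w))_{n \geq 1}]$ for the word $w := \tilde\sigma(gh)^{-1}\tilde\sigma(g)\tilde\sigma(h) \in \langle\langle R \rangle\rangle$. Lemma~\ref{lem:ultra_free} then gives $\|\hf_n(w) - I\| \leq \defe(\hf_n) = \en$ for most $n$, so the representing sequence lies in $N(\en)$ and $\|z(g,h)\|_E \leq 1$. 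A routine computation (really just the Schreier formula) shows that $z$ satisfies the $2$-cocycle identity with respect to the $\Gamma$-action on $E$ introduced in Subsection~\ref{ss:kerbanach}; independence of the choice of $\tilde\sigma$ up to a bounded coboundary follows similarly, since two sections differ by an element of $\langle\langle R \rangle\rangle$ which Lemma~\ref{lem:ultra_free} again controls. This defines $\alpha := [z] \in \HH^2_b(\Gamma; E)$.

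Next I would prove the equivalence via the intermediate lifting-splitting picture of Subsection~\ref{ss:preli:split}. By construction, $z$ is precisely a Schreier cocycle representing the extension
$$1 \to E_C \to \left(\pi_{o_\omega(\en)}G \times_{\f(C\en)} \Gamma\right) \to \Gamma \to 1$$
for every $C \geq 1$, once $E_C$ is identified with the closed $C$-ball of $E$ via Lemma~\ref{lem:ker_ball}. Schreier theory says this extension splits iff $[z]$ vanishes in $\HH^2(\Gamma; E_C)$, i.e.\ iff $z = \delta c$ for some $c : \Gamma \to E_C$; and Lemma~\ref{lem:red_lift} says that the existence of such a splitting for some $C$ is equivalent to defect diminishing for $(\hf_n)_{n \geq 1}$. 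To transfer this to $\HH^2_b(\Gamma; E)$, the key observation is that vanishing of $\alpha$ means $z = \delta c$ for a bounded cochain $c : \Gamma \to E$; boundedness of $c$ by a constant $C := \|c\|_\infty$ forces $c$ to take values in $E_C$, and conversely any trivialization at the $E_C$-level is automatically bounded. These two equivalences combine to yield $\alpha = 0$ iff $(\hf_n)_{n \geq 1}$ has the defect diminishing property.

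The main obstacle will be bookkeeping, rather than any substantive difficulty: (i) verifying that the $\Gamma$-action on $E$ defined in Subsection~\ref{ss:kerbanach} agrees exactly with the conjugation action induced by the extension, so that the inhomogeneous cocycle identities on both sides coincide with matching sign conventions; (ii) tracking the dependence on $C$ carefully, lifting representatives of a bounded $c$ into $N(C\en)$ and checking that the resulting modification of $(\hf_n)_{n \geq 1}$ is the sequence called for by Lemma~\ref{lem:red_lift}; and (iii) handling the passage from ``$z$ takes values in $E_1$'' to the correct quantitative bound $\dist(\hf_n, \hff_n) = O_\omega(\en)$, which is just the reflection at the level of $E$ of the fact that $\|c\|_\infty$ is finite but $\omega$-dependent. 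None of these present a genuine hurdle given the Banach-module framework already established.
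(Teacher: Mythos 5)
Your proof is correct and follows essentially the same route as the paper: translate defect diminishing to the $C$-parametrized lifting problem via Lemma~\ref{lem:red_lift}, reduce that to a splitting problem whose obstruction is a Schreier cocycle $z$ taking values in $E_1$, and then observe that the existence of a trivializing cochain valued in $E_C$ for \emph{some} $C$ is precisely the vanishing of $[z]$ in $\HH^2_b(\Gamma; E)$. The paper states the cocycle construction more tersely ("from the discussion following Lemma~\ref{lem:red_lift}"), whereas you write it out explicitly and verify the norm bound via Lemma~\ref{lem:ultra_free}, but the logical content is identical.
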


\begin{proof}
Let $\ee_n \coloneqq \defe(\hf_n)$. Recall that $(\hf_n)_{n \geq 1}$ is said to have the defect diminishing property if there exists $(\hff_n)_{n \geq 1}$ such that $\defe(\hff_n) = o_\omega(\ee_n)$ and $\dist(\hf_n, \hff_n) = O_\omega(\ee_n)$, or equivalently $\dist(\hf_n, \hff_n) \leq_\omega C \ee_n$ for some $C \geq 1$. By Lemma \ref{lem:red_lift}, this is equivalent to the existence of a solution to the following lifting problem:
\[\begin{tikzcd}
	& {\pi_{o_\omega(\en)}G} \\
	\Gamma & {\pi_{\en}G} \\
	& {\pi_{C\en}G}
	\arrow["\ff", dashed, from=2-1, to=1-2]
	\arrow["{\f(C \en)}"', from=2-1, to=3-2]
	\arrow["{\f(\en)}", from=2-1, to=2-2]
	\arrow[from=2-2, to=3-2]
	\arrow[from=1-2, to=3-2, bend left = 50]
\end{tikzcd}\]

We let $E, E_C$ be as in the previous subsection, and recall that $E$ is a Banach $\K[\Gamma]$-module with a solid norm, and that $E_C$ is the closed $C$-ball in $E$. From the discussion following Lemma \ref{lem:red_lift}, we can construct a cocycle $z \colon \Gamma^2 \to E_1$ whose class in $\HH^2_b(\Gamma; E_C)$ vanishes if and only if there exists a solution to the above lifting problem. Now by very definition, this occurs if and only if there exists $b \colon \Gamma \to E$ such that $b$ takes values in $E_C$ and $\delta^1(b) = z$. Therefore this holds for \emph{some} $C$, if and only if the class $\alpha \in \HH^2_b(\Gamma; E)$ represented by $z \colon \Gamma^2 \to E$ vanishes.
\end{proof}

We immediately obtain:

\begin{corollary}
\label{cor:BC:abs}

Let $\Gamma$ be a finitely presented group, and suppose that $\HH^2_b(\Gamma; E) = 0$ for every Banach $\K[\Gamma]$-module $E$ with a solid norm. Then $\Gamma$ is uniformly $\GGL(\oo)$-stable, with a linear estimate.
In particular, this holds if $\HH^2(\Gamma; E) = 0$ for every such $E$.
\end{corollary}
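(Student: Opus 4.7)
The plan is to assemble the statement by stringing together Corollary~\ref{cor:defdim:ultra} and Proposition~\ref{prop:defdim:BC}, which together convert the bounded-cohomological hypothesis directly into uniform stability with a linear estimate. Since $\Gamma$ is finitely presented and $\GGL(\oo)$ is a family of complete ultrametric groups, Corollary~\ref{cor:defdim:ultra} says that uniform $\GGL(\oo)$-stability with a linear estimate for $\Gamma$ is equivalent to the property of defect diminishing with respect to $\GGL(\oo)$. By definition, the latter requires showing that every sequence $(\hf_n : F_S \to G_n)_{n \geq 1}$ with $\defe(\hf_n) \xrightarrow{n \to \omega} 0$ has the defect diminishing property, for an arbitrary free ultrafilter $\omega$.

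So fix such an ultrafilter and such a sequence. Proposition~\ref{prop:defdim:BC} produces a Banach $\K[\Gamma]$-module $E$ with a solid norm and a class $\alpha \in \HH^2_b(\Gamma; E)$ whose vanishing is equivalent to the defect diminishing property for $(\hf_n)_{n \geq 1}$. By hypothesis, $\HH^2_b(\Gamma; E) = 0$, so $\alpha = 0$; hence defect diminishing holds for $(\hf_n)_{n \geq 1}$. Since this applies to every asymptotic homomorphism, $\Gamma$ has the property of defect diminishing, and Corollary~\ref{cor:defdim:ultra} yields the desired stability.

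For the final clause, I would invoke Proposition~\ref{prop:comp}: $\Gamma$ is finitely generated (being finitely presented), so the comparison map $c^2 : \HH^2_b(\Gamma; E) \to \HH^2(\Gamma; E)$ is injective for every normed $\K[\Gamma]$-module $E$, and in particular for every Banach $\K[\Gamma]$-module $E$ with a solid norm. Therefore vanishing of $\HH^2(\Gamma; E)$ for all such $E$ forces vanishing of $\HH^2_b(\Gamma; E)$ for all such $E$, and the first part applies. No step is really an obstacle here: the entire content of the corollary is packaged into Proposition~\ref{prop:defdim:BC}, and the work of turning asymptotic homomorphisms into bounded $2$-cocycles, as well as identifying the kernel of the lifting problem with the closed unit ball of a Banach $\K[\Gamma]$-module, was carried out in Subsections~\ref{ss:kerbanach} and in the proof of Proposition~\ref{prop:defdim:BC}.
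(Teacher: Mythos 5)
Your proof is correct and follows exactly the paper's route: combine Proposition~\ref{prop:defdim:BC} with Corollary~\ref{cor:defdim:ultra} for the bounded-cohomological statement, then invoke Proposition~\ref{prop:comp} (injectivity of the comparison map for finitely generated groups) to deduce the cohomological version. Nothing is missing or different.
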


\begin{proof}
The bounded-cohomological statement follows from Proposition \ref{prop:defdim:BC} and Corollary \ref{cor:defdim:ultra}. The cohomological one then follows from Proposition \ref{prop:comp}.
\end{proof}

\begin{example}
The vanishing of $H^2$ in Corollary \ref{cor:BC:abs} holds for finitely generated virtually free groups, when $\K$ has characteristic $0$, and for finitely generated virtually free groups without elements of order $p$, when $\K$ has characteristic $p$. Indeed, these are precisely the finitely generated groups of $\K$-cohomological dimension at most $1$ (and they are automatically finitely presented) \cite{cd1}. Thus we recover Corollaries \ref{cor:vfree_p} and \ref{cor:vfree}.
\end{example}

With a little more work, we also obtain a relative version of the above:

\begin{corollary}
\label{cor:BC:rel}

Let $\Lambda, \Gamma$ be finitely presented groups, where $\Lambda$ is uniformly $\GGL(\oo)$-stable with a linear estimate, and $f \colon \Lambda \to \Gamma$ a homomorphism. Suppose that $f^*_b \colon \HH^2_b(\Gamma; E) \to \HH^2_b(\Lambda; E)$ is injective for every Banach $\K[\Gamma]$-module $E$ with a solid norm. Then $\Gamma$ is also uniformly $\GGL(\oo)$-stable with a linear estimate.
In particular, this holds if $f^* \colon \HH^2(\Gamma; E) \to \HH^2(\Lambda; E)$ is injective for every such $E$. When $f \colon \Lambda \to \Gamma = \Lambda/N$ is a quotient, this is implied by the vanishing of $\HH^1(N; \K)$.
\end{corollary}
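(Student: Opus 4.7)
The plan is to apply the bounded-cohomological correspondence of Proposition \ref{prop:defdim:BC} to both groups and use the injectivity of $f^*_b$ as a bridge. Fix a finite presentation $\Gamma = \langle S_\Gamma \mid R_\Gamma \rangle$ and an arbitrary sequence $(\hf_n : F_{S_\Gamma} \to G_n = \GGL_{k_n}(\oo))_{n \geq 1}$ with $\en := \defe(\hf_n) \xrightarrow{n \to \omega} 0$. By Corollary \ref{cor:defdim:ultra}, establishing linear uniform $\GGL(\oo)$-stability of $\Gamma$ reduces to proving the defect diminishing property for $(\hf_n)_{n \geq 1}$, equivalently (by Proposition \ref{prop:defdim:BC}) the vanishing of the associated class $\alpha_\Gamma \in \HH^2_b(\Gamma; E)$, where $E := N(O_\omega(\en))/N(o_\omega(\en))$ is the Banach $\K[\Gamma]$-module from Subsection \ref{ss:kerbanach}. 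By the hypothesis on $f^*_b$, it is enough to show that $f^*_b(\alpha_\Gamma) = 0 \in \HH^2_b(\Lambda; E)$, where $E$ is viewed as a $\K[\Lambda]$-module via $f$.

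To this end, fix a finite presentation $\Lambda = \langle S_\Lambda \mid R_\Lambda \rangle$ and, for each $s \in S_\Lambda$, choose a word $w_s \in F_{S_\Gamma}$ projecting to $f(\overline{s}) \in \Gamma$: this defines a homomorphism $\hat{f} : F_{S_\Lambda} \to F_{S_\Gamma}$ lifting $f$. Set $\hat{g}_n := \hf_n \circ \hat{f}$. Since $\hat{f}(R_\Lambda) \subseteq \langle\langle R_\Gamma \rangle\rangle$ and $R_\Lambda$ is finite, Lemma \ref{lem:ultra_free} yields $\defe(\hat{g}_n) = O_\omega(\en)$. Inspecting the cocycle representing $\alpha_\Gamma$ in the proof of Proposition \ref{prop:defdim:BC}, namely $z_\Gamma(g, h) = [\hf_n(\widetilde{gh})^{-1}\hf_n(\tilde{g})\hf_n(\tilde{h})]_n$ for a set-theoretic section $\Gamma \to F_{S_\Gamma} : g \mapsto \tilde{g}$, and choosing the section over $f(\Lambda)$ compatibly as $\widetilde{f(\lambda)} := \hat{f}(\tilde{\lambda})$, one obtains $f^*(z_\Gamma)(\lambda, \mu) = [\hat{g}_n(\widetilde{\lambda\mu})^{-1}\hat{g}_n(\tilde{\lambda})\hat{g}_n(\tilde{\mu})]_n$. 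This is precisely the cocycle produced by the construction of Proposition \ref{prop:defdim:BC} applied to $(\hat{g}_n)$, but with the reference scale $\en$ in place of $\defe(\hat{g}_n)$. The construction of Subsection \ref{ss:kerbanach} and the proof of Proposition \ref{prop:defdim:BC} go through verbatim at any scale $\ee'_n$ with $\defe(\hat{g}_n) = O_\omega(\ee'_n)$, and the vanishing of the resulting class is equivalent to the existence of $\hat{g}'_n$ with $\defe(\hat{g}'_n) = o_\omega(\ee'_n)$ and $\dist(\hat{g}_n, \hat{g}'_n) = O_\omega(\ee'_n)$.

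Now uniform $\GGL(\oo)$-stability of $\Lambda$ with a linear estimate, via Corollary \ref{cor:defdim:ultra}, supplies a witness $\hat{g}'_n$ of defect diminishing for $(\hat{g}_n)$ at its own scale $\defe(\hat{g}_n)$; since $\defe(\hat{g}_n) = O_\omega(\en)$, the same $\hat{g}'_n$ also witnesses diminishing at the larger scale $\en$. Hence $f^*_b(\alpha_\Gamma) = 0$, and so $\alpha_\Gamma = 0$ by injectivity, yielding the main statement. The cohomological variant then follows from Corollary \ref{cor:pullback}, since $\Lambda$ is finitely generated, which transfers the injectivity of $f^*$ in degree $2$ to that of $f^*_b$. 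In the quotient case $f : \Lambda \to \Gamma = \Lambda/N$, any $\K[\Gamma]$-module $E$ pulled back to $\Lambda$ has trivial $N$-action, so $E^N = E$, and the five-term inflation--restriction exact sequence identifies $\ker\{f^* : \HH^2(\Gamma; E) \to \HH^2(\Lambda; E)\}$ with a quotient of $\HH^1(N; E)^\Gamma$, which is trivial as soon as $\HH^1(N; E) = 0$.

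The main obstacle will be the careful bookkeeping of the reference scale: the cocycle produced by Proposition \ref{prop:defdim:BC} for $(\hat{g}_n)$ at its natural scale $\defe(\hat{g}_n)$ lives a priori in a different Banach $\K[\Lambda]$-module than $f^*_b(\alpha_\Gamma)$, so one must verify that the module construction of Subsection \ref{ss:kerbanach} is functorial under change of scale, and that the section-based cocycle behaves compatibly under pullback by $\hat{f}$, which is what allows the single module $E$ to mediate between the two classes.
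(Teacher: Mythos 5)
Your overall plan matches the paper's: reduce to the defect-diminishing property via Corollary \ref{cor:defdim:ultra} and Proposition \ref{prop:defdim:BC}, transport the sequence from $\Gamma$ to $\Lambda$, identify the resulting class with the pullback $f^*_b(\alpha_\Gamma)$, use stability of $\Lambda$ to kill it, and then invoke injectivity of $f^*_b$. The scale concern you flag at the end is real but you resolve it correctly: since $\defe(\hat g_n)\le\en$ (finiteness of $R_\Lambda$ plus Lemma \ref{lem:ultra_free}), defect diminishing of $(\hat g_n)$ at its own scale automatically produces a witness at scale $\en$, so the coefficient module $E$ built from $\en$ is the right one for both classes; no change of module is actually needed.

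The genuine weak point is the explicit cocycle identity. You define a section of $\Gamma$ over $f(\Lambda)$ by $\widetilde{f(\lambda)}:=\hat f(\tilde\lambda)$, but when $f$ is not injective this rule is not a well-defined function on $f(\Lambda)$: two elements $\lambda_1\ne\lambda_2$ with $f(\lambda_1)=f(\lambda_2)$ would assign different words to the same element of $\Gamma$. Consequently the pointwise identity $f^*(z_\Gamma)(\lambda,\mu)=[\hat g_n(\widetilde{\lambda\mu})^{-1}\hat g_n(\tilde\lambda)\hat g_n(\tilde\mu)]_n$ fails: once you fix a single representative $\lambda_g\in f^{-1}(g)$ per $g$ to make the section well-defined, the left-hand side uses $\tilde\lambda_{f(\lambda)}$ while the right-hand side uses $\tilde\lambda$, and these disagree whenever $\lambda\ne\lambda_{f(\lambda)}$. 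This is exactly the quotient case $f:\Lambda\twoheadrightarrow\Gamma=\Lambda/N$ that the statement is designed to handle, so it cannot be dismissed. The paper sidesteps the problem in two ways: it works with compatible presentations ($S'\subset S$, $R'\subset R$), so the lift $\hat f$ is an inclusion $F_{S'}\hookrightarrow F_S$ and the restricted sequence is literally $\hf_n|_{F_{S'}}$; more importantly, it avoids explicit cocycles altogether and instead observes that the extension encoding the lifting problem for $\Lambda$, namely $1\to E_C\to\big(\pi_{o_\omega(\en)}G\times_{\f(C\en)\circ f}\Lambda\big)\to\Lambda\to 1$, is by construction the pullback along $f$ of the corresponding extension for $\Gamma$, which identifies the classes without any cocycle-level bookkeeping. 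If you replace the compatible-section argument by this pullback-of-extensions argument (or simply observe that all choices of section give cohomologous cocycles, and that the relevant extension is the pullback), the rest of your proof is sound and agrees with the paper's. The remaining parts -- passing from $f^*$ to $f^*_b$ via Corollary \ref{cor:pullback}, and the five-term sequence in the quotient case -- are handled correctly.
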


\begin{proof}
Let $\langle S' \mid R' \rangle$ be a finite presentation of $\Lambda$, and extend it to a finite presentation $\langle S \mid R \rangle$ of $\Gamma$, with $S' \subset S$ and $R' \subset R$, so that the following diagram commutes:
\[\begin{tikzcd}
	{F_{S'}} & {F_S} \\
	\Lambda & \Gamma
	\arrow[hook, from=1-1, to=1-2]
	\arrow[two heads, from=1-1, to=2-1]
	\arrow[two heads, from=1-2, to=2-2]
	\arrow["f"', from=2-1, to=2-2]
\end{tikzcd}\]
This way Proposition \ref{prop:stab_equiv} establishes a correspondence between uniform asymptotic homomorphisms of $\Gamma$ and sequences $(\hf_n \colon F_S \to G_n)_{n \geq 1}$, and uniform asymptotic homomorphisms of $\Lambda$ and sequences $(\hf_n' \colon F_{S'} \to G_n)_{n \geq 1}$. Moreover, by the choice of the presentations, precomposing a uniform asymptotic homomorphism of $\Gamma$ by $f$ to obtain one of $\Lambda$ corresponds to restricting a sequence $(\hf_n)_{n \geq 1}$ from $F_S$ to $F_{S'}$.

Lemma \ref{lem:red_lift} translates the property of defect diminishing to a lifting problem, which in turns reduces to a splitting problem: see Subsection \ref{ss:preli:split}. Restricting the sequence $(\hf_n \colon F_S \to G_n)_{n \geq 1}$ to $(\hf_n' \colon F_{S'} \to G_n)_{n \geq 1}$ corresponds to precomposing the lifting problem by $f$, which in turn reduces to pulling back the splitting problem to $\Lambda$. The two splitting problems now fit into a commutative diagram:
\[\begin{tikzcd}
	1 & {E_C} & {\left( \pi_{o_\omega(\en)}G \times_{\f(C \en) \circ f} \Lambda \right)} & \Lambda & 1 \\
	1 & {E_C} & {\left( \pi_{o_\omega(\en)}G \times_{\f(C \en)} \Gamma \right)} & \Gamma & 1
	\arrow[from=1-1, to=1-2]
	\arrow[from=2-1, to=2-2]
	\arrow[from=1-2, to=1-3]
	\arrow[from=2-2, to=2-3]
	\arrow[from=1-3, to=1-4]
	\arrow[from=1-4, to=1-5]
	\arrow[from=2-4, to=2-5]
	\arrow[from=2-3, to=2-4]
	\arrow["f"', from=1-4, to=2-4]
	\arrow[from=1-2, to=2-2]
	\arrow["{(\id, f)}"', from=1-3, to=2-3]
\end{tikzcd}\]
A section $\Gamma \to \left( \pi_{o_\omega(\en)}G \times_{\f(C \en)} \Gamma \right)$ is of the form $g \mapsto (\sigma(g), g)$. This produces a section
\[\Lambda \to \left( \pi_{o_\omega(\en)}G \times_{\f(C \en) \circ f} \Lambda \right) : g \mapsto (\sigma(f(g)), g).\]

Therefore, a cocycle representing the top-row extension can be chosen to be the pullback to $\Lambda$ of a cocycle representing the bottom-row extension. This shows that for a given sequence $(\hf_n \colon F_S \to G_n)_{n \geq 1}$ such that $\defe(\hf_n) \xrightarrow{n \to \omega} 0$, and the class $\alpha \in \HH^2_b(\Gamma; E)$ given by Proposition \ref{prop:defdim:BC}; the class corresponding to the restriction $(\hf_n' \colon F_{S'} \to G_n)_{n \geq 1}$ is simply $f^*_b(\alpha) \in \HH^2_b(\Lambda; E)$. By assumption $\Lambda$ is uniformly $\GGL(\oo)$-stable with a linear estimate, and so by Corollary \ref{cor:defdim:ultra} it has the property of defect diminishing. In particular $f^*_b(\alpha)$ must vanish, by Proposition \ref{prop:defdim:BC}. But $f^*_b$ is assumed to be injective, so $\alpha$ must vanish as well, showing that $(\hf_n)_{n \geq 1}$ has the property of defect diminishing, again by Proposition \ref{prop:defdim:BC}.

Thus, $\Gamma$ has the property of defect diminishing, and so it is uniformly $\GGL(\oo)$-stable with a linear estimate, again by Corollary \ref{cor:defdim:ultra}. This concludes the proof of the bounded-cohomological statement. The cohomological one then follows from Corollary \ref{cor:pullback}.

Finally, suppose that $f \colon \Lambda \to \Gamma = \Lambda/N$ is a quotient. The Lyndon--Hochschild--Serre Spectral Sequence \cite[VII.6]{Brown} yields a $5$-term exact sequence
\[0 \to \HH^1(\Gamma; E) \to \HH^1(\Lambda; E) \to \HH^1(N; E) \to \HH^2(\Gamma; E) \xrightarrow{f^*} \HH^2(\Lambda; E),\]
which shows that if $\HH^1(N; E) = 0$, then $f^*$ is injective. Since $E$ is a \emph{trivial} $\K[N]$-module, this is implied by $\HH^1(N; \K) = 0$.
\end{proof}

We apply Corollary \ref{cor:BC:rel} to obtain Theorem \ref{intro:thm:fi} from the introduction:

\begin{theorem}
\label{thm:fi}

Let $\Gamma$ be a finitely presented group, let $\Lambda \leq \Gamma$ be a finite-index subgroup, and let $N \leq \Gamma$ be a normal subgroup that is finitely normally generated.
\begin{enumerate}
\item Suppose that $[\Gamma : \Lambda]$ is not divisible by the characteristic of $\K$, and that $\Lambda$ is uniformly $\GGL(\oo)$-stable with a linear estimate. Then the same is true of $\Gamma$.
\item Suppose that $\HH^1(N; \K) = 0$, and that $\Gamma$ is uniformly $\GGL(\oo)$-stable with a linear estimate. Then the same is true of $\Gamma/N$.
\end{enumerate}
\end{theorem}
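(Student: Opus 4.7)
The plan is to apply the cohomological version of Corollary \ref{cor:BC:rel} to a suitable homomorphism in each case, so that stability of the target group reduces to stability of the source plus injectivity of a pullback on ordinary $\HH^2$ with coefficients in Banach $\K[\Gamma]$-modules with solid norms. A preliminary step is to record that all groups involved are finitely presented: for Part 1 because a finite-index subgroup of a finitely presented group is finitely presented, and for Part 2 because the quotient of a finitely presented group by a finite normal subgroup is again finitely presented.

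For Part 1, I would apply Corollary \ref{cor:BC:rel} to the inclusion $\iota \colon \Lambda \hookrightarrow \Gamma$, so that the pullback of interest becomes the restriction $\res^\Gamma_\Lambda \colon \HH^2(\Gamma; E) \to \HH^2(\Lambda; E)$. The injectivity is standard: for $[\Gamma : \Lambda] < \infty$ there is a transfer map $\operatorname{tr} \colon \HH^\bullet(\Lambda; E) \to \HH^\bullet(\Gamma; E)$ with $\operatorname{tr} \circ \res^\Gamma_\Lambda = [\Gamma : \Lambda] \cdot \id$ (see \cite[III.9]{Brown}). Since $E$ is a $\K$-vector space and $[\Gamma : \Lambda]$ is coprime to the characteristic of $\K$, multiplication by $[\Gamma : \Lambda]$ is invertible on $\HH^2(\Gamma; E)$, hence $\res^\Gamma_\Lambda$ is injective, and Corollary \ref{cor:BC:rel} delivers uniform $\GGL(\oo)$-stability of $\Gamma$ with a linear estimate.

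For Part 2, I would apply Corollary \ref{cor:BC:rel} to the quotient map $\pi \colon \Gamma \to \Gamma/N$, with $\Gamma$ playing the role of ``$\Lambda$'' in the corollary and $\Gamma/N$ the role of ``$\Gamma$''. The final sentence of Corollary \ref{cor:BC:rel} reduces injectivity of $\pi^* \colon \HH^2(\Gamma/N; E) \to \HH^2(\Gamma; E)$ to the vanishing of $\HH^1(N; E)$ for every Banach $\K[\Gamma/N]$-module $E$ with a solid norm. Since $E$ is pulled back from $\Gamma/N$, the action of $N$ on $E$ is trivial, so $\HH^n(N; E)$ is computed with trivial coefficients; by \cite[III.10.2]{Brown} it is annihilated by $|N|$, and $|N|$ is invertible in $\K$ by hypothesis. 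Hence $\HH^n(N; E) = 0$ for all $n \geq 1$, in particular for $n=1$, and Corollary \ref{cor:BC:rel} applies.

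There is no genuine obstacle: the proof really just assembles two standard cohomological facts (the transfer identity and the Lyndon--Hochschild--Serre five-term sequence) with a coprimality-to-characteristic argument on $\K$-vector space coefficients. The key structural input is that Corollary \ref{cor:BC:rel} can be used in its \emph{cohomological} form, letting us avoid any direct manipulation of bounded $2$-cocycles; this is afforded by Proposition \ref{prop:comp} (and its consequence Corollary \ref{cor:pullback}), which bridges bounded and ordinary $\HH^2$ for finitely generated groups over non-Archimedean fields.
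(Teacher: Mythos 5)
Your proposal is correct and follows essentially the same route as the paper: finite presentation is preserved in both cases, Part 1 reduces to injectivity of $\res^\Gamma_\Lambda$ on $\HH^2$ via the transfer identity (which is exactly \cite[Proposition III.10.4]{Brown}, the reference the paper invokes), and Part 2 reduces to vanishing of $\HH^1(N;E)$ via \cite[Corollary III.10.2]{Brown} and the coprimality hypothesis. The only cosmetic difference is that in Part 2 you observe that $N$ acts trivially on the pulled-back module before citing the annihilation-by-$|N|$ result; this is unnecessary, since that result already holds for arbitrary $\K[N]$-modules, which is how the paper phrases it.
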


We recall that, in contrast with Ulam stability \cite[Lemma 2.2]{BOT}, uniform $\GGL(\oo)$-stability is not preserved under taking general quotients (Remark \ref{rem:quotients}).

\begin{proof}
$1.$ Since $\Lambda$ has finite index in $\Gamma$, it is also finitely presented. The hypothesis implies that $\res^\Gamma_\Lambda \colon \HH^n(\Gamma; E) \to \HH^n(\Lambda; E)$ is injective for every $n \geq 1$ and every $\K[\Gamma]$-module $E$ \cite[Proposition III.10.4]{Brown}. We conclude by Corollary \ref{cor:BC:rel}. 

\medskip

$2.$ Since $N$ is normally finitely generated, $\Gamma/N$ is also finitely presented. We conclude again by Corollary \ref{cor:BC:rel}.
\end{proof}

\begin{example}
\label{ex:fi:application}

This theorem allows to recover several results of the previous sections. In particular, some of the examples from Theorem \ref{intro:thm:vpfree} can be obtained by combining Theorem \ref{thm:fi} with the corresponding examples in Theorem \ref{intro:thm:pifree}. Of course this criterion is more flexible, and so applies further: as a very basic example, $\BS(2, 3) \times \mathbb{Z} / 2 \mathbb{Z}$ is stable in residual characteristic $3$.
\end{example}

\subsection{Applying the criteria}
\label{ss:conj}

The cohomological version of Corollary \ref{cor:BC:abs} is an analogue of the criterion in \cite{GLT}: a finitely presented group is $(\U(n), \| \cdot \|_{Frob})$-stable if $\HH^2(\Gamma; E) = 0$ for every unitary representation $E$. The so-called \emph{Garland Method}, initially introduced in \cite{Garland} and since then vastly generalised to encompass general Banach coefficients \cite{Garland2}, allows to give many examples of non-Archimedean lattices satisfying this condition (more recently, Archimedean lattices have been tackled by an alternative approach \cite{BS, BLSW}). Our criterion asks for vanishing over Banach spaces with a solid norm, which at a first glance may seem too restrictive compared to the Archimedean setting. However, on the one hand there is no clear analogue of Hilbert spaces over non-Archimedean fields \cite[2.4]{NFA}, and on the other hand the hypothesis of the norm being solid has strong implications: such spaces are isometrically classified \cite[Theorem 2.5.4]{NFA}. But even then it seems hard to prove a cohomology vanishing criterion by adapting Garland's Method, since there the distinction between positive and negative real eigenvalues plays a fundamental role. This is why we are unable to verify the cohomology vanishing condition in Corollary \ref{cor:BC:abs} for groups that are not virtually free. 

\medskip

The bounded cohomological statements are a priori stronger than the cohomological statements. But the hypothesis of $\Gamma$ being finitely presented may play an important role. Indeed, the comparison map $c^2 \colon \HH^2_b(\Gamma, \K) \to \HH^2(\Gamma, \K)$ is an isomorphism when $\Gamma$ is finitely presented, and $\K$ is seen as the trivial normed $\K[\Gamma]$-module \cite[Corollary 8.13]{mio}. Although we do not know whether surjectivity also holds with non-trivial coefficients, this seems to suggest that the bounded cohomological criterion of Corollary \ref{cor:BC:abs} also does not apply further than virtually free groups. 

\medskip

Looking at the real setting, one may hope that Corollary \ref{cor:BC:abs} could be strengthened by only asking for vanishing with \emph{dual} normed $\K[\Gamma]$-modules. For instance, in the real setting all amenable groups have vanishing bounded cohomology with dual coefficients \cite[Chapter 3]{Frig}, and in degree $2$ this even applies to high-rank lattices \cite{burmon, rigidity, monodshalom}. In our setting, there is a significant obstacle, namely that no infinite-dimensional $\K$-Banach space is reflexive \cite[Corollary 7.4.20]{NFA}, so proving that a Banach $\K[\Gamma]$-module is dual requires an explicit construction of a pre-dual. This seems hard considering that the spaces appearing in our setting are quite complicated (Remark \ref{rem:matrix:ultraproduct}). Using the classification of Banach spaces with a solid norm, and assuming that the degree $k_n \to \infty$ (else Proposition \ref{prop:unfin} applies), we are able to show that all spaces appearing in Proposition \ref{prop:defdim:BC} are isometrically isomorphic to $\ell^\infty(\mathbb{N})$, which is the dual of $c_0(\mathbb{N})$. So there is a chance that these spaces are dual $\K[\Gamma]$-modules, but to show that the action itself is dual, one would have to construct an explicit pre-dual. We formulate this as an open question:

\begin{question}
\label{q:dual}

Does Proposition \ref{prop:defdim:BC} hold when the statement is restricted to dual $\K[\Gamma]$-modules? Do Corollaries \ref{cor:BC:abs} and \ref{cor:BC:rel}?
\end{question}

Such a strengthening would then motivate the study of more general vanishing results in degree $2$ than those that are known so far \cite[Theorem 7.4, Corollary 7.13]{mio}, possibly by adapting the work of Burger, Monod and Shalom \cite{burmon, rigidity, monodshalom} to lattices in $\K$-analytic groups. 

\medskip

The relative statement (Corollary \ref{cor:BC:rel}) is more widely applicable as we have seen in Theorem \ref{thm:fi}. However it is again unclear whether further applications than those are possible. In the real case, the injectivity of the restriction $\res^\Gamma_\Lambda \colon \HH^2_b(\Gamma; E) \to \HH^2_b(\Lambda; E)$ holds more generally than just for finite-index subgroups, namely it holds for \emph{coamenable} subgroups, again assuming that $E$ is dual \cite[8.6]{monod} (see also \cite{coamenable}). However the analogous notion over non-Archimedean fields is likely to reduce to finite-index, at least for finitely generated groups, since this is the case for (normed) amenability \cite{Schik, mio}.

\pagebreak

\section{Further remarks and open questions}
\label{s:q}

In this section we survey some open questions on ultrametric stability, give a few partial answers, and propose directions for further research. The first two subsections contain open questions about $\GGL(\oo)$-stability of certain groups, with special attention to $\mathbb{Z}^2$. Subsection \ref{ss:q:fam} proposes other ultrametric families whose study may be of interest. We refer the reader to Subsections \ref{ss:poschar} and \ref{ss:conj} for further open questions, about stability of finite groups in positive characteristic and bounded cohomology, respectively.

\begin{notation}
For the rest of this section, we fix a non-Archimedean local field $\K$ with ring of integers $\oo$, uniformiser $\uni$, maximal ideal $\pp = \uni \oo$, residue field $\kk$ of characteristic $p$, and value group $r^{\mathbb{Z}} = |\K^\times|$, where $r = |\uni| \in (0, 1)$.
\end{notation}

\subsection{Finding more non-examples}

Most of this paper has been concerned with giving positive results on stability, with negative results in Subsection \ref{ss:unst} and Section \ref{s:unst}. These solve many of the questions on instability from the first version of this paper. However, a few questions remain open, the main one being the following.

\begin{question}
\label{q:fp:st}

Does there exist a finitely presented group that is not $\GGL(\oo)$-stable?
\end{question}

It would be very surprising if this question had a negative answer. Recall that in Section \ref{s:unst}, we started by constructing a countable unstable group, and then used a modified version of the embedding of countable groups inside finitely generated groups to promote it to a finitely generated example. A key feature of our construction is that it preserved residual finiteness. It is possible that a finitely presented example may be obtained from the group in Theorem \ref{thm:fg:unst} by means of a modified version of the Higman Embedding Theorem \cite{HET}. However such embedding construction do not combine well with residual finiteness \cite{HET:rf}.

More specific candidates seem to be free abelian groups, surface groups, free nilpotent groups and free solvable groups. The case of $\mathbb{Z}^2$ is discussed in detail in Subsection \ref{ss:q:z2}, the other ones are briefly mentioned after Example \ref{ex:z2:est}.

Other potential non-examples are given by graphs of groups as in Subsection \ref{ss:gog}, where the coprimality conditions on finite quotients are not satisfied. For instance, an interesting family of examples to look at is that of \emph{GBS-tree products}, that is, GBS groups whose underlying graph is a tree, to which the criteria from Corollaries \ref{cor:GBS_p} and \ref{cor:GBS} cannot apply. In particular:

\begin{question}
Which torus knot groups $K_{m, n}$ are $\GGL(\oo)$-stable? Does some condition on $\nu_p(m)$ and $\nu_p(n)$ imply stability, or instability?
\end{question}

All our examples of finitely generated groups that are uniformly but not pointwise stable use Corollary \ref{cor:cex_stab}, so they cannot be residually finite. Therefore we ask:

\begin{question}
\label{q:fgrf:unnonpw}

Does there exist a finitely generated residually finite group that is uniformly but not pointwise $\GGL(\oo)$-stable?
\end{question}

Recall from Corollary \ref{cor:fgrf} that if $\Gamma$ is a finitely generated residually finite group that can be expressed as the largest residually finite quotient of a finitely presented group, then the uniform and pointwise $\GGL(\oo)$-stability of $\Gamma$ are equivalent. We saw in the discussion after Corollary \ref{cor:fgrf} that not all finitely generated groups satisfy this. The examples given there, all coming from \cite{covers}, could provide a positive answer to Question \ref{q:fgrf:unnonpw}. 

\medskip

The proofs of Section \ref{s:approx} seem to suggest that by working on the space of marked groups one could adapt the results from Sections \ref{s:vpropi} and \ref{s:char0} to the pointwise setting.

\begin{question}
Do the results from Sections \ref{s:vpropi} and \ref{s:char0} have a pointwise counterpart?
\end{question}

However, this presents more technical subtleties, since one would have to prove lifting results for \emph{local} homomorphisms to the metric quotients: we are not aware of such results, or of a connection to cohomology analogous to the classical one (see Subsection \ref{ss:preli:split}). 

\medskip

Before moving on to more specific examples in the next subsections, let us comment on why a well-known method for producing non-examples of stability does not work for $\GGL(\oo)$. For instance when $\G$ is a family of unitary groups, one starts with $(n+1)$-dimensional irreducible unitary representations of the finitely generated group $\Gamma$, and restricts to the top $(n \times n)$ corner to obtain an asymptotic homomorphism. Assuming this is close to a homomorphism, one arrives at a contradiction with the irreducibility of the initial representation. The same idea works for permutations, by starting with a transitive action of $\Gamma$ on $\{ 1, \ldots, n\}$. To our knowledge this method was first used in \cite{T} to prove that if $\Gamma$ is infinite, sofic and has property (T), then it is not pointwise stable in permutations; this was later generalised \cite{us:local}. It also appears in \cite{BChap} and \cite{uHS}, where it is shown that uniform stability, in permutations and with respect to unitary groups with the Hilbert--Schmidt norm respectively, is a very restrictive property.

It is key in the arguments that by looking at $n$ out of $(n+1)$ entries (or $n^2$ out of $(n+1)^2$) one does not lose much in terms of normalised metrics. This cannot be the case for the $\ell^\infty$-norm on $\GGL(\oo)$, where a big difference in a single entry is detected as a big difference overall. 

\medskip

This projection trick is precisely the motivation behind introducing notions of \emph{flexible stability} \cite{T}, that have proved fruitful in some contexts \cite{surf}. The discussion above shows that it seems hard to define an analogous notion of flexible $\GGL(\oo)$-stability. Given the rigidity of this context, it is even possible that na\"ive definitions of flexible $\GGL(\oo)$-stability are equivalent to ordinary $\GGL(\oo)$-stability. 

\begin{question}
Is there a meaningful notion of flexible $\GGL(\oo)$-stability? Is it different from ordinary $\GGL(\oo)$-stability?
\end{question}

Note that the operator norm on $\U(n)$ also has the feature that modifying a small proportion of entries can lead to a big difference in norm. Nevertheless, shortly before publication of this paper, Lubotzky and Salomon showed that flexible stability is a meaningful notion in the operator norm: indeed $\ZZ^2$ is flexibly stable but not stable \cite{LS}.

\subsection{(In)stability of $\mathbb{Z}^2$}
\label{ss:q:z2}

The following is the main open question we would like to draw attention to:

\begin{question}
Is $\mathbb{Z}^2$ $\GGL(\oo)$-stable?
\end{question}

An answer for free abelian groups of arbitrary finite rank would be ideal, but we stick to $\mathbb{Z}^2$ for this discussion. An algebraic-geometric approach seems to be the most appropriate: indeed in \cite[Theorem D, Example 10.1]{Zordan} the author proves a result -- including an explicit example -- that suggests that $\mathbb{Z}^2$ is not \emph{constraint $\GGL(\oo)$-stable} with respect to a direct factor (see \cite{a:const} for the definition of constraint stability). The notions of stability and constraint stability can be quite distinct, for instance $\mathbb{Z}^2$ is stable in permutations \cite{a:comm} but not constraint stable with respect to a direct factor \cite{a:const}. Still, Zordan's result is the closest instance to a result on $\GGL(\oo)$-instability of $\mathbb{Z}^2$ that we were able to find in the literature. 

\medskip

Zordan's result is in terms of Lie algebras, not of Lie groups. This is equivalent to our setting: more precisely, instead of working with $\GGL_n(\oo)$, one could work with $\MM_n(\oo)$ and prove stability of the Lie bracket.

\begin{lemma}
\label{lem:z2:lin}

The following are equivalent:
\begin{enumerate}
\item $\mathbb{Z}^2$ is $\GGL(\oo)$-stable.
\item For all $\ee > 0$ there exists $\dd > 0$ such that if $A, B \in \MM_n(\oo)$ satisfy $\| AB - BA \| < \dd$, then there exist $A', B' \in \MM_n(\oo)$ such that $A'B' = B'A'$ and $\| A - A' \|, \|B - B'\| < \ee$.
\end{enumerate}
\end{lemma}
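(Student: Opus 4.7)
The plan is to translate the group-theoretic commutator relation to the matrix commutator, and then to pass between $\GGL_n(\oo)$ and $\MM_n(\oo)$ via the embedding $A\mapsto I+\uni A$.

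First, I will use the presentation $\mathbb{Z}^2=\langle a,b\mid [a,b]\rangle$ together with Corollary \ref{cor:quant} to reformulate Item $1.$ as follows: $\mathbb{Z}^2$ is $\GGL(\oo)$-stable if and only if for every $\ee>0$ there exists $\dd>0$ such that whenever $A,B\in\GGL_n(\oo)$ satisfy $\|ABA^{-1}B^{-1}-I\|\leq\dd$, there exist commuting $A',B'\in\GGL_n(\oo)$ with $\|A-A'\|,\|B-B'\|\leq\ee$. Since $\|A^{-1}B^{-1}\|=1$ for $A,B\in\GGL_n(\oo)$ (Lemma \ref{lem:GL}), right-invariance of $\|\cdot\|$ by elements of $\GGL_n(\oo)$ gives $\|ABA^{-1}B^{-1}-I\|=\|AB-BA\|$. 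So Item $1.$ coincides with Item $2.$ restricted to invertible matrices.

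For the direction $2.\Rightarrow 1.$, suppose Item $2.$ holds and we are given $A,B\in\GGL_n(\oo)$ with $\|AB-BA\|\leq\dd$. Item $2.$ produces $A',B'\in\MM_n(\oo)$ commuting and $\ee$-close to $A,B$. Provided $\ee<1$, Item $5.$ of Lemma \ref{lem:GL} forces $A',B'\in\GGL_n(\oo)$, yielding the required invertible commuting approximants.

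The main step is the direction $1.\Rightarrow 2.$, and it uses the embedding $\Phi : \MM_n(\oo)\to\GGL_n(\oo)$ defined by $\Phi(M):=I+\uni M$; this lands in the principal congruence subgroup $\GGL_n(\oo)_1$. A direct computation gives
$$\Phi(A)\Phi(B)-\Phi(B)\Phi(A)=\uni^2(AB-BA),$$
and consequently, again using $\|\Phi(A)^{-1}\Phi(B)^{-1}\|=1$,
$$\|\Phi(A)\Phi(B)\Phi(A)^{-1}\Phi(B)^{-1}-I\|=|\uni|^2\|AB-BA\|=r^2\|AB-BA\|.$$
Now given $\ee>0$, which we may assume satisfies $\ee\leq 1$ (as $\|A-A'\|\leq 1$ automatically on $\MM_n(\oo)$), let $\ee_1:=r\ee\leq r$ and let $\dd_1=\dd_1(\ee_1)$ be the constant provided by Item $1.$; set $\dd:=\dd_1/r^2$. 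If $A,B\in\MM_n(\oo)$ satisfy $\|AB-BA\|\leq\dd$, then by the above $\Phi(A),\Phi(B)$ have group-commutator $\leq\dd_1$ from $I$, so Item $1.$ produces commuting $A_1',B_1'\in\GGL_n(\oo)$ with $\|A_1'-\Phi(A)\|,\|B_1'-\Phi(B)\|\leq\ee_1\leq r$. The ultrametric inequality then yields $\|A_1'-I\|\leq\max\{r,\ee_1\}\leq r$, so we may write $A_1'=I+\uni A'$, $B_1'=I+\uni B'$ with $A',B'\in\MM_n(\oo)$. The relation $A_1'B_1'=B_1'A_1'$ reduces to $\uni^2(A'B'-B'A')=0$, i.e., $A'B'=B'A'$; and $\|A-A'\|=r^{-1}\|\uni(A-A')\|=r^{-1}\|A_1'-\Phi(A)\|\leq r^{-1}\ee_1=\ee$, and likewise for $B$. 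This completes the reduction.

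The only place that requires genuine thought is the choice of the embedding $\Phi(M)=I+\uni M$: it is the unique natural way to transfer arbitrary integral matrices into $\GGL_n(\oo)$ while preserving the commutator relation (up to an explicit scalar) and, crucially, having the property that small perturbations of $\Phi(M)$ remain in the image of $\Phi$, so that one can extract the commuting $A',B'$ back from $A_1',B_1'$.
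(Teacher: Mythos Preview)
Your proposal is correct and follows essentially the same approach as the paper: both use the identity $\|ABA^{-1}B^{-1}-I\|=\|AB-BA\|$ (from $\GGL_n(\oo)$-invariance of $\|\cdot\|$) to identify Item~$1.$ with Item~$2.$ restricted to invertibles, and both use the embedding $M\mapsto I+\uni M$ together with the computation $\|(I+\uni A)(I+\uni B)-(I+\uni B)(I+\uni A)\|=r^2\|AB-BA\|$ for the direction $1.\Rightarrow 2.$ The paper argues this last direction by contrapositive and leaves the extraction of $A',B'$ implicit; your direct argument spells this out (in particular the observation that the approximants $A_1',B_1'$ lie in $I+\uni\MM_n(\oo)$), but the content is the same.
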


\begin{proof}
We use the characterisation from Corollary \ref{cor:quant}. Since $\| ABA^{-1}B^{-1} - I \| = \| AB - BA \|$ by $\GGL(\oo)$-invariance of $\| \cdot \|$ (Lemma \ref{lem:GL}), instability of $\mathbb{Z}^2$ implies that $2.$ does not hold. Conversely, given matrices $A, B$ contradicting $2.$ for a given $\ee > 0$, the matrices $(I + \uni A), (I + \uni B)$ contradict the characterisation of stability from Corollary \ref{cor:quant}, for a rescaled $\ee$. Indeed, they are invertible by Lemma \ref{lem:GL}, and
\[\| (I + \uni A)(I + \uni B) - (I + \uni B)(I + \uni A) \| = r^2 \| AB - BA \|. \qedhere\]
\end{proof}

Since the norm $\| \cdot \|$ on $\MM_n(\K)$ coincides with the operator norm (Lemma \ref{lem:GL}), it is tempting to try and adapt Voiculescu's counterexample \cite{Voie} to prove instability of $\mathbb{Z}^2$. Voiculescu's matrices proving that $\mathbb{Z}^2$ is not pointwise stable with respect to $\{ (\U(n), \| \cdot \|_{op}) : n \geq 1 \}$ are the \emph{Pauli shift and clock matrices}: the permutation matrix $P$ corresponding to the cycle $(1 \cdots n)$ and the diagonal matrix $D \coloneqq \diag( 1, \omega_n, \omega_n^2, \ldots, \omega_n^{n-1})$, where $\omega_n = e^{\frac{2 \pi i}{n}}$. It is easy to adapt this example to produce asymptotic homomorphisms from $\mathbb{Z}^2$ to $\GGL(\oo)$, but they are all close to homomorphisms. This is true even for an arbitrary matrix $D$, as long as $P$ is monomial (that is, it has a unique non-zero entry in each row and column):

\begin{lemma}
Let $P \in \GGL_n(\oo)$ be a monomial matrix, and let $D \in \MM_n(\K)$. Then there exists $D' \in \MM_n(\K)$ such that $PD' = D'P$ and $\| D - D' \| \leq \| PD - DP \|$.
\end{lemma}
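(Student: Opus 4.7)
The plan is to analyze $P$ via its structural decomposition and reduce the commutation problem to an orbit-by-orbit construction. Write $P = CQ$, where $Q$ is the permutation matrix of some $\sigma \in S_n$ (so $Q_{i,j} = \delta_{\sigma(i), j}$) and $C = \diag(c_1, \ldots, c_n)$ records the nonzero entries of $P$. Since $P \in \GGL_n(\oo)$ and $c_i \in \oo$, integrality of $\det P = \pm\prod c_i \in \oo^\times$ forces $|c_i| = 1$ for every $i$. A direct computation yields the identity
$$(PD - DP)_{a, \sigma(b)} \;=\; c_a\, D_{\sigma(a), \sigma(b)} \;-\; c_b\, D_{a, b},$$
so the condition $PD' = D'P$ reformulates as the linear system $c_a\, D'_{\sigma(a), \sigma(b)} = c_b\, D'_{a, b}$ for all $a, b$.

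Next I would partition $\{1, \ldots, n\}^2$ into orbits of the diagonal action $(a, b) \mapsto (\sigma(a), \sigma(b))$ and build $D'$ on each orbit $O$ independently, starting from a representative $(i_0, j_0)$ of length $k_O$. On such an orbit the recursion $D'_{\sigma^{\ell+1}(i_0), \sigma^{\ell+1}(j_0)} = (c_{\sigma^\ell(j_0)}/c_{\sigma^\ell(i_0)})\, D'_{\sigma^\ell(i_0), \sigma^\ell(j_0)}$ determines all values from $D'_{i_0, j_0}$, and closing the loop produces the consistency condition $(1 - \pi_O)\, D'_{i_0, j_0} = 0$, with monodromy
$$\pi_O \;:=\; \prod_{m = 0}^{k_O - 1} \frac{c_{\sigma^m(j_0)}}{c_{\sigma^m(i_0)}} \;\in\; \oo^\times.$$
Applying the displayed identity at consecutive orbit points bounds each incremental discrepancy $c_{\sigma^\ell(i_0)} D_{\sigma^{\ell+1}(i_0), \sigma^{\ell+1}(j_0)} - c_{\sigma^\ell(j_0)} D_{\sigma^\ell(i_0), \sigma^\ell(j_0)}$ by $\|PD - DP\|$; since $|c_i| = 1$, this estimate propagates along the orbit without loss.

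For orbits with $\pi_O = 1$ --- automatic whenever the $\sigma$-cycles through $i_0$ and $j_0$ have equal cycle-products, and in particular whenever $\sigma$ is a single cycle, as in Voiculescu's original setup --- I would set $D'_{i_0, j_0} := D_{i_0, j_0}$ and propagate by the recursion; a straightforward induction using the ultrametric inequality then gives $|D_{a, b} - D'_{a, b}| \leq \|PD - DP\|$ throughout the orbit, and taking the supremum over all orbits yields the claim. For orbits with $\pi_O \neq 1$ the only consistent choice is $D' \equiv 0$ on $O$, and closing the iteration produces an identity of shape $(1 - \pi_O)\, D_{i_0, j_0} = -e$ with $|e| \leq \|PD - DP\|$. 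The main obstacle lies precisely here: promoting this into a pointwise bound $|D_{a, b}| \leq \|PD - DP\|$ on the whole orbit requires $|1 - \pi_O| \geq 1$, which holds cleanly when $\pi_O \not\equiv 1 \pmod{\pp}$ but is delicate otherwise; finessing the residual case $\pi_O \equiv 1 \pmod{\pp}$ with $\pi_O \neq 1$ is the step where extra input is needed, presumably leveraging that the relevant $P$ arising from Voiculescu's construction has $\sigma$ a single cycle, in which case the monodromy issue does not arise at all.
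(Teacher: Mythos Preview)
Your approach is the same as the paper's: partition $\{1,\ldots,n\}^2$ into orbits of the diagonal $\sigma$-action, choose a representative $(i_0,j_0)$ on each orbit, set $D'_{i_0,j_0} := D_{i_0,j_0}$, and propagate by the recursion $D'_{\sigma(a),\sigma(b)} = \lambda_a^{-1}\lambda_b\, D'_{a,b}$. The paper simply asserts at the end that the resulting $D'$ satisfies $PD'=D'P$ and $\|D-D'\|\leq\varepsilon$, without discussing the closing-up of the recursion.

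Your hesitation about the monodromy is well placed: the concern is not a gap in \emph{your} reasoning but a gap in the lemma itself. The statement is false for general monomial $P$. Take $p$ odd, $\K=\Qp$, $n=3$, $\sigma=(1)(2\ 3)$, $\lambda_1=1+p$, $\lambda_2=\lambda_3=1$, and
\[
D=\begin{pmatrix}0&1&1+p\\0&0&0\\0&0&0\end{pmatrix}.
\]
Then $PD-DP$ has a single nonzero entry $(1+p)^2-1$, so $\|PD-DP\|=p^{-1}$. On the orbit $\{(1,2),(1,3)\}$ the commutation relations force $D'_{12}=(1+p)^2 D'_{12}$, hence $D'_{12}=D'_{13}=0$ for every $D'$ commuting with $P$; thus $\|D-D'\|\geq 1>p^{-1}$. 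This is precisely the case $\pi_O\equiv 1\pmod\pp$, $\pi_O\neq 1$ that you flagged.

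Your observation that the monodromy is automatically trivial when $\sigma$ is a single $n$-cycle is correct: each orbit then has length $n$, and both products $\prod_{l=0}^{n-1}\lambda_{\sigma^l(i_0)}$ and $\prod_{l=0}^{n-1}\lambda_{\sigma^l(j_0)}$ run over all of $\{1,\ldots,n\}$, so $\pi_O=1$. In that case your argument (and the paper's) goes through with the optimal estimate, and this is the only case needed for the Voiculescu discussion. So your proposal is complete for the intended application; the lemma just needs the additional hypothesis that the underlying permutation is a single cycle (or more generally that all cycle-products of the $\lambda_i$ coincide).
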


\begin{proof}
Let $\ee \coloneqq \| PD - DP \| = \|PDP^{-1} - D \|$. Let $\sigma \in S_n$ be the permutation such that $P_{ij} \neq 0$ precisely when $j = \sigma(i)$, and set $\lambda_i \coloneqq P_{i \sigma(i)}$, which is in $\oo^\times$ since $P \in \GGL_n(\oo)$. Then $(PDP^{-1})_{ij} = \lambda_i \lambda_j^{-1} D_{\sigma(i)\sigma(j)}$, so $|D_{\sigma(i)\sigma(j)} - \lambda_i^{-1} \lambda_j D_{ij}| \leq \ee$ for each $(i, j) \in \{ 1, \ldots, n\}^2$. By induction it follows that
\[\left|D_{\sigma^k(i)\sigma^k(j)} - \left( \prod\limits_{l = 0}^{k-1} \lambda_{\sigma^l(i)}^{-1} \lambda_{\sigma^l(j)} \right) \cdot D_{ij}\right| \leq \ee\]
for every $k \geq 0$.

Now choose representatives for each orbit of the diagonal action of $\sigma$ on $\{1, \ldots, n\}^2$, and for each representative $(i, j)$ and each $k \geq 0$ smaller than the size of the corresponding orbit, set
\[D'_{\sigma^k(i) \sigma^k(j)} \coloneqq \left( \prod\limits_{l = 0}^{k-1} \lambda_{\sigma^l(i)}^{-1} \lambda_{\sigma^l(j)} \right) \cdot D_{ij}.\]
Then $\| D - D' \| \leq \ee$ and $PD' = D'P$.
\end{proof}

In particular if $D \in \GGL_n(\oo)$ and $\| PD - DP \| < 1$, then $\| D - D' \| < 1$ and so $D' \in \GGL_n(\oo)$ as well, by Lemma \ref{lem:GL}. This shows that such matrices produce asymptotic homomorphisms that are close to homomorphisms, even with an optimal estimate. Moreover there is no need to modify $P$, so this also does not even work as a counterexample to constraint $\GGL(\oo)$-stability of $\mathbb{Z}^2$. 

\medskip

Let us end by noticing that the stability estimate of $\mathbb{Z}^2$ is, at best, quadratic. So this example really is different from the ones treated in this paper, where the stability estimates were always at worst linear (with the exception of Subsection \ref{ss:poschar}).

\begin{example}
\label{ex:z2:est}

Let $A, B \in \MM_n(\oo)$ be such that $\| A \|, \| B \| \leq \ee < 1$, and consider the map $\f \colon F_2 \to \GGL_n(\oo)$ sending the generators to $(I + A)$ and $(I + B)$, which are invertible by Lemma \ref{lem:GL}. This homomorphism almost descends to $\mathbb{Z}^2$ with a defect of $\ee^2$:
\[\| (I + A)(I + B) - (I + B)(I + A) \| = \| AB - BA \| \leq \| A \| \cdot \| B \| \leq \ee^2.\]
On the other hand, if $A$ and $B$ are chosen so that $\| AB - BA \| = \ee^2$, then this homomorphism is at a distance at least $\ee$ from any homomorphism that descends to $\mathbb{Z}^2$. Indeed, suppose otherwise that there exist commuting matrices $M, N \in \GGL_n(\oo)$ such that $\| (I + A) - M \| < \ee$ and $\|(I+B) - N \| < \ee$. Writing $M = (I+A')$ and $N = (I+B')$, we have $\|A-A'\| < \ee, \|B - B'\| < \ee$ and $A'B' = B'A'$. Now
\[AB = (A - A')B + A'(B - B') + A' B'\]
and so $\| AB - A'B' \| < \ee^2$. Similarly $\| BA - B'A' \| < \ee^2$ and thus $\| AB - BA \| < \ee^2$, contradicting the choice of $A$ and $B$.
\end{example}

With the same idea one can show that free abelian groups and surface groups have at best quadratic estimates. For free nilpotent groups, applying the above argument inductively on the length of the lower central series, one can show that the estimate is at best polynomial, with the degree increasing together with the length. Similarly for free solvable groups with the length of the derived series. 

\medskip

Recall that all of the stability results proved in this paper were with a polynomial estimate. In fact, even with a linear estimate, with the exception Proposition \ref{prop:z2z}. For the groups mentioned above, we gave lower bounds on the stability rate, which is still polynomial. Therefore the following question is natural.

\begin{question}
Let $\G$ be an ultrametric family, and let $\Gamma$ be a group which is uniformly $\G$-stable. Is $\Gamma$ uniformly $\G$-stable with a polynomial estimate? What if $\G = \GGL(\oo)$, where $\oo$ is the ring of integers of a non-Archimedean local field?
\end{question}

In the case of characteristic $0$, one could even ask the question with ``linear'' instead of ``polynomial'' (Remark \ref{rem:quadratic:estimate}).

In the Archimedean case, all of the uniform stability results that we are aware of give a linear estimate \cite{BOT, BChap, uHS, Bharat, Bharatandi}. In the pointwise setting, finitely generated abelian groups are stable with a polynomial estimate, which is superlinear in the case of higher rank \cite{quant}. We are not aware of examples of finitely presented pointwise stable groups with a superpolynomial estimate. There is a framework for quantitative pointwise stability of finitely generated groups, and for that there exist examples of groups with arbitrarily bad stability estimates \cite{diagonalproducts, badstability}.

\subsection{Other ultrametric families}
\label{ss:q:fam}

Most of this paper was concerned with $\GGL(\oo)$-stability, where $\oo$ is the ring of integers of a non-Archimedean local field. The groups $\GGL_n(\oo)$ are compact because $\oo$ is, and compactness played an important role in our arguments, especially to have finiteness of the metric quotients. The general picture could be more complicated:

\begin{problem}
Study $\GGL(\oo)$-stability, where $\oo$ is the ring of integers of a (not necessarily local) non-Archimedean field with residual characteristic $p > 0$. How does it compare to the case of local fields? Does completeness play a role? Does spherical completeness?
\end{problem}

It is likely that some results from Section \ref{s:vpropi} and \ref{s:char0} carry over, assuming at least completeness. In the general case the residue field $\kk$ is not finite, but at least it has characteristic $p$, which makes it possible to recover some arguments. In case $\K$ has residual characteristic $0$, the analogy with local fields breaks down, so this is likely to need a separate study:

\begin{problem}
Study $\GGL(\oo)$-stability, where $\oo$ is the ring of integers of a non-Archimedean field with residual characteristic $0$. Does completeness play a role? Does spherical completeness?
\end{problem}

Another direction in which to generalise $\GGL(\oo)$ while retaining compactness is to look at other compact $\K$-analytic groups equipped with suitable bi-invariant ultrametrics. For instance, using a result of Segal \cite{Segal}, some of the stability results on graphs of groups from Section \ref{s:char0} could be generalised to every family of compact $p$-adic analytic groups equipped with a suitable metric.

\begin{problem}
Study $\G$-stability, for other families $\G$ of compact $\K$-analytic groups equipped with suitable bi-invariant ultrametrics.
\end{problem}

In the introduction we mentioned that the $\ell^\infty$-norm on $\MM_n(\K)$ has the special feature of being at once an ultrametric analogue of the operator norm, of the Frobenius norm, and of the Hilbert--Schmidt norm on $\U(n)$. There is a fourth norm on matrix groups that one could consider, namely the normalised rank, leading to the \emph{rank metric}: the corresponding approximable groups are called \emph{linear sofic} and are studied in \cite{a:lin}. The rank metric can also be defined on non-Archimedean fields, however it is not an ultrametric, so it does not fall in the framework of this paper. Therefore we ask:

\begin{question}
Does the rank metric admit an ultrametric analogue?
\end{question}

In another direction, the problem of stability with respect to the family $\Gal(K)$ from Example \ref{ex:gal:prof} remains an interesting open problem (see Remark \ref{rem:galois:problem}), even for specific fields $K$.

\begin{problem}
Study stability with respect to the family $\Gal(K)$.
\end{problem}

Another interesting family of Galois groups of a different flavour is $\G \coloneqq \{ (\Gal(\mathbb{Q}_p^{alg}/\mathbb{Q}_p), d_p) : p \text{ prime} \}$, where $d_p$ is a bi-invariant ultrametric obtained as in Example \ref{ex:prof}, with respect to a fixed sequence $\eee$ and some suitable choice of open finite-index normal subgroups of $\Gal(\mathbb{Q}_p^{alg}/\mathbb{Q}_p)$.

\begin{problem}
Study stability with respect to the family $\G$ above.
\end{problem}

Our first trivial example of ultrametric family was a family of discrete groups $\G$ equipped with discrete metrics (Example \ref{ex:discr}). We saw that stability is less interesting in this setting (Example \ref{ex:discr:stab}) but as we mentioned in the discussion after Example \ref{ex:discr}, probabilistic versions of stability with respect to such families appear often in property testing \cite{PT}. Therefore it would be interesting to develop a general framework of probabilistic ultrametric stability, analogously to what is done for the family $\{ (S_n, d_H) : n \geq 1 \}$ in \cite{BChap}, which could produce new results in property testing.

\begin{problem}
Study probabilistic analogues of ultrametric stability.
\end{problem}

Finally, it would be interesting to produce and study more examples of ultrametric families of finite groups. Beyond discrete families, the only case we treated is the family $T(\mathbf{R})$, where $\mathbf{R}$ is a finite commutative unital ring: we proved in Corollary \ref{cor:approxTR} a result concerning approximation with respect to such families, and stability was treated in Proposition \ref{prop:aut_stab} without the finiteness hypothesis. While we studied in detail an ultrametric analogue of $\U(n)$, it would be interesting to find an ultrametric analogue of $(S_n, d_H)$ and compare the corresponding stability problems.

\begin{problem}
Produce new examples of ultrametric families of finite groups, and study the corresponding stability problems.
\end{problem}

\pagebreak

\footnotesize

\bibliographystyle{amsalpha}
\bibliography{References}

\normalsize

\vspace{0.5cm}

\noindent{\textsc{Department of Pure Mathematics and Mathematical Statistics, University of Cambridge, UK}}

\noindent{\emph{E-mail address:} \texttt{ff373@cam.ac.uk}}

\end{document}